\newtheorem{theorem}{Theorem}[section]
\newtheorem{corollary}[theorem]{Corollary}
\newtheorem{lemma}[theorem]{Lemma}
\newtheorem{proposition}[theorem]{Proposition}
\newtheorem{conjecture}[theorem]{Conjecture}
\newtheorem{problem}[theorem]{Problem}
\theoremstyle{definition}
\newtheorem{remark}[theorem]{Remark}
\newtheorem{example}[theorem]{Example}
\numberwithin{equation}{section}
\numberwithin{figure}{section}
\numberwithin{table}{section}
\newcommand{\e}{{{el}}}
\newcommand{\R}{\mathbb{R}}
\newcommand{\Rd}{\R^2}
\newcommand{\Comp}{{\mathcal{C}}}
\newcommand{\ov}{\overline}
\newcommand{\Z}{\mathcal{Z}}
\newcommand{\Vmin}{{\mathcal{V}_\mathrm{min}}}
\newcommand{\NN}{{N}}
\newcommand{\h}{{[-H,H]}}
\newcommand{\Oh}{{\hat{\Omega}_H}}
\newcommand{\symtens}{{\overset{s}{\otimes}}}
\newcommand{\mbf}[1]{{\mathbf{#1}}}
\newcommand{\opnorm}[1]{{\left\vert\kern-0.25ex\left\vert\kern-0.25ex\left\vert #1 
		\right\vert\kern-0.25ex\right\vert\kern-0.25ex\right\vert}}
\newcommand{\Ob}{{\overline{\Omega}}}
\newcommand{\bO}{{\partial\Omega}}
\newcommand{\argu}{{\,\cdot\,}}
\newcommand{\Mes}{{\mathcal{M}}}
\newcommand{\Sdd}{{\mathrm{T}^2}}
\newcommand{\Sddp}{{\mathrm{T}^{2}_{\!+}}}
\newcommand{\DIV}{{\mathrm{Div}}}
\newcommand{\hDIV}{{\hat{\mathrm{D}}\mathrm{iv}}}
\newcommand{\dive}{{\mathrm{div}}}
\newcommand{\sig}{{\sigma}}
\newcommand{\tr}{{\mathrm{Tr}}}
\newcommand{\htr}{{\mathrm{Tr}}}
\newcommand{\Cmin}{{\mathcal{C}_{\mathrm{min}}}}
\newcommand{\pairing}[1]{{\left \langle #1 \right \rangle}}
\newcommand{\norm}[1]{\Arrowvert #1 \Arrowvert}
\newcommand{\abs}[1]{{\left \lvert #1 \right \rvert}}
\newcommand{\eps}{\varepsilon}
\newcommand{\Rb}{\overline{\mathbb{R}}}
\newcommand{\D}{\mathcal{D}}
\newcommand{\V}{{\mathcal{V}}}
\newcommand{\Ha}{\mathcal{H}}
\newcommand{\IM}{\mathrm{Im}}
\newcommand{\mres}{\mathbin{\vrule height 1.6ex depth 0pt width
		0.13ex\vrule height 0.13ex depth 0pt width 1.3ex}}
\def\ps@pprintTitle{%
	\let\@oddhead\@empty
	\let\@evenhead\@empty
	\def\@oddfoot{}%
	\let\@evenfoot\@oddfoot}
\begin{document}

	\title{Optimal vault problem -- form finding through 2D convex program}

	
	\author{Karol Bo{\l}botowski}
	
	\address{Department of Structural Mechanics and Computer Aided Engineering, Faculty of Civil Engineering, Warsaw University of Technology, 16 Armii Ludowej Street, 00-637 Warsaw, \linebreak
	College of Inter-Faculty Individual Studies in Mathematics and Natural Sciences, University of Warsaw, 2C Stefana Banacha St., 02-097 Warsaw
	}

	\begin{abstract}
		This work puts forward a form finding problem of designing a least-volume vault that is a surface structure spanning over a plane region, which via pure compression transfers a vertically tracking load to the supporting boundary. Through a duality scheme, developed recently for the design of pre-stressed membranes, the optimal vault problem is reduced to a pair of mutually dual convex problems $(\mathcal{P})$,\,$(\mathcal{P}^*)$ formulated on the 2D reference region. The vault constructed upon solutions of those problems is proved to be both of minimum volume and minimum compliance; analytical examples of optimal vaults are given. Through a measure-theoretic approach, thus found optimal vaults are proved to solve the Prager problem of designing a 3D structure that by compression carries a transmissible load. The ground structure method applied to the convex problems furnishes a pair of discrete, conic quadratic programs $(\mathcal{P}_X)$,\,$(\mathcal{P}_X^*)$ leading  to optimal design of grid-shells. By adopting the member-adding adaptive technique this pair is efficiently tackled  numerically, which is demonstrated on a number of examples where highly precise grid-shell approximations of optimal vaults are found.
	\end{abstract}
	
	\begin{keyword}
		Form finding \sep Michell structures \sep optimal grid-shells \sep Prager structures \sep ground structure \sep conic quadratic programming \sep optimal arch-grids
	\end{keyword}

	\maketitle


\section{Introduction}
\label{sec:introdction}

A long standing engineering  problem is the one of designing a surface structure that in a pure membrane state efficiently transfers a given load to the boundary. This involves both continuous shells or bar frameworks that lie on a single surface. The structures from the latter class are often termed \textit{grid-shells}. Eliminating bending allows to significantly reduce thickness of the structural elements thus generating material savings. Back in the days design of such flexureless structures -- known as \textit{form finding} --  required intuition of crafted engineers and architects. Over time the process evolved being aided by computer methods. For instance, in \cite{day1965} the equilibrated configuration of grid-shells was established via the \textit{dynamic relaxation method with kinetic damping}. An additional constraint may concern the sign of the stress: the pure compression or pure tension state can be imposed thus arriving at design of vaulted masonry structures or, respectively, hanging roofs and cable-nets. To this aim the \textit{force density method} was developed for grid-shells in \cite{schek1974}, see also the works on self-supporting structures in \cite{block2007}, \cite{vouga2012}. Its generalization to design of continuous shells undergoing pure tensile stresses was developed e.g. in \cite{bletzinger1999}, \cite{nguyen2020} where a fully non-linear shell theory is employed. Another topic is optimization of such surface forms, for instance in terms of volume or elastic compliance, which can be combined with the aforementioned form finding methods, see e.g. \cite{bletzinger2005}, \cite{richardson2013}, \cite{dzierzanowski2020b}.

This paper focuses on a very specific form finding problem. For a bounded domain $\Omega \subset \Rd$ contained in a horizontal plane we will be designing a structure that shall lie on a single surface $\mathcal{S}_z$ being a graph of function $z : \Omega \to \R$ that is zero on the boundary $\bO$. The \textit{elevation functon} $z$ is a design parameter itself. The structure is free of bending and is capable of withstanding compressive stresses only while it transfers a vertical load $f$ whose intensity is given with respect to the horizontal domain $\Omega$ and its vertical position is not fixed: the load tracks the surface $\mathcal{S}_z$. The design objective is to minimize the structure's volume while maintaining the principal stresses in the regime $-k_0 \leq \sigma_{\mathrm{I}},\sigma_{\mathrm{II}}\leq 0$ for a yield stress $k_0>0$. No constraints are imposed on the structural topology of the design: the material may occupy any subset of the surface $\mathcal{S}_z$ and  it can be either continuously spread over 2D patches or distributed along 1D elements in the form of struts or arches. The optimal design problem thus formulated will be termed \textit{the minimum-volume vault problem}. A similar problem can be posed when assuming that the structure is subject to pure tension, in which case one may speak of the minimum-volume hanging roof. 

The plane variant of the problem put forward, where a curve $\mathrm{C}_z$ constituting a least-volume masonry arch is being sought, can be solved analytically as it is directly linked to \textit{the funicular problem}, cf. \cite{czubacki2019}. With bending dismissed the arch's equilibrium enforces that the elevation function $z:[a,b] \to \R$ satisfies equation $H  z'' = f$ where $H>0$ is a horizontal thrust force. Owing to the boundary conditions $z(a)=z(b) = 0$ the elevation $z$ is determined up to multiplicative constant $1/H$. Then, assuming that the arch is fully stressed, one can find the optimal thrust $H$ that minimizes the volume. In contrast, the shape of surface forms which carry the load $f$ via compression is clearly non-unique, which puts the vault optimization in the class of more involved structural design problems.

To avoid the geometrically difficult challenge of finding an optimal surface $\mathcal{S}_z$ a relaxed formulation may be proposed: any three dimensional structure in compression is admissible while the vertical position of the load remains to be optimally determined. With the structural volume being minimized such formulation resembles the Michell problem \cite{michell1904} (see also \cite{strang1983,bouchitte2008,lewinski2019a}) up to enforcing the stress sign and adjusting the load. This relaxed problem was first investigated in \cite{rozvany1982} where, to honour the memory of William Prager who initiated this line of research, it was referred to as \textit{Prager problem} whereas the optimal structures themselves -- as \textit{Prager structures}. To the knowledge of the present author, however, designing of spatial Prager structures has never been formulated as a clear mathematical problem, instead the authors of \cite{rozvany1982} depart from the \textit{Prager-Shield optimality criteria}. This work will address this issue whilst relaxing the problem even further by employing the concept of \textit{transmissible loads} introduced in \cite{fuchs2000} (cf. also \cite{chiandussi2009}): the load $f$ given with respect to $\Omega$ can be arbitrarily distributed along vertical lines, continuously or discretely. The hope is that, despite this extra freedom, at each point $x \in \Omega$ there is one optimal load position $z = z(x)$ and that the Prager structure is essentially a vault for it concentrates on a single surface $\mathcal{S}_z$. Such a conjecture is driven by the planar case where Prager structure is indeed a single funicular, cf. the formal proof in \cite{rozvany1983}. At this point it must be stressed that the constraint on the stress sign is crucial -- in \cite{darwich2010} the authors showed that once tension and compression is allowed the 2D optimal design under transmissible load (that is uniformly distributed in the horizontal direction) is not a parabolic arch and instead small lobe-like 2D tension-compression regions occur in vicinity of the supports furnishing smaller volume. To the present author's knowledge so far analytical examples of Prager structures have only been found for axisymmetric or "quasi-axisymmetric" boundary and loading conditions, cf. \cite[Sections 6.8,\,6.9]{rozvany1982}.

The most efficient numerical technique for tackling the Michell problem is \textit{the ground structure method}, its idea was first given in \cite{dorn1964} and further developed in \cite{gilbert2003}, \cite{sokol2010}, \cite{zegard2014}. For a finite nodal grid, populating the design region $\Omega$, a dense highly redundant truss is built by connecting each pair of nodes by a bar; then the volume minimization by member sizing becomes a linear programming (LP) problem. For very fine nodal grids one arrives at a very precise truss approximations of Michell structures. Recently the ground structure approach was successfully brought to grillage optimization, cf. \cite{bolbotowski2018}. Adopting the concept of transmissible loads to the 3D ground structure method allows to stay within the LP framework thus furnishing a natural tool for numerical prediction of Prager structures. First simulations may be already found in \cite{gilbert2005}, however both compression and tension were allowed therein; in \cite{jiang2018} the solution in pure compression was provoked by imposing a very large cost for the elements in tension. In both cases the 3D trusses obtained seem to oscillate about a single surface thus approximating a grid-shell solution. Similar numerical experiments were also presented in \cite{lewinski2019b}: the truss in Fig. \ref{fig:Prager_structure} was obtained by Tomasz Sok{\'o}{\l} and is an approximation of a Prager structure for a square domain and uniformly distributed vertical load.

\begin{figure}[h]
	\centering
	\includegraphics*[trim={0cm 0.5cm 0cm 4.5cm},clip,width=0.45\textwidth]{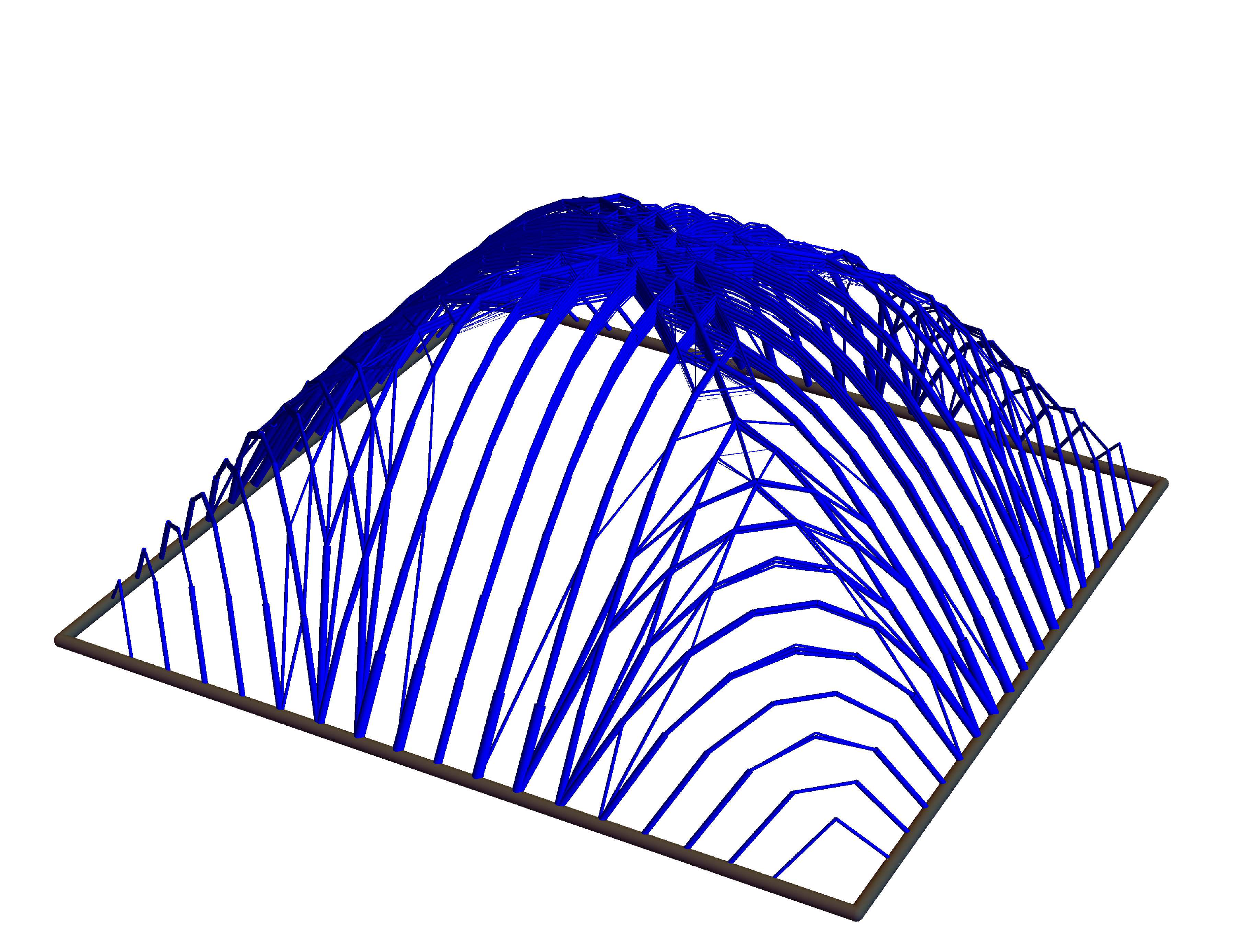}
	\caption{Prager structure for a square domain and uniformly distributed load -- a vault-like numerical prediction through the 3D ground structure approach, courtesy of Tomasz Sok{\'o}{\l}.}
	\label{fig:Prager_structure}       
\end{figure}

One of the main goals of the present paper is to tackle the optimal vault problem from theoretical point of view, therefore we will be bound to work with an explicit surface $\mathcal{S}_z$. A natural step is to reduce the problem to the base plane region $\Omega$, which potentially paves a way to a numerical method that is more efficient than full 3D methods. As an example of such a 2D approach to form finding problem one may give the work \cite{rozvany1979} where the \textit{optimal arch-grid problem} was introduced. It may be treated as a first step of generalizing the optimal funicular problem to 3D case: the tracking load $f$ is carried by two families of arches whose projections onto the base plane are respectively parallel to two prefixed orthogonal directions. The main result is that, for a configuration that minimizes the total volume, the arches lie on a single surface $\mathcal{S}_z$. The Prager-Rozvany arch-grid problem was recently revisited in \cite{czubacki2019} and \cite{dzierzanowski2020}, see also \cite{lewinski2019b} and \cite[Chapter 6]{lewinski2019a}. In the spirit of theory of Michell structures those papers put the arch-grid problem as a pair of mutually dual convex problems in 2D domain $\Omega$: in the primal form we seek a field $q$ guaranteeing vertical equilibrium $-\DIV\,q = f$ and minimizing a very specific convex integral functional; in the dual problem the virtual work $\int_\Omega w \,f$ is being maximized with respect to scalar virtual displacement field $w$ that satisfies an upper bound condition for the mean square slope along each of the two fixed directions. Such duality-based approach towards the form finding problem inspired the present author to chase after a generalization of the arch-grid problem towards freeing the direction of arches at each point of the structure, which eventually led to formulation of the optimal vault problem herein presented.

Amongst other methods of form finding formulated in the reference plane domain $\Omega$ one can find \cite{block2007} where a concept of \textit{thrust network} is developed -- a horizontal network of forces in the in-plane equilibrium is generated based on the idea of reciprocal figures and then suitably erected to constitute a surface form in pure compression. The idea inspired a chain of numerical techniques for construction of \textit{self-supporting} vaulted structures, cf. \cite{fraternali2010} or \cite{vouga2012} to name a few. In those works a continuous description may be found as well where the thrust is represented by a plane negative semi-definite tensor field $\sigma_- \! \preceq 0$. Such thrust field ought to be treated as a projection of compression stress field in a vault lying on a surface $\mathcal{S}_z$, which is in equilibrium with the tracking load $f$ if and only if
\begin{equation}
\label{eq:feas_vault}
\DIV \, \sigma_- \! = 0, \quad -\dive \bigl( \sigma_- \nabla{z} \bigr) = f \qquad   \text{in} \quad \Omega.
\end{equation}
This line of research on the self-supporting structures, however, did not involve optimization, e.g. in terms of volume or compliance.

Equilibrium equations \eqref{eq:feas_vault} can be used as a staring point in the optimal vault problem: any statically admissible vault may be represented as a pair of fields $(z,\sigma_-)$ defined in the reference domain $\Omega$ and satisfying \eqref{eq:feas_vault}, cf. Chapter 11.2 in \cite{green1968}. With the help of differential geometry we can derive a formula for the fully stressed vault's volume $V = V(z,\sigma_-)$ that turns out to be non-convex therefore ruling out the direct method of calculus of variations in showing existence of a solution. Moreover, tackling a non-convex optimization problem numerically puts us at risk of ending up at a local minimum.

The core idea of the present contribution is to establish a non-trivial link between the optimal vault problem and the recently proposed problem of plane pre-stressed membrane of minimum compliance, cf. \cite{bouchitte2020}. Classically, a membrane is characterized by a pre-stress parameter $p>0$. It can be generalized to a non-homogeneous anisotropic pre-stress being a positive semi-definite tensor field $\sigma \succeq 0$ that is bound to satisfy the in-plane equilibrium $\DIV\,\sigma =0$ in $\Omega$. Subject to a pressure load $f$, the membrane undergoes a deflection $w = w_\sigma:\Omega \to \R$ determined by the out-of-plane equilibrium equation $-\dive\bigl( \sigma\, \nabla w_{\sigma} \bigr) = f$ that is a generalization of the renown Possion equation $-p \,\triangle w = f$. Then, the pair $(z,\sigma_-) = (-w_{\sigma},-\sigma)$ furnishes a feasible vault in pure compression, i.e. it satisfies equilibrium equations \eqref{eq:feas_vault}. This concept of form finding is not new and can be traced back to Antoni Gaud\'{i}'s hanging chain models, see \cite{bergos1999} or the inspired numerical \textit{potential energy method} in \cite{jiang2018}. A deeper relation that is promoted in this work concerns optimally designed plane membrane: with membrane's compliance defined as $\Comp_M = \Comp_M(\sigma) = \frac{1}{2}\int_{\Omega} w_\sigma\, f$ there holds an implication
\begin{equation*}
\Comp_M(\bar{\sigma}) = \min_{\sigma \succeq 0}\left\{ \Comp_M(\sigma) \, \left \vert \ \DIV \, \sigma =0, \   \int_{\Omega} \tr\,\sigma \leq \V_0  \right.  \right\}  \ \ \  \Rightarrow \ \ \  (z,\sigma_-)=\left(-\frac{1}{\alpha}\,w_{\bar{\sigma}},-\alpha\,\bar{\sigma} \right) \ \text{ yields an optimal vault,}
\end{equation*}
where $\alpha$ is a suitably chosen positive number. This connection inspired the work \cite{bouchitte2020} co-written by the present author where the optimal pre-stressed membrane problem was investigated from the mathematical point of view. Unlike the optimal vault problem, the membrane problem is convex and existence of solutions was established. With the use of general duality tools the design problem was reformulated as a pair of mutually dual convex programs: the primal problem $(\mathcal{P})$ on the space of Radon measures $\sigma$ and $q$, the latter representing a vector field of transverse force; the dual problem $(\mathcal{P}^*)$ where virtual displacement fields are sought -- vectorial in-plane displacement $u$ and scalar deflection function \nolinebreak $w$. This pair is reminiscent of the one emerging in Michell theory.

We shall now summarize the content of the present paper. After a brief review of Michell theory and gathering some tools of convex analysis in Section \ref{sec:review}, we devote Section \ref{sec:plastic_design} to proving that a least-volume vault may be recast from solutions of convex problems $(\mathcal{P}), (\mathcal{P}^*)$ that are proposed directly based on \cite{bouchitte2020}. We depart from the precise formulation of the optimal form finding problem employing the classical membrane shell theory in the differential geometry setting, cf. \cite{green1968}, \cite{ciarlet2000}. For convenience and clarity in the derivations and proofs the structure will be assumed to work in pure tension rather than compression; in due course a simple alteration by change of signs will be proposed to recover the vault in compression (notwithstanding this, the term \textit{vault} will be used in both contexts). We shall extensively build upon the duality theory between problems $(\mathcal{P})$,\,$(\mathcal{P}^*)$ developed in \cite{bouchitte2020}. The recipe for the least-volume vault is given in Theorem \ref{thm:recovring_MV_dome}: the optimal elevation function reads $z = \frac{1}{2}\, w$ where $w$ solves $(\mathcal{P}^*)$ while the vault's stress field is obtained by unprojecting $\sigma$ being a solution of $(\mathcal{P})$. 
Using the derived optimality conditions an analytical solution of an axisymmetric vault is given.

While Section \ref{sec:plastic_design} deals with the so called "plastic design setting" of the optimal vault problem where no elastic compatibility is enforced on the deformation, Section \ref{sec:elastic_design} puts forward a problem of designing vaults of minimum elastic compliance that proves to be equivalent to the former problem. Aside from the elevation function $z$ one seeks an optimal elastic material distribution $\rho$.
The optimal elastic vault is then once again constructed based on solutions of convex problems $(\mathcal{P}), (\mathcal{P}^*)$, namely for an optimal elevation $z = \frac{1}{2} \, w$ the density $\rho$ follows directly from $\sigma$. An astonishing relation is discovered and carefully examined: the optimal elevation function $z$ and the elastic vertical displacement function $w_\e$ coincide up to a multiplicative constant. 

The purpose of Section \ref{sec:3D} is twofold. First, it is to cover the more general design setting where vaults with lower dimensional stiffeners are admissible. Secondly, it  establishes a desirable link to the three dimensional Prager problem for which we first propose a rigorous mathematical formulation. To that aim the more modern tools of measure theory \cite{evans1992} and measure-tangential calculus \cite{bouchitte1997} will be engaged. The objective functional in $(\mathcal{P})$ is of linear growth therefore, similarly as in Michell problem, its solutions $\sigma$ and $q$ in general lie in the space of tensor and, respectively, vector valued measures --  objects that simultaneously describe continuously distributed stress as well as one dimensional stress channels. Inspired by the pioneering works on optimal design in the measure-theoretic setting -- see \cite{bouchitte2001} on the mass optimization or \cite{bouchitte2008} on Michell problem -- we pose the Prager problem of designing a structure in three dimensional space modelled by a 3D tensor valued measure $\hat{\sigma}$ potentially combining 3D, 2D and 1D structural elements. The main theoretical result of the paper is stated in Theorem \ref{thm:optimal_3D_structure}: once more we utilize solutions of problems $(\mathcal{P})$,\,$(\mathcal{P}^*)$ to construct the measure $\hat{\sigma}$ that solves the Prager problem and concentrates on a single surface $\mathcal{S}_z$, which proves that optimally designed vaults are indeed Prager structures, possibly consisting of 2D and 1D elements. 

Every optimal design problem considered in this work reduces to the pair of mutually dual convex problems $(\mathcal{P})$,\,$(\mathcal{P}^*)$. The natural strategy is thus to develop the numerical method around a discretized variant of this very pair. In Section \ref{sec:discrete}, upon mathematical justification, the ground structure method is adopted for this purpose. In the emerging discrete problems $(\mathcal{P}_X)$,\,$(\mathcal{P}^*_X)$ the truss spanned by the nodal grid $X$ is essentially a pre-stressed system of strings that reacts out of plane due to the discretized load $\mbf{f}$. In the primal problem $(\mathcal{P}_X)$ we search for vectors $\mbf{s}$ and $\mbf{q}$ of, respectively, pre-stressing and transverse member forces. In the dual problem $(\mathcal{P}_X^*)$ the virtual displacement vectors $\mbf{u}$ (in-plane) and $\mbf{w}$ (out-of-plane) are linked by a convex quadratic constraint -- the pair is therefore not a linear program as in the case of truss optimization problem. Drawing upon the achievements in finite dimensional convex optimization, cf. \cite{ben2001}, we convert $(\mathcal{P}_X)$, $(\mathcal{P}^*_X)$ to a pair of mutually dual conic quadratic programming problems that may be tackled by powerful interior point methods, cf. \cite{andersen2003}. Based on solutions of problems $(\mathcal{P}_X)$,\,$(\mathcal{P}_X^*)$ a grid-shell approximation of the optimal vault may be constructed: it is extracted from the 3D truss obtained through elevating the nodes $X$ of the plane ground structure by $\mbf{z} = \frac{1}{2} \,\mbf{w}$. From Theorems \ref{thm:recovering_MV_grid-shell}, \ref{thm:recovering_MC_grid-shell} we find that, with thicknesses of the grid-shell's bars induced by $\mbf{s}$, we arrive at an optimal design amongst all grid-shells spanned over the plane ground structure -- both in the plastic and the elastic setting.

Section \ref{sec:numerics} presents the numerical simulations. The adaptive \textit{member adding approach} developed in \cite{gilbert2003} for the LP truss optimization problem (cf. also \cite{sokol2015} and \cite{he2019}) was successfully converted to the conic program $(\mathcal{P}_X)$,\,$(\mathcal{P}_X^*)$, thereby making it possible to solve large scale problems with 2D ground structures counting up to several billion potential members while using a laptop computer. The high resolution in the reference domain $\Omega$ transfers directly to high resolution on the surface $\mathcal{S}_z$ which is very difficult to match by 3D ground structure methods that require discretization in the third, vertical direction. The numerical scheme is demonstrated through a number of examples of load conditions for a square domain $\Omega$; a non-convex cross-shaped design domain is considered as well. A study of the obtained numerical solutions is given. 

In Section \ref{sec:variations} we discuss possible variations of the optimal vault problem. First, the kinematical support of the designed vault is relaxed to be any closed set $\Gamma \subset \Ob$ and not necessarily the boundary $\bO$; in particular $\Gamma$ may be a finite set that simulates point supports. Next we consider the case when the supporting boundary is not horizontal and may be obtained by elevation of the plane boundary $\bO$ instead. Finally, by constraining principal directions of $\sigma$ in $(\mathcal{P})$, we put forward a link between  the optimal vault problem and the optimal arch-grid problem that motivated this very work in the first place. The paper concludes with Sections \ref{sec:final_remarks} where several open problems are outlined.

\vspace{0.5cm}

\noindent\textbf{Notation:} The set of non-negative reals will be denoted by $\R_+$, the symbols $\R^d$, $\mathrm{T}^d$ will be used for $d$-dimensional vectors and second order symmetric tensors/matrices, whilst $\mathrm{T}_+^d$ will stand for the positive semi-definite matrices. For vectors $a,b \in \R^d$ and matrices $A,B \in \mathrm{T}^d$ by $a \cdot b$ and $A:B$ we shall understand the standard scalar products, while $\abs{a}$ will be the Euclidean norm. For two symmetric matrices $A,B \in \mathrm{T}^d$ by writing $A \preceq B$ we shall compare the two induced quadratic forms, i.e. $A \preceq B \ \Leftrightarrow \ A:(\tau\otimes \tau) \leq B:(\tau\otimes \tau)$ for any $\tau \in \R^d$. Next, for a bounded domain $\Omega \subset \R^d$ (open and connected set), $C^k(\Ob;V)$ will be the space of $V$-valued functions (e.g. $V = \R^d$) that are continuously $k$-differentiable up to the boundary; $L^p(\Omega;V)$ will stand for the Lebesgue space of $p$-integrable $V$-valued functions. Finally, by $\Mes(\Ob;V)$ we shall denote the space of Radon $V$-valued measures, in particular $\Mes_+(\Ob) = \Mes(\Ob;\R_+)$ is the set of positive measures containing e.g. Lebesgue measure $\mathcal{L}^2$, Dirac delta measure $\delta_{x_0}$ at point $x_0$ or  $1$-dimensional Hausdorff measure on a rectifiable curve $\Ha^1 \mres \mathrm{C}$, where by symbol $\mu \mres B$ we understand restriction of measure $\mu$ to a Borel set $B$. Integrals over $\Ob$ written as $\int_\Ob f$ will be understood twofold: either with respect to Lebesgue measure once $f$ is a function in $L^1(\Omega;\R)$ or, more generally, with respect to Radon measure $f \in \Mes(\Ob;\R)$. The divergence operator shall be applied in the distributional sense with respect to some open set, for instance $\Omega$, i.e. for a vector field $q$ (integrable function or a measure) $\Lambda = \dive\, q$ is a distribution such that $\Lambda(\varphi) = -\int_{\Omega} \nabla \varphi \cdot q$ for any smooth test function $\varphi$ with compact support in $\Omega$. In case of tensor valued field $\sigma$ we shall use the upper case symbol $\DIV \,\sigma$ for distinction. Throughout the text the hat symbol $\hat{\argu}$ will be used to stress that the object is "three dimensional": either $\hat{f}$ will be a function/measure defined on $\R^3$, or it will admit values in $\R^3$ (alternatively in  $\mathrm{T}^3$), or both.

\section{A short review: convex analysis and theory of plane Michell structures}
\label{sec:review}

\subsection{Basic tools of convex analysis employed in the paper}
\label{ssec:convex_analysis}

For convenience of the reader we recall the basic notions of convex analysis that are employed in this paper; we give \cite{rockafellar1970convex} as a reference. For any set $B$ in a linear space $Y$ the indicator function will be denoted by $\mathbbm{I}_B:Y \to \Rb$ where $\Rb = \R \cup \{-\infty,\infty\}$ is the extended real line, namely $\mathbbm{I}_B(y) = 0$ for $y \in B$ and $\mathbbm{I}_B(y) = \infty$ whenever $y \notin B$. Once $Y$ is a normed space we may assign its dual space $Y^*$ (equivalent to the space $Y$ itself once $Y$ is finite dimensional) along with a duality pairing $\pairing{y;y^*}$ for $y\in Y,\ y^*\in Y^*$ (e.g. typically $\pairing{y;y^*} = y \cdot y^*$ when $Y = \R^n \equiv Y^*$). Then, any extended real convex function $j:Y \to \Rb$ enjoys its convex conjugate $j^*:Y^* \to \Rb$ given by the formula $j^*(y^*) = \sup_{y\in Y} \bigl\{ \pairing{y;y^*} - j(y) \bigr\}$. A particular class of convex functions is the one of \textit{gauges} i.e. convex functions $g:Y\to \Rb$ which are non-negative and positively 1-homogeneous; we say that a gauge $g$ is closed when in addition it is lower semi-continuous. To every closed gauge $g$ there corresponds a unique closed convex set $K \subset Y$ containing the origin such that $g(y) = g_K(y) := \inf \bigl\{ t\geq 0 \, \big\vert \, y\in t K \}$ and, vice versa, each such set $K$ induces a closed gauge $g_K$.
For each gauge function $g:Y \to \Rb$ we define its polar function $g^0:Y^* \to \Rb$ via $g^0(y^*):= \sup_{y\in Y} \bigl\{ \pairing{y;y^*} \, \big\vert\, g(y) \leq 1  \bigr\}$ which is a closed gauge. For a closed set $K$ we note that $g_K$ is the unique positively 1-homogeneous function that gives the equivalence $g_K(y)\leq 1 \ \Leftrightarrow \ y\in K$ hence follows an equality that will be of particular importance in this work:
\begin{equation}
	\label{eq:polar}
	g^0_K(y^*) = \sup_{y\in Y} \Big\{ \pairing{y;y^*} \, \big\vert\, y \in K  \Big\}.
\end{equation}
The function $g_K^0$ is the so called \textit{support function} of the convex set $K$.
Finally, for any gauge $g$ the function $j=j(y) = \frac{1}{2}\bigl(g(y) \bigr)^2$ yields $j^*(y^*) = \frac{1}{2} \bigl(g^0(y^*) \bigr)^2$.

\subsection{Theory of plane Michell structures and an interpretation of the link to least volume trusses}
\label{ssec:Michell}

For a plane and convex domain $\Omega \subset \Rd$ let there be given a load being a vector-valued measure $F \in \Mes(\Ob;\Rd)$ (with the use of measures we may account for body forces as well as point forces, also concentrated on the boundary $\bO$) and a closed subset of the boundary $\bO_0 \subset \bO$ where the potential structure may be kinematically fixed.
The problem of finding the plane Michell structure is inextricably linked to the closed gauge  $\gamma: \Sdd \to \R_+$ that happens to be the spectral norm:
\begin{equation}
	\label{eq:spectral_norm}
	\gamma(\eps) = \norm{\eps}_\mathrm{spec} = \max \Big\{\abs{\lambda_1(\eps)},\abs{\lambda_2(\eps)} \Big\} = \sup_{\tau \in \Rd, \ \abs{\tau}\leq 1} \abs{\eps:(\tau \otimes \tau)},
\end{equation}
where $\lambda_i(A)$ for a symmetric matrix $A$ is its $i$-th eigenvalue. Upon setting the duality pairing $\pairing{\eps;\sigma}$ as the scalar product $\eps :\sigma$ the polar gauge $\gamma^0:\Sdd \to \R_+$ can be computed:
\begin{equation*}
	\gamma^0(\sigma) = \abs{\lambda_1(\sigma)}+\abs{\lambda_2(\sigma)}.
\end{equation*}
The renowned Michell problem reduces to a pair of mutually dual variational problems (see e.g. \cite{lewinski2019a} or \cite{bouchitte2008}):
\begin{align}
\label{eq:Michell_min}
\mathcal{V}_\mathrm{Michell} &=\min\limits_{\sig \in \Mes(\Ob;\Sdd)} \biggl\{ \int_\Ob \gamma^0(\sigma) \ \biggl\vert \,  -\DIV\, \sigma = F \ \text{ in } \R^2\backslash \bO_0 \biggr\}\\
\label{eq:Michell_max}
&= \sup_{u \in C^1(\Ob;\Rd)} \biggl\{ \int_\Ob u\cdot F \ \biggl\vert \, u = 0 \text{ on } \bO_0, \  \gamma\bigl( e(u) \bigr) \leq 1 \text{ point-wise in } \Ob   \biggr\},
\end{align}
where $e(u)$ stands for the symmetric part of the gradient $\nabla u$, i.e. $$e(u) = \frac{1}{2}\biggl(\nabla u + (\nabla u)^\top\biggr).$$
The minimization problem \eqref{eq:Michell_min} has an objective with the integrand of linear growth thus the existence of solution is established (cf. \cite{bouchitte2008}) within the space of tensor-valued measures $\sigma \in \Mes(\Ob;\Sdd)$ which encompasses lower dimensional stress paths such as curved cables of finite cross section area (see Section \ref{ssec:regularity} for more insight). The equilibrium equation  $-\DIV\, \sigma = F$ is intended in the sense of distributions in the open set $\R^2\backslash \bO_0$ thus incorporating a natural boundary condition of the type $"\sigma \nu = p"$ on $\bO \backslash \bO_0$.

The stress field $\sigma \in \Mes(\Ob;\Sdd)$ and the smooth displacement function $u \in C^1(\Ob;\Rd)$ solve the problems \eqref{eq:Michell_min} and \eqref{eq:Michell_max}, respectively, if and only if the optimality conditions are met: 
\begin{align}
\label{eq:opt_cond_Michell}
\begin{cases}
(i)& u=0 \ \text{ on $\bO_0$}, \quad \gamma\bigl( e(u) \bigr) \leq 1 \text{ everywhere in } \Ob;\\
(ii) &   -\DIV\, \sigma = F \quad \text{ in } \R^2\backslash \bO_0; \\
(iii) &  e(u):\sigma = \gamma^0(\sigma) \quad \text{ and } \quad \gamma\bigl( e(u) \bigr) = 1 \quad \sigma\text{-a.e. in } \Ob.
\end{cases}
\end{align}
Naturally the first two conditions are the admissibility constraints for $u$ and $\sigma$ hence the essence lies in the point-wise condition $(iii)$ that is often referred to as the \textit{extremality condition}. The condition $\gamma\bigl( e(u) \bigr) = 1$ must be satisfied only $\sigma$ almost everywhere $(\sigma\text{-a.e.})$, namely only at points of non-zero stress or, effectively, where the material occurs.

In contrast to the minimization problem \eqref{eq:Michell_min} the displacement-based maximization problem \eqref{eq:Michell_max} does not have a solution in general, i.e. one that is of class $u\in C^1(\Ob;\Rd)$. By virtue of Lemma 2.1 in \cite{bouchitte2008} the point-wise constraint $\gamma\bigl(e(u) \bigr) \leq 1$ may be equivalently put as a two-point condition:
\begin{equation}
	\label{eq:two_point_Michell}
	-\abs{y-x} \leq \bigl(u(y) - u(x) \bigr)\cdot \frac{y-x}{\abs{y-x}} \leq \abs{y-x} \qquad \forall (x,y) \in \Ob \times \Ob
\end{equation}
which does not require differentiability of $u$, merely its continuity; it should be recalled that $\Omega$ is here assumed to be convex. Existence of solution $u\in C(\Ob;\Rd)$ in the problem \eqref{eq:Michell_max} can be readily proved, see \cite{bouchitte2008}.

One has to carefully note that the possibility of rewriting the kinematic constraint in the two-point manner \eqref{eq:two_point_Michell} strongly relies on the very particular form of the gauge $\gamma$ being the spectral norm.
This structure of the constraint affects the choice of the numerical approach towards Michell problem or, more accurately, towards the maximization problem \eqref{eq:Michell_max}: although the problem is \textit{a priori} posed as continuous the natural alternative for the finite element method is a discrete approach where the vector function $u$ is determined only on a finite subset $X$ of $\Ob$. Upon fixing a Cartesian basis $(e_1, e_2)$ of the plane $\Rd$ such function $u$ is represented by a pair of column vectors $\mbf{u}_1,\mbf{u}_2 \in \R^n$ where $n = \#(X \backslash \bO_0)$ ($\#(A)$ is the cardinality of a set $A$). With the load $F$ suitably discretized to $\mbf{F}_1,\mbf{F}_2 \in \R^n$ the discrete maximization problem may be put forward:
\begin{align}
	\label{eq:truss_max}
	\max_{\mbf{u}_1, \mbf{u}_2 \in \R^n} \biggl\{ \mbf{F}_1^\top\! \mbf{u}_1 + \mbf{F}_2^\top\! \mbf{u}_2 \ \biggl\vert \, - \mbf{l} \leq \mbf{B}_1\, \mbf{u}_1 + \mbf{B}_2\, \mbf{u}_2 \leq \mbf{l}    \biggr\}
\end{align}
where the constraints are the matrix-vector version of the two-point constraint \eqref{eq:two_point_Michell} written for the pairs $(x,y) \in X \times X$; matrices $\mbf{B}_1, \mbf{B}_2$ are the geometric matrices consisting of directional cosines of segments $[x,y]$ while vector $\mbf{l}$ stores lengths of these segments, cf. Section \ref{ssec:discretication} for more details. The linear programming (LP) proposed above attains its dual (or in fact primal) formulation
\begin{equation}
	\label{eq:truss_min}
	\min_{\mbf{s} \in \R^m} \left\{ \sum_{k=1}^m l_k \abs{s_k}  \ \biggl\vert \,  \mbf{B}_1^\top \mbf{s} = \mbf{F}_1,\ \mbf{B}_2^\top\, \mbf{s} = \mbf{F}_2 \right\}.
\end{equation}
In the minimization problem \eqref{eq:truss_min} we recognize a reduced version of the minimum volume truss problem where we search for the axial tensile/compressive forces $s_k$ in bars interconnecting all the nodes of the grid $X$. This dense universe of bars is known in the literature as the \textit{ground structure} while the pair of LP problems \eqref{eq:truss_max}, \eqref{eq:truss_min} has now long been used to find very precise numerical predictions of Michells structures, see \cite{dorn1964,gilbert2003,sokol2015, zegard2014}.

\section{Vaults of the least volume and the link to a 2D convex problem}
\label{sec:plastic_design}

\subsection{Formulation of the form finding problem in the plastic setting}
\label{ssec:problem_formulation_plastic}

Over a bounded domain $\Omega$ with a "smooth" boundary (see Remark \ref{rem:bO_smoothness} below) lying within a horizontal base plane $\Rd$ we shall design a vault being a surface $\mathcal{S}_z = \bigl\{\hat{x}=\bigl(x,z(x)\bigr)\, \big\vert \, x\in \Omega \bigr\} \subset \R^3$ that is pinned on the boundary $\bO$ or, more precisely, $\partial \mathcal{S}_z  = \bO \times \{0\}$,  cf. Fig. \ref{fig:form_finding_problem}. We are in fact seeking the \textit{elevation function} $z \in C^1(\Ob;\R)$ such that $z = 0$ on $\bO$; the function $z$ is allowed to change sign, i.e. the surface may lie above ($z \geq 0$) and below $(z \leq 0)$ the base plane. 

\begin{figure}[h]
	\centering
	\includegraphics*[trim={-1cm 0.5cm 1cm 4.2cm},clip,width=0.9\textwidth]{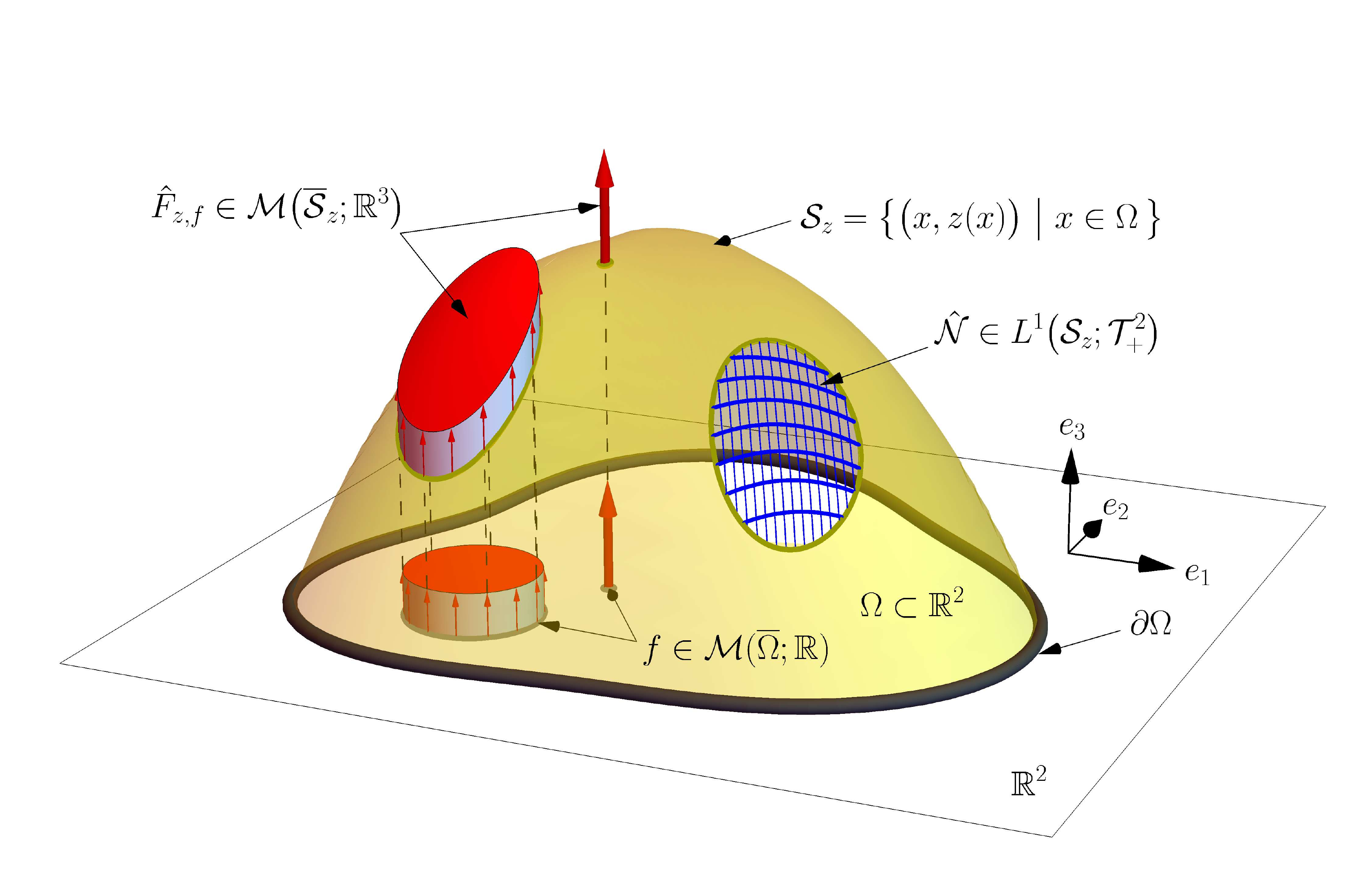}
	\caption{The setting of the form finding problem: optimal design of a "vault in tension".}
	\label{fig:form_finding_problem}       
\end{figure}

Upon the surface there will act a load that \textit{vertically tracks} the surface $\mathcal{S}_z$ being a design variable. The intensity of the load is given with respect to the base plane, i.e. "per unit area of $\Omega$", therefore the tracking load can be represented as a signed Radon measure $f \in \Mes(\Ob;\R)$ (positive means acting upwards) where, for a given function $z$, the actual load $\hat{F}=\hat{F}_{z,f} \in \Mes(\ov{\mathcal{S}}_z;\R^3)$ is the one that for any open subdomain $\omega \subset \Omega$ gives $\int_{\mathcal{S}_z(\omega)} \hat{F}_{z,f} = \int _\omega f e_3$ where $\mathcal{S}_z(\omega) = \mathcal{S}_z \cap (\omega \times \R)$ and $e_3$ is the unit vertical vector, see the visualization in Fig. \ref{fig:form_finding_problem}.  Note that since $f$ is a measure it encompasses loads distributed over 2D sets, "knife loads" (distributed along curves) and point forces.

The vault will be designed as a membrane shell that is capable of withstanding stresses of one sign only: we can choose between tension or compression. In the context of a vault, that is typically loaded gravitationally, i.e. with $f \leq 0$, it is more natural to consider the "compression setting" of the problem. On the other hand it will be far more convenient and clear from the mathematical point of view once the tension setting is chosen and we shall do exactly that -- the stress resultants will be thus membrane forces $\hat{\mathcal{N}}$ that point-wise on $\mathcal{S}_z$ admit values in $\mathcal{T}_+^{\,2}$ being the set of symmetric positive semi-definite tensors. In Section \ref{ssec:compression} it will be showed that one may switch between the optimal designs in tension and compression through a simple change of signs; by exploiting this fact the numerical solutions will be given for the compression setting.

Apart from the surface $\mathcal{S}_z$ the second design variable shall be the material distribution of the vault. In this aspect we draw upon the theory of Michell structures: assuming a perfectly plastic material of the yield stress $k_0$ we are designing a fibrous structure by aligning the fibres along the principal lines of $\hat{\mathcal{N}}$ so that the fibres are fully stressed. The elementary volume of such structure evaluated point-wise on $\mathcal{S}_z$ reads $dV = \frac{1}{k_0} \bigl(\vert\lambda_{1}(\hat{\mathcal{N}}) \vert+\vert\lambda_{2}(\hat{\mathcal{N}})\vert \bigr)\, d\Ha^2$ which, considering positivity of $\hat{\mathcal{N}}$, reduces to $dV = \frac{1}{k_0} \tr\,\hat{\mathcal{N}}\, d\Ha^2$. This reasoning paves a way to posing the problem of \textit{Minimum Volume Vault} in the plastic setting:
\begin{equation*}\tag*{$(\mathrm{MVV})$}
	V_\mathrm{min}=\inf_{\substack{z \in C^1\!(\Ob;\R) \\ \hat{\mathcal{N}} \in L^1\!(\mathcal{S}_z;\mathcal{T}_+^{\,2})}} \left\{ \frac{1}{k_0}\int_{\mathcal{S}_z} \tr\,\hat{\mathcal{N}}\, d \Ha^2 \ \biggl\vert \, z = 0 \text{ on } \bO, \ \hat{\mathcal{N}} \in \Sigma\bigl(\mathcal{S}_z,\hat{F}_{z,f}\bigr) \right\} 
\end{equation*}
where by $\Sigma\bigl(\mathcal{S}_z,\hat{F}_{z,f}\bigr)$ we understand the set of \textit{statically admissible membrane forces fields}, namely the fields $\hat{\mathcal{N}}$ that satisfy the equilibrium equations of the membrane shell. It is fair to deem the formulation of $(\mathrm{MVV})$ rather quick, in particular deriving the volume functional requires a more careful usage of differential geometry. Nonetheless, it was the author's intention that this rather tedious step is skipped here, which may be justified by the equivalence between $(\mathrm{MVV})$ and the Prager problem $(\mathrm{MVPS}_H)$ that will be posed in Section \ref{sec:3D} using the full 3D framework, i.e. without differential geometry.

The form finding problem $(\mathrm{MVV})$ is posed in a formal way: the field $\hat{\mathcal{N}}:\mathcal{S}_z \to \mathcal{T}_+^{\,2}$ is an intrinsic tensor field (which is emphasized by the calligraphic font both in $\hat{\mathcal{N}}$ and $\mathcal{T}_+^{\,2}$) and the trace $\tr\,\hat{\mathcal{N}}$ must be understood intrinsically as well. In order to proceed we shall introduce a parametrization of the surface $\mathcal{S}_z$ whereas here the only natural choice is the following:
\begin{equation}
	\label{eq:parametrization}
	\Omega \ni x \quad \mapsto \quad \hat{x} = \hat{z}(x) := \bigl(x, z(x) \bigr) \in \Omega \times \R,
\end{equation}
by which we understand, in a more classical format, as $x=(x_1,x_2) \mapsto \hat{x} = \bigl(x_1,x_2, z(x_1,x_2) \bigr)$ once $x = x_1\, e_1 + x_2 \, e_2$. The Cartesian parametrization of $\Omega$, however, will be immaterial to us (until Section \ref{sec:discrete} when the discrete setting will be covered) since we shall stick to absolute notation for plane vectors in $\Rd$ and matrices in \nolinebreak $\Sdd$.

A reformulation of the classical equations of membrane shell theory that is specifically tailored for the parametrization \eqref{eq:parametrization} may be found in Chapter 11.2 of \cite{green1968} which we will follow in the remainder of this section. The matrix $G_z$ of covariant components of the metric tensor on the surface $\mathcal{S}_z$ (with respect to par. \eqref{eq:parametrization}) can be computed along with the Jacobian $J_z$ of the parametrization:
\begin{equation*}
	G_z = \mathrm{I} + \nabla z \otimes \nabla z, \qquad J_z = \sqrt{\mathrm{det}\,G_z} = \sqrt{1 + \vert\nabla z \vert^2},
\end{equation*}
where $\mathrm{I}$ is the $2 \times 2$ identity matrix. Henceforward $\eta: \Omega \to \Sddp$ will be the field of contravariant components of the membrane force tensor $\hat{\mathcal{N}}$. Point-wise there holds an equality
\begin{equation*}
	\tr\, \hat{\mathcal{N}}\!(\hat{x}) = G_z(x) : \eta(x)
\end{equation*}
where $\hat{x} = \hat{z}(x) \in \R^3$. Owing to the transformed equilibrium equations (11.2.5-6) in \cite{green1968} we find that $\hat{\mathcal{N}} \in \Sigma(\mathcal{S}_z, \hat{F}_{z,f})$ if and only if there exists a matrix field $\sigma:\Omega \mapsto \Sddp$ such that:
\begin{equation*}
	\sigma = J_z\, \eta, \qquad \DIV \, \sigma = 0, \quad -\dive \bigl( \sigma \nabla{z} \bigr) = f,
\end{equation*}
where the two last equations are intended in the sense of distributions in $\Omega$, i.e. there is no natural boundary condition involved. With the use of the change of variable formula we may readily rewrite the form finding problem $(\mathrm{MVV})$ in its parametrized setting:
\begin{equation*}\tag*{$(\mathrm{MVV}_\Omega)$}
	\Vmin=\inf_{\substack{z \in C^1\!(\Ob;\R) \\ \eta \in L^1\!(\Omega;\Sddp)}} \left\{ \int_{\Omega} \bigl(G_z : \eta \bigr) J_z \, \left\vert \, z = 0 \text{ on } \bO,
	\begin{array}{c}
	\DIV \, \sigma = 0,\\
	-\dive \bigl( \sigma \nabla{z} \bigr) = f 
	\end{array}
	 \text{ in } \Omega, \ \sigma = J_z \, \eta \right.\right\}
\end{equation*}
where we have put $\Vmin = k_0 \, V_\mathrm{min}$ to eliminate $k_0$. The next natural step is to utilize the very simple relation $\sigma = J_z \, \eta$ to dispose of the variable $\eta$ and arrive at:
\begin{equation}
	\label{eq:MVFD_sigma}
	\Vmin=\inf_{\substack{z \in C^1\!(\Ob;\R) \\ \sigma \in L^1\!(\Omega;\Sddp)}} \left\{ \int_{\Omega} \Big(\tr \, \sigma + \sigma : \bigl(\nabla z \otimes \nabla z \bigr)\Big)  \, \left\vert \, z = 0 \text{ on } \bO,
	\begin{array}{c}
	\DIV \, \sigma = 0,\\
	-\dive \bigl( \sigma \nabla{z} \bigr) = f 
	\end{array}
	\text{ in } \Omega \right.
	\right\},
\end{equation}
where point-wise  $\tr\, \sigma = \mathrm{I}:\sigma$ is the classical trace of a matrix $\sigma$.

The last formulation \eqref{eq:MVFD_sigma} is much more transparent than its predecessors, although the underlying mathematical structure is still far from clear, e.g. there is no convexity with respect to the pair $(z,\sigma)$ which rules out classical variational methods of examining the existence of solution.  The core idea of this paper connects our form finding problem to a convex problem that emerged spontaneously in the recent analysis of the optimal pre-stressed membrane problem, see \cite{bouchitte2020}.

\subsection{The underlying 2D convex problems $(\mathcal{P})$,\,$(\mathcal{P}^*)$}
\label{ssec:2D_convex_program}

We start the passage with a remark: apart from the boundary condition $z=0$ on $\bO$ the elevation function $z$ enters the formulation \eqref{eq:MVFD_sigma} only by means of gradient $\nabla z$. It is thus clear that if in place of $\nabla z$ we put a vector function $\zeta \in C(\Ob;\Rd)$ as an independent design variable we would obtain a non-greater value of the infimum  (not every function $\zeta$ is a gradient of some function $z$). Taking a step further we may relax the continuity condition and choose from all Borel measurable functions $\zeta$. Next we may change variables by introducing vector valued field $q = \sigma \zeta $ while, at the formal level, we have $\sigma:(\zeta \otimes\zeta) = (\sigma \zeta) \cdot \zeta = \bigl(\sigma\, (\sigma^{-1} q)\bigr) \cdot (\sigma^{-1} q) = q \cdot (\sigma^{-1} q)$. Finally, we may further relax the regularity conditions on $\sigma, q$ by allowing that they are Radon measures thus arriving at the problem:
\begin{tcolorbox}
\vspace{0.17cm}
\begin{equation*}\tag*{$(\mathcal{P})$}
\Z:=\inf_{\substack{\sigma \in \Mes(\Ob;\Sddp) \\ q \in \Mes(\Ob;\Rd)}} \left\{ \int_{\Ob} \Big( \tr \, \sigma + (\sigma^{-1}q)\cdot q  \Big) \, \left\vert \,
\begin{aligned}
\DIV \, \sigma &= 0\\
-\dive\, q &= f 
\end{aligned}
\ \text{ in } \Omega \right. \right\}
\end{equation*}
\end{tcolorbox}
It should be stressed that the integral $\int_{\Ob} \,(\sigma^{-1}q)\cdot q$ is intended formally: first, $\sigma$ is merely positive semi-definite and, secondly, $\sigma,q$ are measures. Below, however, we will show that this integral is after all meaningful in the sense of theory of convex functionals on measures. It is remarkable that the transformed equilibrium equations $\DIV \, \sigma = 0, \ -\dive\, q = f$ are reminiscent of the plate theory where $\sigma$ are the in-plane membrane forces while $q$ is the transverse shear force. 

The problem $(\mathcal{P})$ above has been obtained by relaxing the problem \eqref{eq:MVFD_sigma} therefore we have
\begin{equation}
	\label{eq:Z_leq_Vmin}
	\mathcal{Z} \leq \Vmin.
\end{equation}
The important question concerns the opposite inequality or more accurately: assuming that problem $(\mathcal{P})$ attains a solution $(\sigma,q)$ does there exist a function $z \in C^1(\Ob;\R)$ with $z=0$ on $\bO$ such that equality $q = \sigma \nabla z$ holds? We shall next address this issue by employing a duality argument developed in \cite{bouchitte2020}.

\subsubsection{Duality framework}
\label{sssec:duality}

After the work \cite{bouchitte2020} we put forward the problem that will turn out to be the dual of $(\mathcal{P})$:
\begin{tcolorbox}
\vspace{0.3cm}
\begin{equation*}\tag*{$(\mathcal{P}^*)$}
	\sup_{\substack{u \in C^1\!(\Ob;\Rd) \\ w \in C^1\!(\Ob;\R)}} \left\{\, \int_\Ob w\, f  \, \left\vert \, u=0, \, w=0 \text{ on } \bO, \ \ \frac{1}{4}\, \nabla w \otimes \nabla w +e(u) \preceq  \mathrm{I} \ \text{ in } \Ob \right. \right\}
\end{equation*}
\end{tcolorbox}
\noindent where $\mathrm{I}$ is the $2 \times 2$ identity matrix.
\begin{remark}
	\label{rem:different_factor}
	Two minor differences with respect to the paper \cite{bouchitte2020} should be emphasized: the roles of symbols $u$ and $w$ are interchanged (due to a different context) while the quotient $\frac{1}{2}$ therein is replaced by $\frac{1}{4}$ here, which is only a matter of rescaling the solution $w$.
\end{remark}

In order to show duality between the problems $(\mathcal{P})$ and $(\mathcal{P}^*)$ we shall first rewrite the point-wise constraint in $(\mathcal{P}^*)$ to reveal that the set of admissible pairs $(u,w)$ is convex. Since the operation $(u,w) \mapsto \bigl(e(u),\nabla w\bigr)$ is linear this issue amounts to showing that the set
\begin{equation*}
\mathscr{C} := \left\{ (\eps,\vartheta)\in \Sdd \times \Rd  \ \left\vert \ \frac{1}{4} \,\vartheta \otimes \vartheta + \eps \preceq  \mathrm{I} \right.  \right\}
\end{equation*}
is convex in $\Sdd \times \Rd$; henceforward the symbols $\eps \in \Sdd$ and $\vartheta \in \Rd$ should be associated with $e(u)$ and, respectively, $\nabla w$ at a point. An auxiliary function as follows will prove useful:
\begin{equation}
	\label{eq:c}
	c(\eps,\vartheta) := \sup\limits_{\tau \in \Rd, \ \abs{\tau} \leq 1} \left\{ \Big(\frac{1}{4} \,\vartheta \otimes \vartheta + \eps \Big):(\tau \otimes \tau)  \right\}
	= \sup\limits_{\tau \in \Rd, \ \abs{\tau} \leq 1} \left\{ \frac{1}{4} (\vartheta \cdot \tau)^2 + \eps:(\tau\otimes\tau)   \right\};
\end{equation}
assuming that $\lambda_1(\argu)$ is always the biggest eigenvalue of a matrix an alternative formula may be given: $c(\eps,\vartheta) = \max\left\{\lambda_1\left(\frac{1}{4} \,\vartheta \otimes \vartheta + \eps \right) ,0\right\}$. For a fixed vector $\tau \in \Rd$ it is straightforward to check that the function $(\eps,\vartheta) \mapsto \frac{1}{4} (\vartheta \cdot \tau)^2 + \eps:(\tau\otimes\tau)$ is convex and continuous. Then, owing to the latter formula in \eqref{eq:c} where $c$ is a point-wise supremum of convex continuous functions, convexity and lower semi-continuity of $c$ may be inferred, cf. \cite{rockafellar1970convex}. Almost by definition there holds $\mathscr{C} = \bigl\{ (\eps,\vartheta)\in \Sdd \times \Rd  \ \big\vert \ c(\eps,\vartheta) \leq 1  \bigr\}$ hence the convexity and closedness of the set $\mathscr{C}$. It should be noted that $\mathscr{C}$ is unbounded as it contains half-lines $\bigl\{t\,(\eps,0) \,\big\vert\, t\geq 0 \bigr\}$ whenever $\eps$ is negative semi-definite (the function $c$ is zero on such half-lines).

With $\mathscr{C}$ being convex and closed we may assign to it its gauge function $g_\mathscr{C}:\Sdd \times \Rd \to \Rb$ being the convex l.s.c. positively 1-homogeneous function that satisfies $g_\mathscr{C}(\eps,\vartheta) \leq 1 \ \Leftrightarrow (\eps,\vartheta) \in \mathscr{C}$ (note that $c$ is not a gauge since it is not 1-homogeneous). The point-wise constraint in $(\mathcal{P}^*)$ gets to be rewritten as $g_\mathscr{C}\bigl(e(u),\nabla w \bigr) \leq 1$ in $\Ob$ and we discover certain analogy to the Michell's maximization problem \eqref{eq:Michell_max} where the constraint reads $\gamma\bigl(e(u) \bigr) \leq 1$ and $\gamma:\Sdd \to \Rb$ is a gauge as well. In order to examine duality between $(\mathcal{P})$ and $(\mathcal{P}^*)$ the polar gauge $g_\mathscr{C}^0$ ought to be computed. To that aim we choose a natural pairing between the spaces $Y = \Sdd \times \Rd \equiv Y^*$, namely for $(\eps,\vartheta) \in Y$ and $(\sigma,q)\in Y^*$ we set $\pairing{(\eps,\vartheta)\,;(\sigma,q)} := \eps:\sigma + \vartheta \cdot q$. From \eqref{eq:polar} follows the formula
\begin{equation*}
	g_\mathscr{C}^0(\sigma,q) = \sup\limits_{\eps \in \Sdd, \ \vartheta\in \Rd} \biggl\{ \eps:\sigma + \vartheta\cdot q \ \bigg\vert \  \frac{1}{4} \,\vartheta \otimes \vartheta + \eps \preceq  \mathrm{I} \biggr\}.
\end{equation*}
First we observe that for $\sigma$ that is not positive definite there exists $\eps_- \preceq 0$ such that $\eps : \sigma >0$ and since $\bigl\{t\,(\eps_-,0) \,\big\vert\, t\geq 0 \bigr\} \subset \mathscr{C}$ we have $g_\mathscr{C}^0(\sigma,q) = \infty$ for any $q$. Then, assuming $\sigma \in \Sddp$, for a fixed $\vartheta$ the maximization with respect to $\eps$ is solved by choosing $\eps = \mathrm{I} -  \frac{1}{4} \,\vartheta \otimes \vartheta$ being the biggest matrix (with respect to relation $A \preceq B$) that satisfies the constraint. A non-constrained problem in $\vartheta$ emerges: $g_\mathscr{C}^0(\sigma,q) = \tr\,\sigma + \sup_{\vartheta \in \Rd} \bigl\{\vartheta \cdot q - \frac{1}{4}\sigma:(\vartheta \otimes \vartheta)  \bigr\}$. Two scenarios may occur: either $q \notin \IM(\sigma)$ and then again $g_\mathscr{C}^0(\sigma,q) = \infty$, or $q \in \IM(\sigma)$ and then the maximum is reached for any $\vartheta$ such that $q = \frac{1}{2} \sigma \vartheta$. Since for each such $\vartheta$ the product $\vartheta \cdot q$ is invariant the following formula is meaningful:
\begin{equation}
	\label{eq:g_C^0}
	g_\mathscr{C}^0(\sigma,q) = \begin{cases} \tr\, \sigma + (\sigma^{-1}q)\cdot q  & \text{if $\sigma\in \Sddp$ and  }q \in  \IM(\sigma),\\
	\infty & \text{otherwise}.
	\end{cases}
\end{equation}
We thus find that the functional being minimized in $(\mathcal{P})$ is exactly $\int_{\Ob} g_\mathscr{C}^0(\sigma,q)$ and, although the product $(\sigma^{-1}q)\cdot q$ seems vague for measures $\sigma, q$, the integral functional itself is well defined as $g_\mathscr{C}^0$ is a closed gauge, see \cite{bouchitte1988}. 

In order to see the duality relation between the problems $(\mathcal{P})$ and $(\mathcal{P}^*)$ we dispose of the equilibrium constraints in $(\mathcal{P})$ arriving at an $\inf$-$\sup$ problem for a suitable Lagrangian $\mathscr{L} = \mathscr{L}\bigl((\sigma,q);(u,w) \bigr)$:
\begin{equation*}
	\Z=\inf \mathcal{P} = \inf_{\substack{\sigma \in \Mes(\Ob;\Sddp) \\ q \in \Mes(\Ob;\Rd)}} \sup_{\substack{(u,w) \in C^1\!(\Ob;\R^3)\\(u,w)=0 \text{ on }\bO }} \left\{ \int_{\Ob} g_\mathscr{C}^0(\sigma,q) + \left(-\int_{\Ob} e(u):\sigma\right) + \left(-\int_\Ob \nabla w \cdot q + \int_{\Ob} w \, f \right)  \right\}.
\end{equation*}
Without dwelling on the question whether the order of $\inf$ and $\sup$ can be interchanged the inequality as below may always be given (along with rearranging the terms):
\begin{equation}
	\label{eq:weak_duality}
	\Z=\inf \mathcal{P}\geq \sup_{\substack{(u,w) \in C^1\!(\Ob;\R^3)\\(u,w)=0 \text{ on }\bO }} \inf_{\substack{\sigma \in \Mes(\Ob;\Sddp) \\ q \in \Mes(\Ob;\Rd)}} 
\left\{ \int_{\Ob} w \, f + \int_\Ob \biggl( - \Big\langle\bigl(e(u),\nabla w\bigr); \bigl(\sigma,q\bigr) \Big \rangle + g_\mathscr{C}^0(\sigma,q)\biggr)  \right\} = \sup \mathcal{P}^*.
\end{equation}
An explanation of the last equality above is in order: for fixed $(u,w)$ we must show that the infimum above equals $-\infty$ if there is a point $\bar{x} \in \Ob$ such that $\frac{1}{4} \nabla w(\bar{x}) \otimes \nabla w(\bar{x}) + e(u)(\bar{x}) \not\preceq \mathrm{I}$ or equivalently $g_\mathscr{C}\bigl(e(u)(\bar{x}),\nabla w(\bar{x}) \bigr) >1$. In that case, since $\bigl(g_\mathscr{C}^0\bigr)^{\!0} = g_\mathscr{C}$, there exist $(\bar{\sigma},\bar{q}) \in \Sddp \times \Rd$ such that: $g_\mathscr{C}^0(\bar{\sigma},\bar{q}) \leq 1$ and $\pairing{\bigl(e(u)(\bar{x}),\nabla w(\bar{x})\bigr); \bigl(\bar\sigma,\bar{q}\bigr)} > 1$. Then, for Dirac delta measures  $\sigma = \bar{\sigma}\,  \delta_{\bar{x}} \in \Mes(\Ob;\Sddp)$, \ $q= \bar{q}\,  \delta_{\bar{x}} \in \Mes(\Ob;\Rd)$ the second integral in \eqref{eq:weak_duality} attains negative value. Since the infimum is taken with respect to a cone $\Mes(\Ob;\Sddp) \times \Mes(\Ob;\Rd)$ its value must be $-\infty$. Once $\frac{1}{4} \nabla w \otimes \nabla w + e(u) \preceq \mathrm{I}$ in whole $\Ob$ we immediately see that the infimum is reached for $(\sigma,q) =(0,0)$.

The inequality $\inf \mathcal{P} \geq \sup \mathcal{P}^*$, sometimes referred to as the \textit{weak duality} result, was proved above to convince the reader of the duality relation between the two problems $(\mathcal{P})$ and $(\mathcal{P}^*)$. For the strong result we refer to \cite[Theorem 3.18]{bouchitte2020} where through more advanced duality tools we find:
\begin{theorem}
	\label{thm:duality}
	The problems $(\mathcal{P})$ and $(\mathcal{P}^*)$ are mutually dual convex problems, moreover
	\begin{equation*}
	\Z = \min \mathcal{P} = \sup \mathcal{P}^*<\infty,
	\end{equation*}
	where the problem $\mathcal{P}$ always attains a solution $(\sigma,q) \in \Mes(\Ob;\Sddp) \times \Mes(\Ob;\Rd)$.
\end{theorem}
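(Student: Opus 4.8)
The plan is to recognise $(\mathcal{P})$ as the Fenchel--Rockafellar dual of the (negated) problem $(\mathcal{P}^*)$ and then to activate a constraint qualification at the trivial admissible pair $(u,w)=(0,0)$, which simultaneously removes the duality gap and yields attainment on the $(\mathcal{P})$ side. I would work in the Banach space $Y=C(\Ob;\Sdd)\times C(\Ob;\Rd)$, whose dual is $Y^*=\Mes(\Ob;\Sdd)\times\Mes(\Ob;\Rd)$ under the pairing $\pairing{(\eps,\vartheta);(\sigma,q)}=\int_\Ob \eps:\sigma+\int_\Ob\vartheta\cdot q$, and in the Banach space $X=\bigl\{(u,w)\in C^1(\Ob;\R^3):u=0,\,w=0\text{ on }\bO\bigr\}$ equipped with the bounded linear operator $\Lambda:X\to Y$, $\Lambda(u,w)=\bigl(e(u),\nabla w\bigr)$. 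Setting $\Phi(u,w)=-\int_\Ob w\,f$ on $X$, and letting $\Psi:Y\to\{0,+\infty\}$ be the indicator of the set of continuous $\mathscr{C}$-valued fields $\bigl\{(\eps,\vartheta)\in Y:g_\mathscr{C}(\eps(x),\vartheta(x))\le1\ \text{for all }x\in\Ob\bigr\}$, one has $-\sup(\mathcal{P}^*)=\inf_{(u,w)\in X}\bigl\{\Phi(u,w)+\Psi(\Lambda(u,w))\bigr\}$, and both $\Phi$ and $\Psi$ are proper, convex and l.s.c.\ (closedness of $\mathscr{C}$ being already established).

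The first real step is to identify the conjugates. Since $\Psi$ is, pointwise, the indicator of the closed convex set $\mathscr{C}$ whose support function is the polar gauge $g_\mathscr{C}^0$, the theory of convex integral functionals on spaces of measures (cf.\ \cite{bouchitte1988}) gives $\Psi^*(\sigma,q)=\int_\Ob g_\mathscr{C}^0(\sigma,q)$, the right-hand side being meaningful exactly because $g_\mathscr{C}^0$ is a closed gauge, so that the singular part of $(\sigma,q)$ needs no separate treatment. Next, $\Lambda^*(\sigma,q)=(-\DIV\,\sigma,-\dive\,q)$ on $X$ --- no boundary term arises since $u,w$ vanish on $\bO$ --- and computing the conjugate of the affine functional $\Phi|_X$ one finds that $(\sigma,q)\mapsto\Phi^*(\Lambda^*(\sigma,q))$ equals $0$ when $\DIV\,\sigma=0$ and $-\dive\,q=f$ hold in $\Omega$, and $+\infty$ otherwise. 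Hence the abstract Fenchel--Rockafellar dual
\[
\sup_{(\sigma,q)\in Y^*}\bigl\{-\Phi^*(\Lambda^*(\sigma,q))-\Psi^*(-\sigma,-q)\bigr\},
\]
after the harmless substitution $(\sigma,q)\mapsto(-\sigma,-q)$, reduces to $-\inf(\mathcal{P})$; this both exhibits $(\mathcal{P})$ and $(\mathcal{P}^*)$ as a mutually dual pair and recovers the weak inequality $\inf(\mathcal{P})\ge\sup(\mathcal{P}^*)$ obtained above by hand.

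To close the gap and obtain attainment I would verify the qualification condition of the Fenchel--Rockafellar theorem: $(u,w)=(0,0)$ lies in $\mathrm{dom}\,\Phi$, and its image $\Lambda(0,0)=(0,0)$ is an \emph{interior} point of $\mathrm{dom}\,\Psi$ for the norm of $Y$, because $(0,0)$ belongs to the interior of $\mathscr{C}$ (indeed $c$ is continuous with $c(0,0)=0<1$), so an entire sup-norm ball of fields in $Y$ takes values in $\mathscr{C}$ and $\Psi$ is continuous at $\Lambda(0,0)$. The theorem then delivers $\inf(\mathcal{P})=\sup(\mathcal{P}^*)$ together with attainment of the supremum in the abstract dual, i.e.\ $(\mathcal{P})$ possesses a minimiser $(\sigma,q)\in Y^*$. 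Since at such a minimiser $\int_\Ob g_\mathscr{C}^0(\sigma,q)<\infty$, formula \eqref{eq:g_C^0} forces $\sigma\in\Mes(\Ob;\Sddp)$ and the compatibility $q\in\IM(\sigma)$ in the measure sense, so the minimiser indeed lies in $\Mes(\Ob;\Sddp)\times\Mes(\Ob;\Rd)$.

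Finally, $\Z\ge0$ is immediate as the integrand of $(\mathcal{P})$ is non-negative, while $\Z<\infty$ calls for a finite-energy competitor for $(\mathcal{P})$: when $f$ is regular (say with an $L^2(\Omega)$ density) one may instead bound $\sup(\mathcal{P}^*)$ from above, using that taking the trace in $\tfrac14\nabla w\otimes\nabla w+e(u)\preceq\mathrm{I}$ and integrating over $\Omega$ with $u=0$ on $\bO$ gives the uniform estimate $\tfrac14\int_\Omega|\nabla w|^2\le2\,|\Omega|$, whence a Poincar\'e inequality bounds $\int_\Omega w\,f$; for a general $f\in\Mes(\Ob;\R)$ one constructs a competitor directly --- e.g.\ for $f=\delta_{x_0}$ one takes $\sigma$ and $q$ concentrated, with constant respectively piecewise-constant density, on a straight chord of $\Omega$ through $x_0$, which satisfies the equilibrium constraints and has energy bounded by a multiple of the chord length (see \cite{bouchitte2020}). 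I expect the genuinely delicate point of the whole plan to be the identification $\Psi^*(\sigma,q)=\int_\Ob g_\mathscr{C}^0(\sigma,q)$: because $g_\mathscr{C}^0$ is only a closed gauge --- non-coercive and $+\infty$ on a large cone --- the exchange of integration and conjugation cannot be done naively but rests on the $C$--$\Mes$ duality and the recession-function calculus for integral functionals on measures, which is exactly where the argument of \cite[Theorem 3.18]{bouchitte2020} concentrates its effort.
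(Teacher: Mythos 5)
Your proposal is correct in substance, but it takes a genuinely different route from the paper's own treatment: the paper only establishes the weak inequality $\inf\mathcal{P}\ge\sup\mathcal{P}^*$ by the Lagrangian/Dirac-mass argument preceding the theorem and then outsources strong duality, attainment and finiteness entirely to \cite[Theorem 3.18]{bouchitte2020}, whereas you reconstruct a self-contained proof via Fenchel--Rockafellar duality in the pairing of $C(\Ob;\Sdd)\times C(\Ob;\Rd)$ with $\Mes(\Ob;\Sdd)\times\Mes(\Ob;\Rd)$, with a Slater-type qualification at $(u,w)=(0,0)$ (legitimate: $\mathscr{C}$ contains a uniform neighbourhood of the origin since $c$ is continuous with $c(0,0)=0$, so $\Psi$ is sup-norm continuous at $\Lambda(0,0)$), the integral representation $\Psi^*(\sigma,q)=\int_\Ob g_\mathscr{C}^0(\sigma,q)$ from \cite{bouchitte1988}, and finiteness of $\Z$ via explicit chord-supported competitors (correctly so: the trace/Poincar\'e bound only covers $f\in L^2$, and for a general Radon measure $f$ the competitor construction, closed by weak duality, is the right tool). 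What your approach buys is precisely the part the paper chooses not to reprove; what the paper's presentation buys is brevity plus the transparent Dirac-perturbation argument for weak duality, which your scheme recovers as a by-product. You also rightly flag the conjugation-under-the-integral step as the delicate point, which is where the machinery of convex functionals on measures does the real work.

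One step you state too quickly is the identification of the abstract dual constraint with the constraints of $(\mathcal{P})$: conjugating $\Phi$ along $\Lambda^*$ yields the condition $\int_\Ob e(u):\sigma+\int_\Ob\nabla w\cdot q=\int_\Ob w\,f$ for \emph{all} $(u,w)\in X$, i.e. tested against $C^1$ fields vanishing on $\bO$, and for measures $\sigma,q$ that charge $\bO$ this is strictly stronger than the distributional equations $\DIV\,\sigma=0$, $-\dive\, q=f$ in $\Omega$ appearing in $(\mathcal{P})$, which say nothing about boundary-concentrated parts. The slip is harmless but should be acknowledged: Fenchel--Rockafellar gives zero gap and dual attainment for the smaller admissible class; that class is contained in the admissible class of $(\mathcal{P})$, and since your weak duality gives $\inf\mathcal{P}\ge\sup\mathcal{P}^*$ for the larger class, the two infima coincide and the minimiser you obtain is automatically a minimiser of $(\mathcal{P})$. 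A cut-off/approximation argument of the same flavour is also what justifies integrating by parts with merely $C^1$ test functions (rather than compactly supported ones) in that weak-duality step, so it is worth recording once and invoking in both places.
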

In contrast to problem $(\mathcal{P})$ the problem $(\mathcal{P}^*)$ fails to have a solution in general, i.e. solution of the class $C^1$. Section \ref{ssec:regularity} discusses the strategy of relaxing the differentiability condition so that solution $(u,w)$ exists after all.

\begin{remark}
	\label{rem:bO_smoothness}
	The reader may find that throughout the paper \cite{bouchitte2020} convexity of $\Omega$ is assumed whereas in the present work this restriction is dropped. In fact, all the results from \cite{bouchitte2020} that are herein exploited do not rely on the convexity assumption. Proposition \ref{prop:regularitu_u_w} below, where convexity of $\Omega$ is explicitly imposed, is displayed for illustrative purposes only, i.e. it is not used in any of the proofs herein (moreover, Proposition \ref{prop:regularitu_u_w} may be easily extended to \textit{star-shaped} domains $\Omega$). In the end, none of the results in the present paper explicitly require any regularity of the boundary $\bO$. One should keep in mind, however, that assumptions on smoothness of functions  $(u,w)$ solving $(\mathcal{P}^*)$ (see e.g. Theorem \ref{thm:opt_cond} or Theorem \ref{thm:optimal_3D_structure} below) are difficult to meet if $\bO$ is not at least Lipschitz regular.
\end{remark}

\subsubsection{Optimality conditions for the pair of problems $(\mathcal{P})$ and $(\mathcal{P}^*)$}
\label{sssec:opt_cond}

So far we have passed from the original formulation $(\mathrm{MVV})$ to a pair of mutually dual problems $(\mathcal{P})$ and $(\mathcal{P}^*)$. Up till now the connection is not clear as it merely relies on the inequality \eqref{eq:Z_leq_Vmin}. To take the next step we shall derive the optimality conditions for the pair $(\mathcal{P})$,\,$(\mathcal{P}^*)$ resembling the conditions \eqref{eq:opt_cond_Michell} for Michell problem. Once more we directly draw upon the work \cite{bouchitte2020} and we repeat Theorem 4.1 therein (in fact we present the less general version, see Section \ref{ssec:3D_to_2D} below):

\begin{theorem}
\label{thm:opt_cond}
The pairs $(\sigma,q) \in L^1(\Omega;\Sddp) \times L^1(\Omega;\Rd)$ and $(u,w) \in C^1(\Ob;\Rd) \times C^1(\Ob;\R)$ are solutions of problems $(\mathcal{P})$ and $(\mathcal{P}^*)$, respectively, if and only if the following optimality conditions are met:
\begin{align}
\label{eq:opt_cond}
\begin{cases}
(i)& u=0, \, w =0 \ \text{ on $\bO$}, \qquad \frac{1}{4}\, \nabla w \otimes \nabla w + e(u) \preceq  \mathrm{I} \quad \text{everywhere in } \Ob;\\
(ii) &  \DIV\, \sigma = 0 \ \text{ in } \Omega, \quad -\dive \, q = f \ \text{ in } \Omega; \\
(iii) & \bigl( \frac{1}{4}\, \nabla w \otimes \nabla w + e(u) \bigr):\sigma = \tr\,\sigma \quad \text{ and }\quad \lambda_1\bigl( \frac{1}{4}\, \nabla w \otimes \nabla w + e(u) \bigr) = 1 \quad \sigma\text{-a.e. in } \Ob;\\
(iv) &   q = \frac{1}{2}\, \sigma\, \nabla w.
\end{cases}
\end{align}
where $\lambda_1(\argu)$ stands for the biggest eigenvalue of the matrix.
\end{theorem}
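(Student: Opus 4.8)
The plan is to derive the optimality conditions \eqref{eq:opt_cond} from the weak duality chain \eqref{eq:weak_duality} together with the strong duality equality $\min\mathcal{P}=\sup\mathcal{P}^*$ from Theorem \ref{thm:duality}. Recall from Section \ref{sssec:duality} that the functional minimized in $(\mathcal{P})$ equals $\int_\Ob g_\mathscr{C}^0(\sigma,q)$, and the point-wise constraint in $(\mathcal{P}^*)$ is precisely $g_\mathscr{C}\bigl(e(u),\nabla w\bigr)\le 1$. So the first step is to note that, for any admissible pair $(\sigma,q)$ and any admissible pair $(u,w)$, integration by parts (legitimate since $(u,w)\in C^1$ and $(u,w)=0$ on $\bO$, while $\DIV\,\sigma=0$ and $-\dive\,q=f$ in the distributional sense on $\Omega$) gives
\begin{equation*}
\int_\Ob w\,f = \int_\Ob \nabla w\cdot q + \int_\Ob e(u):\sigma = \int_\Ob \pairing{\bigl(e(u),\nabla w\bigr);(\sigma,q)}.
\end{equation*}
Here one must be a little careful about whether point supports or boundary-concentrated parts of $\sigma,q$ contribute; since we are in the regular case $(\sigma,q)\in L^1$ this is the standard Gauss--Green identity and there is no boundary term.

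\textbf{Main estimate and the sandwich.} The key inequality is the pointwise Fenchel–Young-type bound $\pairing{(\eps,\vartheta);(\sigma,q)}\le g_\mathscr{C}(\eps,\vartheta)\,g_\mathscr{C}^0(\sigma,q)$, valid for a gauge and its polar; combined with $g_\mathscr{C}\bigl(e(u),\nabla w\bigr)\le 1$ this yields, pointwise a.e.,
\begin{equation*}
\pairing{\bigl(e(u),\nabla w\bigr);(\sigma,q)}\le g_\mathscr{C}^0(\sigma,q).
\end{equation*}
Integrating and using the identity above gives $\int_\Ob w\,f\le \int_\Ob g_\mathscr{C}^0(\sigma,q)$, i.e. the value of $(u,w)$ in $(\mathcal{P}^*)$ is at most the value of $(\sigma,q)$ in $(\mathcal{P})$ — this is weak duality again. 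Now $(\sigma,q)$ and $(u,w)$ are \emph{both} optimal if and only if $\int_\Ob w\,f=\int_\Ob g_\mathscr{C}^0(\sigma,q)$, which by Theorem \ref{thm:duality} is equivalent to both being feasible and realizing the common optimal value. Since the integrand inequality is pointwise and the integral equality forces equality a.e., optimality is equivalent to: $(i)$ and $(ii)$ (feasibility of $(u,w)$ and of $(\sigma,q)$) together with
\begin{equation*}
\pairing{\bigl(e(u),\nabla w\bigr);(\sigma,q)} = g_\mathscr{C}^0(\sigma,q)\qquad \sigma\text{- and }q\text{-a.e.}
\end{equation*}
(equality can only hold where $(\sigma,q)\neq 0$, hence "$\sigma$-a.e." in the sense of the excerpt).

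\textbf{Unpacking the extremality relation.} The last step is to show that this single extremality equation is equivalent to conditions $(iii)$–$(iv)$. Plugging in the explicit formula \eqref{eq:g_C^0} for $g_\mathscr{C}^0$: since $g_\mathscr{C}^0(\sigma,q)<\infty$ on the support of the (finite-value) optimal pair, we already have $\sigma\in\Sddp$ and $q\in\IM(\sigma)$ there, so we may pick $\vartheta$ with $q=\tfrac12\sigma\vartheta$. The equality then reads $e(u):\sigma + \vartheta\cdot q = \tr\,\sigma + (\sigma^{-1}q)\cdot q$, i.e. $e(u):\sigma+\vartheta\cdot q=\tr\,\sigma+\tfrac12\vartheta\cdot q$, which after substituting $\vartheta$ and writing $\tfrac14\vartheta\cdot q = \tfrac14\sigma:(\vartheta\otimes\vartheta)$ becomes $\bigl(e(u)+\tfrac14\nabla w\otimes\nabla w\bigr):\sigma=\tr\,\sigma$ provided we also know $\vartheta$ can be taken equal to $\nabla w$ — which is exactly the content $q=\tfrac12\sigma\nabla w$, condition $(iv)$. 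This is the point that needs the most care: one recovers $(iv)$ because in the supremum defining $g_\mathscr{C}^0$ the maximizing $\eps$ is $\eps=\mathrm{I}-\tfrac14\vartheta\otimes\vartheta$, and matching $e(u)$ against this, under the constraint $\tfrac14\nabla w\otimes\nabla w+e(u)\preceq\mathrm{I}$, forces equality of the two quadratic forms on the image of $\sigma$ — from which $(iii)$'s first equality follows, the eigenvalue condition $\lambda_1\bigl(\tfrac14\nabla w\otimes\nabla w+e(u)\bigr)=1$ follows from combining the constraint in $(i)$ with $\bigl(\tfrac14\nabla w\otimes\nabla w+e(u)\bigr):\sigma=\tr\,\sigma$ (if the top eigenvalue were $<1$ somewhere on $\operatorname{supp}\sigma$ the trace inequality would be strict), and $(iv)$ is read off from the $\vartheta$-maximization. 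Conversely, $(i)$–$(iv)$ plainly imply the extremality equation, closing the equivalence. The one genuine obstacle is bookkeeping the measurable selection of $\vartheta=\nabla w$ and justifying the pointwise manipulations of $g_\mathscr{C}^0$ along the support of $\sigma$; everything else is Fenchel--Young plus the integration by parts.
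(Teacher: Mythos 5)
Your skeleton is the same as the paper's: integrate by parts to rewrite $\int_\Ob w\,f$ as $\int_\Ob \pairing{\bigl(e(u),\nabla w\bigr);(\sigma,q)}$, note the pointwise bound $\pairing{\bigl(e(u),\nabla w\bigr);(\sigma,q)}\le g_\mathscr{C}^0(\sigma,q)$ under $g_\mathscr{C}\bigl(e(u),\nabla w\bigr)\le 1$, and use Theorem \ref{thm:duality} to reduce joint optimality to the pointwise extremality relation $\sigma$-a.e. That part is fine. The gap is in the final unpacking of this relation into (iii)--(iv), which is the only non-routine step of the whole proof. Your transcription of the extremality relation is already wrong: after choosing $\vartheta$ with $q=\frac12\sigma\vartheta$ you write "$e(u):\sigma+\vartheta\cdot q=\tr\sigma+(\sigma^{-1}q)\cdot q$", but the pairing involves $\nabla w$, not the auxiliary $\vartheta$; replacing $\nabla w$ by $\vartheta$ there presupposes $\sigma\nabla w=\sigma\vartheta$, i.e. precisely condition (iv). You flag this circularity yourself ("provided we also know $\vartheta$ can be taken equal to $\nabla w$") and then only assert the missing piece ("matching $e(u)$ against the maximizer \dots forces equality of the two quadratic forms on the image of $\sigma$"). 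That assertion is not an argument, and as phrased it cannot work: when $\sigma$ is singular the maximizing $\eps$ in the inner supremum defining $g^0_\mathscr{C}$ is far from unique, so comparing $e(u)$ with $\mathrm{I}-\frac14\vartheta\otimes\vartheta$ identifies nothing pointwise.

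What actually closes the step (and is the paper's argument) is an explicit completion of the square: writing $q=\sigma\zeta$, the extremality relation $e(u):\sigma+\nabla w\cdot q=\tr\sigma+(\sigma^{-1}q)\cdot q$ is equivalent to
\begin{equation*}
\Bigl(\tfrac14\,\nabla w\otimes\nabla w+e(u)\Bigr):\sigma \;=\; \tr\sigma+\sigma:\Bigl(\tfrac12\nabla w-\zeta\Bigr)\otimes\Bigl(\tfrac12\nabla w-\zeta\Bigr),
\end{equation*}
and then condition (i) makes the left side $\le\tr\sigma$ while $\sigma\succeq0$ makes the right side $\ge\tr\sigma$; equality therefore forces $\sigma\bigl(\tfrac12\nabla w-\zeta\bigr)=0$, i.e. $q=\tfrac12\sigma\nabla w$ (condition (iv)), together with the trace equality in (iii). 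Alternatively, your remark that "(iv) is read off from the $\vartheta$-maximization" can be made into a proof: extremality means $\bigl(e(u),\nabla w\bigr)$ attains the supremum defining $g^0_\mathscr{C}(\sigma,q)$, hence $\nabla w$ maximizes the concave quadratic $\vartheta\mapsto\tr\sigma-\tfrac14\,\sigma:(\vartheta\otimes\vartheta)+\vartheta\cdot q$, whose maximizers are exactly the solutions of $\tfrac12\sigma\vartheta=q$; but this has to be stated and used, not gestured at. Your deduction of $\lambda_1\bigl(\tfrac14\nabla w\otimes\nabla w+e(u)\bigr)=1$ from (i) and the trace equality is correct, and the converse implication is indeed immediate; it is only the derivation of (iii)--(iv) from extremality that is missing.
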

\begin{proof}
Conditions (i),\,(ii) are the admissibility conditions in problems $(\mathcal{P}^*)$,\,$(\mathcal{P})$ respectively hence they shall be assumed as true in the remainder of the proof. Then, owing to Theorem \ref{thm:duality} the pairs $(\sigma,q)$ and $(u,w)$ are solutions if and only if the global extremality condition is met: $\int_\Ob w f =\int_{\Ob} g_\mathscr{C}^0(\sigma,q)$. The distributional equilibrium equations $\DIV\,\sigma =0$, $-\dive q = f$ in $\Omega$ by definition imply that $\int_\Ob\, e(u):\sigma = 0$ and $\int_\Ob \nabla w \cdot q = \int_\Ob w \, f$ therefore the global condition may be rewritten:
\begin{equation*}
	\int_\Ob \pairing{\bigl(e(u),\nabla w\bigr); \bigl(\sigma,q\bigr)} = \int_{\Ob} g_\mathscr{C}^0(\sigma,q).
\end{equation*}
Due to (i) there holds $g_\mathscr{C}\bigl(e(u),\nabla w\bigr) \leq 1$, which implies that point-wise $\pairing{\bigl(e(u),\nabla w\bigr); \bigl(\sigma,q\bigr)} \leq g_\mathscr{C}^0(\sigma,q)$. Hence the equality above holds true if and only if $\pairing{\bigl(e(u),\nabla w\bigr); \bigl(\sigma,q\bigr)} = g_\mathscr{C}^0(\sigma,q)$ almost everywhere. Acknowledging \eqref{eq:g_C^0} and the fact that $\int_{\Ob} g_\mathscr{C}^0(\sigma,q)<\infty$ at optimality,
we find that $(\sigma,q)$ and $(u,w)$ are optimal if and only if there exists a vector function $\zeta$ such that $q= \sigma \zeta$ and equality $e(u):\sigma+ \nabla w \cdot (\sigma \zeta) = \tr\,\sigma + \sigma: (\zeta \otimes \zeta)$ holds. Careful computations give an equivalent form of this equation:
\begin{equation*}
	\left(\frac{1}{4}\, \nabla w \otimes \nabla w +e(u) \right) : \sigma = \tr \, \sigma + \sigma:\left(\frac{1}{2} \nabla{w} - \zeta\right) \otimes \left(\frac{1}{2} \nabla{w} - \zeta\right).
\end{equation*}
Due to condition (i) the LHS above is not greater than $\tr\,\sigma$ while the RHS is not smaller than $\tr\,\sigma$ and equality holds if and only if the two conditions are met: $\sigma\, \bigl(\frac{1}{2} \nabla{w} - \zeta\bigr) = 0$ and $\bigl(\frac{1}{4}\, \nabla w \otimes \nabla w +e(u) \bigr) : \sigma = \tr \, \sigma$. The first condition says that $\frac{1}{2}\sigma \, \nabla w = \sigma\, \zeta = q$ which is exactly the condition (iv) while the second condition together with (i) implies that indeed the maximum eigenvalue of $\frac{1}{4}\, \nabla w \otimes \nabla w +e(u)$ is one whenever $\sigma$ is non-zero.
\end{proof}

\subsection{Recasting the optimal vault}
\label{ssec:recovering_domes}

In comparison with optimality conditions \eqref{eq:opt_cond_Michell} for the Michell problem, in the system of conditions \eqref{eq:opt_cond} tailored for the pair $(\mathcal{P})$,\,$(\mathcal{P}^*)$ we spot one extra condition of a new nature: $q = \frac{1}{2}\, \sigma\, \nabla w$. This answers the key question posed below inequality \eqref{eq:Z_leq_Vmin}: for a solution $(\sigma,q)$ of $(\mathcal{P})$ does there exist an elevation function $z$ such that $q = \sigma\, \nabla z$? As long as $\sigma$ and $q$ are functions in $L^1$, which we shall assume throughout this subsection, the condition $q = \frac{1}{2}\, \sigma\, \nabla w$ immediately paves a way of retrieving solution of the minimum volume vault problem:  
\begin{theorem}[\textbf{Construction of the least-volume vault in the 'continuous' case}]
	\label{thm:recovring_MV_dome}
	Assume that $(\sigma,q) \in L^1(\Omega;\Sddp) \times L^1(\Omega;\Rd)$ and $(u,w) \in C^1(\Ob;\R^2) \times C^1(\Ob;\R)$ are solutions of problems $(\mathcal{P})$ and $(\mathcal{P}^*)$ respectively. Then the pair
	\begin{equation*}
		z = \frac{1}{2}\, w,\qquad \eta = \frac{1}{J_{z}} \, \sig
	\end{equation*} 
	solves the problem $(\mathrm{MVV}_\Omega)$ with $\Vmin = \Z$.
\end{theorem}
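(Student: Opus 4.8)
The plan is to exploit the extra optimality condition $q = \frac{1}{2}\,\sigma\,\nabla w$ from Theorem \ref{thm:opt_cond} to close the gap in inequality \eqref{eq:Z_leq_Vmin}. First I would set $z = \frac{1}{2}\,w$; since $w \in C^1(\Ob;\R)$ with $w = 0$ on $\bO$, the same holds for $z$, so $z$ is admissible for $(\mathrm{MVV}_\Omega)$. Because $q = \sigma\,\nabla z$ holds as an equality of $L^1$ functions, the equilibrium equations $\DIV\,\sigma = 0$ and $-\dive\,q = f = -\dive(\sigma\,\nabla z)$ from condition (ii) translate precisely into the constraints of $(\mathrm{MVV}_\Omega)$ once we define $\eta = \frac{1}{J_z}\,\sigma$, since then $\sigma = J_z\,\eta$ by construction; note $\eta$ is $L^1$ and $\Sddp$-valued because $\sigma$ is and $J_z$ is bounded above and below away from zero on $\Ob$ (here $w \in C^1$ is used). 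Thus the pair $(z,\eta)$ is feasible for $(\mathrm{MVV}_\Omega)$.

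Next I would compute the objective of $(\mathrm{MVV}_\Omega)$ at this pair and show it equals $\Z$. We have
\begin{equation*}
\int_\Omega \bigl(G_z : \eta\bigr)\,J_z = \int_\Omega G_z : \sigma = \int_\Omega \Bigl(\tr\,\sigma + \sigma:(\nabla z \otimes \nabla z)\Bigr) = \int_\Omega \Bigl(\tr\,\sigma + \sigma:\bigl(\tfrac{1}{2}\nabla w \otimes \tfrac{1}{2}\nabla w\bigr)\Bigr).
\end{equation*}
The point is to rewrite $\sigma:(\nabla z \otimes \nabla z)$ as $(\sigma^{-1}q)\cdot q$, the integrand of $(\mathcal{P})$. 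Using $q = \sigma\,\nabla z$, at $\sigma$-a.e. point we have $\nabla z \in \IM(\sigma)$ (where it matters), so $\sigma:(\nabla z \otimes \nabla z) = (\sigma\,\nabla z)\cdot \nabla z = q \cdot \nabla z = (\sigma^{-1}q)\cdot q$ in the well-defined sense recorded in \eqref{eq:g_C^0}; on the set where $\sigma = 0$ the term $\sigma:(\nabla z \otimes \nabla z)$ vanishes and so does $g_\mathscr{C}^0(\sigma,q)$ (here one uses $q = \sigma\,\nabla z = 0$ there as well). Hence
\begin{equation*}
\int_\Omega \bigl(G_z : \eta\bigr)\,J_z = \int_\Omega \Bigl(\tr\,\sigma + (\sigma^{-1}q)\cdot q\Bigr) = \int_\Omega g_\mathscr{C}^0(\sigma,q) = \Z,
\end{equation*}
the last equality because $(\sigma,q)$ solves $(\mathcal{P})$ and the objective of $(\mathcal{P})$ is $\int_\Ob g_\mathscr{C}^0(\sigma,q)$ as established around \eqref{eq:g_C^0}.

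Finally I would combine this with \eqref{eq:Z_leq_Vmin}. We have produced a feasible pair for $(\mathrm{MVV}_\Omega)$ whose cost is exactly $\Z$, so $\Vmin \leq \Z$; together with $\Z \leq \Vmin$ this gives $\Vmin = \Z$ and simultaneously shows $(z,\eta)$ is optimal for $(\mathrm{MVV}_\Omega)$. The main obstacle, and the point requiring the most care, is the manipulation of the formally-written product $(\sigma^{-1}q)\cdot q$: one must check that on the (Lebesgue-)set where $\sigma$ is singular but $q$ lies in its image the identity $q\cdot\nabla z = (\sigma^{-1}q)\cdot q$ is genuinely unambiguous (invariance over choices of preimage $\vartheta$ with $q = \sigma\vartheta$), and that on the complementary set everything vanishes consistently — all of which is guaranteed by the structure of $g_\mathscr{C}^0$ in \eqref{eq:g_C^0} and the fact that $\int_\Ob g_\mathscr{C}^0(\sigma,q) < \infty$ at optimality, so that $q \in \IM(\sigma)$ $\sigma$-a.e. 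A secondary technical point is justifying $\int_\Omega G_z:\eta\,J_z = \int_\Omega G_z:\sigma$, which is immediate from $\sigma = J_z\,\eta$ since $J_z > 0$ pointwise.
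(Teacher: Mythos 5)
Your argument is correct and follows essentially the same route as the paper: invoke the optimality condition $q=\frac{1}{2}\,\sigma\,\nabla w$ from Theorem \ref{thm:opt_cond} to get $q=\sigma\,\nabla z$, identify $\sigma:(\nabla z\otimes\nabla z)=(\sigma^{-1}q)\cdot q$, and close the chain $\Z\leq\Vmin\leq\int_\Ob\bigl(\tr\,\sigma+\sigma:(\nabla z\otimes\nabla z)\bigr)=\int_\Ob g^0_{\mathscr{C}}(\sigma,q)=\Z$. Your extra care about the well-posedness of $(\sigma^{-1}q)\cdot q$ and the passage between $\eta$ and $\sigma=J_z\,\eta$ only makes explicit what the paper leaves implicit in working with the simplified variant \eqref{eq:MVFD_sigma}.
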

\begin{proof}
It suffices to show that the pair $({z},\sig)$ solves the simplified variant \eqref{eq:MVFD_sigma}. By virtue of Theorem \ref{thm:opt_cond} there holds $q = \frac{1}{2}\,\sigma\, \nabla w$ thus $q = \sigma\, \nabla {z}$ and as consequence $\sigma:(\nabla z \otimes \nabla z) = (\sigma^{-1} q)\cdot q$. By acknowledging: inequality \eqref{eq:Z_leq_Vmin}, equations $\DIV\,\sigma =0$, $\ -\dive \bigl(\sigma \nabla z\bigr) = -\dive\, q = f$ and optimality of $(\sigma,q)$ a chain of inequalities follows:
\begin{equation}
	\label{eq:Z_leq_V_leq_Z}
	\Z \leq \Vmin \leq  \int_\Ob \Big(\tr \, \sigma + \sigma : \bigl(\nabla z \otimes \nabla z \bigr) \Big) = \int_\Ob \Big(\tr \, \sigma + \bigl(\sigma^{-1} q\bigr)\cdot q \Big) = \int_\Ob g^0_{\mathscr{C}}(\sigma,q) = \Z
\end{equation}
being, as a result, a chain of equalities, which proves the assertion.
\end{proof}

The optimal field $\eta$, point-wise being the matrix of contravariant components of the tensor field $\hat{\mathcal{N}}$, proves convenient in computations yet is impractical in terms of physical interpretation. One way of remedying this it to embed our membrane shell into 3D space as a lower dimensional structure in the measure-theoretic manner: the membrane force field is a tensor valued measure $\hat{\sigma} = \hat{S} \, \Ha^2 \mres \mathcal{S}_z \in \Mes(\R^3;\mathrm{T}^3_+)$ that charges the two dimensional surface $\mathcal{S}_z$ while $\hat{S}\in L^1(\mathcal{S}_z;\mathrm{T}^3_+)$ is the function satisfying
\begin{equation}
	\label{eq:unprojection}
	\hat{S}\bigl(z(\argu)\bigr) = \begin{bmatrix}
	\,\mathrm{I}  &  \nabla z\, 
	\end{bmatrix}^\top\!
	\eta\, \begin{bmatrix}
	\,\mathrm{I}  &  \nabla z\, 
	\end{bmatrix}.
\end{equation}
where by $\bigl[ \,\mathrm{I}  \ \  \nabla z\, \bigr]$ we understand a $2 \times 3$ matrix composed of the $2 \times 2$ identity matrix $\mathrm{I}$ and the column 2D vector $\nabla z$. The proof that the stress field $\hat{\sigma}$ is in equilibrium with the tracking load $f$, namely that $-\hDIV \,\hat{\sigma} = \hat{F}_{f,z}$ in the sense of distributions in $\R^3\backslash (\partial\Omega \times \{0\})$, is postponed to Section \ref{ssec:3D_to_2D} where a broader setting shall be considered.

\begin{example}[\textbf{Analytical solution of the least-volume axisymmetric vault}]
\label{ex:axisymmetric}
For a radius $R>0$ a disk domain $\Omega = \bigl\{x \in \Rd \, \big\vert\, \abs{x} < R \bigr\}$ is considered. The load per unit area of $\Omega$ is uniformly distributed, i.e. $f = p = \mathrm{const}$ ($f = p\, \mathcal{L}^2 \mres \Omega$ as a measure). Following Theorem \ref{thm:recovring_MV_dome} the pair of problems $(\mathcal{P})$ and $(\mathcal{P}^*)$ must be solved. The strategy is to guess the fields $(\sigma,q)$, $(u,w)$ and then to show that optimality conditions \eqref{eq:opt_cond} are met. By making use of the axial symmetry we give the candidate fields expressed in polar coordinates for which $(e_r, e_\varphi)$ is the standard basis: $\sigma = \sigma_{rr}\, e_r\otimes e_r$, $\ q = q_r \,e_r$, $\ u = u_r\, e_r$, $\ w = \tilde{w}$ \ where
\begin{equation}
	\label{eq:axisym_sol}
	\sigma_{rr}(r) = \frac{p R^2}{2\sqrt{5}}\, \frac{1}{r}, \qquad q_r(r) = - \,\frac{p\, r}{2}, \qquad u_r(r) = r - \frac{r^5}{R^4}, \qquad \tilde{w}(r) = \frac{2\sqrt{5}}{3} \frac{R^3 - r^3}{R^2}. 
\end{equation}
From the well established formulas one obtains: $\DIV\, \sigma = (\sigma'_{rr}+ \sigma_{rr}/r)\,  e_r,\ $ $-\dive\,q = -(q'_r + q_r/r)$ and $e(u) = u'_r\, e_r \otimes \nolinebreak e_r + u_r/r \,e_\varphi \otimes e_\varphi,\ $ $\nabla w =  \tilde{w}'\, e_r$. Optimality condition (i) follows immediately since $\DIV \,\sigma = 0$ and $-\dive\,q = p = f$ (including the origin). Since $\tilde{w}' = -2\sqrt{5}\,r^2/R^2$ it is also straightforward that $q =\frac{1}{2}\, \sigma\, \nabla w$ hence the condition (iv). To verify (i) and (iii) we compute $\frac{1}{4} \nabla w \otimes \nabla w + e(u) = e_r \otimes e_r + \bigl(1- (r/R)^4\bigr)\, e_\varphi \otimes e_\varphi$. We have shown that the pairs $(\sigma,q)$, $(u,w)$ given in \eqref{eq:axisym_sol} solve the system \eqref{eq:opt_cond} which renders them optimal for $(\mathcal{P})$ and $(\mathcal{P}^*)$, respectively, while
\begin{equation*}
	\Vmin = \Z = \int_\Ob \Big(\tr \, \sigma + \bigl(\sigma^{-1} q\bigr)\cdot q \Big) =\int_{\Ob} w \, f = \frac{2\pi}{\sqrt{5}} \, p R^3. 
\end{equation*}
The reader is referred to \cite[Section 4.3]{bouchitte2020} for solutions in the case of an arbitrary axisymmetric load $f$.

We observe that the field $\sigma$ is unbounded in vicinity of the origin, although it is integrable; it is a rank-one field and can be decomposed into continuum of bars in tension that coincide with diameters of the disc $\Omega$, cf. the disintegration formula \eqref{eq:fibrous_sigma_q} in Section \ref{ssec:regularity}. In this spirit $\sigma$ was illustrated in Fig. \ref{fig:axisymmetric}(a) -- this type of fibrous structure regularly appears as a part of Michell structures, cf. the Michell's bicycle half-wheel in Chapter 4.5.3 of \cite{lewinski2019a}.
\begin{figure}[h]
	\centering
	\subfloat[]{\includegraphics*[trim={-0cm -1.8cm -0cm -0cm},clip,width=0.22\textwidth]{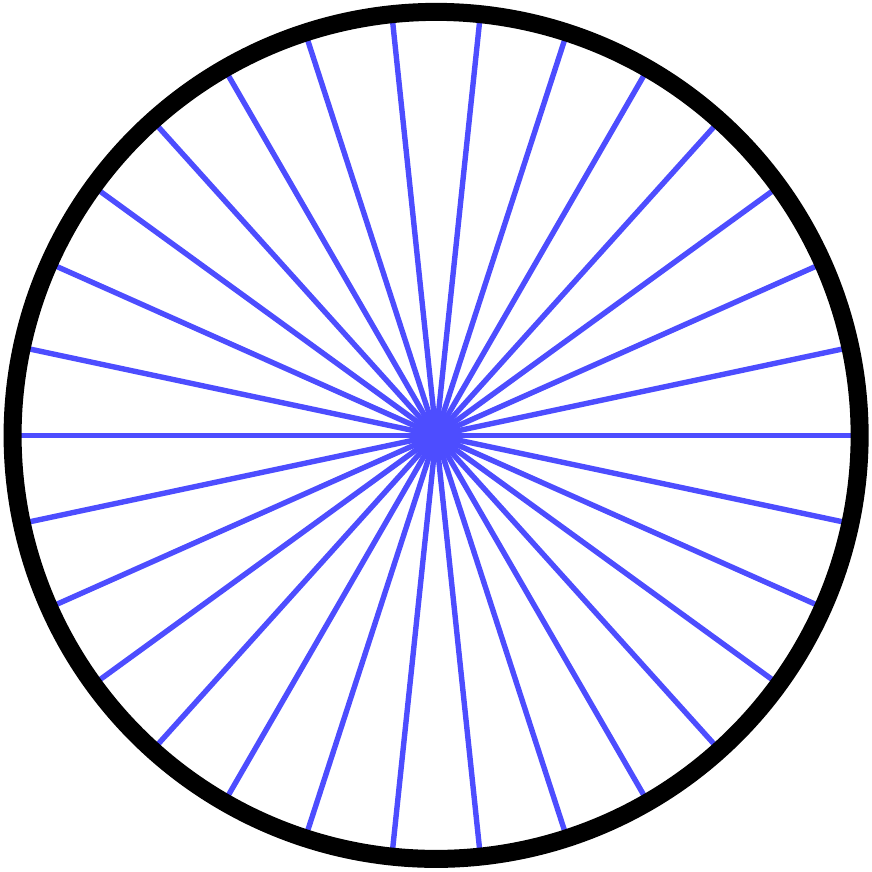}}\hspace{0.4cm}
	\subfloat[]{\includegraphics*[trim={4cm 1cm 3.cm 4cm},clip,width=0.33\textwidth]{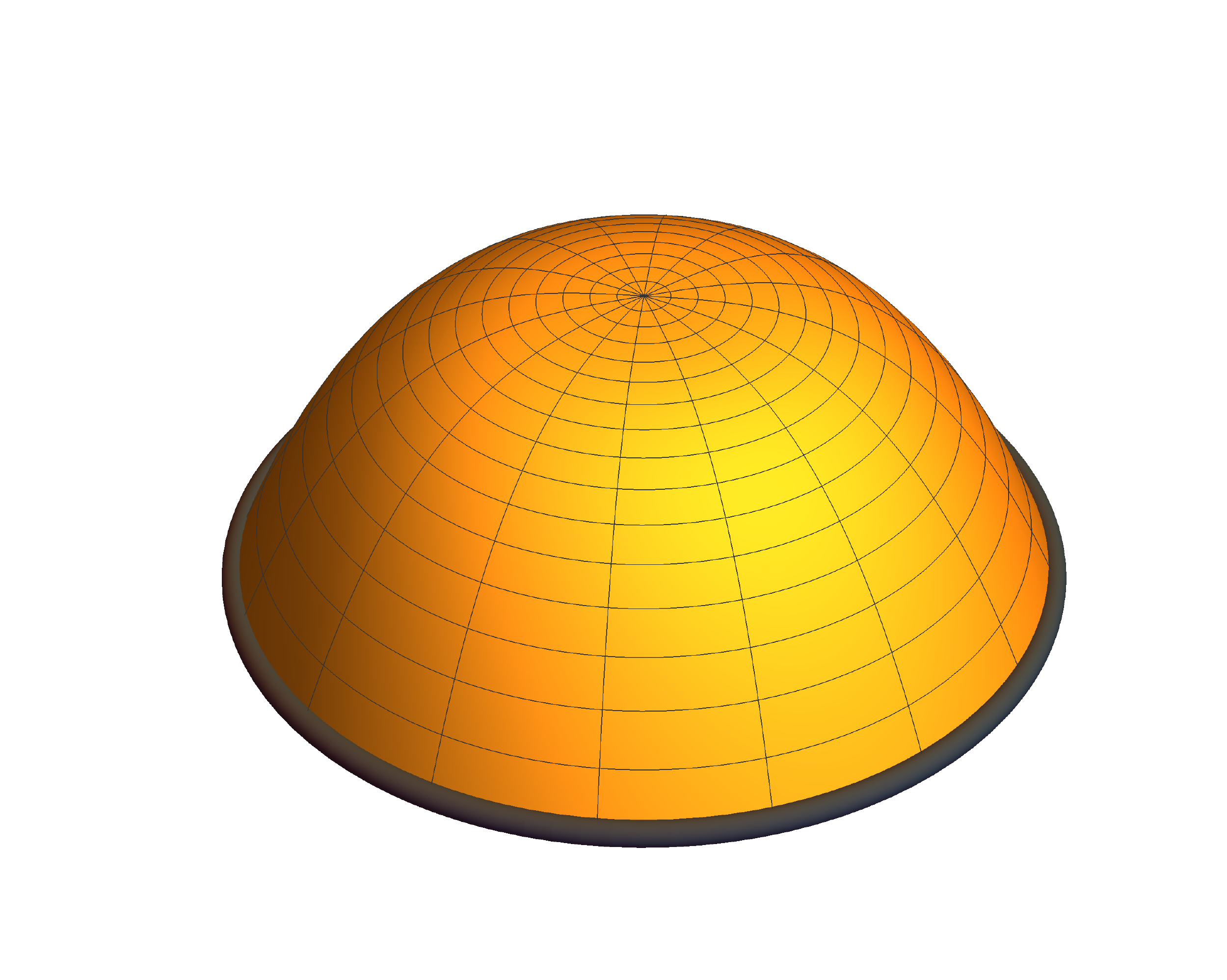}}\hspace{0.3cm}
	\subfloat[]{\includegraphics*[trim={6cm 2.5cm 4.5cm 5cm},clip,width=0.33\textwidth]{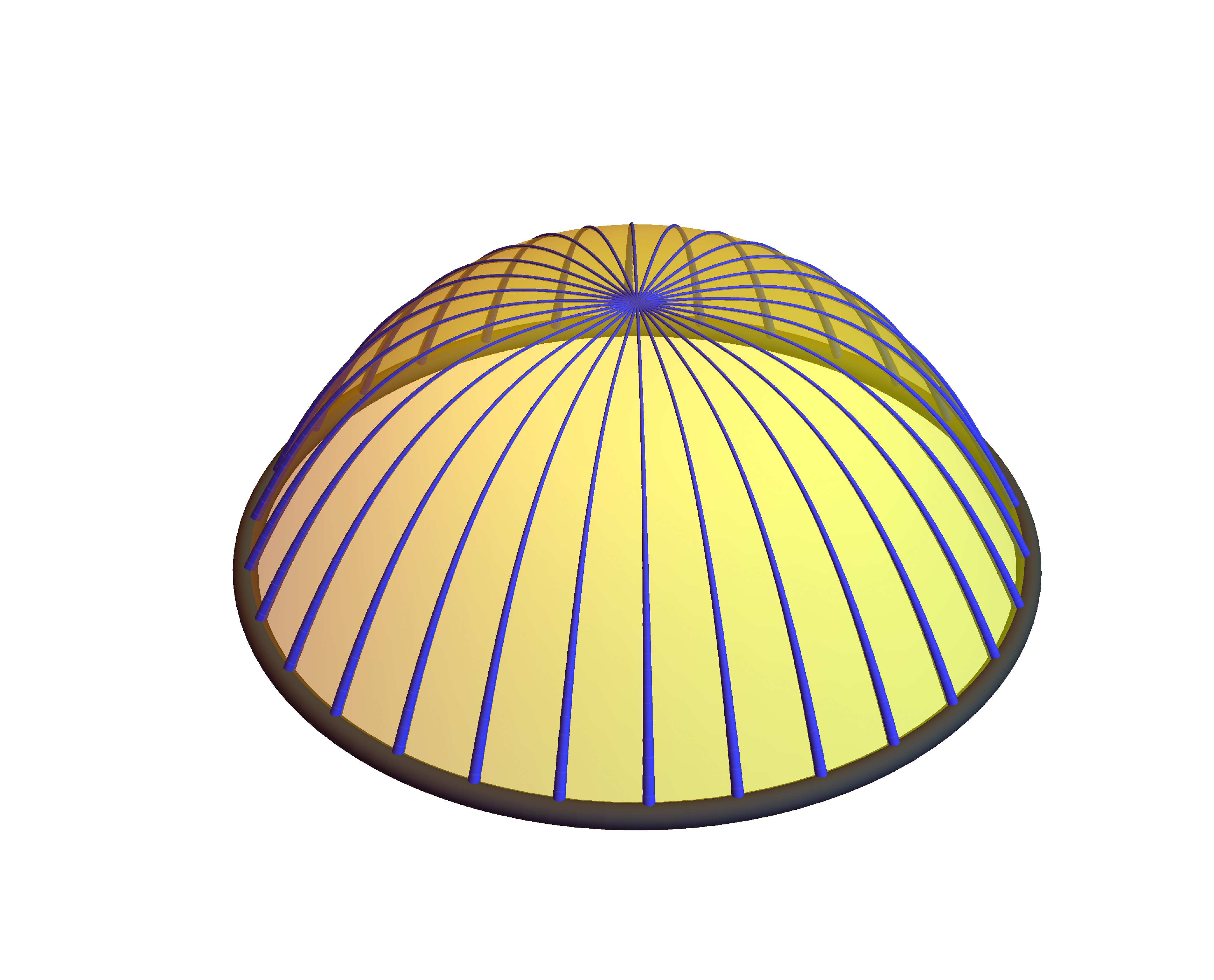}}\\
	\caption{The problem of minimum volume vault for a load uniformly distributed over a disk: (a) the field $\sigma$ solving problem $(\mathcal{P})$; (b) optimal elevation function $z = \frac{1}{2}\,w$ where $w$ solves problem $(\mathcal{P}^*)$; (c) the optimal vault composed of uncountable number of cables.}
	\label{fig:axisymmetric}
\end{figure}

Theorem \ref{thm:recovring_MV_dome} combined with formula \eqref{eq:unprojection} paves the way of unprojecting the field $\sigma$ onto the surface $\mathcal{S}_z$ where $z = z(r) = \frac{1}{2} \,w(r)$. Simple computations furnish the 3D stress field on the vault:
\begin{equation*}
	\hat{S}\bigl(\hat{z}(r) \bigr) = \frac{1}{J_z(r)}\frac{p R^2}{\sqrt{5} \,r} \Big(e_r \otimes e_r +2\, z'(r)\, e_r\,\symtens\, e_3 + (z'(r))^2 \, e_3 \otimes e_3   \Big)
\end{equation*}
where $J_z(r) =\sqrt{1+\vert\frac1{2}\nabla w(r)\vert^2}= \sqrt{1+(z'(r))^2}$, \  $e_3$ is the vertical unit vector and $\symtens$ stands for the symmetrical part of the tensor product. The field $\hat{S}$ is rank-one and to see this we find a formula for a unit vector being tangent to $\mathcal{S}_z$ and horizontally radial: $\hat{\tau} = \hat{\tau}(r) = \bigl(e_r + z'(r)\, e_3 \bigr)/\sqrt{1+(z'(r))^2}$ hence
\begin{equation*}
	\hat{S}\bigl(z(r) \bigr) = \frac{1+(z'(r))^2}{J_z(r)}\frac{p R^2}{\sqrt{5} \,r} \, \hat{\tau}(r) \otimes \hat{\tau}(r) = \sqrt{1+(z'(r))^2} \, \frac{p R^2}{\sqrt{5} \,r}  \, \hat{\tau}(r) \otimes \hat{\tau}(r) = \sqrt{1+5\left(r/R\right)^4} \, \frac{p R^2}{\sqrt{5} \,r}  \, \hat{\tau}(r) \otimes \hat{\tau}(r).
\end{equation*}
By analogy we find that the membrane force field in the vault decomposes to continuum of cables in tension and of radially varying thickness, cf. the 3D visualization in Fig. \ref{fig:axisymmetric}(c).
\end{example}

\section{Vaults of the least compliance -- the elastic design problem}
\label{sec:elastic_design}

\subsection{Formulation of the form finding problem in the elastic setting}
\label{ssec:problem_formulation_elastic}

In this section we shall once more tackle the optimal design problem of a vault being a membrane shell that is capable of withstanding tension only, although this time distribution of an elastic material will be sought while minimizing the elastic compliance. Consequently the data $\Omega, f$ will remain unchanged and for an elevation function $z \in C^1(\Ob;R)$, with $z =0$ on $\bO$, $\mathcal{S}_z$ will be a surface spread over $\Omega$ and parametrized by \eqref{eq:parametrization}. \textit{A priori} the material distribution is represented by a function $\hat{\rho}:\mathcal{S}_z \to \R_+$, however, unlike in Section \ref{ssec:problem_formulation_plastic}, we will pose the optimization problem in its parametrized version on $\Omega$ straightaway, therefore the elastic material distribution shall be identified by the function $\rho \in L^1(\Omega;\R_+)$; the relation is clear: $\rho = \hat{\rho} \circ \hat{z}$, where $\circ$ stands for composition of functions.

The underlying elastic material will be modelled through the Michell-like energy potential that for a strain $\eps \in \Sdd$ in a plane plate reads $j(\eps) = \frac{E_0}{2} \bigl(\gamma(\eps)\bigr)^2$ where $E_0$ is an \textit{a priori} fixed Young modulus and $\gamma$ is the spectral norm, cf. \eqref{eq:spectral_norm}. This somewhat artificial choice ought to reflect the fibrous character of the vault and may be justified two fold. On the one hand, in \cite{bendsoe1993} and \cite{allaire1993} it was  noted that the Michell-like potential occurs to be the integrand in the suitably relaxed minimum compliance problem of the highly porous structure made of an isotropic material of given volume: the Michell's energy density is obtained by performing a passage to the zero limit with the volume fraction of the isotropic material to be optimally distributed. One may expect that a similar asymptotic analysis could be performed for membrane shells here. Another explanation goes through the problem of the \textit{Free Material Design}, cf. Remark \ref{rem:FMD} below for more details.

In theory of Michell structures it is well established that the virtual displacement $u:\Ob \to \Rd$ being a solution of the problem \eqref{eq:Michell_max} is, after rescaling by a multiplicative constant, the displacement field in the optimally designed elastic structure. One could thus hope that solutions $u:\Ob \to \Rd,\ w: \Ob \to \R$ \ to  $(\mathcal{P}^*)$ relate to deformation of optimally designed elastic vault. We will show that this is indeed the case.

In the sequel of this section $u\in C^1(\Ob;\Rd)$ and $w \in C^1(\Ob;\R)$ will stand for horizontal and, respectively, vertical displacement functions of the vault $\mathcal{S}_z$. More precisely for each $x \in \Omega$ the 3D displacement vector at a point $\hat{z}(x) =\bigl(x,z(x) \bigr) \in \mathcal{S}_z$ shall read $\hat{v} = \hat{v}\bigl(\hat{z}(x)\bigr) = u(x) + w(x)\,e_3$ where we agree that the in-plane vector $u(x) \in \Rd$ is naturally embedded into $\R^3$ space; $e_3$ is the vertical unit vector according to Fig. \ref{fig:form_finding_problem}. The vault $\mathcal{S}_z$ shall be kinematically fixed on the boundary $\bO \times \{0\}$ which amounts to enforcing boundary conditions $u=0,\ w=0$ on $\bO$. After formula (11.2.14) in \cite{green1968} displacements $u,w$ generate the linearized strain tensor field $\hat{\mathcal{E}}:\mathcal{S}_z \to \mathcal{T}^{\,2}$ whose covariant components (with respect to parametrization \eqref{eq:parametrization}) \ $\xi:\Omega \to \Sdd$ are expressed via the linear differential operator $\mathcal{A}_z$:  
\begin{equation*}
	\xi = \mathcal{A}_z(u,w) := e(u) + \nabla z\, \symtens\, \nabla w
\end{equation*}
where $\symtens$ stands for the symmetric part of the tensor product. Since $\xi$ is merely the matrix of covariant components the Michell-like potential $j(\argu) = \frac{E_0}{2} \bigl(\gamma(\argu)\bigr)^2$ must be properly adjusted:
\begin{equation*}
	j_z\bigl(x, \xi \bigr) = \frac{E_0}{2} \bigl( \gamma_z(x,\xi ) \bigr)^2, \qquad \gamma_z\bigl(x,\xi \bigr) = \sup_{\tau \in \Rd, \, \tau \neq 0}\ \abs{\frac{\xi : (\tau \otimes \tau)}{G_z(x) : (\tau \otimes \tau)}};
\end{equation*}
further we shall suppress the dependence on $x$ of the two functions and we will shortly write: $j_z=j_z(\xi),\ \gamma_z = \gamma_z(\xi)$.
The function $\gamma_z$ is a closed gauge and one can observe that it actually measures the maximal (with respect to direction $\tau$) absolute value of the physical strain, exactly as $\gamma$ in the case of Cartesian coordinates on the plane.  

For the next step, to the elastic formulation we must introduce a constraint on the stress field: the membrane shell cannot withstand compression which may be interpreted as perfect immunity to local bucking. A neat way to do so leads through the constitutive law, namely  starting from the base elastic potential $j_z$ we propose
\begin{equation}
	\label{eq:j_+}
	j_{z,+} = \left( j^*_z  + \mathbbm{I}_{\Sddp}\right)^*
\end{equation}
where $\mathbbm{I}_\Sddp$ is the indicator functions of the set $\Sddp$ (the reader is referred to Section \ref{ssec:convex_analysis} that summarizes some basic concepts of convex analysis). The idea \eqref{eq:j_+} was already employed in \cite[Example 6.1]{bolbotowski2020a}. it also views the formulation of elasticity of masonry structures proposed in \cite{giaquinta1985} from perspective of convex analysis. Since $\Sddp$ is a closed convex cone in $\Sdd$ it is straightforward (see Corollary 15.3.1 in \cite{rockafellar1970convex}) that
\begin{equation}
	\label{eq:j_gamma}
	j_{z,+}\bigl(\xi \bigr) = \frac{E_0}{2} \bigl( \gamma_{z,+}(\xi ) \bigr)^2, \qquad j^*_{z,+}\bigl(\eta \bigr) = \frac{1}{2 E_0} \bigl( \gamma^0_{z,+}(\eta) \bigr)^2
\end{equation}
for some mutually polar closed gauges $\gamma_{z,+},\ \gamma_{z,+}^0$; moreover one has $j^*_{z,+}(\eta) = j^*_{z}(\eta) +  \mathbbm{I}_{\Sddp}(\eta)$ and as a consequence $\gamma^0_{z,+}(\eta) = \infty$ for any $\eta \notin \Sddp$ while for $\eta \in \Sddp$ it holds that
\begin{equation}
	\label{eq:gamma_z_+}
	\gamma^0_{z,+}(\eta) = \gamma^0_z(\eta) = \sup_{\xi \in \Sdd} \biggl\{ \xi : \eta \ \biggl\vert \ -G_z \preceq \xi \preceq G_z  \biggr\} = G_z : \eta;
\end{equation}
we recall the dependence on $x$, i.e. $\gamma^0_{z,+}(\eta) = \gamma^0_{z,+}(x,\eta)$ and $G_z =G_z(x)$.
Above, the constraint in the supremum is equivalent to $\gamma_z(\xi) \leq 1$, while for $\eta \in \Sddp$ the maximization problem is solved for $\xi = G_z$ being the greatest matrix that satisfies the constraint. The closed gauge $\gamma_{z,+}$ can be readily computed as $(\gamma_{z,+}^0)^0$:
\begin{equation*}
	\gamma_{z,+}(\xi) = \sup_{\eta \in \Sddp} \biggl\{\xi:\eta \ \biggl\vert \ G_z : \eta \leq 1 \biggr\} = \sup_{\tau \in \Rd} \biggl\{ \xi:(\tau \otimes \tau) \ \biggl\vert \ G_z : (\tau  \otimes \tau) \leq 1 \biggr\}
\end{equation*}
where equality between the two suprema can be easily shown based on the  linearity of the two problems.

Compliance of the vault $\mathcal{S}_z$ with material distribution $\hat{\rho} = \rho \circ \hat{z}^{-1}$ and subject to the load $\hat{F}_{f,z}$ (see Section \ref{ssec:problem_formulation_plastic} for definition) can be readily defined as minus total potential energy of the system. The elastic potential energy reads $\int_{\mathcal{S}_z} j_{z,+}\bigl(\mathcal{A}_z(u,w)\bigr)\circ \hat{z}^{-1} \, \hat{\rho} \,d\Ha^2= \int_{\Omega} j_{z,+}\bigl(\mathcal{A}_z(u,w)\bigr) \, J_z \, \rho$ while the potential energy of the load can be written as $\int_{\mathcal{S}_z} \hat{v} \cdot \hat{F}_{f,z} = \int_{\Omega} w\,f$; ultimately, after a change $-\inf A = \sup (-A)$, the definition of the compliance reads
\begin{equation}
	\label{eq:comp_def}
	\Comp(z,\rho) := \sup_{\substack{u \in C^1\!(\Ob;\Rd) \\ w \in C^1\!(\Ob;\R)}} \left\{ \left. \int_{\Omega} w f - \frac{E_0}{2} \int_\Omega  \Big(  \gamma_{z,+}\bigl(\mathcal{A}_z(u,w) \bigr)\Big)^2 J_z\, \rho \ \ \right\vert \ u=0, \, w=0 \text{ on } \bO \right\}.
\end{equation}
By using classical duality arguments in the setting $Y = L^2_\rho(\Omega;\Sddp) \equiv Y^*$, with $L^2_\rho$ being the weighted Lebesgue space, one arrives at the dual definition of compliance where the complementary energy is minimized (cf. e.g. \cite{ekeland1999}):
\begin{equation}
	\label{eq:dual_comp_def}
	\Comp(z,\rho) = \inf_{\NN \in L^2_\rho(\Omega;\Sddp)} \left\{ \left. \frac{1}{2 E_0} \int_\Omega \Big(  \gamma^0_{z}\bigl(\NN \bigr)\Big)^2 J_z\, \rho  \ \ \right\vert
	\begin{array}{c}
	\DIV \, \bigl( \rho\NN J_z \bigr) = 0,\\
	-\dive \bigl( (\rho\NN J_z) \nabla{z} \bigr) = f
	\end{array}
	\text{ in } \Omega  \right\}.
\end{equation}
\begin{remark}
	The more standard form of the dual elasticity formulation may be recovered from the above: by change of variables $\eta = \rho \, \NN$ we arrive at the minimized functional $\frac{1}{2 E_0} \int_\Omega \bigl(  \gamma^0_{z}(\eta)\bigr)^2/\rho\, J_z$. However, since $\rho$ is allowed to vanish, formulation \eqref{eq:dual_comp_def} is mathematically more natural. The advantage becomes fundamental once the design problem in $\rho$ is relaxed to measures $\mu$ (see Section \ref{sec:3D}) in which case the integral with $\eta$ is ill-posed. In the relation $\eta = \rho\, \NN$ a mechanical interpretation may be found: while $\eta$ is the tensor of membrane forces (its contravariant components) the tensor $\NN$ plays the role of stress as it is referred to the material distribution $\rho$.
\end{remark}
The volume of the vault $\mathcal{S}_z$ with material distribution $\hat{\rho}:\mathcal{S}_z \to \R_+$ is (by definition) given by the integral $\int_{\mathcal{S}_z}\hat{\rho}$ \ or, through the change of variables formula, $\int_{\Omega} J_z\, \rho$ where $\rho(x) = \hat{\rho}\bigl(\hat{z}(x) \bigr)$. We formulate the design problem of \textit{Minimum Compliance Vault} under the volume constraint in the parametrized setting:
\begin{equation*}\tag*{$(\mathrm{MCV}_\Omega)$}
	\Cmin=\inf_{\substack{z \in C^1\!(\Ob;\R) \\ \rho \in L^1\!(\Omega;\R_+)}} \left\{ \Comp(z,\rho)\ \left\vert \ z = 0 \text{ on } \bO, \ \int_{\Omega} J_z\, \rho \leq V_0 \right. \right\}
\end{equation*}
where $V_0 >0$ is the prescribed upper limit for the vault's volume. Similarly as for $(\mathrm{MVV)_\Omega}$ the posed problem does not enjoy the desired mathematical properties as e.g. convexity with respect to the pair $(z,\rho)$. Again we will succeed in reducing it to the pair of convex  problems $(\mathcal{P})$,\,$(\mathcal{P}^*)$.

\subsection{Retrieving optimal elastic vault from solutions of problems $(\mathcal{P})$ and $(\mathcal{P}^*)$}
\label{ssec:recovering_dome_elastic}

Inspired by the strategy from \cite{bouchitte2001} we will establish a direct connection between problems $(\mathrm{MCV}_\Omega)$ and $(\mathcal{P}^*)$. To that aim we introduce an auxiliary function $h: \Sdd \times \Rd \to \Rb$:
\begin{equation*}
h(\eps,\vartheta) := \sup\limits_{\zeta \in \Rd} \sup\limits_{\tau \in \Rd} \biggl\{ \bigl(\eps + \zeta \, \symtens \, \vartheta\bigr):(\tau \otimes \tau) \ \, \biggl\vert\ (\mathrm{I} + \zeta \otimes \zeta):(\tau\otimes\tau) \leq 1 \biggr\} 
\end{equation*}
that for any $u,w,z$ satisfies everywhere in $\Omega$:
\begin{equation}
	\label{eq:gamma_leq_h}
	\gamma_{z,+}\bigl(\mathcal{A}_z(u,w) \bigr) \leq h\bigl(e(u),\nabla w \bigr),
\end{equation}
i.e. for given $e(u),\nabla w$ the function $h$ point-wise yields the maximal normal strain with respect to slope $\zeta = \nabla z$ of the surface $\mathcal{S}_z$. The following result sparks the idea of the link to problem $(\mathcal{P}^*)$:    
\begin{proposition}
	\label{prop:h_and_g_C}
	The function $h$ is the gauge for the closed convex set $\mathscr{C}$, namely $h = g_\mathscr{C}$. In particular for each pair $\eps \in \Sdd$, $\vartheta \in \Rd$ there holds an equivalence:
	\begin{equation}
		\label{eq:h_g_C}
		h(\eps,\vartheta) \leq 1 \qquad \Leftrightarrow \qquad \frac{1}{4} \,\vartheta \otimes \vartheta + \eps \preceq  \mathrm{I}.
	\end{equation}
\end{proposition}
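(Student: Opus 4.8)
The plan is to verify that $h$ is a \emph{closed gauge} and then to identify its unit sublevel set with $\mathscr{C}$; since a closed gauge is uniquely determined by that set (cf. the recap in Section~\ref{ssec:convex_analysis}), this forces $h=g_\mathscr{C}$ and yields the equivalence \eqref{eq:h_g_C}. The first step is to carry out the inner maximization over $\zeta$ explicitly. Writing $(\eps+\zeta\symtens\vartheta):(\tau\otimes\tau)=\eps:(\tau\otimes\tau)+(\zeta\cdot\tau)(\vartheta\cdot\tau)$ and $(\mathrm{I}+\zeta\otimes\zeta):(\tau\otimes\tau)=\abs{\tau}^2+(\zeta\cdot\tau)^2$, one sees that for fixed $\tau\neq0$ the only relevant feature of $\zeta$ is the scalar $s=\zeta\cdot\tau$, which is free in $\mathbb{R}$ subject to $s^2\le1-\abs{\tau}^2$ (so that $\abs{\tau}\le1$ is forced); maximizing the affine-in-$s$ objective gives
\[
h(\eps,\vartheta)=\sup_{\abs{\tau}\le1}\Big\{\eps:(\tau\otimes\tau)+\sqrt{1-\abs{\tau}^2}\,\abs{\vartheta\cdot\tau}\Big\}.
\]
From this formula $h$ is non-negative (take $\tau=0$), positively $1$-homogeneous, and — being a supremum of functions that are affine in $(\eps,\vartheta)$ once $\tau$ and $s$ are fixed — convex and lower semicontinuous; hence $h$ is a closed gauge, and it remains to prove $\{h\le1\}=\mathscr{C}$.

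For the implication ``$\Leftarrow$'' I would assume $\tfrac14\vartheta\otimes\vartheta+\eps\preceq\mathrm{I}$, fix $\tau$ with $\abs{\tau}\le1$, and abbreviate $a=\eps:(\tau\otimes\tau)$, $b=\abs{\vartheta\cdot\tau}$, $t=\abs{\tau}^2$, $r=\sqrt{1-t}$. The hypothesis tested at $\tau$ gives $a+\tfrac14b^2\le t$, hence $a+\sqrt{1-t}\,b\le t-\tfrac14b^2+rb=1-(r-\tfrac12 b)^2\le1$, and taking the supremum over $\tau$ yields $h(\eps,\vartheta)\le1$. For the converse ``$\Rightarrow$'' I would assume $h(\eps,\vartheta)\le1$, fix a unit vector $\tau$, and set $A=\eps:(\tau\otimes\tau)$, $B=\abs{\vartheta\cdot\tau}$. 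Testing $h\le1$ against the rescaled vectors $\lambda\tau$ for $\lambda\in[0,1]$ gives $\lambda^2A+\lambda\sqrt{1-\lambda^2}\,B\le1$ for all such $\lambda$; substituting $\lambda=\cos\theta$ and maximizing over $\theta\in[0,\tfrac\pi2]$ produces $\tfrac12A+\tfrac12\sqrt{A^2+B^2}\le1$. Since $\lambda=1$ forces $A\le1$, the quantity $2-A$ is positive, and squaring $\sqrt{A^2+B^2}\le2-A$ gives $B^2\le4-4A$, i.e. $A+\tfrac14B^2\le1$. As $\tau$ was an arbitrary unit vector and both sides of $\eps:(\tau\otimes\tau)+\tfrac14(\vartheta\cdot\tau)^2\le\abs{\tau}^2$ are $2$-homogeneous in $\tau$, this holds for every $\tau\in\Rd$, which is precisely $\tfrac14\vartheta\otimes\vartheta+\eps\preceq\mathrm{I}$.

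Combining the two inclusions gives $\{h\le1\}=\mathscr{C}$, and since $h$ and $g_\mathscr{C}$ are closed gauges sharing the unit ball $\mathscr{C}$ (a closed convex set containing the origin), they coincide; the equivalence \eqref{eq:h_g_C} then restates $\{h\le1\}=\mathscr{C}$, and \eqref{eq:gamma_leq_h} together with the definition of $\mathscr C$ gives \eqref{eq:gamma_leq_h} as a byproduct. The genuinely delicate point is the ``$\Rightarrow$'' direction: one must use the $1$-homogeneity of $h$ by testing the constraint not just at $\tau$ but along the whole ray $\{\lambda\tau\mid\lambda\in[0,1]\}$, so that the quadratic term $\tfrac14(\vartheta\cdot\tau)^2$ absent from the formula for $h$ is recovered from the one-parameter optimization. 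Everything else is the elementary completion-of-squares / Cauchy--Schwarz computation sketched above; note in particular that $h$ is \emph{not} equal to the non-homogeneous auxiliary function $c$ of \eqref{eq:c}, even though both have sublevel set $\mathscr{C}$ at level $1$ — it is the $1$-homogeneity of $h$ that makes it the gauge.
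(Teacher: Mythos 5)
Your proof is correct, and its overall skeleton --- show that $h$ is a closed gauge, then identify $\{h\le 1\}$ with $\mathscr{C}$ and invoke uniqueness of the gauge attached to a closed convex set containing the origin --- is the same as in the paper. The execution of the identification is genuinely different, though. The paper keeps both variables $\zeta,\tau$, uses homogeneity to restate $h(\eps,\vartheta)\le 1$ as $(\eps+\zeta\symtens\vartheta):(\tau\otimes\tau)\le(\mathrm{I}+\zeta\otimes\zeta):(\tau\otimes\tau)$ for all $\zeta,\tau$, substitutes $\psi=\zeta\cdot\tau$ and reads this as nonnegativity of a quadratic in $\psi$, so that the discriminant condition $(\vartheta\cdot\tau)^2\le 4\,(\mathrm{I}-\eps):(\tau\otimes\tau)$ delivers both directions of \eqref{eq:h_g_C} in a single chain of equivalences. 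You instead eliminate $\zeta$ first, obtaining the closed form $h(\eps,\vartheta)=\sup_{\abs{\tau}\le1}\bigl\{\eps:(\tau\otimes\tau)+\sqrt{1-\abs{\tau}^2}\,\abs{\vartheta\cdot\tau}\bigr\}$, and then prove the two inclusions separately: a completion of squares for ``$\Leftarrow$'', and for ``$\Rightarrow$'' the ray-testing trick along $\lambda\tau$, $\lambda\in[0,1]$, which yields $\tfrac12\bigl(\eps:(\tau\otimes\tau)+\sqrt{(\eps:(\tau\otimes\tau))^2+(\vartheta\cdot\tau)^2}\bigr)\le1$ before squaring --- incidentally the same expression for $h$ that the paper only derives later, around \eqref{eq:optimal_psi}. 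Your route is a bit longer (two implications instead of one equivalence via the discriminant) but equally elementary, and it produces the explicit formula for $h$ as a useful by-product; the paper's argument is shorter but less explicit. One cosmetic slip: your closing remark that ``\eqref{eq:gamma_leq_h} together with the definition of $\mathscr C$ gives \eqref{eq:gamma_leq_h} as a byproduct'' is circular as written and should be dropped or rephrased; it plays no role in the proof itself.
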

\begin{proof} For each $\zeta,\tau \in \Rd$ the mapping $(\eps,\vartheta) \mapsto \bigl(\eps + \zeta \, \symtens \, \vartheta\bigr):(\tau \otimes \tau)$ is affine and zero at the origin. Therefore function $h$, as a point-wise supremum of such mappings, is a closed gauge, cf. \cite{rockafellar1970convex}. The assertion will follow once we prove that $h(\eps,\vartheta) \leq \nolinebreak 1 \ \Leftrightarrow \ (\eps,\vartheta) \in \mathscr{C}$ or, equivalently, that \eqref{eq:h_g_C} holds true. Almost directly by definition, the inequality $h(\eps,\vartheta) \leq 1$ may be rewritten as:
\begin{equation*}
	\bigl(\eps + \zeta \, \symtens \, \vartheta\bigr):(\tau \otimes \tau) \leq (\mathrm{I} + \zeta \otimes \zeta):(\tau\otimes\tau) \qquad \forall\,\zeta,\tau \in \Rd
\end{equation*}
or, by putting $\psi =\zeta \cdot \tau$,
\begin{equation*}
	\psi^2 - (\vartheta \cdot \tau)\, \psi + (\mathrm{I}-\eps):(\tau \otimes \tau) \geq 0 \qquad \forall\,\psi \in \R,\ \tau \in \Rd.
\end{equation*}
The LHS is a quadratic function in $\psi$ hence the inequality holds if and only if the discriminant is non-positive:
\begin{equation*}
	(\vartheta \cdot \tau)^2 - 4\, (\mathrm{I}-\eps):(\tau \otimes \tau) \leq 0 \qquad \forall\, \tau \in \Rd
\end{equation*}
or $\bigl(\frac{1}{4} \vartheta \otimes \vartheta + e(u)\bigr):(\tau \otimes \tau) \leq \mathrm{I}:(\tau \otimes \tau)$ for any $\tau \in \Rd$ and the proof is complete.
\end{proof}

From the substitution $\psi = \zeta\cdot\tau$ one learns about another formula for $h$ that will be useful in due course:
\begin{equation}
	\label{eq:h_with_psi}
	h(\eps,\vartheta) = \sup\limits_{\psi \in \R} \ \sup\limits_{\substack{\tau \in \Rd,\ \abs{\tau} \leq 1}} \frac{\eps:(\tau\otimes\tau)+\psi\ \vartheta\cdot \tau}{1+ \psi^2}.
\end{equation}
The link between $(\mathrm{MCV}_\Omega)$ and $(\mathcal{P}^*)$ can readily be given:

\begin{lemma}
	\label{lem:Cmin_leq_Z}
	There holds an inequality
	\begin{equation*}
		\Cmin \geq \frac{\Z^2}{2E_0 V_0}.
	\end{equation*}
\end{lemma}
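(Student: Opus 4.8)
The plan is to produce a lower bound for $\Comp(z,\rho)$ that is uniform over all admissible pairs $(z,\rho)$, by feeding suitably rescaled competitors from $(\mathcal{P}^*)$ into the primal definition \eqref{eq:comp_def} of the compliance; this is a weak‑duality‑type estimate, in the same spirit as the derivation of \eqref{eq:weak_duality}. The ingredients are all in place: inequality \eqref{eq:gamma_leq_h}, which dominates $\gamma_{z,+}(\mathcal{A}_z(u,w))$ by $h(e(u),\nabla w)$; Proposition \ref{prop:h_and_g_C}, which identifies $h$ with the gauge $g_\mathscr{C}$; the positive $1$‑homogeneity of $g_\mathscr{C}$; and the equality $\Z=\sup\mathcal{P}^*$ from Theorem \ref{thm:duality}. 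Note first that $\Z\ge 0$ and $\Cmin\ge 0$, both seen by testing with the zero fields, so it suffices to restrict attention to competitors $(u_0,w_0)$ admissible for $(\mathcal{P}^*)$ with $\int_\Omega w_0\,f\ge 0$, the supremum of $\int_\Omega w_0\,f$ over this class still being $\Z$.

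\textbf{Main step.} Fix such a competitor $(u_0,w_0)$ and an admissible pair $(z,\rho)$ for $(\mathrm{MCV}_\Omega)$, so that $\int_\Omega J_z\,\rho\le V_0$. By Proposition \ref{prop:h_and_g_C} the $(\mathcal{P}^*)$‑constraint on $(u_0,w_0)$ reads $g_\mathscr{C}\bigl(e(u_0),\nabla w_0\bigr)\le 1$ pointwise in $\Ob$. For every $t>0$ the rescaled pair $(t u_0,t w_0)$ is an admissible competitor in \eqref{eq:comp_def} (it is $C^1$ and vanishes on $\bO$), and combining \eqref{eq:gamma_leq_h}, Proposition \ref{prop:h_and_g_C} and $1$‑homogeneity of $g_\mathscr{C}$ gives the pointwise bound $\gamma_{z,+}\bigl(\mathcal{A}_z(t u_0,t w_0)\bigr)\le t\,g_\mathscr{C}\bigl(e(u_0),\nabla w_0\bigr)\le t$. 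Substituting $(t u_0,t w_0)$ into \eqref{eq:comp_def} and using this, together with $\int_\Omega J_z\,\rho\le V_0$, yields
\[
\Comp(z,\rho)\ \ge\ t\int_\Omega w_0\,f\ -\ \frac{E_0}{2}\,t^2 V_0\qquad\text{for all }t>0 .
\]
Optimizing the right‑hand side over $t>0$ (maximum value $\bigl(\int_\Omega w_0\,f\bigr)^2/(2E_0V_0)$, attained at $t=\int_\Omega w_0\,f/(E_0V_0)$ when $\int_\Omega w_0\,f>0$, and $0$ otherwise) gives $\Comp(z,\rho)\ge\bigl(\int_\Omega w_0\,f\bigr)^2/(2E_0V_0)$.

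\textbf{Conclusion and the delicate point.} The right‑hand side no longer involves $(z,\rho)$, so taking the infimum over admissible $(z,\rho)$ gives $\Cmin\ge\bigl(\int_\Omega w_0\,f\bigr)^2/(2E_0V_0)$ for every admissible $(u_0,w_0)$ with $\int_\Omega w_0\,f\ge 0$; since $A\mapsto A^2$ is non‑decreasing on $[0,\infty)$ and the supremum of $\int_\Omega w_0\,f$ over this class equals $\Z\ge 0$, passing to the supremum delivers $\Cmin\ge\Z^2/(2E_0V_0)$, which is the claim. I do not expect a genuine obstacle here: the argument is one‑sided and every piece has been established earlier. The only points needing care are the direction of the inequality inherited from \eqref{eq:gamma_leq_h} (squaring preserves it because both sides are non‑negative gauges) and the small bookkeeping with $\Z\ge 0$ that ensures the quadratic optimization in $t$ produces the stated bound rather than the trivial value $0$.
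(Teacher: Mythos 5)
Your proof is correct and follows essentially the same route as the paper: both arguments lower-bound $\Comp(z,\rho)$ by testing \eqref{eq:comp_def} with rescaled competitors $t(u_0,w_0)$ admissible for $(\mathcal{P}^*)$, use \eqref{eq:gamma_leq_h} together with Proposition \ref{prop:h_and_g_C} to bound $\gamma_{z,+}\bigl(\mathcal{A}_z(tu_0,tw_0)\bigr)\le t$, optimize the resulting quadratic in $t$, and identify the supremum of $\int_\Omega w_0\,f$ with $\Z$. The only difference is presentational — the paper packages the same estimate as an $\inf$-$\sup\ge\sup$-$\inf$ swap before applying the scaling trick of \cite{bouchitte2001}, whereas you carry the bound through explicitly for each fixed competitor — so no gap and no genuinely distinct method.
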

\begin{proof}
By plugging the displacement based definition \eqref{eq:comp_def} of compliance $\Comp(z,\rho)$ into the problem $(\mathrm{MCV}_\Omega)$ we arrive at an $\inf$-$\sup$ problem. Due to the lack of convexity/concavity properties of the functional in $z,\rho$ and $u,w$ it is not clear if the order can be swapped to $\sup$-$\inf$ while preserving equality, but inequality as below always holds:
\begin{equation*}
	\Cmin \geq  \sup_{\substack{(u,w) \in C^1\!(\Ob;\R^3)\\(u,w)=0 \text{ on }\bO }}\ \inf\limits_{\substack{z \in C^1\!(\Ob;\R), \ z=0 \text{ on } \bO \\ \tilde{\rho} \in L^1\!(\Ob;\R_+), \ \int_\Omega \tilde{\rho}\, \leq V_0}}   \ \left\{  \int_{\Omega} w f - \frac{E_0}{2} \int_\Omega  \Big(  \gamma_{z,+}\bigl(\mathcal{A}_z(u,w) \bigl)\Big)^2 \tilde\rho \, \right\},
\end{equation*}
where a variable change $\tilde{\rho} = J_z \,\rho$ was performed and now the volume constraint reads $\int_{\Omega} \tilde{\rho} \leq \nolinebreak V_0$. For any elevation function $z \in C^1(\Ob;\R)$ and $x \in \Omega$ from inequality \eqref{eq:gamma_leq_h} follows that $ \gamma_{z,+}\bigl( \mathcal{A}_z(u,w)(x) \bigr) \leq h\bigl(e(u)(x),\nabla w(x) \bigr) \leq \norm{h\bigl(e(u),\nabla w \bigr)}_\infty$ where $\norm{k}_\infty = \sup_{x\in \Omega} \abs{k(x)}$. The infimum above can be estimated from below furnishing 

\begin{equation*}
	\Cmin \geq \sup_{\substack{(u,w) \in C^1\!(\Ob;\R^3)\\(u,w)=0 \text{ on }\bO }}
	\left\{  \int_{\Ob} w f - \frac{E_0 V_0}{2} \norm{h\bigl(e(u),\nabla w \bigr)}_\infty^2  \right\}.
\end{equation*}
In the next step we acknowledge that the pair $(u,w)$ belongs to a linear space and utilize the technique proposed in the proof of Proposition 2.1 in \cite{bouchitte2001}: each pair $(u,w)$ may be represented as $(u,w) = t\,(u_1,w_1)$ with $t \in \R_+$ and $\norm{h\bigl(e(u_1),\nabla w_1 \bigr)}_\infty \leq 1$; as a consequence
\begin{equation*}
\Cmin \geq \sup_{\substack{(u_1,w_1) \in C^1\!(\Ob;\R^3)\\(u_1,w_1)=0 \text{ on }\bO }} \  \sup_{t \geq 0}\ \ 
\left\{  t\int_{\Ob} w_1 f - t^2\frac{E_0 V_0}{2} \ \bigg\vert \ h\bigl(e(u_1),\nabla w_1 \bigr)\leq 1\ \text{ in } \Ob  \right\}.
\end{equation*}
We shall show that the RHS of the inequality above equals $\Z^2/(2E_0 V_0)$. First we solve the univariate quadratic maximization problem with respect to $t \geq 0$: the maximum is attained at $t = (\int_{\Omega} w_1\,f)/(E_0 V_0)$ (we may assume that $\int_{\Omega} w_1 \,f \geq 0$) thus the functional maximized with respect to $u_1,w_1$ reads $(\int_{\Omega} w_1\,f)^2/(2E_0 V_0)$. The assertion follows owing to Proposition \ref{prop:h_and_g_C}.
\end{proof}

We move on to give the main theorem of this section that allows to recover the vault of minimum compliance based on solutions of problems $(\mathcal{P})$ and $(\mathcal{P}^*)$:

\begin{theorem}[\textbf{Constructing elastic vault of the least compliance in the 'continuous' case}]
	\label{thm:recovring_MC_dome}
	Assume that the pairs $(\sigma,q) \in L^1(\Omega;\Sddp \times \Rd)$ and $({u},{w}) \in C^1(\Ob;\R^3)$ are solutions of problems $(\mathcal{P})$ and $(\mathcal{P}^*)$ respectively. Then the pair
	\begin{equation}
		\label{eq:optima_z_rho}
		z = \frac{1}{2}\, {w}, \qquad \rho = \frac{V_0}{\Z} \frac{1}{J_{z}}\, G_{z} : \sigma
	\end{equation} 
	solves the problem $(\mathrm{MCV}_\Omega)$ with $\Cmin = \frac{\Z^2}{2E_0 V_0}$. Moreover the displacement and stress functions
	\begin{equation*}
		(u_\e,w_\e) = \frac{\Z}{E_0 V_0}\, ({u},{w}) \qquad \text{and} \qquad \NN = \frac{1}{J_{z}\, \rho}\, \sigma \in L^\infty_{\rho}(\Omega;\Sddp)
	\end{equation*}
	solve the displacement-based and, respectively, stress-based elasticity problems \eqref{eq:comp_def} and \eqref{eq:dual_comp_def} for the optimal vault $(z,\rho)$. The functions $(u_\e,w_\e)$ and $\NN$ are linked by the constitutive law of elasticity for $\rho$-a.e. $x$:
	\begin{equation*}
		\NN(x) \in \partial j_{z,+} \Big(\mathcal{A}_{z}(u_\e,w_\e)(x)\Big).
	\end{equation*}
\end{theorem}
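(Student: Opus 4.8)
The strategy copies the one used for Theorem~\ref{thm:recovring_MV_dome}: Lemma~\ref{lem:Cmin_leq_Z} already gives the lower bound $\Cmin\geq\Z^2/(2E_0V_0)$, so it is enough to exhibit an admissible pair for $(\mathrm{MCV}_\Omega)$ whose compliance is $\leq\Z^2/(2E_0V_0)$; the sandwich then collapses to equalities, forcing optimality of $(z,\rho)$, the value $\Cmin=\Z^2/(2E_0V_0)$, and — by the mutual (Fenchel) duality between \eqref{eq:comp_def} and \eqref{eq:dual_comp_def} — optimality of the accompanying stress and displacement fields. First I would do the bookkeeping for $(z,\rho)$. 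Since $z=\tfrac12 w$ vanishes on $\bO$, the only real feasibility issue is the volume; using $G_z=\mathrm{I}+\nabla z\otimes\nabla z$, $\nabla z=\tfrac12\nabla w$ and the extremality relation $q=\tfrac12\,\sigma\nabla w$ from Theorem~\ref{thm:opt_cond}(iv) one obtains the pointwise identity $G_z:\sigma=\tr\sigma+(\sigma^{-1}q)\cdot q=g_\mathscr{C}^0(\sigma,q)$, hence $\int_\Omega J_z\,\rho=\tfrac{V_0}{\Z}\int_\Omega G_z:\sigma=\tfrac{V_0}{\Z}\int_\Omega g_\mathscr{C}^0(\sigma,q)=V_0$ by Theorem~\ref{thm:duality}, so $(z,\rho)$ is admissible with the volume constraint active.

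For the upper bound on $\Comp(z,\rho)$ I would plug the candidate $\NN=\tfrac{1}{J_z\rho}\,\sigma$ (set $\NN=0$ where $\rho=0$, which by $G_z\succ0$ is exactly $\{\sigma=0\}$) into \eqref{eq:dual_comp_def}. Then $\rho\NN J_z=\sigma$, so the two equilibrium constraints become $\DIV\sigma=0$ and $-\dive(\sigma\nabla z)=-\dive q=f$, both valid since $(\sigma,q)$ solves $(\mathcal{P})$. Because $\rho=\tfrac{V_0}{\Z}\tfrac1{J_z}\,G_z:\sigma$, formula \eqref{eq:gamma_z_+} yields $\gamma_z^0(\NN)=G_z:\NN=\Z/V_0$ on $\{\rho>0\}$, so the complementary energy of $\NN$ is $\tfrac{1}{2E_0}(\Z/V_0)^2\int_\Omega J_z\rho=\Z^2/(2E_0V_0)$ (the bound on $\gamma_z^0(\NN)$ together with $G_z\succeq\mathrm{I}$ and boundedness of $\nabla w$ also gives $\NN\in L^\infty_\rho$). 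Thus $\Comp(z,\rho)\leq\Z^2/(2E_0V_0)$, which with Lemma~\ref{lem:Cmin_leq_Z} closes the loop: $\Cmin=\Comp(z,\rho)=\Z^2/(2E_0V_0)$, the pair $(z,\rho)$ is optimal, and $\NN$ solves \eqref{eq:dual_comp_def}.

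Next I would identify $(u_\e,w_\e)=\tfrac{\Z}{E_0V_0}(u,w)$ as a maximizer in \eqref{eq:comp_def}. By linearity $\mathcal{A}_z(u_\e,w_\e)=\tfrac{\Z}{E_0V_0}\mathcal{A}_z(u,w)$ and, since $\nabla z=\tfrac12\nabla w$, $\mathcal{A}_z(u,w)=e(u)+\tfrac12\,\nabla w\otimes\nabla w$. The crucial point — and what I expect to be the main obstacle — is the "fully strained" identity
\[
\gamma_{z,+}\!\left(\mathcal{A}_z(u,w)\right)=1\qquad \sigma\text{-a.e. in }\Omega,
\]
the exact analogue of $\gamma(e(u))=1$ $\sigma$-a.e. in Michell theory. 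The bound "$\leq1$" holds everywhere: by \eqref{eq:gamma_leq_h} the left side is $\leq h(e(u),\nabla w)$, and Proposition~\ref{prop:h_and_g_C} together with the admissibility $\tfrac14\nabla w\otimes\nabla w+e(u)\preceq\mathrm{I}$ from \eqref{eq:opt_cond}(i) gives $h(e(u),\nabla w)=g_\mathscr{C}(e(u),\nabla w)\leq1$. For the reverse inequality $\sigma$-a.e.\ I would use the variational formula $\gamma_{z,+}(\xi)=\sup\{\xi:\eta\mid \eta\in\Sddp,\ G_z:\eta\leq1\}$ with the admissible test tensor $\eta=\sigma/(G_z:\sigma)$ at points where $\sigma\neq0$, which gives $\gamma_{z,+}(\mathcal{A}_z(u,w))\geq \mathcal{A}_z(u,w):\sigma\,/\,(G_z:\sigma)$; a short computation combining \eqref{eq:opt_cond}(iii), namely $(\tfrac14\nabla w\otimes\nabla w+e(u)):\sigma=\tr\sigma$, with \eqref{eq:opt_cond}(iv) shows $\mathcal{A}_z(u,w):\sigma=\tr\sigma+(\sigma^{-1}q)\cdot q=G_z:\sigma$ $\sigma$-a.e., so the ratio equals $1$. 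Granting this, and recalling that $\{\rho>0\}$ and $\{\sigma\neq0\}$ coincide, the objective of \eqref{eq:comp_def} at $(u_\e,w_\e)$ equals $\int_\Omega w_\e f-\tfrac{E_0}{2}\big(\tfrac{\Z}{E_0V_0}\big)^2\int_\Omega J_z\rho=\tfrac{\Z^2}{E_0V_0}-\tfrac{\Z^2}{2E_0V_0}=\Z^2/(2E_0V_0)=\Comp(z,\rho)$, using $\int_\Omega w f=\Z$ from Theorem~\ref{thm:duality}; hence $(u_\e,w_\e)$ is a maximizer in \eqref{eq:comp_def}.

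Finally, the constitutive law is the Fenchel extremality equality between the now-established primal and dual elastic fields. Indeed, $\rho$-a.e.\ one has $\gamma_{z,+}(\mathcal{A}_z(u_\e,w_\e))=\Z/(E_0V_0)$ and $\gamma_z^0(\NN)=\Z/V_0$, so by \eqref{eq:j_gamma} both $j_{z,+}(\mathcal{A}_z(u_\e,w_\e))$ and $j^*_{z,+}(\NN)$ equal $\Z^2/(2E_0V_0^2)$, while $\mathcal{A}_z(u_\e,w_\e):\NN=\tfrac{\Z}{E_0V_0J_z\rho}\,G_z:\sigma=\Z^2/(E_0V_0^2)$ since $G_z:\sigma=\tfrac{\Z}{V_0}J_z\rho$; as these add up correctly, $\NN\in\partial j_{z,+}(\mathcal{A}_z(u_\e,w_\e))$ $\rho$-a.e. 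Alternatively one may simply invoke the standard complementarity between \eqref{eq:comp_def} and \eqref{eq:dual_comp_def} once both extremal fields are in hand. I expect nothing delicate in this last step beyond arithmetic; the genuine work is the $\sigma$-a.e.\ saturation identity above, which is where the optimality conditions \eqref{eq:opt_cond} are really used.
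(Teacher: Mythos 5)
Your proposal is correct and follows essentially the same route as the paper's proof: volume check, plugging $\NN=\sigma/(J_z\rho)$ into the dual compliance to close the sandwich with Lemma \ref{lem:Cmin_leq_Z}, deriving the $\rho$-a.e.\ saturation $\gamma_{z,+}\bigl(\mathcal{A}_z(u,w)\bigr)=1$ from optimality conditions (iii)--(iv), and finishing with the Fenchel extremality equality for the constitutive law. Your derivation of the saturation identity by testing the sup formula for $\gamma_{z,+}$ with $\eta=\sigma/(G_z:\sigma)$ is just an explicit rendering of the extremality argument the paper uses, so there is no substantive difference.
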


\begin{proof}
We start by checking whether $\rho$ satisfies the volume constraint:
\begin{equation}
	\label{eq:volume_check}
	\int_{\Omega} J_{z} \, \rho = \int_{\Omega} J_{z} \left(\frac{V_0}{\Z} \frac{1}{J_{z}}\, G_{z} : \sigma \right) = \frac{V_0}{\Z} \int_\Omega \Big(\tr\,\sigma + \sigma:\bigl(\nabla z \otimes \nabla z\bigr) \Big) = V_0
\end{equation}
where the last equality follows from Theorem \ref{thm:recovring_MV_dome}, see \eqref{eq:Z_leq_V_leq_Z} in its proof. We compute $\rho$-almost everywhere
\begin{equation*}
	\gamma_{z,+}^0(\NN) = \gamma_{z}^0(\NN) = G_z : \NN = \frac{1}{J_z\,\rho}\, G_z:\sigma =\left( \frac{\Z}{V_0} \frac{J_z}{G_z:\sigma}\right)\! \frac{1}{J_z}\, G_z:\sigma = \frac{\Z}{V_0}
\end{equation*}
and boundedness $\NN \in  L^\infty_{\rho}(\Omega;\Sddp)$ follows. Since $\rho \NN J_z = \sigma$ it is straightforward that $\NN$ is a competitor for the stress-based problem \eqref{eq:dual_comp_def}. The chain of inequalities may be written down:
\begin{equation*}
	\Cmin \leq \Comp(z,\rho) \leq  \frac{1}{2 E_0} \int_\Omega \Big(  \gamma^0_{z}\bigl(\NN \bigr)\Big)^2 J_{z}\, \rho = \frac{1}{2 E_0} \int_\Omega \biggl(  \frac{\mathcal{Z}}{V_0}\biggr)^2 J_{z}\, \rho = \frac{\Z^2}{2E_0 V_0} \leq \Cmin
\end{equation*}
where the last inequality is the assertion of Lemma \ref{lem:Cmin_leq_Z}. Ultimately, everywhere in the chain above we have equalities from which we infer that $(z,\rho)$ solves $(\mathrm{MCV}_\Omega)$ and that $\NN$ solves the stress-based elasticity problem \nolinebreak \eqref{eq:dual_comp_def}.

By manipulating with the relation $z = \frac{1}{2} \, {w}$ we compute $\rho$-a.e.
\begin{equation*}
	\mathcal{A}_{z}({u},{w}) : \NN = \bigl( e({u})+\nabla z \otimes \nabla {w} \bigr):\NN = \left(\frac{1}{4}\, \nabla {w} \otimes \nabla {w} +e({u}) \right):\NN + \bigl(\nabla z \otimes \nabla z\bigr): \NN.
\end{equation*}
Owing to definition of $\rho$ we observe that writing "$\rho$-a.e." is the same as writing "$\sigma$-a.e.". Next, since $\NN$ is point-wise proportional to $\sigma$, we infer from optimal condition (iii) in \eqref{eq:opt_cond} that 
\begin{equation*}
	\mathcal{A}_{z}({u},{w}) : \NN = \tr\,\NN +  \bigl(\nabla z \otimes \nabla z\bigr): \NN = G_{z} : \NN = \gamma_{z,+}^0(\NN) = \frac{\Z}{V_0}.
\end{equation*}
Due to Lemma \ref{lem:Cmin_leq_Z} everywhere in $\Omega$ it holds that $\gamma_{z,+}\bigl( \mathcal{A}_{z}({u},{w}) \bigr)\leq h\bigl(e({u}),\nabla {w} \bigr) \leq 1$ which renders the equation $\mathcal{A}_{z}({u},{w}) : \NN = \gamma_{z,+}^0(\NN)\ $ an extremality relation, therefore 
\begin{equation*}
	\gamma_{z,+}\bigl(\mathcal{A}_{z}({u},{w}) \bigr) = 1 \qquad \rho\,\text{-a.e.}
\end{equation*}
and consequently $\gamma_{z,+}\bigl(\mathcal{A}_{z}(u_\e,w_\e) \bigr) = \Z/(E_0 V_0)\ $ $\rho$-a.e. By optimality of $(z,\rho)$, by definition \eqref{eq:comp_def} and by the fact that $(u,w)$ solves $(\mathcal{P}^*)$ we obtain
\begin{align*}
	\Cmin = \Comp(z,\rho) \geq \int_{\Omega} w_\e f - \frac{E_0}{2} \int_\Omega  \Big(  \gamma_{z,+}\bigl(\mathcal{A}_{z}(u_\e,w_\e) \bigr)\Big)^2 J_{z} \, \rho = \frac{\Z}{E_0 V_0} \int_\Omega {w} \, f - \frac{E_0}{2} \int_\Omega  \biggl( \frac{\Z}{E_0 V_0}\biggr)^2 J_{z} \, \rho&\\
	= \frac{\Z}{E_0 V_0} \,\Z - \frac{\Z^2}{2E_0 (V_0)^2} \int_\Omega J_{z} \, \rho= \frac{\Z^2}{2E_0 V_0} = \Cmin&
\end{align*}
which ultimately is a chain of equalities proving that $(u_\e,w_\e)$ solves the displacement-based elasticity problem \eqref{eq:comp_def}. Verification of the constitutive law amounts to showing that
\begin{equation*}
	\mathcal{A}_{z}(u_\e,w_\e) : \NN = j_{z,+}\bigl( \mathcal{A}_{z}(u_\e,w_\e)\bigr) + j_{z,+}^*\bigl(\NN\bigr) \qquad \rho\text{-a.e.}
\end{equation*}
being straightforward when combining the hitherto obtained results and formulas \eqref{eq:j_gamma}. 
\end{proof}

The astonishing relation obtained deserves to be put as a separate result:
\begin{corollary}
	\label{cor:z_w}
	The elevation function $z$ of the optimal elastic vault $\mathcal{S}_z$ and its vertical displacement function $w_\e$ satisfy the relation
	\begin{equation}
		\label{eq:z_w}
		z = \frac{E_0 V_0}{2 \Z}\, w_\e.
	\end{equation}
\end{corollary}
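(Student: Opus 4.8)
The statement is an immediate algebraic consequence of the two relations established in Theorem \ref{thm:recovring_MC_dome}. The plan is simply to eliminate the auxiliary solution $w$ of $(\mathcal{P}^*)$ between the formula for the optimal elevation and the formula for the elastic vertical displacement. First I would recall from Theorem \ref{thm:recovring_MC_dome} that the optimal elevation function of the minimum-compliance vault is $z = \tfrac{1}{2}\, w$, where $w$ is (together with $u$) a solution of $(\mathcal{P}^*)$. Next, from the same theorem, the vertical displacement of the optimal elastic vault is the rescaled function $w_\e = \tfrac{\Z}{E_0 V_0}\, w$, obtained from the very same $w$. Since $\Z = \min\mathcal{P} < \infty$ is strictly positive (it equals the least volume $\Vmin > 0$ by Theorem \ref{thm:recovring_MV_dome}, assuming the load $f$ is nontrivial), the relation $w_\e = \tfrac{\Z}{E_0 V_0}\, w$ may be inverted to give $w = \tfrac{E_0 V_0}{\Z}\, w_\e$. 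Substituting this into $z = \tfrac{1}{2}\, w$ yields
\begin{equation*}
	z = \frac{1}{2}\, w = \frac{1}{2}\cdot\frac{E_0 V_0}{\Z}\, w_\e = \frac{E_0 V_0}{2\Z}\, w_\e,
\end{equation*}
which is precisely \eqref{eq:z_w}, completing the argument.

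There is essentially no obstacle here: the only point requiring a word of care is the invertibility of the scalar $\tfrac{\Z}{E_0 V_0}$, i.e. that $\Z \neq 0$, which is guaranteed as soon as the design problem is non-degenerate (equivalently $\Vmin > 0$, or $\sup\mathcal{P}^* > 0$, which holds whenever $f \not\equiv 0$). Beyond that, the corollary is a restatement highlighting the remarkable proportionality: the optimal shape of the vault is, up to the explicit positive constant $\tfrac{E_0 V_0}{2\Z}$, identical to its own elastic deflection under the tracking load. One may additionally remark that this proportionality constant can be rewritten using $\Cmin = \tfrac{\Z^2}{2 E_0 V_0}$ from Theorem \ref{thm:recovring_MC_dome}, namely $\tfrac{E_0 V_0}{2\Z} = \tfrac{\Z}{4\,\Cmin}$, which recasts \eqref{eq:z_w} purely in terms of the optimal volume-type quantity $\Z$ and the optimal compliance $\Cmin$.
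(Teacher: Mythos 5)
Your proposal is correct and coincides with the paper's treatment: the corollary is stated as an immediate consequence of Theorem \ref{thm:recovring_MC_dome}, obtained exactly by eliminating $w$ between $z=\frac{1}{2}\,w$ and $w_\e=\frac{\Z}{E_0 V_0}\,w$ (the remark that $\Z>0$ makes the constant well defined, and the rewriting via $\Cmin$, are fine but inessential additions).
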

\vspace{0.5cm}
In the course of the proof the following equalities were obtained $\rho$-a.e.
\begin{equation}
\label{eq:uniform_strain_and_stress}
\gamma_{z,+}\bigl(\mathcal{A}_{z}(u_\e,w_\e)\bigr) =  h\bigl(e(u_\e),\nabla w_\e \bigr) = \frac{\Z}{E_0 V_0}, \qquad \gamma_{z,+}^0(\NN) = \frac{\Z}{V_0}.
\end{equation}
First of all, it means that where there is material $\rho$, there the strains $\xi = \mathcal{A}_{z}(u_\e,w_\e)$ and stresses $\NN$ are uniform with respect to gauges $\gamma_{z,+}$ and $\gamma_{z,+}^0$, respectively. These features of optimal design are well known from theory of Michell structures.
In the equality $\gamma_{z,+}\bigl(\mathcal{A}_{z}(u_\e,w_\e)\bigr) =  h\bigl(e(u_\e),\nabla w_\e \bigr)$ one may seek the mystery behind the result \eqref{eq:z_w} or the recipe for the optimal elevation itself: $z = \frac{1}{2} \, {w}$. From definition \eqref{eq:comp_def} of the compliance we see that the role of $z$ as a design variable is to point-wise maximize the value $\gamma_{z,+}\bigl(\mathcal{A}_{z}(u_\e,w_\e)\bigr)$ for given functions $u,w$ and, by \eqref{eq:gamma_leq_h}, the biggest possible value is precisely $h\bigl(e(u_\e),\nabla w_\e \bigr)$. According to \eqref{eq:uniform_strain_and_stress} this upper bound is reached for every material point when $z = \frac{1}{2} {w}$ and we shall show why this is. For a fixed $x \in \Omega$ let $\eps = e(u_\mathrm{e})(x)$ and $\vartheta = \nabla w_\e(x)$. Then, let $\bar{\tau}$ be the vector that gives the maximum in \eqref{eq:h_with_psi}; we will find a solution $\bar{\psi}$ of the smooth problem 
\begin{equation*}
	h(\eps,\vartheta) = \max\limits_{\psi \in \R} \frac{\eps:(\bar{\tau}\otimes\bar{\tau})+\psi\ \vartheta\cdot \bar{\tau}}{1+ \psi^2}.
\end{equation*}
By writing the Euler-Lagrange equation one computes that
\begin{equation*}
	h(\eps,\vartheta) = \frac{\eps:(\bar{\tau}\otimes\bar{\tau})+\bar\psi\ \vartheta\cdot \bar{\tau}}{1+ \bar{\psi}^2} = \frac{1}{2} \left(\eps:(\bar{\tau}\otimes\bar{\tau}) + \sqrt{(\eps:(\bar{\tau}\otimes\bar{\tau}))^2+(\vartheta\cdot \bar{\tau})^2} \right)
\end{equation*}
where, under assumption that $(\eps,\vartheta)\neq (0,0)$
\begin{equation}
	\label{eq:optimal_psi}
	\bar{\psi} = \frac{\vartheta\cdot \bar{\tau}}{\eps:(\bar{\tau}\otimes\bar{\tau}) + \sqrt{(\eps:(\bar{\tau}\otimes\bar{\tau}))^2+(\vartheta\cdot \bar{\tau})^2}} = \frac{(\frac{1}{2} \vartheta)\cdot \bar{\tau}}{h(\eps,\vartheta)}.
\end{equation}
By optimality of $(u,w)$ in problem $(\mathcal{P}^*)$ we have $h\bigl(e(u),\nabla w \bigr) =1\ $ $\sigma$-a.e. and therefore $h\bigl(e(u_\e),\nabla w_\e \bigr) = \Z /(E_0 V_0) \ $ $\rho$-a.e. The fact that $h\bigl(e(u_\e),\nabla w_\e \bigr)$ is constant $\rho$-a.e. is fundamental since for $z =E_0 V_0/\Z\, \frac{1}{2} w_\e $ the field $\bar{\psi} = \nabla z \cdot \bar\tau$ satisfies \eqref{eq:optimal_psi} for $\eps = e(u_\e)$ and $\vartheta = \nabla w_\e \ $ $\rho$-a.e. in $\Omega$. As a result we indeed arrive at $\gamma_{z,+}\bigl(\mathcal{A}_{z}(u_\e,w_\e)\bigr) =  h\bigl(e(u_\e),\nabla w_\e \bigr) = \Z /(E_0 V_0)$ for $\rho$-a.e. point in $\Omega$.

\section{Optimal vaults and Prager structures -- the measure-theoretic setting}
\label{sec:3D}

\subsection{Discussion on constructing optimal vaults in the general case}
\label{ssec:regularity}

The hitherto found two main results on the optimal plastic and elastic design of vaults -- Theorem \ref{thm:recovring_MV_dome} and Theorem \ref{thm:recovring_MC_dome} --  rely on the two assumptions on the pairs $(\sigma,q)$ and $(u,w)$ solving problems $(\mathcal{P})$ and $(\mathcal{P}^*)$: the fields $\sigma$,\,$q$ must be integrable functions (equivalently measures absolutely continuous with respect to Lebesgue measure $\mathcal{L}^2$) and functions $u$,\,$w$ must be continuously differentiable which in general is rarely the case.
The most delicate issue concerns regularity of solutions of problem $(\mathcal{P}^*)$, more precisely: which functional spaces are suitable for functions $u$ and, independently, $w$ so that the problem $(\mathcal{P}^*)$ (or rather its relaxed variant) always attains a solution? This matter was addressed in \cite{bouchitte2020}, the result below is a part of Proposition 5.11 therein:

\begin{proposition}
	\label{prop:regularitu_u_w}
	Assume that $\Omega$ is convex. Then, the maximization problem $(\mathcal{P}^*)$ in its relaxed form attains a solution $u \in BV(\Omega;\Rd) \cap L^\infty(\Omega;\Rd)$, \  $w \in C^{0,1/2}(\Ob;\R) \cap W^{1,2}(\Omega;\R)$ such that
	\begin{equation*}
	\bigl(e(u)\bigr)_s \preceq 0, \qquad \qquad \frac{1}{4} \, \nabla w \otimes \nabla w + \bigl(e(u) \bigr)_{ac} \preceq \mathrm{I} \qquad \text{a.e. in } \Omega
	\end{equation*}
	where $e(u) = (e(u))_{ac} + (e(u))_s$ is the Lebesgue decomposition of measure $e(u) \in \Mes(\Omega;\Sdd)$ into the absolutely continuous part $(e(u))_{ac}$ and the singular part $(e(u))_s$; the derivative $\nabla w \in L^2(\Omega;\Rd)$ is intended in the weak sense. Moreover, the following uniform estimates hold:
	\begin{equation}
		\label{eq:estimates_u_w}
		\norm{u}_\infty \leq \mathrm{diam}(\Omega), \qquad \norm{w}_\infty \leq \sqrt{2}\,\mathrm{diam}(\Omega),
	\end{equation}
	where $\mathrm{diam}(\Omega) = \sup_{x,y \in \Omega}\,\abs{x-y}$ denotes the diameter of $\Omega$.
\end{proposition}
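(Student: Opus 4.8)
As noted in Remark~\ref{rem:bO_smoothness}, this proposition is recorded for illustration only, so we confine ourselves to the plan; the full argument is \cite[Section~5]{bouchitte2020}. The idea is to mimic the treatment of the Michell problem \eqref{eq:Michell_max}: first recast the constraint $\tfrac14\,\nabla w\otimes\nabla w+e(u)\preceq\mathrm{I}$ as a two-point condition meaningful when $u$ is merely $BV$ and $w$ merely continuous, then read off the a priori bounds, and finally apply the direct method to the relaxed problem.

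First I would rewrite the constraint. Fix $x,y\in\Ob$; by convexity of $\Omega$ the segment $[x,y]$ lies in $\Ob$, and for a.e.\ direction the restrictions of $u$ and $w$ to it are legitimate one-dimensional $BV$, resp.\ $W^{1,2}$, functions ($BV$ slicing for $u$, absolute continuity along a.e.\ line for $w$). With $\tau=\tfrac{y-x}{\abs{y-x}}$, $L=\abs{y-x}$, $\phi(t)=u(x+tL\tau)\cdot\tau$, $\psi(t)=w(x+tL\tau)$, testing admissibility against $\tau\otimes\tau$ gives $\phi'_{ac}(t)\le L-\tfrac1{4L}\bigl(\psi'(t)\bigr)^2$, while $\bigl(e(u)\bigr)_s\preceq0$ forces the singular part of $\phi'$ to be a nonpositive measure; integrating over $[0,1]$ and using $\int_0^1\bigl(\psi'(t)\bigr)^2dt\ge\bigl(\psi(1)-\psi(0)\bigr)^2$ yields
\begin{equation*}
	\bigl(u(y)-u(x)\bigr)\cdot\frac{y-x}{\abs{y-x}}+\frac{\bigl(w(y)-w(x)\bigr)^2}{4\,\abs{y-x}}\ \le\ \abs{y-x}\qquad\text{for all }x,y\in\Ob,
\end{equation*}
the analogue of \eqref{eq:two_point_Michell} and of Lemma~2.1 in \cite{bouchitte2008}. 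Conversely, dividing by $\abs{y-x}$ and letting $y\to x$ along each fixed direction recovers $\tfrac14\,\nabla w\otimes\nabla w+\bigl(e(u)\bigr)_{ac}\preceq\mathrm{I}$ a.e.; convexity of $\Omega$ is used exactly here.

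Next I would extract the estimates from the two-point condition. Given $x\in\Omega$, take a chord of $\Omega$ through $x$ with endpoints $a,b\in\bO$ and write the condition for $(a,x)$ and $(b,x)$, at which $u$ and $w$ vanish; adding the two inequalities kills the $u$-terms and leaves $w(x)^2\le4\,\abs{a-x}\,\abs{b-x}\le\abs{a-b}^2$, which gives the bound on $\norm{w}_\infty$ in \eqref{eq:estimates_u_w}. Keeping only the pair $(a,x)$ and discarding the nonnegative $w$-term gives $u(x)\cdot\tau\le\abs{a-x}$ for every direction $\tau$ in which the ray from $x$ meets $\bO$ --- all directions, by convexity --- whence the bound on $\norm{u}_\infty$. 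The same computation with $x,y$ interior gives $\bigl(w(y)-w(x)\bigr)^2\le4\abs{y-x}\bigl(\abs{y-x}+2\norm{u}_\infty\bigr)$, so $w\in C^{0,1/2}(\Ob)$; and taking the trace of the matrix inequality gives $\tfrac12\abs{\nabla w}^2\le2-\tr\bigl(e(u)\bigr)_{ac}$ a.e., where $\int_\Omega\tr\bigl(e(u)\bigr)_{ac}\ge\int_\Omega d\bigl(\tr\,e(u)\bigr)=\int_{\bO}u\cdot\nu\,d\Ha^1=0$ because $\bigl(e(u)\bigr)_s\preceq0$, so $\nabla w$ stays bounded in $L^2(\Omega;\Rd)$.

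Finally I would run the direct method on the relaxed problem, whose feasible set is the convex family of pairs $(u,w)$ with $u\in BV(\Omega;\Rd)$, $w\in C^{0,1/2}(\Ob)\cap W^{1,2}(\Omega)$, both vanishing on $\bO$, satisfying $\bigl(e(u)\bigr)_s\preceq0$ and the two-point condition. By the estimates a maximizing sequence $(u_n,w_n)$ is bounded in $BV$, $C^{0,1/2}(\Ob)$ and $W^{1,2}(\Omega)$, so along a subsequence $u_n\weakstar u$ in $BV$, $w_n\to w$ uniformly on $\Ob$, $\nabla w_n\rightharpoonup\nabla w$ in $L^2$. Since $f\in\Mes(\Ob;\R)$ the objective $w\mapsto\int_\Ob w\,f$ is continuous for uniform convergence, so the value passes to the limit, and the feasible set is closed under these convergences (both the two-point condition and $\bigl\{(e(u))_s\preceq0\bigr\}$ are closed and convex), hence $(u,w)$ is the desired maximizer, with the stated regularity and the bounds \eqref{eq:estimates_u_w}. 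Equality of this relaxed value with $\sup\mathcal{P}^*$ follows by a dilation-plus-mollification argument that preserves the constraint exactly (the map $v\mapsto v\otimes v$ being convex) and along which $\int_\Ob w\,f$ converges; together with Theorem~\ref{thm:duality} this identifies the common value as $\Z$. I expect the genuinely delicate point --- and the one treated carefully in \cite{bouchitte2020} --- to be the interplay of the $BV$ slicing with the relaxed constraint $\bigl(e(u)\bigr)_s\preceq0$: one must check that the singular part of $u$ along lines can never destroy admissibility and that $\bigl\{(e(u))_s\preceq0\bigr\}$ is precisely the constraint stable under weak-$*$ $BV$ convergence, so that the passage between the pointwise and the two-point formulation is reversible.
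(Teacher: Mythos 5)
The paper does not actually prove this proposition: it is imported verbatim (up to the rescaling of $w$ noted in Remark \ref{rem:different_factor}) from \cite[Proposition 5.11]{bouchitte2020}, so there is no in-paper argument to compare against. Your sketch follows the same route as that reference -- two-point reformulation of the constraint, a priori bounds from chords through the convex $\Omega$, direct method on the relaxed class -- and the estimates you extract are correct; in fact your chord argument gives $\norm{w}_\infty \leq \mathrm{diam}(\Omega)$, which is consistent with (and sharper than) the stated $\sqrt{2}\,\mathrm{diam}(\Omega)$, the latter being an artefact of the rescaling from the factor $\frac{1}{2}$ in \cite{bouchitte2020} to $\frac{1}{4}$ here.

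Three steps, however, are glossed in a way that would not survive as written. First, in the direct method you close the feasible set using the two-point condition, but that condition involves pointwise values of $u$, and $u_n \weakstar u$ in $BV$ gives only $L^1$ (not pointwise) control, so stability of the two-point inequality along the maximizing sequence is exactly what fails; the clean fix is to encode admissibility as the single measure inequality $\frac{1}{4}\,\nabla w \otimes \nabla w\, \mathcal{L}^2 + e(u) \preceq \mathrm{I}\,\mathcal{L}^2$ in $\Mes(\Omega;\Sdd)$, which is weak-* closed (test with $\varphi \geq 0$ and fixed $\tau$, use convexity of $\vartheta \mapsto (\vartheta\cdot\tau)^2$ for the weakly convergent $\nabla w_n$ and linearity for $e(u_n)$), and recovers both displayed conditions of the proposition. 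Second, boundedness of $(u_n)$ in $BV$ is asserted but needs an argument: from $e(u_n) \preceq \mathrm{I}\,\mathcal{L}^2$ the measure $\mathrm{I}\,\mathcal{L}^2 - e(u_n)$ is positive semi-definite, so its total variation is comparable to its trace, whose integral is controlled by $2\abs{\Omega}$ plus a boundary term bounded via $\norm{u_n}_\infty$; only then does $\abs{e(u_n)}(\Omega)$ stay bounded. Third, your identity $\int_\Omega d\bigl(\tr e(u)\bigr) = \int_{\bO} u\cdot\nu\, d\Ha^1 = 0$ is not available in the relaxed setting, where the trace of $u$ need not vanish; the admissibility of the zero-extension only yields $u\cdot\nu \geq 0$ on $\bO$, which gives $\int_\Omega \tr\bigl(e(u)\bigr)_{ac} \geq 0$ -- fortunately the inequality is all your $L^2$ bound on $\nabla w$ requires (note also the trace of $\frac{1}{4}\nabla w \otimes \nabla w$ is $\frac{1}{4}\abs{\nabla w}^2$, not $\frac{1}{2}\abs{\nabla w}^2$). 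With these repairs the outline matches the strategy of the cited proof; you correctly identified the interplay between $BV$ slicing and the relaxed constraint as the genuinely delicate point.
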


The first obstacle with handling the general case, i.e. the solutions $\sigma, q$ of $(\mathcal{P})$ being measures and solutions $u, w$ of the relaxed problem $(\mathcal{P}^*)$ as in Proposition \ref{prop:regularitu_u_w}, lies in adapting the optimality conditions in Theorem \ref{thm:opt_cond} that proved to be fundamental for recovering the optimal vault. Derivatives $\nabla w$ and $(e(u))_{ac}$ are a priori functions in Lebesgue spaces and thus are defined a.e. with respect to Lebesgue measure $\mathcal{L}^2$. Once $\sigma, q$ concentrate on lower dimensional measure (e.g. representing bars) the optimality conditions (iii), (iv) in \eqref{eq:opt_cond} require $\nabla w$ and $e(u)$ to be defined a.e. yet with respect to this lower dimensional measure and therefore more information is needed than is guaranteed by Proposition \ref{prop:regularitu_u_w}. A compromise was found in \cite[Theorem 4.1]{bouchitte2020} where Lipschitz continuous solutions $u \in \mathrm{Lip}(\Ob;\Rd)$, $w \in \mathrm{Lip}(\Ob;\R)$ were assumed, see Section \ref{ssec:3D_to_2D} and optimality conditions \eqref{eq:opt_cond_mu} below for more details.

Before we investigate an example of more general solutions of the pair $(\mathcal{P})$,\,$(\mathcal{P}^*)$ we introduce some additional objects that allow description of bar structures measure-theoretically: for arbitrary pair of distinct points $x,y \in \Rd$ let us define one dimensional matrix and vector measures
\begin{equation}
\label{eq:bar_measures}
\sigma^{x,y} := \tau^{x,y} \otimes \tau^{x,y} \, \Ha^1 \mres [x,y], \qquad q^{x,y} := \tau^{x,y} \, \Ha^1 \mres [x,y], \qquad \tau^{x,y}:= \frac{y-x}{\abs{y-x}}.
\end{equation}
Measure $\sigma^{x,y}$ ought to model a straight bar of end-points $x,y$ and of unit tensile axial force; similarly $q^{x,y}$ represents a bar with unit transverse force. For any pair of distinct points $x,y \in \R^2$ we may compute in the sense of distributions on $\Omega$: $-\DIV \, \sigma^{x,y} = \mathbbm{1}_\Omega(y)\,\tau^{x,y} \, \delta_y - \mathbbm{1}_\Omega(x)\,\tau^{x,y}\, \delta_x$ and $-\dive\, q^{x,y} = \mathbbm{1}_\Omega(y)\,\delta_y - \mathbbm{1}_\Omega(x)\,\delta_x$ where $\mathbbm{1}_\Omega$ stands the characteristic function of $\Omega$. The divergences are thus point forces (horizontal in $\Rd$ or vertical in $\R$) applied at bar's end-points as expected, unless the given end-point lies on the boundary (or more generally outside $\Omega$) -- such force disappears for it is immaterial to the equilibrium equation.

\begin{example}[\textbf{Analytical solution of the optimal vault problem for a point force over a disk domain}]
	\label{ex:cone}
	For the disk domain $\Omega = \bigl\{x \in \Rd \, \big\vert\, \abs{x} < R \bigr\}$ consider a point load $f = P \,\delta_{x_0}$ with $x_0\in \Omega$. Let us choose any positive measure on the boundary $\pi \in \Mes_+(\bO)$ that satisfies $\pi(\bO) = 1$ ($\pi$ is a probability) and
	\begin{equation}
	\label{eq:condition_p}
	x_0 = \int_\bO x \,\pi(dx),
	\end{equation}
	namely $x_0$ is the barycentre of $\pi$. We propose measures $\sigma, q$ that are measure-theoretic superposition of \textit{bar-like} measures $\sigma^{x,y}, q^{x,y}$, that is, with a slight abuse of notation we use disintegration formulas below:
	\begin{equation}
	\label{eq:fibrous_sigma_q}
	\sigma = \frac{P}{\sqrt{R^2 - \vert x_0 \vert^2}}\int_\bO \abs{x-x_0}\,\sigma^{x,x_0} \, \pi(dx), \qquad q = P \int_\bO q^{x,x_0} \, \pi(dx).
	\end{equation}
	\begin{figure}[h]
		\centering
		\subfloat[]{\includegraphics*[trim={-1cm -1cm -0cm -0cm},clip,width=0.25\textwidth]{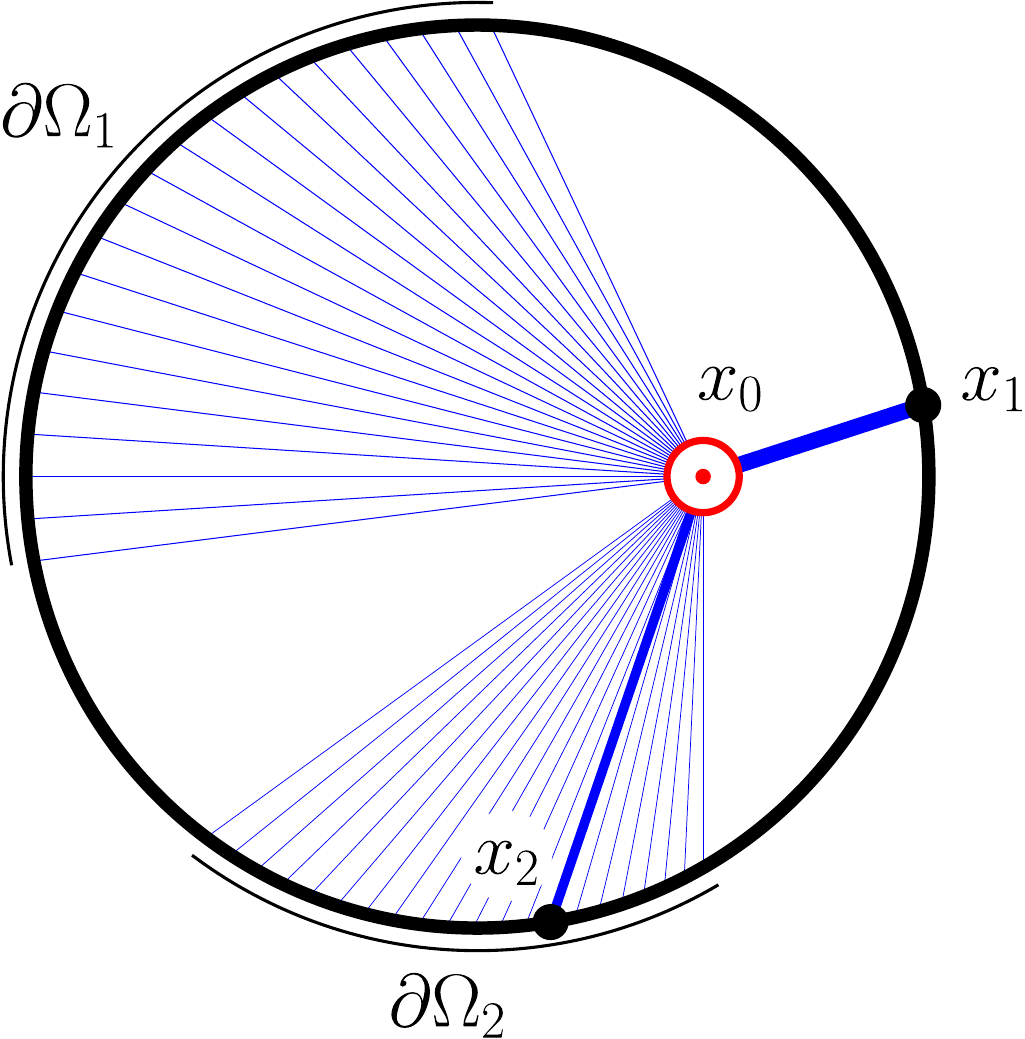}}\hspace{0.5cm}
 		\subfloat[]{\includegraphics*[trim={1.3cm -0.6cm 4cm 1.5cm},clip,width=0.33\textwidth]{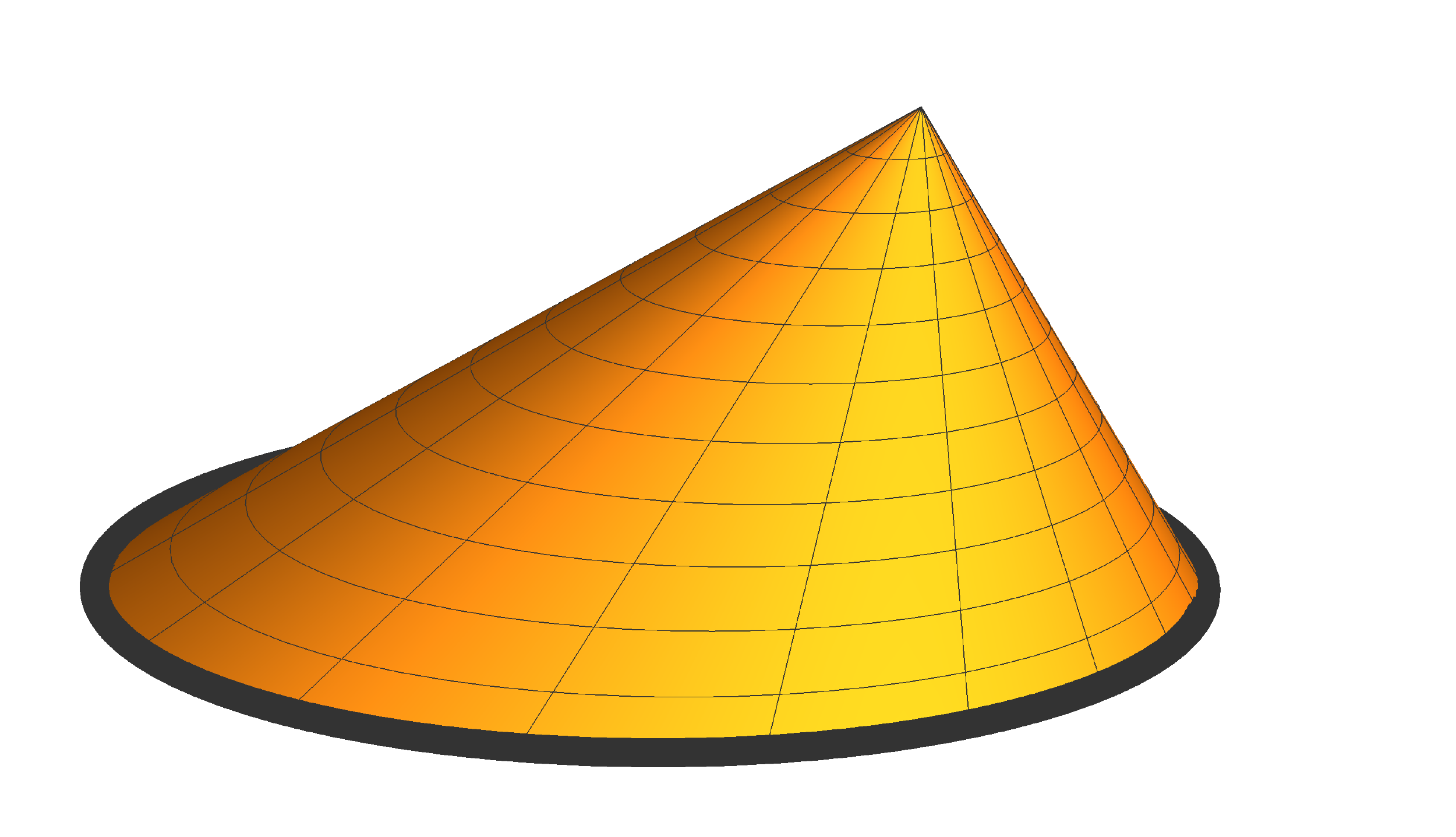}}\hspace{0.7cm}
		\subfloat[]{\includegraphics*[trim={0.8cm 0cm 2cm 1.5cm},clip,width=0.33\textwidth]{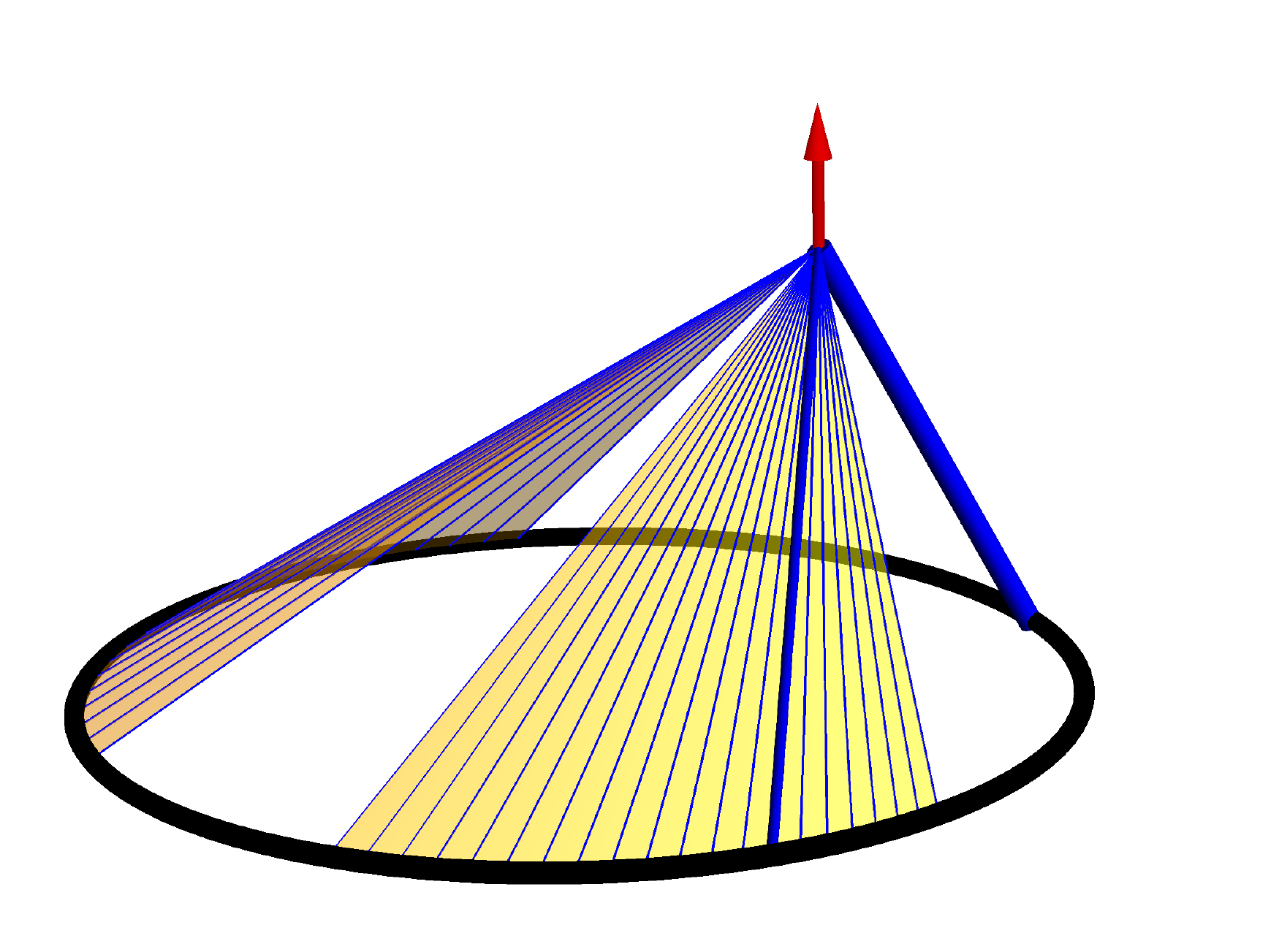}}\\
		\caption{The problem of optimal vault over a disk that is subject to a point force $P$ at $x_0$:  (a) the field $\sigma$ solving problem $(\mathcal{P})$; (b) optimal elevation function $z = \frac{1}{2}\,w$ where $w$ solves problem $(\mathcal{P}^*)$; (c) an optimal structure composed of 1D and 2D elements.}
		\label{fig:cone}
	\end{figure}
	As candidates for solutions of the relaxed problem $(\mathcal{P}^*)$ we propose
	\begin{equation}
	\label{eq:cone_uw}
	{u}(x) = 2\,h(x)\, x_0, \qquad {w}(x) = 2\sqrt{R^2 - \vert x_0 \vert^2} \, h(x).
	\end{equation}
	where $h:\Rd \rightarrow \R$ is a function of the graph $\bigl\{\bigl(x,h(x)\bigr)\bigr\}$ being a cone of vertex $\{x_0,1\}$ which passes through $\bO \times \{0\}$, see Fig. \ref{fig:cone}(b). The functions ${u}, {w}$ are Lipschitz continuous, yet not differentiable at $x_0$. The generalized variant of optimality conditions \eqref{eq:opt_cond} spoken of in this section are verified for the pairs $(\sigma,q)$ and $({u},{w})$ in \cite[Section 4.4]{bouchitte2020} where we refer for details and examples of solutions for $f = P \, \delta_{x_0}$ and different convex domains. This renders $(\sigma,q)$ and $({u},{w})$ optimal for $(\mathcal{P})$ and the relaxed version of $(\mathcal{P}^*)$, respectively, while $\Z$ may be computed as $\int_\Ob w \,f = P \, w(x_0) = 2 P \sqrt{R^2 - \vert x_0 \vert^2}$. 
	
	Condition \eqref{eq:condition_p} imposed on the probability measure $\pi \in \Mes_+(\bO)$  is very mild, offering a wide choice of solutions $(\sigma,q)$ via formulas \eqref{eq:fibrous_sigma_q}. For instance we may consider $\pi = \sum_{i=1}^2 b_i \,\Ha^1\mres\bO_i + \sum^2_{i=1} B_i \, \delta_{x_i}$, with $b_i,B_i \in R_+$ chosen so that $\pi(\bO) = 1$ and $x_0 = \int_{\bO} x \, \pi(dx)$, then the field $\sigma$ in accordance with \eqref{eq:fibrous_sigma_q} is visualized in Fig. \ref{fig:cone}(a). This optimal field $\sigma$ may be divided into three parts: a "continuous" fan radiating from $x_0$ and supported on the arc $\bO_1$; a bar $[x_1,x_0]$ of finite cross section area; the hybrid part consisting both of continuous fan supported on $\bO_2$ and a bar $[x_2,x_0]$.
\end{example}

Once the solution $\sigma$ is a general matrix-valued measure we are facing another issue when constructing the optimal vault (also in the case of differentiable $z = \frac{1}{2} w$ that will be assumed below for simplicity): the transformations $\eta = 1/J_z\, \sigma$ in Theorem \ref{thm:recovring_MV_dome} and $\rho = V_0/(\Z J_z)\, G_z:\sigma$ fail to work, even when $\eta$, $\rho$ are understood as measures. In order to explain the issue further we shall focus only on the case of the elastic design problem. If $\sigma$ is a measure we should \textit{a priori} seek elastic material distribution on the surface which is a measure $\hat{\mu} \in \Mes_+(\mathcal{S}_z)$ as well. When the material distribution is "continuous" the measure reads $\hat{\mu} = \hat{\rho} \, \Ha^2 \mres \mathcal{S}_z$ with $\hat{\rho} \in L^1(\mathcal{S}_z;\R_+)$; in that case our strategy was to find  $\rho \in L^1(\Omega;\R_+)$, i.e. a function on $\Omega$, and then send it to $\mathcal{S}_z$ simply by $\hat{\rho} = \rho \circ \hat{z}^{-1}$, where optimal $\rho$ was found via \eqref{eq:optima_z_rho} -- this strategy must be revised for general measures.

We start off with a matrix-valued measure $\sigma \in \Mes(\Ob;\Sddp)$ defined on the plane set $\Ob$ from which we must construct a suitable positive measure $\mu \in \Mes_+(\Ob)$ that shall be sent to $\ov{\mathcal{S}}_z$ to obtain $\hat{\mu} \in \Mes_+(\ov{\mathcal{S}}_z)$. This can be done by the \textit{push-forward} operation: $\hat{\mu} = \hat{z}_\# \mu$, which means that for any Borel set $\mathcal{B} \subset \ov{\mathcal{S}}_z$ we define $\hat{\mu}(\mathcal{B}) := \mu\bigl(\hat{z}^{-1}(\mathcal{B})\bigr)$.
In particular case the following relation holds by the change of variable formula (see e.g. \cite{evans1992}):
\begin{equation}
	\label{eq:change_of_variable_formula}
	\mu = \tilde{\rho} \, \mathcal{L}^2\mres\Omega \quad \text{and} \quad \hat{\mu} = \hat{z}_\# \mu \qquad \Leftrightarrow \qquad \hat{\mu} = \hat{\rho}\, \Ha^2 \mres \mathcal{S}_z \quad \text{and} \quad \hat{\rho} = \left(\frac{1}{J_z} \, \tilde{\rho}\right) \circ \hat{z}^{-1},
\end{equation}
where we recall that $J_z = \big(\mathrm{det}((\nabla \hat{z})^\top \nabla \hat{z}) \big)^{1/2} = \sqrt{1 + \vert\nabla z \vert^2}$. In the light of the hitherto used formula $\hat{\rho} = \rho \circ \hat{z}^{-1}$  the above yields $\tilde{\rho} = J_z\,\rho$. The function $\tilde{\rho} = J_z\, \rho$ turns out to be more natural than $\rho$, compare the change of variable performed in the proof of Lemma \ref{lem:Cmin_leq_Z}.

Relation \eqref{eq:change_of_variable_formula} ceases to be true for general measures -- the classical definition of Jacobian $J_z$ is specific to the Lebesgue measure only  (cf. Theorem 2.91 in \cite{ambrosio2000} for a more general change of variable formula). Nevertheless  \eqref{eq:change_of_variable_formula} helps to foresee the formula for measure $\mu \in \Mes_+(\Ob)$ that will produce an optimal distribution $\hat{\mu} = \hat{z}_\# \mu$ of the elastic material on $\ov{\mathcal{S}}_z$ in general case: when $\sigma$ is an $L^1$ function, from Theorem \ref{thm:recovring_MC_dome} we learn that $\mu = \tilde{\rho} \, \mathcal{L}^2\mres\Omega = \bigl(J_z \rho\bigr) \mathcal{L}^2\mres\Omega =  \bigl(V_0/\Z\, G_{z} : \sigma\bigr) \mathcal{L}^2\mres\Omega$ where the Jacobian $J_z$ is no longer in play. Therefore, for a general measure $\sigma \in \Mes(\Ob;\Sddp)$  and $z \in C^1(\Ob;\R)$, one may guess that an optimal distribution $\hat{\mu}$ may be recovered as follows:
\begin{equation}
	\label{eq:push_forward}
	\hat{\mu} = \hat{z}_\# \mu \ \in \ \Mes_+(\ov{\mathcal{S}}_z), \qquad \mu = \frac{V_0}{\Z}\, G_{z} : \sigma \ \in \  \Mes_+(\Ob).
\end{equation}
In Example \ref{ex:cone}, for $\sigma$,\,$z$ presented in Fig. \ref{fig:cone}(a),\,(b) the optimal elastic material distribution $\hat{\mu}$ is visualized in Fig. \ref{fig:cone}(c). It is easy to verify that $\hat{\mu}$ sharply satisfies the volume constraint since, by definition of the push-forward, $\int_{\ov{\mathcal{S}}_z} d\hat{\mu} = \int_{\Ob} d\mu =V_0/\Z \, \int_{\Ob} G_z :\sigma = V_0/\Z \, \int_{\Ob} \bigl(\tr \,\sigma  + (\nabla z \otimes \nabla z):\sigma \bigr) = V_0$, see \eqref{eq:Z_leq_V_leq_Z}. However, in order to just pose the question of optimality of the structure $\hat{\mu}$, a relaxed formulation of elastic optimal design problem from Section \ref{sec:elastic_design} must be first put forward. Such relaxation to spaces of general measures is not straightforward since the derivations in Section \ref{sec:elastic_design} has built upon classical theory of membrane shells (see \cite{green1968} or \cite{ciarlet2000}). Moreover, the process, although interesting mathematically, in author's opinion would not contribute much to mechanical aspects of the design problem, on which this work essentially focuses.

Instead, by employing the measure-theoretic setting we shall engage yet another design problem where we choose from the whole universe of three dimensional structures under pure tension (or under pure compression), including junctions of full 3D bodies, 2D shells and 1D cables or bars. Since \textit{a priori} there will be no surface $\mathcal{S}_z$ to track, the load $($still generated by $f \in \Mes(\Ob;\R))$ will be assumed to be \textit{vertically transmissible}, which, loosely speaking, means that the load $f$ can be arbitrarily distributed along vertical lines. This line of optimal design is strongly related to the concept of \textit{Prager structures} that were discussed in \cite{rozvany1982}. According to the present author's knowledge an explicit mathematical formulation of the \textit{Prager problem} has never been given except for the planar case, cf. \cite{rozvany1983}. The next subsection puts forward a proposal of such a formulation: both in the plastic setting (originally intended by William Prager and the authors of \cite{rozvany1982}) as well as in the elastic setting. The astonishing result will be that the vault-like structure $\hat{\mu}$ obtained  through the push-forward will be a solution of such 3D problem. In other words, we shall show that this very general design problem can be once more reduced to the 2D convex problems $(\mathcal{P})$,\,$(\mathcal{P}^*)$.   

\subsection{Formulation of the 3D design problem in the plastic and elastic setting -- the Prager problem}

With $\Omega \subset \R^2$ as in previous sections and for a chosen height parameter $H \in (0,\infty]$ we set our 3D design region as a closed set $\Oh := \Ob \times \h$. In the case when $H = \infty$ the design is not vertically constrained, i.e. $\Oh = \Ob \times \R$, which may be considered the most natural setting of the problem. The main advantage of the formulation with finite $H$ (no matter how big) will be the existence result in Proposition \ref{prop:existence}.
Throughout the section the hat symbol $\hat{\argu}$ shall be used to stress that the object is three dimensional or is derived with respect to 3D ambient space; in particular we will use symbols $\hDIV,\ \hat{\nabla},\ \hat{e} = \frac{1}{2}(\hat{\nabla}+\hat{\nabla}^\top)$.

We depart with a precise definition of vertically transmissible load generated by $f \in \Mes(\Ob;\R)$. Essentially, the 3D load represented by the vector-valued measure $\hat{F} \in \Mes(\Oh;\R^3)$ shall be another design variable. Since $f$ is in general a signed measure we must independently handle its positive and negative parts $f_+, f_- \in \Mes_+(\Ob)$. For each of those measures we separately define subsets $\mathscr{T}_+(\Omega,f,H)$, \ $\mathscr{T}_-(\Omega,f,H) \subset \Mes\bigl(\Oh;\R^3\bigr)$:
\begin{equation*}
\mathscr{T}_\pm(\Omega,f,H) := \left\{ \hat{F} = e_3\,\hat{f} \ \left\vert \ \hat{f}\in\Mes_+\bigl(\Oh \bigr),  \ \ \int_{B \times \h} \hat{f} = \int_B f_\pm \quad  \forall\text{  Borel set }  B \subset \Ob \right. \right\}.
\end{equation*}
The set of feasible 3D loads attainable by vertical transmission of $f$ may be readily given as 
\begin{equation}
	\label{eq:transmissible_set}
	\mathscr{T}(\Omega,f,H) := \mathscr{T}_+(\Omega,f,H) - \mathscr{T}_-(\Omega,f,H),
\end{equation}
where the difference is intended in the sense of linear space $\Mes\bigl(\Oh;\R^3\bigr)$. It can be easily checked that whenever $H$ is finite the two sets $\mathscr{T}_\pm(\Omega,f,H)$ are bounded and closed in $\Mes\bigl(\R^3;\R^3\bigr)$ and therefore weakly-* compact. As a result the same holds for $\mathscr{T}(\Omega,f,H)$. The compactness is lost for $H = \infty$, i.e. for $\Oh = \Ob \times \R$.

We move on to formulate the problem of \textit{Minimum Volume Prager Structure}. The design problem shall lie very close to the 3D setting of Michell problem up to two differences: (1) only tensile stress in the structure is permitted; (2) the load $\hat{F}$ is being designed, i.e. by choosing from  $\mathscr{T}(\Omega,f,H)$. Consequently, two mutually polar closed gauges on $\mathrm{T}^3$ shall be used $(\lambda_1(\argu)$ stands for the biggest eigenvalue$)$:
\begin{align}
	\label{eq:3D_spectral_gauges}
	\hat\gamma_+(\hat{\eps}) = \max \bigl\{\lambda_1(\hat{\eps}),0 \bigr\} = \sup_{\hat\tau \in \R^3, \ \abs{\hat{\tau}}\leq 1} \hat{\eps}:(\hat{\tau} \otimes \hat{\tau}), \qquad \hat\gamma^0_+(\hat{\sigma}) =
	\begin{cases}
		\tr \,\hat{\sigma}   &\text{if} \ \ \hat{\sigma} \in \mathrm{T}^3_+,\\
		\infty   &\text{if} \ \ \hat{\sigma} \notin \mathrm{T}^3_+
	\end{cases}
\end{align}
that generate  two mutually conjugate Michell-like elastic potentials:
\begin{equation}
	\label{eq:hat_j}
	\hat{j}_+(\hat{\eps}) = \frac{E_0}{2} \bigl(\hat{\gamma}_+(\hat{\eps}) \bigr)^2, \qquad \hat{j}^*_+(\hat{\sigma}) = \frac{1}{2 E_0} \bigl(\hat{\gamma}^0_+(\hat{\sigma}) \bigr)^2. 
\end{equation}
We put forward the plastic setting of Prager problem (that is parametrized by $H \in (0,\infty]$\,):
\begin{equation*}\tag*{$(\mathrm{MVPS}_H)$}
\hat{\mathcal{V}}^H_\mathrm{min} = \inf_{\substack{\hat{\sigma} \in \Mes(\Oh;\mathrm{T}^3_+) \\ \hat{F} \in \mathscr{T}(\Omega,f,H)}} \left\{ \left. \int_{\Oh}\! \htr \, \hat{\sigma} \ \ \right\vert \ -\hDIV\, \hat\sigma = \hat{F} \ \ \  \text{in } \ \R^3 \backslash \bigl(\bO \times \{0\}\bigr)
\right\}
\end{equation*}
The equilibrium equation $-\hDIV\, \hat\sigma = \hat{F}$ must be understood in the sense of distribution on the open set $\R^3 \backslash \bigl(\bO \times \nolinebreak \{0\}\bigr)$ which incorporates the fact that the designed structure is pinned on $\bO \times \{0\}$.

Prior to analysis of the newly posed problem we jump to formulate the elastic design problem. Here, apart from choosing optimal load $\hat{F}\in \mathscr{T}(\Omega,f,H)$, we shall search for an optimal 3D distribution of elastic material represented by a positive measure $\hat{\mu} \in \Mes_+(\Oh)$. Considering the abstract, measure-theoretic setting we draw upon the pioneering work \cite{bouchitte2001} where compliance of a 3D structure $\hat{\mu}$ subject to load $\hat{F}$ is defined via
\begin{equation}
	\label{eq:3D_comp_def}
	\hat{\Comp}(\hat{\mu},\hat{F}) = \sup_{\substack{\hat{v} \in C^1\!(\Ob\times\R;\R^3)}} \left\{ \left. \int_{\Oh}  \hat{v} \cdot \hat{F} - \frac{E_0}{2} \int_{\Oh}  \Big(  \hat{\gamma}_{+}\bigl(\hat{e}(\hat{v})\bigr) \Big)^2 d\hat\mu \ \ \right\vert \ \hat{v}=0 \text{ on } \bO \times \{0\} \right\};
\end{equation}
the function $\hat{v}$ represents the 3D vectorial displacement field, \textit{a priori} defined in the whole design space $\Ob \times \R$ (in order that the integrals are well defined compact support of $\hat{v}$ may be assumed). After \cite{bouchitte2001} the dual version reads
\begin{equation}
	\label{eq:3D_dual_comp_def}
	\hat{\Comp}(\hat{\mu},\hat{F}) = \inf_{\substack{\hat{S} \in L^2_{\hat{\mu}}(\Oh;\mathrm{T}^3_+)}} \left\{ \left. \frac{1}{2 E_0} \int_{\Oh}  \bigl(  \htr\, \hat{S}\bigr)^2 d\hat\mu \ \ \right\vert \, -\hDIV\bigl(\hat{S}\hat{\mu}\bigr) = \hat{F} \ \ \ \text{in } \R^3 \backslash \bigl(\bO \times \{0\}\bigr) \right\},
\end{equation}
where $L^2_{\hat{\mu}}\bigl(\Oh;\mathrm{T}^3_+\bigr)$ is the Lebesgue space with respect to measure $\hat{\mu}$. The formulation of the problem of \textit{Minimum Compliance Prager Structure} may be put forward:
\begin{equation*}\tag*{$(\mathrm{MCPS}_H)$}
\hat{\Comp}^H_\mathrm{min} =  \inf_{\substack{\hat{\mu} \in \Mes_+(\Oh) \\ \hat{F} \in \mathscr{T}(\Omega,f,H)}} \left\{ \hat{\Comp}(\hat{\mu},\hat{F}) \ \left\vert \ \int_{\hat{\Omega}_H} d\hat{\mu} \leq V_0 \right. \right\}       
\end{equation*}
where once again $V_0$ is the upper bound on the volume of the structure.

Problems $(\mathrm{MVPS}_H)$ and $(\mathrm{MCPS}_H)$ turn out to be the only design problems in this work that are originally well posed, namely through the direct method of the calculus of variations we infer that:
\begin{proposition}
	\label{prop:existence}
	For every finite $H>0$ there always exist solutions of convex problems $(\mathrm{MVPS}_H)$ and $(\mathrm{MCPS}_H)$.
\end{proposition}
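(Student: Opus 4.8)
The plan is to apply the direct method of the calculus of variations to each of the two convex problems separately. The only points that are not completely routine are (a) checking that the feasible sets are non-empty, so that the infima are finite, and (b) establishing weak-* lower semicontinuity of the two objective functionals; everything else is Banach--Alaoglu together with passage to the limit in the (linear) equilibrium constraint. The finiteness of $H$ will be used in two essential ways: it makes $\Oh = \Ob \times \h$ compact, so that bounded subsets of $\Mes(\Oh;\mathrm{T}^3)$ and of $\Mes_+(\Oh)$ are sequentially weakly-* precompact with no escape of mass at infinity; and it makes $\mathscr{T}(\Omega,f,H)$ weakly-* compact, as already noted after \eqref{eq:transmissible_set}.

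\emph{Feasibility.} First I would exhibit one admissible competitor for each problem. Splitting $f = f_+ - f_-$ and arguing separately on $f_\pm$, one builds a ``cone-type'' structure exactly as in Example \ref{ex:cone}: lower the portion of the load coming from $f_-$ to the level $z=-H$ and connect it to the pinned set $\bO \times \{0\}$ by straight cables; since every $x\in\Omega$ lies in the convex hull of $\bO$, a disintegration over a probability measure on $\bO$ of the type \eqref{eq:fibrous_sigma_q} produces a measure $\hat{\sigma}_0 \in \Mes(\Oh;\mathrm{T}^3_+)$ with $-\hDIV\,\hat{\sigma}_0 = \hat{F}_0$ for a suitable $\hat{F}_0 \in \mathscr{T}(\Omega,f,H)$ and with $\int_{\Oh}\htr\,\hat{\sigma}_0 < \infty$ (the $f_+$ part is handled symmetrically by raising the load to $z=H$). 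Hence $\hat{\mathcal{V}}^H_{\mathrm{min}}<\infty$. For $(\mathrm{MCPS}_H)$ the same structure, viewed as a measure $\hat{\mu}_0$ concentrated on the union of segments and carrying the obvious rank-one stress field $\hat{S}_0 \in L^2_{\hat{\mu}_0}$, satisfies $-\hDIV(\hat{S}_0\hat{\mu}_0) = \hat{F}_0$ and, after rescaling $\hat{\mu}_0$ to total mass $V_0$, gives $\hat{\Comp}(\hat{\mu}_0,\hat{F}_0)<\infty$; since $\hat{\Comp}\geq 0$ by taking $\hat{v}=0$ in \eqref{eq:3D_comp_def}, we get $\hat{\Comp}^H_{\mathrm{min}}\in[0,\infty)$.

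\emph{Existence for} $(\mathrm{MVPS}_H)$. Take a minimizing sequence $(\hat{\sigma}_n,\hat{F}_n)$. For positive semi-definite matrices one has $\abs{A}\leq \htr A$ (comparing Frobenius and trace norms), whence $\norm{\hat{\sigma}_n}_{\Mes}\leq \int_{\Oh}\htr\,\hat{\sigma}_n$ is bounded; by weak-* precompactness on the compact set $\Oh$ we may assume $\hat{\sigma}_n \weakstar \hat{\sigma}$ in $\Mes(\Oh;\mathrm{T}^3)$, and by weak-* compactness of $\mathscr{T}(\Omega,f,H)$ that $\hat{F}_n \weakstar \hat{F}\in\mathscr{T}(\Omega,f,H)$. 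For every $\hat{\varphi}\in C^\infty_c\bigl(\R^3\setminus(\bO\times\{0\});\R^3\bigr)$ the identity $\int_{\Oh}\hat{e}(\hat{\varphi}):\hat{\sigma}_n = \int_{\Oh}\hat{\varphi}\cdot\hat{F}_n$ passes to the limit, since $\hat{e}(\hat{\varphi})$ and $\hat{\varphi}$ are fixed continuous functions with compact support; thus $-\hDIV\,\hat{\sigma}=\hat{F}$ in the required distributional sense. Finally, $\hat{\sigma}\mapsto\int_{\Oh}\hat{\gamma}^0_+(\hat{\sigma})$ is a convex, positively $1$-homogeneous, lower semicontinuous functional on measures -- the integrand $\hat{\gamma}^0_+$ from \eqref{eq:3D_spectral_gauges} being convex, l.s.c.\ and $1$-homogeneous, with value $+\infty$ off $\mathrm{T}^3_+$ -- hence weakly-* l.s.c.\ by the theory of convex functionals on measures, cf.\ \cite{bouchitte1988}. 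Applied along the sequence this gives $\int_{\Oh}\hat{\gamma}^0_+(\hat{\sigma})\leq\liminf_n\int_{\Oh}\htr\,\hat{\sigma}_n = \hat{\mathcal{V}}^H_{\mathrm{min}}<\infty$, which both forces $\hat{\sigma}$ to be $\mathrm{T}^3_+$-valued and shows that $(\hat{\sigma},\hat{F})$ is optimal.

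\emph{Existence for} $(\mathrm{MCPS}_H)$. The genuinely new ingredient -- and the step I expect to require the most care -- is the weak-* lower semicontinuity of $(\hat{\mu},\hat{F})\mapsto\hat{\Comp}(\hat{\mu},\hat{F})$ under joint weak-* convergence. Here I would exploit the displacement formulation \eqref{eq:3D_comp_def}: for each fixed admissible $\hat{v}\in C^1$ with compact support and $\hat{v}=0$ on $\bO\times\{0\}$, the map $(\hat{\mu},\hat{F})\mapsto \int_{\Oh}\hat{v}\cdot\hat{F} - \tfrac{E_0}{2}\int_{\Oh}\bigl(\hat{\gamma}_+(\hat{e}(\hat{v}))\bigr)^2 d\hat{\mu}$ is weakly-* \emph{continuous}, because $\hat{v}$ and $\bigl(\hat{\gamma}_+(\hat{e}(\hat{v}))\bigr)^2$ restrict to continuous functions on the compact set $\Oh$; being a pointwise supremum of such continuous functionals, $\hat{\Comp}$ is weakly-* l.s.c. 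Now let $(\hat{\mu}_n,\hat{F}_n)$ be a minimizing sequence; since $\int_{\Oh}d\hat{\mu}_n\leq V_0$ and $\Oh$ is compact we may assume $\hat{\mu}_n\weakstar\hat{\mu}\in\Mes_+(\Oh)$, and testing against the constant function $1$ gives $\int_{\Oh}d\hat{\mu}=\lim_n\int_{\Oh}d\hat{\mu}_n\leq V_0$, while $\hat{F}_n\weakstar\hat{F}\in\mathscr{T}(\Omega,f,H)$ as before. Then $\hat{\Comp}(\hat{\mu},\hat{F})\leq\liminf_n\hat{\Comp}(\hat{\mu}_n,\hat{F}_n)=\hat{\Comp}^H_{\mathrm{min}}$, so $(\hat{\mu},\hat{F})$ solves $(\mathrm{MCPS}_H)$. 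I would close with the remark that both arguments genuinely break down for $H=\infty$: mass in $\hat{\sigma}$, $\hat{\mu}$, or $\hat{F}$ may then escape to $z=\pm\infty$, and $\mathscr{T}(\Omega,f,\infty)$ is no longer weakly-* compact -- which is why the statement is restricted to finite $H$.
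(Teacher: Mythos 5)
Your proof is correct and follows essentially the same route as the paper's: the direct method with weak-* compactness of $\mathscr{T}(\Omega,f,H)$ and of bounded sets of measures on the compact set $\Oh$, lower semicontinuity of the objectives (the compliance as a pointwise supremum of weak-* continuous functionals), plus an explicit cable construction to establish feasibility and finiteness of both infima. The only cosmetic differences are that the paper's feasibility competitor is a measure-theoretic superposition of two-bar trusses rather than your cone of cables, and that you are slightly more explicit about rescaling $\hat{\mu}_0$ to satisfy the volume constraint.
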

\noindent For the proof the reader is referred to \ref{app:3D}.

\begin{remark}
	\label{rem:FMD}
	Unlike in Section \ref{ssec:problem_formulation_elastic} the somewhat unnatural choice of Michell-like elastic potential cannot be defended by results on optimal design of highly porous bodies, cf. \cite{allaire1993,bendsoe1993} -- in three dimensions thus recovered potential and the Michell's one do not coincide. Both in 2D and 3D, however, the Michell's energy can be recast by considering the \textit{Free Material Design} (FMD) in a particular setting recently put forward in \cite[Example 6.2]{bolbotowski2020a}. In the "Fibrous Material Design" problem (FibMD) point-wise we seek the 4th order elasticity tensor $\hat{\mathscr{C}}$ choosing from the set:
	\begin{equation*}
		\hat{\mathscr{H}}_{\mathrm{fib}} = \left\{ \left. \hat{\mathscr{C}} = \sum_{i=1}^n E_i\, \hat{\tau}_i \otimes \hat{\tau}_i \otimes \hat{\tau}_i \otimes \hat{\tau}_i \ \, \right\vert \ E_i \geq 0, \ \hat{\tau}_i \in \R^3, \ \abs{\hat{\tau}_i} = 1   \right\}
	\end{equation*}
	that ought to represent a fibrous material. By departing from  constitutive law of linear elasticity, i.e. $\hat{j} = \hat{j}(\hat{\mathscr{C}},\hat{\eps}) = \frac{1}{2} \bigl(\hat{\mathscr{C}}\hat{\eps}\bigr): \hat{\eps}$ we define compliance $\hat{\Comp} = \hat{\Comp}(\hat{\mathscr{C}},\hat{F})$ with the use of energy potential $\hat{j}_{+}(\hat{\mathscr{C}},\argu) = \bigl( \hat{j}^*(\hat{\mathscr{C}},\argu)  + \mathbbm{I}_{\mathrm{T}_+^3}\bigr)^*$ so that compression is ruled out. The Fibrous Material Design problem is then posed by minimizing compliance $\hat{\Comp}(\hat{\mathscr{C}},\hat{F})$ with respect to $\hat{F} \in \mathscr{T}(\Omega,f,H)$ and $\hat{\mathscr{C}} \in \Mes\bigl(\Oh;\hat{\mathscr{H}}_{\mathrm{fib}}\bigr)$ such that the integral constraint $\int_{\Oh} \tr\,\hat{\mathscr{C}} \leq E_0 V_0$ is met. By adapting computations from Examples 6.2, 6.4 in \cite{bolbotowski2020a} the FibMD problem is in one step reduced to the herein proposed design problem $(\mathrm{MCPS}_H)$. Summing up:
	
	\vspace{0.2cm}
	\centering
	\textit{The design problem $(\mathrm{MCPS}_H)$ may be reinterpreted as a problem of finding a compliance-minimizing\newline body made of linearly elastic material that point-wise is constituted by an elasticity tensor \newline optimally chosen from the fibrous-like class: $\hat{\mathscr{C}} \in \hat{\mathscr{H}}_{\mathrm{fib}}$.} 

\end{remark}

\subsection{Optimal vaults are Prager structures -- solution via the convex problems $(\mathcal{P})$,\,$(\mathcal{P}^*)$}
\label{ssec:3D_to_2D}

The two problems $(\mathrm{MVPS}_H)$ and $(\mathrm{MCPS}_H)$ seem far more general than the problems $(\mathrm{MVV}_\Omega)$ and $(\mathrm{MCV}_\Omega)$ where from the beginning a vault, being a membrane shell of middle surface $\mathcal{S}_z$, was sought. Potentially, by designing an arbitrary 3D structure we could obtain lower values of the volume and compliance, respectively. The following result shows those values satisfy the bounds already established in \eqref{eq:Z_leq_Vmin} and Lemma \ref{lem:Cmin_leq_Z} for the vault design problems: 
\begin{lemma}
	\label{lem:inequalities_3D}
	For every $H \in (0,\infty]$ there hold inequalities:
	\begin{equation*}
	\hat{\mathcal{V}}^H_\mathrm{min} \geq \mathcal{Z}, \qquad \hat{\Comp}^H_\mathrm{min} \geq \frac{\Z^2}{2E_0 V_0}.
	\end{equation*}
\end{lemma}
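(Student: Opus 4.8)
The plan is to show that any feasible competitor for the 3D Prager problem can be "projected down" to a feasible competitor for the 2D convex problem $(\mathcal{P})$ with no greater value of the objective. For the plastic bound $\hat{\mathcal{V}}^H_\mathrm{min} \geq \mathcal{Z}$, let $(\hat\sigma, \hat F)$ be feasible for $(\mathrm{MVPS}_H)$, so $\hat\sigma \in \Mes(\Oh;\mathrm{T}^3_+)$, $\hat F = e_3\,\hat f \in \mathscr{T}(\Omega,f,H)$, and $-\hDIV\,\hat\sigma = \hat F$ in $\R^3\setminus(\bO\times\{0\})$. Write a generic point of $\R^3$ as $\hat x = (x, x_3)$ with $x \in \R^2$ and let $P:\R^3\to\R^2$ be the orthogonal projection $P\hat x = x$. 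The idea is to define $\sigma \in \Mes(\Ob;\Sddp)$ and $q \in \Mes(\Ob;\Rd)$ as push-forwards under $P$ of, respectively, the upper-left $2\times 2$ block $\hat\sigma_{\alpha\beta}$ ($\alpha,\beta\in\{1,2\}$) and the mixed block $\hat\sigma_{\alpha 3}$ ($\alpha\in\{1,2\}$) of the tensor measure $\hat\sigma$. Concretely, $\sigma := P_\#\bigl(\hat\sigma^{(2\times 2)}\bigr)$ and $q := P_\#\bigl(\hat\sigma^{(\cdot\,,3)}\bigr)$, where the superscripts extract the indicated components.

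The two things to verify are: (a) $(\sigma, q)$ is feasible for $(\mathcal{P})$, and (b) the objective does not increase. For (a), positivity: since $\hat\sigma(\hat x) \in \mathrm{T}^3_+$ $\hat\sigma$-a.e., its principal $2\times 2$ minor is in $\Sddp$, and this property is preserved by push-forward, so $\sigma \in \Mes(\Ob;\Sddp)$. For the equilibrium equations, one tests against $\varphi \in C_c^\infty(\Omega)$: using the test function $\hat\varphi(x,x_3) := \varphi(x)$ (independent of $x_3$, cut off in $x_3$ when $H=\infty$, which is legitimate since the supports involved are inside $\Oh$) one computes $\langle \hDIV\,\hat\sigma, \hat\varphi\,e_\alpha\rangle$ and $\langle \hDIV\,\hat\sigma, \hat\varphi\,e_3\rangle$; because $\partial_{x_3}\hat\varphi = 0$ the rows involving the third coordinate drop out, yielding exactly $\DIV\,\sigma = 0$ and $-\dive\, q = f$ on $\Omega$, where the crucial point is that $\int_{B\times\h}\hat f = \int_B f$ for all Borel $B$ by the definition of $\mathscr{T}(\Omega,f,H)$ (handling $f_+$ and $f_-$ separately and recombining). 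For (b), the key pointwise inequality is that for any $\hat A \in \mathrm{T}^3_+$, writing $\hat A = \begin{pmatrix} A & b \\ b^\top & c\end{pmatrix}$ with $A\in\Sddp$, $b\in\R^2$, $c\geq 0$, one has $\tr_{3}\hat A = \tr_2 A + c \geq \tr_2 A + A^{-1}b\cdot b$, i.e. $\tr_3\hat A \geq g_\mathscr{C}^0(A,b)$, since positive semidefiniteness of $\hat A$ forces $c \geq A^{-1}b\cdot b$ on $\IM(A)$ (and $b\in\IM(A)$). Passing from pointwise values to the disintegration of the measure $\hat\sigma$ along $P$ and applying convexity/lower semicontinuity of the closed gauge $g_\mathscr{C}^0$ (as in \cite{bouchitte1988}), we get $\int_\Ob g_\mathscr{C}^0(\sigma,q) \leq \int_{\Oh}\htr\,\hat\sigma$, hence $\mathcal{Z}\leq\int_\Ob g_\mathscr{C}^0(\sigma,q)\leq\int_{\Oh}\htr\,\hat\sigma$; taking the infimum over competitors gives $\mathcal{Z}\leq\hat{\mathcal{V}}^H_\mathrm{min}$.

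For the elastic bound $\hat{\Comp}^H_\mathrm{min}\geq\Z^2/(2E_0V_0)$, the cleanest route is to reduce it to the plastic one exactly as in the proof of Lemma \ref{lem:Cmin_leq_Z}: by the dual (stress-based) definition \eqref{eq:3D_dual_comp_def}, for feasible $(\hat\mu,\hat F)$ and any competitor $\hat S$ with $-\hDIV(\hat S\hat\mu) = \hat F$, set $\hat\sigma := \hat S\,\hat\mu \in \Mes(\Oh;\mathrm{T}^3_+)$, which is feasible for $(\mathrm{MVPS}_H)$; then by Jensen's inequality applied to $t\mapsto t^2$ with the probability measure $d\hat\mu/\hat\mu(\Oh)$ and the volume constraint $\hat\mu(\Oh)\leq V_0$,
\begin{equation*}
\frac{1}{2E_0}\int_{\Oh}(\htr\,\hat S)^2\,d\hat\mu \;\geq\; \frac{1}{2E_0\,\hat\mu(\Oh)}\left(\int_{\Oh}\htr\,\hat S\,d\hat\mu\right)^{\!2} \;=\;\frac{1}{2E_0\,\hat\mu(\Oh)}\left(\int_{\Oh}\htr\,\hat\sigma\right)^{\!2}\;\geq\;\frac{\mathcal{Z}^2}{2E_0 V_0},
\end{equation*}
where the last step uses $\int_{\Oh}\htr\,\hat\sigma\geq\hat{\mathcal{V}}^H_\mathrm{min}\geq\mathcal{Z}$ from the first part and $\hat\mu(\Oh)\leq V_0$. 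Taking the infimum over $\hat S$ gives $\hat{\Comp}(\hat\mu,\hat F)\geq\mathcal{Z}^2/(2E_0V_0)$, and then over $(\hat\mu,\hat F)$ yields the claim.

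I expect the main obstacle to be the rigorous justification of the projection (push-forward) step for general tensor-valued measures: one must argue that extracting the $2\times 2$ and mixed blocks and pushing forward under $P$ is well-defined, that it is compatible with the distributional divergence (the interchange of $\hDIV$ with $P_\#$ against $x_3$-independent test functions), and that the convex functional $\int g_\mathscr{C}^0$ behaves sub-additively under this operation — this last point is exactly where the theory of convex functionals of measures (lower semicontinuity and convexity of the integrand $g_\mathscr{C}^0$, \cite{bouchitte1988}) is needed, together with the elementary but essential matrix inequality $\tr_3\hat A\geq g_\mathscr{C}^0(A,b)$ for $\hat A\succeq 0$. The handling of the case $H=\infty$, where the transmissible-load sets are no longer weak-$*$ compact, requires only that individual competitors have the stated properties (no compactness is used in this lemma), so it causes no additional difficulty here.
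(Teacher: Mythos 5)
Your proof is correct, but it takes a genuinely different route from the paper's. The paper argues on the dual (displacement) side: it enlarges the load class to $\widetilde{\mathscr{T}}(\Omega,f)$, which admits arbitrary vertical force dipoles, writes both Prager problems as $\inf$-$\sup$ and passes to $\sup$-$\inf$ by weak duality; the dipoles force admissible test fields $\hat{v}$ to be independent of the vertical coordinate, and the identity \eqref{eq:hat_gamma_h} together with Proposition \ref{prop:h_and_g_C} identifies the resulting value $\mathcal{Y}$ with $\sup\mathcal{P}^*=\Z$, the compliance bound then following by the rescaling trick of Lemma \ref{lem:Cmin_leq_Z} (via \cite[Proposition 2]{bouchitte2007}). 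You instead work on the primal (stress) side: you push forward the $2\times2$ and mixed blocks of any admissible $\hat\sigma$ to a pair $(\sigma,q)$ feasible for $(\mathcal{P})$, using the Schur-complement inequality $\htr\,\hat A\geq g^0_\mathscr{C}(A,b)$ for $\hat A\succeq 0$ and the dual representation of $\int g^0_\mathscr{C}$ (test with continuous $(\eps,\vartheta)$ valued in $\mathscr{C}$, composed with the projection), so every single competitor is dominated and $\Z=\min\mathcal{P}\leq\int\htr\,\hat\sigma$; the elastic bound then drops out of the plastic one by Cauchy--Schwarz in the stress-based compliance \eqref{eq:3D_dual_comp_def}. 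Your approach buys a more direct, competitor-by-competitor estimate using the primal value of $\Z$, avoids the artificial load enlargement and the $\inf$-$\sup$ interchange; its cost is exactly the measure-theoretic bookkeeping you flag (block extraction, commuting $\hDIV$ with vertically invariant test functions -- with an $x_3$ cut-off and a finite-total-mass reduction when $H=\infty$ -- and the Jensen-type inequality for the push-forward), all of which is standard given \cite{bouchitte1988}; the paper's route needs only smooth test functions and weak duality. Two harmless loose ends you should state: when no $\hat S$ is feasible for $(\hat\mu,\hat F)$, or $\hat\mu\equiv 0$ with $f\neq 0$, the dual compliance is $+\infty$ and the bound is trivial; and in the Cauchy--Schwarz step one needs $\int\htr\,\hat S\,d\hat\mu\geq 0$, which holds since $\hat S\succeq 0$.
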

\noindent In Theorems \ref{thm:recovring_MV_dome} and \ref{thm:recovring_MC_dome} we have seen that whenever solutions of problems $(\mathcal{P})$,\,$(\mathcal{P}^*)$ are smooth enough the two bounds are reached for vault of continuous material distribution. In this section our goal is more ambitious: we wish to cover the case when $(\sigma,q) \in \Mes(\Ob;\Sddp) \times \Mes(\Ob;\Rd)$ and $({u},{w}) \in \mathrm{Lip}(\Ob;\Rd) \times \mathrm{Lip}(\Ob;\R)$. Such regularity of solutions of the pair $(\mathcal{P})$,\,$(\mathcal{P}^*)$ was so far observed for convex $\Omega$ (see Example \ref{ex:cross} below for the counter-example in the non-convex case). In full generality one is forced to work with regularity of $u,w$ guaranteed by Proposition \ref{prop:regularitu_u_w} and at the moment we are missing the mathematical tools for handling  this broader scenario.

Prior to proving Lemma \ref{lem:inequalities_3D} we propose another set of loads that is much larger than $\mathscr{T}(\Omega,f,H)$:
\begin{equation*}
\widetilde{\mathscr{T}}(\Omega,f) := \left\{ \hat{F} \in \Mes(\Ob\times\R;\R^3) \ \left\vert \ \int_{B \times \R} \hat{F} = \int_B e_3 f \ \text{ for any Borel set }  B \subset \Ob \right. \right\},
\end{equation*}
more precisely a proper inclusion $\mathscr{T}(\Omega,f,\infty) \subsetneq \widetilde{\mathscr{T}}(\Omega,f)$ holds true.
In particular, for given $\hat{F}_0 \in \mathscr{T}(\Omega,f,\infty)$, for any $\hat{P}\in\R^3 $ and $x_0 \in \Ob$ one has 
\begin{equation}
	\label{eq:tilde_F}
	\hat{F} = \hat{F}_0 + \hat{P} \,\delta_{(x_0,h_1)} - \hat{P}\, \delta_{(x_0,h_2)} \quad  \in \quad  \widetilde{\mathscr{T}}(\Omega,f)
\end{equation}
where $h_1,h_2 \in \R$ may be chosen arbitrarily. The minimal volume and minimal compliance problems could be modified accordingly: let us agree that the values $\widetilde{\mathcal{V}}_\mathrm{min}$ and $\widetilde{\mathcal{C}}_{\mathrm{min}}$ are the infima as in $(\mathrm{MVPS}_\infty)$ and $(\mathrm{MCPS}_\infty)$, respectively, where $\hat{F}$ is chosen from $\widetilde{\mathscr{T}}(\Omega,f)$ instead. Since $\widetilde{\mathscr{T}}(\Omega,f)$ offers much more choice, inequalities $\hat{\mathcal{V}}^H_\mathrm{min} \geq \widetilde{\mathcal{V}}_\mathrm{min}$, \  $\hat{\mathcal{C}}^H_{\mathrm{min}} \geq \widetilde{\mathcal{C}}_{\mathrm{min}}$ follow. Although this generalization looses its physical meaning, the inequalities $\widetilde{\mathcal{V}}_\mathrm{min} \geq \mathcal{Z}$ and $\hat{\Comp}^H_\mathrm{min} \geq \Z^2/(2E_0 V_0)$ will still hold true and will be easier to prove directly. 

\begin{proof}[Proof of Lemma \ref{lem:inequalities_3D}]
By acknowledging the definition of $\widetilde{\mathcal{V}}_\mathrm{min}$ above it may be written as an $\inf$-$\sup$ problem by disposing of the equilibrium equation:
\begin{align}
	\nonumber
	\hat{\mathcal{V}}^H_\mathrm{min} &\geq \widetilde{\mathcal{V}}_\mathrm{min} =  \inf_{\substack{\hat{\sigma} \in \Mes(\Ob \times \R;\mathrm{T}^3_+) \\ \hat{F} \in \widetilde{\mathscr{T}}(\Omega,f)}} \ \sup_{\substack{\hat{v} \in C^1\!(\Ob \times \R;\R^3)\\ \hat{v}=0 \text{ on }\bO \times \{0\} }} \left\{ \int_{\Ob \times  \R} \htr \, \hat{\sigma} + \left(-\int_{\Ob \times \R} \hat{e}(\hat{v}):\hat{\sigma} + \int_{\Ob \times \R} \hat{v}\cdot \hat{F}   \right) \right\}\\
	\label{eq:V_geq_Y}
	& \geq \sup_{\substack{\hat{v} \in C^1\!(\Ob \times \R;\R^3)\\ \hat{v}=0 \text{ on }\bO \times \{0\} }} \ \inf_{\substack{\hat{\sigma} \in \Mes(\Ob \times \R;\mathrm{T}^3_+) \\ \hat{F} \in \widetilde{\mathscr{T}}(\Omega,f)}}
	\left\{ \int_{\Ob \times \R} \hat{v}\cdot\hat{F} + \int_{\Ob \times \R}\Big( -\hat{e}(\hat{v}):\hat{\sigma} + \htr \, \hat{\sigma}  \Big)  \right\} =: \mathcal{Y},
\end{align}
where obtaining the bottom line is straightforward. We shall prove that the number $\mathcal{Y}$ defined above equals
\begin{equation*}
	\mathcal{Y} =\sup_{\substack{(u,w) \in C^1\!(\Ob;\R^3)\\(u,w)=0 \text{ on }\bO }} \left\{  \int_{\Ob} w \,f \ \ \bigg\vert \ \hat{\gamma}_+\bigl( \hat{e}(\hat{v}) \bigr) \leq 1 \ \text{ in } \Ob\times \R \ \text{ \ for \  }\ \hat{v}(\hat{x}) = u(x)+w(x) \,e_3   \right\} 
\end{equation*}
where by $\hat{v}(\hat{x}) = u(x)+w(x) \,e_3$ we understand that $\hat{v}$ is independent of the third, vertical coordinate. We must prove that unless $\hat{v}$ satisfies the two constraints: $\hat{v}(\hat{x}) = u(x)+w(x) \,e_3$ and $\hat{\gamma}_+\bigl( \hat{e}(\hat{v}) \bigr) \leq 1$, the infimum in the bottom line of \eqref{eq:V_geq_Y} equals $-\infty$. Assume that there exist distinct points $\hat{x}_1 =(x_0,h_1), \ \hat{x}_2=(x_0,h_2) \in \Ob\times \R$ such that $\hat{v}(x_0,h_1) \neq \hat{v}(x_0,h_2)$. Then, for any $\hat{F}_0 \in \mathscr{T}(\Omega,f,H)$ and $\hat{P} = -t\,\bigl(\hat{v}(x_0,h_1) - \hat{v}(x_0,h_2) \bigr)$ with $t \geq 0$ the load $\hat{F}$ according to \eqref{eq:tilde_F} is an element of $\widetilde{\mathscr{T}}(\Omega,f)$ for which $ \int \hat{v}\cdot\hat{F} =  \int \hat{v}\cdot\hat{F}_0\, - t\, \abs{\hat{v}(x_0,h_1) - \hat{v}(x_0,h_2)}^2$. By sending $t$ to $\infty$ we obtain $\inf_{\hat{F} \in \widetilde{\mathscr{T}}(\Omega,f)} \int \hat{v}\cdot\hat{F} = -\infty$. Next, recalling that $\tr\, \hat{\sigma} = \hat{\gamma}_+^0(\hat{\sigma})$ we infer that $\inf_{\hat{\sigma}\succeq 0} \int\bigl( -\hat{e}(\hat{v}):\nolinebreak\hat{\sigma} + \htr \, \hat{\sigma}  \bigr) = - \infty$ whenever there is a point $\hat{x}_0 \in \Ob\times \R$ such that $\hat{\gamma}_+\bigl(\hat{e}(\hat{v})\bigr)(\hat{x}_0)>1$, see the argument below \eqref{eq:weak_duality}.
Once the two constraints are enforced upon $\hat{v}$ the choice of $\hat{F}\in \widetilde{\mathscr{T}}(\Omega,f)$ is immaterial since always $\int_{\Ob \times \R} \hat{v}\cdot\hat{F} = \int_{\Ob} w \,f$, while the infimum in $\hat{\sigma}$ is reached for $\hat{\sigma}=0$. The formula for $\mathcal{Y}$ above is established. 

In the next step we will show that $\mathcal{Y} = \Z$ and to that aim the constraint $\hat{\gamma}_+\bigl( \hat{e}(\hat{v}) \bigr) \leq 1$ must be rewritten. Since $\hat{v}(\hat{x}) = u(x)+w(x) \,e_3$ the strain field reads $\hat{e}(\hat{u})(\hat{x}) = e(u)(x) + \nabla w(x) \, \symtens \, e_3$ where the plane strain $e(u) \in \Sdd$ is naturally embedded into $\mathrm{T}^3$. According to \eqref{eq:3D_spectral_gauges} we must test $ \hat{e}(\hat{v}) : (\hat{\tau} \otimes \hat{\tau})$ with $\hat{\tau}\in \R^3$, \ $\abs{\hat{\tau}}\leq 1$. Since $\hat{e}(\hat{v}):(e_3\otimes e_3) = 0$ the variables may be changed:
\begin{equation*}
	\hat{\tau} = \frac{1}{\sqrt{1+\psi^2}}\,\tau + \frac{\psi}{\sqrt{1+\psi^2}}\, e_3
\end{equation*}
where $\tau \in \R^2$, \ $\abs{\tau}\leq 1$ and $\psi \in \R$ is the slope of $\hat{\tau}$ with respect to the base plane $\R^2$. Owing to the formula \eqref{eq:h_with_psi} the gauge $ h\bigl(\nabla w, e(u) \bigr)$ reappears:
\begin{align}
	\label{eq:hat_gamma_h}
	\sup_{\hat{\tau}\in\R^3, \ \abs{\hat\tau}\leq 1} \biggl\{\hat{e}(\hat{v}):(\hat{\tau} \otimes \hat{\tau}) \biggr\} =   \sup_{\substack{\tau \in \Rd, \ \abs{\tau}\leq 1 \\ \psi \in \R}} \biggl\{\frac{1}{1+\psi^2}\, e(u):(\tau \otimes \tau)+ \frac{\psi}{1+\psi^2} \, \nabla w \cdot \tau \biggr\} = h\bigl(\nabla w, e(u) \bigr).
\end{align}
The fact that $\mathcal{Y} = \Z$ is now a direct implication of Proposition \ref{prop:h_and_g_C}. The first inequality in the assertion is proved.

The value $\hat{\mathcal{C}}^H_\mathrm{min}$ is an $\inf$-$\sup$ and so is $\widetilde{\mathcal{C}}_\mathrm{min}$; the order may be swapped to $\sup$-$\inf$ with an inequality:
\begin{align*}
	&\hat{\mathcal{C}}^H_\mathrm{min} \geq \widetilde{\mathcal{C}}_\mathrm{min} \geq \sup_{\substack{\hat{v} \in C^1\!(\Ob \times \R;\R^3)\\ \hat{v}=0 \text{ on }\bO \times \{0\} }} \ \inf_{\substack{\hat{\mu} \in \Mes_+(\Ob \times \R) \\ \hat{F} \in \widetilde{\mathscr{T}}(\Omega,f)}}
	\left\{  \int_{\Ob\times \R}  \hat{v} \cdot \hat{F} - \frac{E_0}{2} \int_{\Ob\times \R}  \Big(  \hat{\gamma}_{+}\bigl(\hat{e}(\hat{v})\bigr)\Big)^2 d\hat\mu  \biggr)  \right\}\\
	= & \sup_{\substack{(u,w) \in C^1\!(\Ob;\R^3)\\(u,w)=0 \text{ on }\bO }} \left\{\left. \inf_{\substack{\hat{\mu} \in \Mes_+(\Ob \times \R)}}
	\left\{  \int_{\Ob} w \,f  - \frac{E_0}{2} \int_{\Ob\times \R}  \Big(  \hat{\gamma}_{+}\bigl(\hat{e}(\hat{v})\bigr)\Big)^2 d\hat\mu  \biggr)  \right\} \ \right\vert \ \hat{v}(\hat{x}) = u(x)+w(x) \,e_3 \right\} = \frac{\mathcal{Y}^2}{2 E_0 V_0}.
\end{align*}
To pass to the second line the argument with taking $\hat{F} = \hat{F}(t)$ as above may be employed. The last equality is due to a variant of technique used in the proof of Lemma \ref{lem:Cmin_leq_Z}, the reader is referred to  \cite[Proposition 2]{bouchitte2007} for a more precise statement. The proof is complete by acknowledging that $\mathcal{Y} = \Z$ once again.
\end{proof}

Inequalities in Lemma \ref{lem:inequalities_3D} pave a clear way to proving that vaults are optimal in problems $(\mathrm{MVPS}_H)$ and $(\mathrm{MCPS}_H)$: based on solutions $(\sigma,q)$ and $({u},{w})$ of problems $(\mathcal{P})$ and $(\mathcal{P}^*)$ we must construct admissible 3D measures $(\hat{\sigma},\hat{F})$ and $(\hat{\mu},\hat{F})$ that will furnish $\int \tr\, \hat{\sigma} = \Z$ and $\hat{\mathcal{C}}(\hat{\mu},\hat{F}) = \Z^2/(2 E_0 V_0)$. As declared at the beginning of the present subsection we will not cover the fully general case and we will henceforward assume that solutions $({u},{w})$ are Lipschitz continuous. This extra regularity allows to introduce for any Radon measure $\mu \in \Mes_+(\Ob)$ the notion of $\mu$-tangential gradient $\nabla_\mu {w}$ and $\mu$-tangential operator $e_\mu({w})$, for details the reader is referred to \cite{bouchitte1997} and the further developments e.g in \cite{bouchitte2001,bouchitte2007}. Eventually, for $({u},{w}) \in \mathrm{Lip}(\Ob;\Rd) \times \mathrm{Lip}(\Ob;\R)$ and measures $(\sigma,q)$ the generalized optimality conditions were derived in \cite{bouchitte2020} with the use of measure-tangential calculus: the first two conditions are identical to conditions $(i)$,\,$(ii)$ from Theorem \ref{thm:opt_cond} above while with $\mu =\tr\,\sigma$ the other two read:
\begin{equation}
	\label{eq:opt_cond_mu}
	\begin{array}{ll}
		(iii)' & \  \bigl( \frac{1}{4}\, \nabla_\mu {w} \otimes \nabla_\mu {w} + e_\mu({u}) \bigr):\sigma = \tr\,\sigma;\\
		(iv)' & \   q = \frac{1}{2}\, \sigma\, \nabla_\mu {w}.
	\end{array}
\end{equation}

In the sequel $(\sigma,q) \in \Mes(\Ob;\Sddp) \times \Mes(\Ob;\Rd)$ and $({u},{w}) \in \mathrm{Lip}(\Ob;\Rd) \times \mathrm{Lip}(\Ob;\R)$ will be fixed solutions of problems $(\mathcal{P})$ and $(\mathcal{P}^*)$; in particular there hold equilibrium equations $\DIV\, \sigma=0$, $-\dive\, q = f$ in $\Omega$. For the elevation function we choose $z = \frac{1}{2} {w}$. Since $z$ is Lipschitz continuous the matrix function $(\mathrm{I}+\nabla_\mu z \otimes \nabla_\mu z)$ is uniformly bounded from each side (in the sense of operator $\preceq$) by two positive definite matrices, therefore the measure $\mu = \tr \,\sigma$ may be redefined such that
\begin{equation}
	\label{eq:opt_mu_S_2D}
	\sigma = S \mu, \qquad \mu= \frac{V_0}{\Z}\, (\mathrm{I}+\nabla_\mu z \otimes \nabla_\mu z):\sigma, \qquad  (\mathrm{I}+\nabla_\mu z \otimes \nabla_\mu z):S = \frac{\Z}{V_0} \quad \mu\text{-a.e.},
\end{equation}
which does not change the operators $\nabla_\mu$ and $e_\mu$ hence the optimality conditions \eqref{eq:opt_cond_mu} remain true. By means of the push-forward, from the plane fields above we define the 3D fields $\hat{\sigma} \in \Mes(\Ob\times\R;\mathrm{T}^3_+)$, $\hat{\mu} \in \Mes_+(\Ob\times\R)$, $\hat{S} \in L^\infty_{\hat{\mu}}(\Ob\times \R; \mathrm{T}^3_+)$, $\hat{F} \in \Mes(\Ob\times \R ; \R^3)$:
\begin{equation}
\label{eq:opt_mu_S_F}
\hat\sigma = \hat{S} \hat{\mu}, \qquad \hat{\mu} = \hat{z}_\# \mu, \qquad \hat{S} =  \left(\begin{bmatrix}
\,\mathrm{I}  &  \nabla_\mu z\, 
\end{bmatrix}^\top\!
S\, \begin{bmatrix}
\,\mathrm{I}  &  \nabla_\mu z\, 
\end{bmatrix} \right) \circ \hat{z}^{-1}, \qquad \hat{F} = e_3 \,\hat{z}_\# \,f,
\end{equation}
where by $\bigl[ \,\mathrm{I}  \ \ \   \nabla_\mu z\, \bigr] = \nabla_\mu \hat{z}$ we understand the $2 \times 3$ matrix composed of the $2 \times 2$ identity matrix $\mathrm{I}$ and the column 2D vector $\nabla_\mu z$; let us note that $\hat{F}$ is exactly the load $\hat{F}_{f,z}$ defined in Section \ref{ssec:problem_formulation_plastic}. It is straightforward to check that $\hat{F} \in \mathscr{T}(\Omega,f,\infty)$ and consequently the measure $\hat{\sigma}$ turns out to be feasible for problem 
$(\mathrm{MVPS}_\infty)$, namely:
\begin{lemma}
	\label{lem:feas_hat_sigma}
	The matrix-valued measure $\hat\sigma = \hat{S} \hat{\mu}$ in \eqref{eq:opt_mu_S_F} satisfies the 3D equilibrium equation $-\hDIV \, \hat{\sigma} = \hat{F}$ in the sense of distributions on the open set $\R^3\backslash (\bO \times \{0\})$.
\end{lemma}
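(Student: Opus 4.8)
The goal is to verify that $\int_{\R^3}\hat{e}(\hat\varphi):\hat\sigma=\int_{\R^3}\hat\varphi\cdot\hat{F}$ for every test field $\hat\varphi\in C^1_c\bigl(\R^3\backslash(\bO\times\{0\});\R^3\bigr)$, which is exactly the claimed identity $-\hDIV\,\hat\sigma=\hat F$ on that open set. The plan is to push the left-hand side back to $\Omega$ and recognise there the equilibrium equations $\DIV\,\sigma=0$, $-\dive\,q=f$ together with optimality condition $(iv)'$. First I would fix $\hat\varphi$ and introduce the pulled-back field $\varphi:=\hat\varphi\circ\hat{z}$ on $\Omega$, splitting it as $\varphi=(\varphi^h,\varphi_3)$ into its first two (horizontal) components and its third (vertical) one. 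Since $z=\frac12 w$ is Lipschitz and vanishes on $\bO$, the map $\hat z$ carries a neighbourhood of $\bO$ in $\Ob$ into a neighbourhood of $\bO\times\{0\}$, which avoids $\mathrm{supp}\,\hat\varphi$; as $\Omega$ is bounded and $z$ is Lipschitz the graph $\mathcal{S}_z$ is bounded, so $\varphi^h$ and $\varphi_3$ are Lipschitz functions with compact support contained in $\Omega$, hence admissible against the distributional equations on $\Omega$.

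Next I would carry out the change of variables. From $\hat\sigma=\hat{S}\hat\mu$, $\hat\mu=\hat z_\#\mu$ and the transport formula for push-forwards,
$$\int_{\R^3}\hat{e}(\hat\varphi):\hat\sigma=\int_{\Ob}\bigl(\hat{e}(\hat\varphi):\hat{S}\bigr)\circ\hat{z}\;d\mu .$$
Using $\hat{S}\circ\hat{z}=\bigl[\,\mathrm{I}\ \ \nabla_\mu z\,\bigr]^\top S\,\bigl[\,\mathrm{I}\ \ \nabla_\mu z\,\bigr]$ with $\bigl[\,\mathrm{I}\ \ \nabla_\mu z\,\bigr]=\nabla_\mu\hat z$ — a $2\times3$ matrix annihilating the (a.e.\ defined) unit normal to $\mathcal{S}_z$ — together with the symmetry of $S$ and the $\mu$-a.e.\ chain rule $\nabla(\hat\varphi\circ\hat z)=\bigl((\nabla\hat\varphi)\circ\hat z\bigr)\,\nabla_\mu\hat z$, one gets $\bigl(\hat{e}(\hat\varphi):\hat{S}\bigr)\circ\hat z=\mathrm{sym}\bigl(\nabla_\mu\hat z\,\nabla\varphi\bigr):S$, and an elementary block computation of $\nabla_\mu\hat z\,\nabla\varphi$ yields the pointwise identity
$$\bigl(\hat{e}(\hat\varphi):\hat{S}\bigr)\circ\hat{z}=\Bigl(e_\mu(\varphi^h)+\nabla_\mu z\,\symtens\,\nabla_\mu\varphi_3\Bigr):S\qquad\mu\text{-a.e.},$$
i.e.\ the right-hand side is $\mathcal{A}_z(\varphi^h,\varphi_3):S$ read in the $\mu$-tangential sense. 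Since $\sigma=S\mu$ this gives
$$\int_{\R^3}\hat{e}(\hat\varphi):\hat\sigma=\int_{\Ob}e_\mu(\varphi^h):\sigma+\int_{\Ob}\bigl(\nabla_\mu z\,\symtens\,\nabla_\mu\varphi_3\bigr):\sigma .$$

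Then I would dispatch the two terms. The first vanishes: $\DIV\,\sigma=0$ in $\Omega$, and by the measure-tangential calculus of \cite{bouchitte1997} (cf.\ \cite{bouchitte2020}) this extends to compactly supported Lipschitz test fields, so $\int_{\Ob}e_\mu(\varphi^h):\sigma=0$. For the second, since $\nabla_\mu w=2\nabla_\mu z$, condition $(iv)'$ reads $\sigma\,\nabla_\mu z=\frac12\,\sigma\,\nabla_\mu w=q$, whence, by symmetry of $\sigma$,
$$\int_{\Ob}\bigl(\nabla_\mu z\,\symtens\,\nabla_\mu\varphi_3\bigr):\sigma=\int_{\Ob}\nabla_\mu\varphi_3\cdot\bigl(\sigma\,\nabla_\mu z\bigr)=\int_{\Ob}\nabla_\mu\varphi_3\cdot q=\int_{\Ob}\varphi_3\,f,$$
the last step being $-\dive\,q=f$ in $\Omega$, again extended to Lipschitz test functions. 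Finally $\int_{\Ob}\varphi_3\,f=\int_{\Ob}(\hat\varphi\cdot e_3)\circ\hat z\;df=\int(\hat\varphi\cdot e_3)\,d(\hat z_\#f)=\int\hat\varphi\cdot\hat F$ because $\hat F=e_3\,\hat z_\#f$, which closes the argument.

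The genuinely delicate point is the change-of-variables step: as $\hat z$ is only Lipschitz one must systematically replace the classical gradient of $z$ by its $\mu$-tangential counterpart, justify the corresponding chain rule, and check that the distributional equilibria on $\Omega$ still act on Lipschitz compactly supported test functions — precisely what the machinery of \cite{bouchitte1997,bouchitte2020} supplies, and the reason Lipschitz regularity of $(u,w)$ was assumed. Note that feasibility of $\hat\sigma$ uses only the equilibrium equations and condition $(iv)'$; the extremality relation $(iii)'$ plays no role here and will enter only when evaluating $\int\htr\,\hat\sigma$.
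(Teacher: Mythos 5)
Your proposal is correct and follows essentially the same route as the paper's proof: pull the test field back through $\hat{z}$, use the push-forward change-of-variables together with the $\mu$-tangential chain rule and the relation between $\hat{S}$ and $S$ to obtain the integrand $e_\mu(\cdot):\sigma+\nabla_\mu(\cdot)\cdot(\sigma\,\nabla_\mu z)$, then conclude via optimality condition $(iv)'$ ($\sigma\,\nabla_\mu z=q$) and the equilibrium equations extended to Lipschitz test functions by the measure-tangential integration-by-parts formula of \cite{bouchitte2020}. The only differences are cosmetic (you split the two terms and handle the support of the pulled-back test field explicitly, while the paper treats them in one integration-by-parts step).
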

\begin{proof}[Proof of Lemma \ref{lem:feas_hat_sigma}]
	We must prove that for all smooth functions $\hat{v} \in C^1(\Ob \times \R ;\R^3)$ with compact support in $\R^3\backslash (\bO \times \{0\})$ there holds $\int_{\Ob\times\R} \hat{\nabla}\hat{v} :\hat{\sigma} = \int_{\Ob \times \R} \hat{v} \cdot \hat{F}$. For convenience we decompose the virtual displacement function to $\hat{v}=(\hat{u},\hat{w})$; next we set $u = \hat{u} \circ \hat{z} \in \mathrm{Lip}(\Ob;\Rd)$, $w = \hat{w} \circ \hat{z} \in \mathrm{Lip}(\Ob;\R)$ and moreover $v = \hat{v}\circ \hat{z} = (u,w) \in \mathrm{Lip}(\Ob;\R^3)$. Due to smoothness of $\hat{v}$ the following chain rule for $\mu$-tangential differentiation holds:
	\begin{equation*}
	\nabla_\mu v (x) = \hat{\nabla} \hat{v}\bigl(\hat{z}(x) \bigr)\, \nabla_\mu {\hat{z}}(x) = \hat{\nabla} \hat{v}\bigl(\hat{z}(x) \bigr)\,\begin{bmatrix}
	\,\mathrm{I}  &  \nabla_\mu z (x)\, 
	\end{bmatrix} \qquad \text{for } \mu\text{-a.e. } x,
	\end{equation*}
	which, owing to the relation \eqref{eq:opt_mu_S_F} between $\hat{S}$ and $S$, furnishes $\mu$-a.e. in $\Ob$:
	\begin{equation}
	\label{eq:nabla_v:S}
	\hat{\nabla}\hat{v}\bigl(\hat{z}(\argu)\bigr) : \hat{S}\bigl(\hat{z}(\argu)\bigr) =  \nabla_\mu v : \begin{bmatrix}
	\,S  &  S\, \nabla_\mu z
	\end{bmatrix} = \Big( e_\mu(u):S + \nabla_\mu w \cdot \bigl(S\, \nabla_\mu z \bigr) \Big).
	\end{equation}
	By the change of variable formula in the general, push-forward setting
	\begin{align*}
	\int_{\Ob\times\R} \hat{\nabla}\hat{v} :\hat{\sigma} =\int_{\Ob\times\R} \hat{\nabla}\hat{v} :\hat{S} \, d\hat\mu =& \int_{\Ob} \hat{\nabla}\hat{v}\bigl(\hat{z}(\argu)\bigr) : \hat{S}\bigl(\hat{z}(\argu) \bigr)\, d\mu= \int_\Ob \Big(e_\mu(u):S + \nabla_\mu w \cdot \bigl(S\, \nabla_\mu z \bigr) \Big)d\mu \\ 
	= & \int_\Ob \Big(e_\mu(u):\sigma + \nabla_\mu w \cdot q \Big) = \int_\Ob w\, f= \int_{\Ob \times \R} \hat{w}\, (\hat{z}_\# f)= \int_{\Ob \times \R} \hat{v} \cdot \hat{F}.
	\end{align*}
	To pass to the second line we have explicitly used optimality condition $(iv)'$ in \eqref{eq:opt_cond_mu}, i.e. $q = \frac{1}{2}\,  \sigma \, \nabla_\mu {w} = \sigma \, \nabla_\mu z$. Equality $\int_\Ob \bigl(e_\mu(u):\sigma + \nabla_\mu w \cdot q \bigr) = \int_\Ob w\, f$ follows from equilibrium equations $\DIV \,\sigma =0$, $-\dive\,q = f$, see \cite[Corollary 3.15]{bouchitte2020} for the integration by parts formula in the framework of $\mu$-tangential calculus.
\end{proof} 
 
Together with the load $\hat{F}$, the lower dimensional measures $\hat{\sigma}$ and $\hat{\mu}$ are the candidates for solutions of problems $(\mathrm{MVPS}_H)$ and $(\mathrm{MCPS}_H)$ respectively. The necessary condition is that the domain $\Oh = \Ob \times [-H,H]$ contains the supports of $\hat{F}$, $\hat{\sigma}$ and $\hat{\mu}$. This is equivalent to enforcing that $\mathcal{S}_z \subset \Oh$ or that $\norm{z}_{\infty} \leq H$. Since $z = \frac{1}{2}\, w$ where $w$ is a solution of $(\mathcal{P}^*)$, according to estimates \eqref{eq:estimates_u_w} it is enough to guarantee that $H$ is not smaller than $\mathrm{diam}(\Omega)/\sqrt{2}$. For such $H$ optimality of vaults as a three-dimensional fibrous structures can be readily claimed; the proof of the theorem put forward below uses ideas already known from proofs of Theorems \ref{thm:recovring_MV_dome} and \ref{thm:recovring_MC_dome}, the differences are mainly technical due to the measure-tangential calculus. For those reasons the proof is moved to \ref{app:3D}.

\begin{theorem}[\textbf{Construction of a vault solving the Prager problem}]
	\label{thm:optimal_3D_structure}
	Assume that the pairs $(\sigma,q) \in \Mes(\Ob;\Sddp) \times \Mes(\Ob;\Rd)$ and  $(u,w) \in \mathrm{Lip}(\Ob;\Rd) \times \mathrm{Lip}(\Ob;\R)$ are solutions of the problems $(\mathcal{P})$ and $(\mathcal{P}^*)$ respectively. Then, for $z = \frac{1}{2}\,w$ let us choose $\hat{\mu}, \hat{S}, \hat{F}$ in accordance with \eqref{eq:opt_mu_S_F}, moreover we set
	\begin{equation*}
		{\hat{v}}(\hat{x}) =u(x) + w(x) \,e_3
	\end{equation*}
	in the sense that $\hat{v}$ does not depend on the third coordinate. Then, provided that $H \geq \mathrm{diam}(\Omega)/\sqrt{2}$, the pair
	\begin{equation*}
	\hat{\sigma} = \hat{S} \hat{\mu} \in \Mes(\ov{\mathcal{S}}_z;\mathrm{T}_+^3), \qquad \hat{F} \in \Mes(\ov{\mathcal{S}}_z;\R^3)
	\end{equation*}
	solves the minimum volume problem $(\mathrm{MVPS}_H)$ with $\hat{\mathcal{V}}^H_\mathrm{min} = \Z$, while the pair
	\begin{equation*}
	\hat{\mu}\in \Mes_+(\ov{\mathcal{S}}_z), \quad \hat{F} \in \Mes(\ov{\mathcal{S}}_z;\R^3)
	\end{equation*}
	solves the minimum compliance problem $(\mathrm{MCPS}_H)$ with $\hat{\Comp}^H_\mathrm{min} = \frac{\Z^2}{2E_0 V_0}$. Moreover, functions $\hat{v}_\e = \Z/(E_0 V_0) \, \hat{v}\in \mathrm{Lip}\bigl(\Oh;\R^3\bigr)$ and $\hat{S} \in L^\infty_{\hat{\mu}}\bigl(\Oh;\mathrm{T}^3_+\bigr)$ solve the elasticity problems \eqref{eq:3D_comp_def} (its relaxed variant) and \eqref{eq:3D_dual_comp_def} respectively. The constitutive relation holds:
	\begin{equation}
		\label{eq:const_law_mu}
		\hat{S} \in \partial \hat{j}_+\bigl(\hat{e}_{\hat{\mu}}(\hat{v}_\e) \bigr) \qquad \hat{\mu}\text{-a.e.}
	\end{equation}
\end{theorem}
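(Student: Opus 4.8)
The plan is to follow the template of Theorems \ref{thm:recovring_MV_dome} and \ref{thm:recovring_MC_dome}: combine the two lower bounds of Lemma \ref{lem:inequalities_3D} with a direct verification that the candidate $3$D fields are feasible and attain exactly those bounds, now carried out through the push-forward $\hat z_\#$ and the $\mu$-tangential calculus of \cite{bouchitte1997}. I would first dispose of feasibility. The load $\hat F = e_3\,\hat z_\# f$ is admissible: writing $f = f_+ - f_-$ one has $\hat z_\# f_\pm \in \Mes_+(\ov{\mathcal S}_z)$, and since $z = \tfrac12 w$ with $w$ solving $(\mathcal P^*)$ the estimate \eqref{eq:estimates_u_w} gives $\norm{z}_\infty \le \mathrm{diam}(\Omega)/\sqrt 2 \le H$, so $\ov{\mathcal S}_z \subset \Oh$; as $\hat z^{-1}(B \times \h) = B$ for every Borel $B \subset \Ob$, this yields $\int_{B\times\h}\hat z_\# f_\pm = f_\pm(B)$ and hence $\hat F \in \mathscr T(\Omega,f,H)$ by \eqref{eq:transmissible_set}. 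Positivity $\hat\sigma = \hat S\hat\mu \in \Mes(\Oh;\mathrm T^3_+)$ is immediate since $\sigma = S\mu$ with $\mu \ge 0$ forces $S \succeq 0$ $\mu$-a.e.\ and $\hat S(\hat z(\argu))$, being the congruence transform of $S$ defined in \eqref{eq:opt_mu_S_F}, is then positive semi-definite; moreover $\htr\,\hat S \equiv \Z/V_0$ $\hat\mu$-a.e.\ by the third relation in \eqref{eq:opt_mu_S_2D}, so $\hat S \in L^\infty_{\hat\mu}(\Oh;\mathrm T^3_+)$. Finally $-\hDIV\,\hat\sigma = \hat F$ on $\R^3\backslash(\bO\times\{0\})$ is exactly Lemma \ref{lem:feas_hat_sigma}; since $\hat\sigma = \hat S\hat\mu$, the same identity shows that $\hat S$ is admissible for the stress-based problem \eqref{eq:3D_dual_comp_def}.

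The value computations come next. By the change-of-variables formula for push-forwards and the identity $\htr(A^\top B A) = (AA^\top):B$ one gets $\int_{\Oh}\htr\,\hat\sigma = \int_{\Ob}\htr\,\hat S(\hat z(\argu))\,d\mu = \int_{\Ob}(\mathrm I + \nabla_\mu z\otimes\nabla_\mu z):S\,d\mu = \int_{\Ob}(\mathrm I + \nabla_\mu z\otimes\nabla_\mu z):\sigma$, and using $q = \sigma\,\nabla_\mu z$ (condition $(iv)'$ in \eqref{eq:opt_cond_mu}) this last integral equals $\int_{\Ob}g_{\mathscr C}^0(\sigma,q) = \Z$ by optimality of $(\sigma,q)$ in $(\mathcal P)$, just as in the chain \eqref{eq:Z_leq_V_leq_Z} but with $\mu$-tangential quantities. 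Together with Lemma \ref{lem:inequalities_3D} this gives $\hat{\mathcal V}^H_{\min} = \Z$ and optimality of $(\hat\sigma,\hat F)$ for $(\mathrm{MVPS}_H)$. Likewise $\int_{\Oh}d\hat\mu = \int_{\Ob}d\mu = \tfrac{V_0}{\Z}\int_{\Ob}(\mathrm I + \nabla_\mu z\otimes\nabla_\mu z):\sigma = V_0$, so $\hat\mu$ satisfies the volume constraint, and plugging $\hat S$ into \eqref{eq:3D_dual_comp_def} yields $\hat\Comp(\hat\mu,\hat F) \le \tfrac1{2E_0}\int_{\Oh}(\htr\,\hat S)^2\,d\hat\mu = \tfrac1{2E_0}(\Z/V_0)^2 V_0 = \tfrac{\Z^2}{2E_0 V_0}$; matched against the second bound of Lemma \ref{lem:inequalities_3D} this gives $\hat\Comp^H_{\min} = \hat\Comp(\hat\mu,\hat F) = \tfrac{\Z^2}{2E_0 V_0}$ and shows that $\hat S$ solves \eqref{eq:3D_dual_comp_def}.

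It remains to identify the displacement field and the constitutive law. For $\hat v(\hat x) = u(x) + w(x)\,e_3$ (independent of the vertical coordinate) I would use the chain rule for $\mu$-tangential differentiation under the bi-Lipschitz map $\hat z$ to express $\hat e_{\hat\mu}(\hat v)$ at $\hat z(x)$ through $e_\mu(u)(x)$ and $\nabla_\mu w(x)$, and then the $\hat\mu$-tangential counterpart of the change of variables $\hat\tau\mapsto(\tau,\psi)$ performed in \eqref{eq:hat_gamma_h} to obtain $\hat\gamma_+\bigl(\hat e_{\hat\mu}(\hat v)\bigr) = h\bigl(e_\mu(u),\nabla_\mu w\bigr) \le 1$ $\hat\mu$-a.e., the inequality being the admissibility constraint of $(\mathcal P^*)$ rewritten via Proposition \ref{prop:h_and_g_C}. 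On the other hand, by $(iii)'$ in \eqref{eq:opt_cond_mu} and $z = \tfrac12 w$, one has $\hat S : \hat e_{\hat\mu}(\hat v) = (\mathrm I + \nabla_\mu z\otimes\nabla_\mu z):S = \htr\,\hat S$ $\hat\mu$-a.e.; since $\hat\gamma_+^0(\hat S) = \htr\,\hat S$, this is an extremality relation forcing $\hat\gamma_+\bigl(\hat e_{\hat\mu}(\hat v)\bigr) = 1$ $\hat\mu$-a.e., hence $\hat\gamma_+\bigl(\hat e_{\hat\mu}(\hat v_\e)\bigr) = \tfrac{\Z}{E_0 V_0}$ for $\hat v_\e = \tfrac{\Z}{E_0 V_0}\hat v$. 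Substituting $\hat v_\e$ into the relaxed variant of \eqref{eq:3D_comp_def} and using $\int_{\Oh}\hat v\cdot\hat F = \int_{\Ob} w\,f = \Z$ gives the value $\tfrac{\Z^2}{E_0 V_0} - \tfrac{E_0}{2}\bigl(\tfrac{\Z}{E_0 V_0}\bigr)^2 V_0 = \tfrac{\Z^2}{2E_0 V_0} = \hat\Comp(\hat\mu,\hat F)$, so $\hat v_\e$ is a maximizer; finally \eqref{eq:const_law_mu} is the Fenchel equality $\hat S : \hat e_{\hat\mu}(\hat v_\e) = \hat j_+\bigl(\hat e_{\hat\mu}(\hat v_\e)\bigr) + \hat j_+^*(\hat S)$ $\hat\mu$-a.e., which holds since substituting $\hat\gamma_+\bigl(\hat e_{\hat\mu}(\hat v_\e)\bigr) = \tfrac{\Z}{E_0 V_0}$ and $\htr\,\hat S = \tfrac{\Z}{V_0}$ into \eqref{eq:hat_j} makes both sides equal to $\tfrac{\Z^2}{E_0 V_0^2}$.

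The step I expect to be the main obstacle is the measure-theoretic bookkeeping in the last paragraph: justifying the chain rule for the $\mu$-tangential operators under the change of variables $\hat z$, and in particular deriving the clean identity $\hat\gamma_+\bigl(\hat e_{\hat\mu}(\hat v)\bigr) = h\bigl(e_\mu(u),\nabla_\mu w\bigr)$ $\hat\mu$-a.e., which is the tangential version of the elementary computation \eqref{eq:hat_gamma_h} and requires careful use of the calculus of \cite{bouchitte1997} (see also \cite{bouchitte2001,bouchitte2007}). This is precisely why the full argument is deferred to \ref{app:3D}; once it is in place, the remaining steps are the algebraic manipulations already rehearsed in the proofs of Theorems \ref{thm:recovring_MV_dome} and \ref{thm:recovring_MC_dome}.
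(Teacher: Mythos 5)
Your proposal is correct and follows essentially the same route as the paper's proof in the appendix: feasibility via Lemma \ref{lem:feas_hat_sigma} and the estimate $\norm{z}_\infty\leq H$, the volume and compliance value chains matched against the lower bounds of Lemma \ref{lem:inequalities_3D}, the $\mu$-tangential extremality argument built on conditions $(iii)'$,\,$(iv)'$ of \eqref{eq:opt_cond_mu}, and the Fenchel equality for \eqref{eq:const_law_mu}. The only (harmless) overstatement is the claimed identity $\hat{\gamma}_+\bigl(\hat{e}_{\hat{\mu}}(\hat{v})\bigr) = h\bigl(e_\mu(u),\nabla_\mu w\bigr)$: since $\hat{e}_{\hat{\mu}}(\hat{v}) = P_{\hat{\mu}}\,\hat{\xi}\,P_{\hat{\mu}} \preceq \hat{\xi}$, the paper establishes only the inequality $\hat{\gamma}_+\bigl(\hat{e}_{\hat{\mu}}(\hat{v})\bigr) \leq h\bigl(e_\mu(u),\nabla_\mu w\bigr) \leq 1$, which is all that is needed because the extremality relation $\hat{S}:\hat{e}_{\hat{\mu}}(\hat{v}) = \htr\,\hat{S}$ then forces $\hat{\gamma}_+\bigl(\hat{e}_{\hat{\mu}}(\hat{v})\bigr) = 1$ $\hat{\mu}$-a.e.
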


\begin{example}[\textbf{Prager structures over a disk domain and subject to a single point force}]
	Let the data $x_0 \in \Omega = \bigl\{x \in \Rd \, \big\vert\, \abs{x} < R \bigr\}$ and $f = P \,\delta_{x_0}$ be as in Example \ref{ex:cone}. Let $\pi \in \Mes_+(\bO)$ be any probability measure supported on the circle $\bO$ that satisfies \eqref{eq:condition_p}. According to Example \ref{ex:cone} measures $(\sigma,q)$ in \eqref{eq:fibrous_sigma_q} and Lipschitz continuous functions $(u,w)$ in \eqref{eq:cone_uw} solve problems $(\mathcal{P})$ and $(\mathcal{P}^*)$ respectively.
	
	Then, assuming any parameter $H$ that is not smaller than $R$, the pairs $(\hat{\sigma},\hat{F})$ and $(\hat{\mu},\hat{F})$ constructed via Theorem \ref{thm:optimal_3D_structure} solve the Prager problems: the plastic setting $(\mathrm{MVPS}_H)$ and, respectively, the elastic setting $(\mathrm{MCPS}_H)$. 
	The Prager structure concentrates on the surface $\mathcal{S}_z$ being a graph of the function $z = \frac{1}{2}\, w$. The one force load is elevated to the \textit{apical point} $\hat{x}_0 \in \R^3$:
	\begin{equation*}
		\hat{F} = P \,\delta_{\hat{x}_0}, \qquad \hat{x}_0 = \left({x}_0, \sqrt{R^2 - \vert x_0 \vert^2} \right).
	\end{equation*}
	By exploiting the change of variables formula one may easily show that the push-forward operation preserves the slicing formula for $\sigma$, namely $\hat{\sigma}$ and $\hat{\mu}$ decompose to 1D straight bars, cf. Fig. \ref{fig:cone}(c):
	\begin{align*}
		\hat{\sigma} &= \frac{P}{\sqrt{R^2 - \vert x_0 \vert^2}}\int_\bO \abs{\hat{x}-\hat{x}_0}\,\hat{\sigma}^{\hat{x},\hat{x}_0} \, \pi(dx), \qquad \hat{\sig}^{\hat{x},\hat{x}_0} := \hat{\tau}^{\hat{x},\hat{x}_0} \otimes \hat{\tau}^{\hat{x},\hat{x}_0} \, \Ha^1 \mres [\hat{x},\hat{x}_0], \\
		\hat{\mu} = \tr\,\hat{\sigma} &= \frac{P}{\sqrt{R^2 - \vert x_0 \vert^2}}\int_\bO \abs{\hat{x}-\hat{x}_0}\,\hat{\mu}^{\hat{x},\hat{x}_0} \, \pi(dx), \qquad \hat{\mu}^{\hat{x},\hat{x}_0} := \Ha^1 \mres [\hat{x},\hat{x}_0]
	\end{align*}
	where we agree that $\hat{x} := (x,0) \in \R^3$ and $\hat{\tau}^{\hat{x},\hat{x}_0} := (\hat{x}_0-\hat{x})/\abs{\hat{x}_0-\hat{x}}$. The optimal objective values read: $\hat{\mathcal{V}}^H_\mathrm{min} = \Z = 2 P \sqrt{R^2 - \vert x_0 \vert^2}$ and $\hat{\Comp}^H_\mathrm{min} = \Z^2/(2E_0 V_0) = 2 P (R^2 - \vert x_0 \vert^2)/(E_0 V_0)$.
	
	For a finite number of bars, i.e. for finitely supported $\pi$, the truss relating to $\hat{\sigma}$ was for the first time considered as a candidate for the Prager structure in \cite[Proposition 3.2]{rozvany1982}, see also \cite[Section 6.3.1]{lewinski2019a}. Therein, however, global optimality was not proved -- the authors restricted the search to trusses consisting of bars connecting the apical point to the supporting circle $\bO \times \{0\}$ only. By employing Theorem \ref{thm:optimal_3D_structure} we have proved that their proposal is indeed an exact Prager structure. 

\end{example}

In the course of proving Theorem \ref{thm:optimal_3D_structure} we find a more general result that is rather unexpected. In both problems $(\mathrm{MVPS}_H)$ and $(\mathrm{MCPS}_H)$ the designed 3D structure is allowed to be pinned only on the plane curve $\bO \times \{0\}$. In the proof of inequality $\hat{\mathcal{V}}^H_\mathrm{min} \geq \Z$ in Lemma \ref{lem:inequalities_3D} (cf. the chain \eqref{eq:V_geq_Y}) this is reflected in taking the supremum among all functions $\hat{v} \in C^1\bigl(\Oh;\R^3\bigr)$ that are zero precisely on $\bO \times \{0\}$. Eventually the number $\mathcal{Y}$ at the end of the chain is recognized as supremum with respect to functions $\hat{v}$ that does not depend on the third, vertical variable at all and, as a result, they must be zero on the whole infinite cylinder $\bO \times \R$. We infer that this stronger Dirichlet condition could be imposed right from the start, in other words $\Z$ would remain a lower bound for $\hat{\mathcal{V}}^H_\mathrm{min}$ even if we allowed the structure to rest on the whole cylinder. The idea behind the proof of Theorem \ref{thm:optimal_3D_structure} was to give a recipe for an admissible structure that saturates the lower bound $\Z$, which works in this yet another, broader design problem. Similar conclusions concern the compliance minimization problem thus ultimately:
\begin{corollary}
	\label{cor:3D_on_cylinder}
	Under assumptions of Theorem \ref{thm:optimal_3D_structure} the lower dimensional pair $(\hat{\sigma},\hat{F})$ solves the minimum volume design problem where the structure may rest on the whole cylinder $\bO \times \h$, i.e. it solves the problem
	\begin{equation*}
	\inf_{\substack{\hat{\sigma} \in \Mes(\Oh;\mathrm{T}^3_+) \\ \hat{F} \in \mathscr{T}(\Omega,f,H)}} \left\{ \left. \int_{\Oh} \htr \, \hat{\sigma} \ \ \right\vert \ -\hDIV\, \hat\sigma = \hat{F} \ \  \text{in }\ \R^3 \backslash \bigl(\bO \times [-H,H]\bigr)
	\right\}.
	\end{equation*}
	An analogous result holds for the minimum compliance problem. 
\end{corollary}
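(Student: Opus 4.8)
The plan is to show that the infimum in the displayed minimum‑volume problem — call it $\hat{\mathcal{V}}^{\,\mathrm{cyl}}_{\min}$, and $\hat{\mathcal{C}}^{\,\mathrm{cyl}}_{\min}$ for its compliance analogue — equals $\Z$ (resp. $\Z^2/(2E_0V_0)$), and then that the very pairs $(\hat\sigma,\hat F)$ and $(\hat\mu,\hat F)$ produced in Theorem \ref{thm:optimal_3D_structure} are admissible competitors realising these values. The easy half is the upper bound: enlarging the kinematic support from $\bO\times\{0\}$ to $\bO\times\h$ weakens the equilibrium constraint, since $-\hDIV\,\hat\sigma=\hat F$ is then imposed only on the smaller open set $\R^3\setminus(\bO\times\h)\subset\R^3\setminus(\bO\times\{0\})$; the feasible set therefore grows and $\hat{\mathcal{V}}^{\,\mathrm{cyl}}_{\min}\le\hat{\mathcal{V}}^H_{\min}$, which equals $\Z$ by Theorem \ref{thm:optimal_3D_structure} (and likewise $\hat{\mathcal{C}}^{\,\mathrm{cyl}}_{\min}\le\hat{\mathcal{C}}^H_{\min}=\Z^2/(2E_0V_0)$).

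For the matching lower bound I would re-run the proof of Lemma \ref{lem:inequalities_3D} with the strengthened Dirichlet condition built in from the outset. As there, one first enlarges the load class to $\widetilde{\mathscr{T}}(\Omega,f)$ (which only lowers the infimum), removes the equilibrium constraint through a virtual displacement $\hat v\in C^1(\Ob\times\R;\R^3)$ now required to vanish on all of $\bO\times\h$, and applies weak duality to reach a $\sup$-$\inf$ expression. The point already contained in the chain \eqref{eq:V_geq_Y} is that the inner infimum over $\widetilde{\mathscr{T}}(\Omega,f)$ equals $-\infty$ unless $\hat v$ is independent of the vertical coordinate; and for such $\hat v$ the condition $\hat v=0$ on $\bO\times\h$ is equivalent to $\hat v=0$ on $\bO\times\{0\}$ (indeed to $\hat v=0$ on the whole cylinder $\bO\times\R$). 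Hence the stronger boundary condition discards none of the competitors that survive the infimum, the remaining computation — culminating in \eqref{eq:hat_gamma_h} and Proposition \ref{prop:h_and_g_C} — is verbatim that of Lemma \ref{lem:inequalities_3D}, and it again yields $\mathcal{Y}=\Z$. Thus $\hat{\mathcal{V}}^{\,\mathrm{cyl}}_{\min}\ge\Z$, so equality holds; applying the same reasoning to the second half of Lemma \ref{lem:inequalities_3D} gives $\hat{\mathcal{C}}^{\,\mathrm{cyl}}_{\min}\ge\mathcal{Y}^2/(2E_0V_0)=\Z^2/(2E_0V_0)$.

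It then remains to verify that $(\hat\sigma,\hat F)$ of Theorem \ref{thm:optimal_3D_structure} competes in the cylinder problem. Its support lies in $\Oh$ by the hypothesis $H\ge\mathrm{diam}(\Omega)/\sqrt2$ already used there, one has $\hat F\in\mathscr{T}(\Omega,f,H)$, and by Lemma \ref{lem:feas_hat_sigma} the equilibrium $-\hDIV\,\hat\sigma=\hat F$ holds in $\R^3\setminus(\bO\times\{0\})$, hence \emph{a fortiori} in the smaller open set $\R^3\setminus(\bO\times\h)$; its objective value is $\int_{\Oh}\htr\,\hat\sigma=\Z$, so it is optimal. For the compliance statement the pair $(\hat\mu,\hat F)$ obeys $\int_{\Oh}d\hat\mu\le V_0$, and the fields $\hat v_\e$ and $\hat S$ of Theorem \ref{thm:optimal_3D_structure} remain admissible for the displacement- and stress-based formulations associated with the larger support $\bO\times\h$ (again because $\hat v_\e$ is vertical-independent and the complementary-energy equilibrium is imposed on a smaller open set), so that the compliance of $(\hat\mu,\hat F)$ in the cylinder problem equals $\Z^2/(2E_0V_0)$; it is therefore optimal as well.

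The step I expect to be the most delicate is the bookkeeping in the re-run of Lemma \ref{lem:inequalities_3D}: one must make sure that passing to the enlarged load class $\widetilde{\mathscr{T}}(\Omega,f)$ produces \emph{the same} effective family of virtual displacements — those not depending on the vertical variable — for the pinned-cylinder version as for the pinned-boundary version, so that the stronger Dirichlet condition is genuinely costless. Beyond this no new analytic estimate is needed: everything rests on Lemma \ref{lem:inequalities_3D}, Lemma \ref{lem:feas_hat_sigma} and Theorem \ref{thm:optimal_3D_structure}.
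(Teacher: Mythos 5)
Your proposal is correct and takes essentially the same route as the paper: the paper likewise notes that in the chain \eqref{eq:V_geq_Y} the only competitors $\hat{v}$ surviving the inner infimum over $\widetilde{\mathscr{T}}(\Omega,f)$ are independent of the vertical coordinate, so requiring $\hat{v}=0$ on the whole cylinder $\bO\times\h$ costs nothing and the lower bound $\Z$ (resp. $\Z^2/(2E_0V_0)$) persists, after which the pair constructed in Theorem \ref{thm:optimal_3D_structure} remains feasible for the weaker equilibrium constraint and saturates the bound.
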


This corollary settles another design problem where together with the structure we seek an optimal kinematical support being a 3D curve $\hat{\Gamma}$ obtained by elevating the plane curve $\bO \times \{0\}$. Corollary \ref{cor:3D_on_cylinder} shows that with all the cylinder $\Ob \times \R$ available the choice $\bO \times \{0\}$ is still optimal. The question of finding the optimal supporting curve $\hat{\Gamma}$ is therefore trivial. A completely different design problem would be to search for the optimal structure with a fixed, prescribed supporting curve $\hat{\Gamma}$ -- to this topic we will devote a separate Section \ref{ssec:boundary_conditions}.

\section{Discrete formulation and the conic quadratic program. The optimal grid-shell problem}
\label{sec:discrete}

After a theoretical coverage of optimal design of vaults we are addressing the matter of choosing a suitable discretization  strategy. We managed to show that at the core of every design problem covered in this work lies the pair of mutually dual convex problems $(\mathcal{P})$,\,$(\mathcal{P}^*)$ defined in the 2D reference domain $\Omega$. Since a lot of similarities between the pair $(\mathcal{P})$,\,$(\mathcal{P})^*$ and the pair of Michell problems \eqref{eq:Michell_min}, \eqref{eq:Michell_max} has been pointed out one may expect that techniques based on the concept of ground structure (cf. Section \ref{ssec:Michell}) could be employed here as well.

\subsection{Conic quadratic programming problem as a discretization of the pair $(\mathcal{P})$,\,$(\mathcal{P}^*)$}
\label{ssec:discretication}

While the ground structure approach was quite natural for Michell problem considering that Michell structures are generalized trusses, it is not obvious how to adapt the method to a rather abstract pair of problems $(\mathcal{P})$,\,$(\mathcal{P}^*)$. In Section \ref{ssec:Michell} an alternative perspective on the ground structure method was taken notice of: the constraint $\gamma\bigl(e(u)\bigr) \leq 1$ (or equivalently $-\mathrm{I} \preceq e(u) \preceq \mathrm{I}$) in problem \eqref{eq:Michell_max} was rewritten as a two-point condition \eqref{eq:two_point_Michell} that, in the context of the ground structure, bounds the virtual elongation of bars. With little intuition behind the pair $(\mathcal{P})$,\,$(\mathcal{P}^*)$ we shall look for mathematical analogies and ultimately we will rewrite the point-wise constraint $\frac{1}{4}\, \nabla w \otimes \nabla w + e(u) \preceq  \nolinebreak \mathrm{I}$ as a two-point condition as well. To that aim we introduce an auxiliary operator $\Lambda: \nolinebreak C(\Ob;\Rd) \times C(\Ob;\R) \mapsto C(\Ob \times \Ob;\R)$ which for distinct points $x,y \in \Ob$ gives
\begin{equation*}
	\Lambda(u,w)(x,y):=\frac{1}{4}\,\frac{\abs{w(y) -w(x)}^2}{\abs{y-x}} + \bigl(u(y) - u(x) \bigr)\cdot \frac{y-x}{\abs{y-x}},
\end{equation*}
while for each $x \in \Ob$ we agree that $\Lambda(u,w)(x,x) = 0$. The following result was proved in \cite[Lemma 3.5]{bouchitte2020}:
\begin{proposition}
	\label{prop:two-point_condition}
	For $\Omega \subset\Rd$ being any bounded domain (not necessarily convex) let us take functions $u \in C^1(\Ob;\Rd)$, $w \in C^1(\Ob;\R)$ with zero boundary values $u=0$, $w=0$ on $\Ob$. Then, the two constraints are equivalent:
	\begin{equation*}
		\frac{1}{4}\, \nabla w \otimes \nabla w + e(u) \preceq  \mathrm{I} \quad \ \ \text{point-wise in } \Ob \qquad \Leftrightarrow \qquad \Lambda(u,w)(x,y) \leq \abs{y-x} \quad \ \ \forall\,(x,y) \in \Ob \times \Ob.
	\end{equation*}
\end{proposition}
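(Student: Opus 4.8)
The plan is to exploit the earlier observation that the pointwise constraint $\frac14\,\nabla w\otimes\nabla w + e(u)\preceq\mathrm{I}$ is exactly $c(e(u),\nabla w)\le 1$, where, by \eqref{eq:c}, $c(\eps,\vartheta)=\sup_{|\tau|\le 1}\bigl(\tfrac14(\vartheta\cdot\tau)^2+\eps:(\tau\otimes\tau)\bigr)$. Thus the left-hand condition is equivalent to demanding, for every unit vector $\tau$, that $\tfrac14\bigl(\nabla w(x)\cdot\tau\bigr)^2 + e(u)(x):(\tau\otimes\tau)\le 1$ at every $x$. Fixing a unit vector $\tau$ and a pair of distinct points $x,y\in\Ob$, I set $\tau = (y-x)/|y-x|$ and consider the restriction of $u,w$ to the segment $[x,y]$. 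Writing $g(t)=w\bigl(x+t(y-x)\bigr)$ and $\phi(t)=u\bigl(x+t(y-x)\bigr)\cdot\tau$ for $t\in[0,1]$, the key identity to establish is that integrating the pointwise inequality along the segment yields the two-point inequality — and conversely.

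The forward direction ($\Rightarrow$): assuming $\frac14\nabla w\otimes\nabla w+e(u)\preceq\mathrm{I}$ everywhere, I fix $x\neq y$, let $\tau=(y-x)/|y-x|$, and integrate along $[x,y]$. Along the segment, $\frac{d}{dt}\bigl(u(x+t(y-x))\cdot\tau\bigr)=|y-x|\,e(u):(\tau\otimes\tau)$ (the symmetric part suffices since we contract with $\tau\otimes\tau$), and $\frac{d}{dt}w(x+t(y-x))=|y-x|\,\nabla w\cdot\tau$. So the pointwise bound reads $\tfrac{1}{4|y-x|^2}\bigl(\tfrac{d}{dt}g\bigr)^2+\tfrac1{|y-x|}\tfrac{d}{dt}\phi\le 1$, i.e. $\bigl(\tfrac{d}{dt}g\bigr)^2\le 4|y-x|^2-4|y-x|\tfrac{d}{dt}\phi$. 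The task is then to show $\int_0^1 (g')^2\,dt\ge \bigl(\int_0^1|g'|\,dt\bigr)^2\ge |w(y)-w(x)|^2$ by Jensen/Cauchy–Schwarz, while $\int_0^1\phi'\,dt=\bigl(u(y)-u(x)\bigr)\cdot\tau$ (note the boundary terms need not vanish here — we only use $u=w=0$ on $\bO$ in the converse). Combining, $\tfrac14|w(y)-w(x)|^2/|y-x|+\bigl(u(y)-u(x)\bigr)\cdot\tau\le \int_0^1\bigl(\tfrac14(g')^2/|y-x|+\phi'\bigr)dt\le |y-x|$, which is precisely $\Lambda(u,w)(x,y)\le|y-x|$. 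I expect the main obstacle to be the converse.

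The converse direction ($\Leftarrow$): here I must recover a pointwise matrix inequality from a two-point condition, and this is where the zero boundary values and the choice of test directions matter. Fix $x_0\in\Ob$ and a unit vector $\tau$; I want $\tfrac14(\nabla w(x_0)\cdot\tau)^2+e(u)(x_0):(\tau\otimes\tau)\le 1$. The idea is to apply the two-point inequality to $y=x_0+h\tau$ and $x=x_0-h\tau$ (or $x=x_0$, $y=x_0+h\tau$) for small $h>0$, expand both sides to second order in $h$ using $C^1$ regularity — more carefully, differentiability of $u,w$ gives $w(x_0\pm h\tau)=w(x_0)\pm h\,\nabla w(x_0)\cdot\tau+o(h)$ and likewise for $u$ — and divide by $|y-x|$. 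The subtlety is that $\Lambda$ involves $|w(y)-w(x)|^2/|y-x|$, which is $O(h)$, matching $|y-x|=O(h)$, so after dividing by $h$ and letting $h\to 0$ one gets $\tfrac14(\nabla w(x_0)\cdot\tau)^2+ e(u)(x_0):(\tau\otimes\tau)\le 1$; quantifying the error terms is the delicate step. Since this direction is precisely the content of \cite[Lemma 3.5]{bouchitte2020}, I would cite that lemma for the rigorous argument and present only the short forward computation above in detail, remarking that $\Omega$ need not be convex because the segments $[x,y]$ used in the forward direction may leave $\Omega$ — but $u,w$ are defined on all of $\Ob\supset\Omega$, and convexity of $\Omega$ is not needed since we only use values of $u,w$ at the two endpoints together with differentiability along the chord, which holds on a neighbourhood; in fact the clean statement is that both conditions are local and the equivalence is pointwise, the global two-point form following by the fundamental theorem of calculus along chords contained in a suitable domain, and the cited lemma handles the general bounded domain by working with the $C^1$ extension to $\Ob$.
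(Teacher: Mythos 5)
Your two computations are the right ones and they are, as far as one can tell, the same mechanism as in the cited source (the paper itself gives no proof of this proposition — it defers entirely to \cite[Lemma 3.5]{bouchitte2020}, which you also invoke): for $\Rightarrow$, restrict to the chord, use $\phi'(t)=\abs{y-x}\,e(u):(\tau\otimes\tau)$ and $g'(t)=\abs{y-x}\,\nabla w\cdot\tau$, integrate the pointwise bound and apply Jensen/Cauchy--Schwarz; for $\Leftarrow$, test with $y=x_0+h\tau$, expand to first order, divide by $h$, and extend from $\Omega$ to $\Ob$ by continuity of $\nabla u,\nabla w$. Both steps are sound whenever the segment $[x,y]$ lies in $\Ob$, and the blow-up argument in the converse is unproblematic for $C^1$ functions.

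The genuine gap is your treatment of non-convexity, which is exactly the feature the proposition advertises. If $\Omega$ is not convex, the chord $[x,y]$ between two points of $\Ob$ may leave $\Ob$ entirely, and there $u,w$ are simply not defined; your remark that "differentiability along the chord \ldots holds on a neighbourhood" is false in that case, and "the $C^1$ extension to $\Ob$" does not help, so the forward integration argument as written only proves the statement for chords contained in $\Ob$, i.e.\ essentially for convex $\Omega$. Moreover your parenthetical assigning the boundary condition to the converse is backwards: the implication $\Leftarrow$ needs no boundary data at all (interior increments plus continuity of the gradients up to $\bO$ suffice), whereas the hypothesis $u=w=0$ on $\bO$ is precisely what saves the implication $\Rightarrow$. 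The repair is to extend $u$ and $w$ by zero to all of $\R^2$: because of the zero trace the extensions are continuous, hence Lipschitz on compact sets, they satisfy the pointwise constraint almost everywhere in $\R^2$ (outside $\Ob$ the matrix $\frac{1}{4}\nabla \bar w\otimes\nabla \bar w+e(\bar u)$ vanishes, so $\preceq \mathrm{I}$ trivially), and their restrictions to arbitrary segments are absolutely continuous with derivative bounded as before; running your chordwise integration on the extended functions then yields the two-point inequality for every pair $(x,y)\in\Ob\times\Ob$ (indeed in $\R^2\times\R^2$). Without this step — or an explicit appeal to the cited lemma for the non-convex case — your argument does not prove the stated proposition.
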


Writing the constraint in $(\mathcal{P}^*)$ in its two-point variant paves a way to an alternative duality scheme. For the infinite dimensional setting of this duality the reader is referred to \cite[Section 3.2]{bouchitte2020}, here we shall concentrate on the discrete version only. Henceforward $X$ we will stand for a finite subset of $\Ob$, see Fig. \ref{fig:ground_structure}(a). The only assumption imposed on $X$ will be the following:
\begin{equation}
	\label{eq:assum_on_X}
	X \backslash \bO \subset \mathrm{int} \bigl(\mathrm{conv}(X \cap \bO ) \bigr),
\end{equation}
where $\mathrm{conv}(A)$ stands for the convex hull of a set $A$, while $\mathrm{int}(A)$ for its interior. For a given load $f \in \Mes(\Ob;\R)$ we will always work with its discretization $f_X \in \Mes(X;\R)$, i.e. a load consisting of point forces applied at points in $X$ only, that weakly-* converges to $f$ with the resolution parameter of $X$ approaching zero. The load $f_X$ will be proposed individually in each example, see Section \ref{ssec:examples} below. We define a problem:
\begin{equation}
	\label{eq:Z_X_def}
	\Z_X:=\sup_{\substack{u \in C\left(\Ob;\Rd\right) \\ w \in C(\Ob;\R)}} \left\{ \left. \, \int_\Ob w\, f_X  \ \, \right\vert \, u=0, \, w=0 \text{ on } \bO, \ \ \ \Lambda(u,w)(x,y) \leq \abs{y-x} \quad  \forall\, (x,y) \in X \times X \right\}
\end{equation}
which plays the role of a discrete variant of problem $(\mathcal{P}^*)$. Indeed, by acknowledging Proposition \ref{prop:two-point_condition} in the problem above a finite subset of the continuum of  two-point constraints $\Lambda(u,w)(x,y) \leq \abs{x-y}\ $ $\forall (x,y) \in \Ob\times\Ob$ from $(\mathcal{P}^*)$ is imposed. Since $f_X$ is supported on $X$ it readily follows that only the values of $u, w$ at points of $X$ are significant. This fact encourages to pose the discrete problem in algebraic, vector-matrix formulation.

\begin{figure}[h]
	\centering
	\subfloat[]{\includegraphics*[trim={0.cm -0.65cm 0cm 0cm},clip,width=0.35\textwidth]{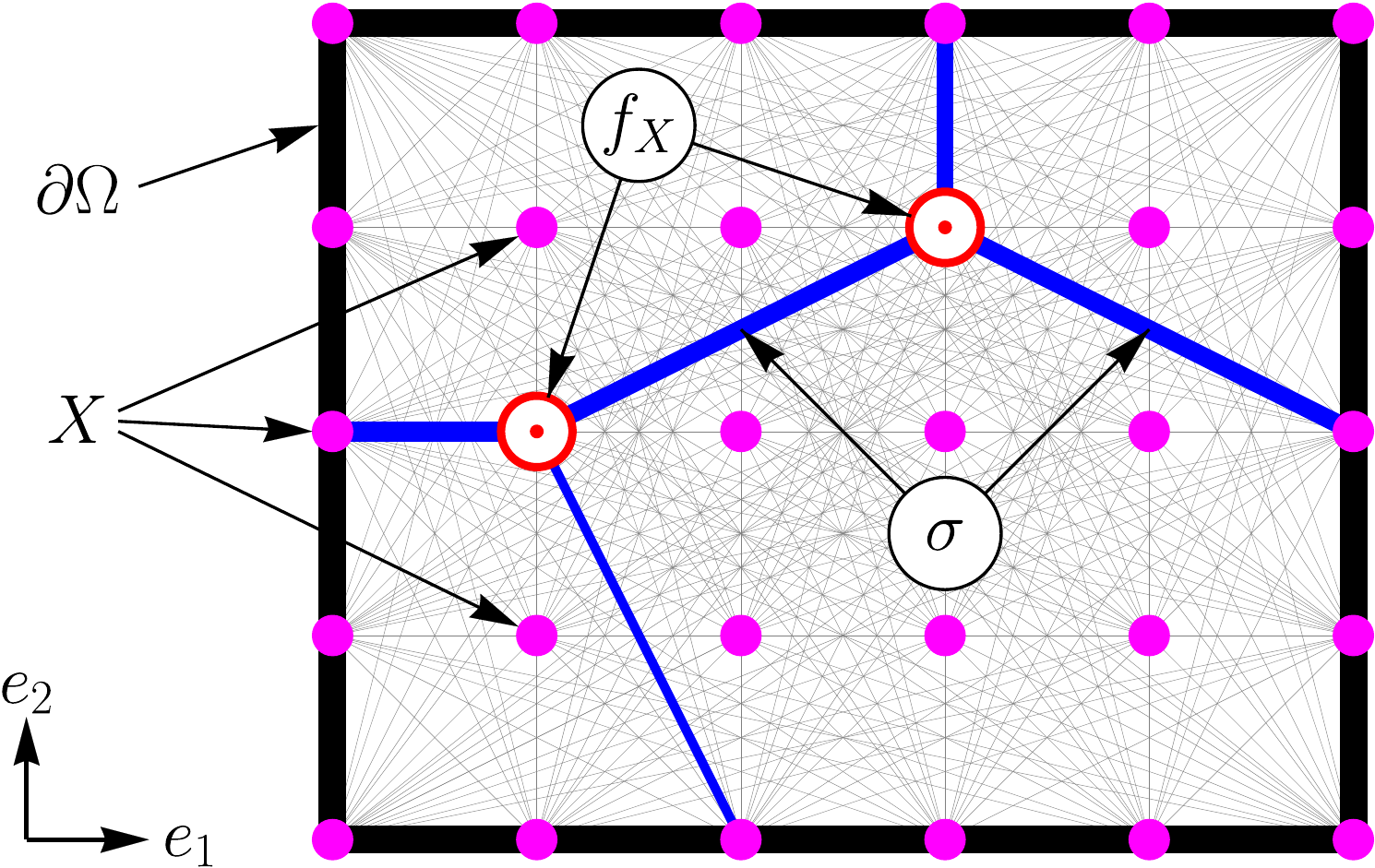}}\hspace{0.8cm}
	\subfloat[]{\includegraphics*[trim={1.cm 0cm 0.5cm 0.5cm},clip,width=0.59\textwidth]{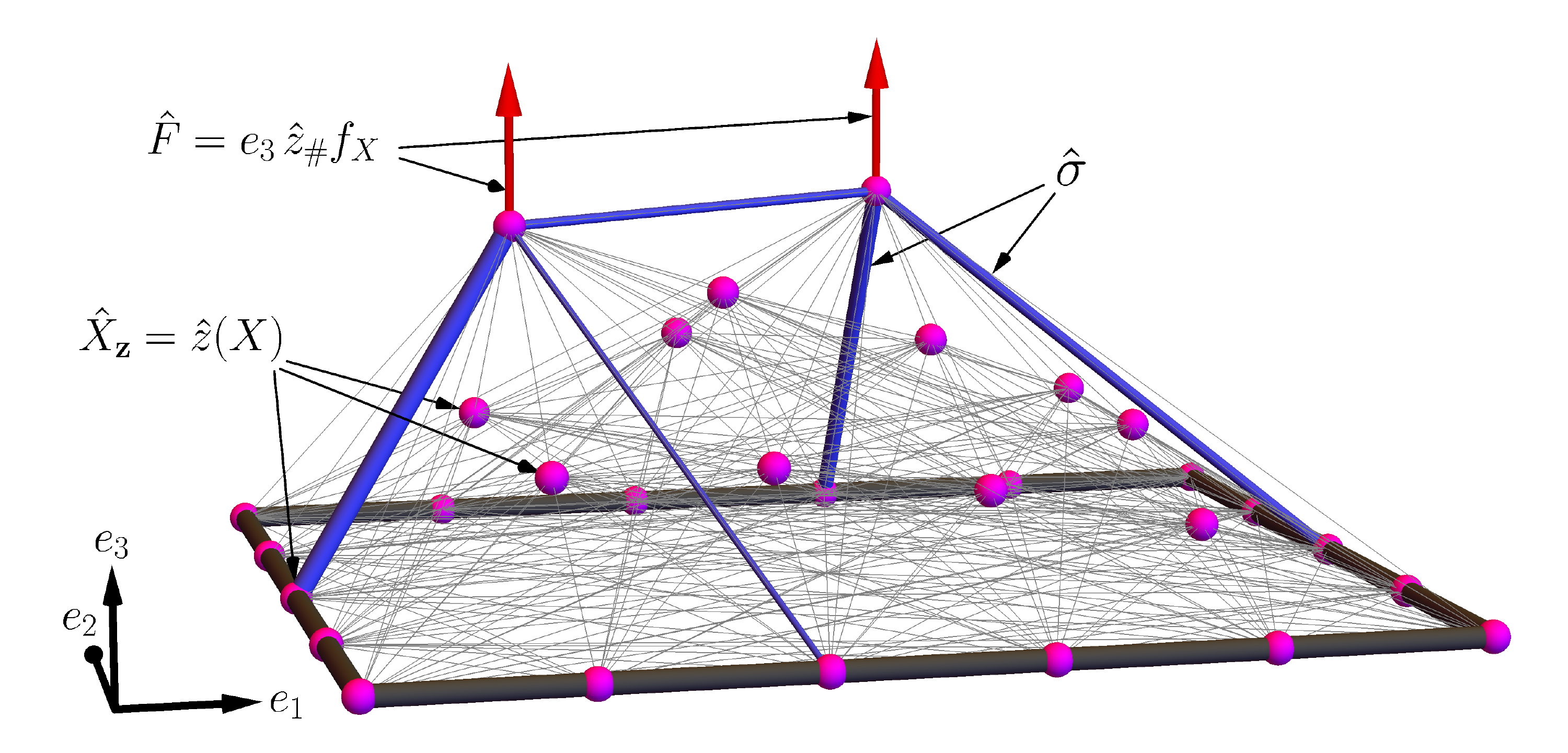}}\\
	\caption{The discrete formulation: (a) a 2D ground structure interconnecting points of the grid $X$ and an optimal 2D truss $\sigma$ according to \eqref{eq:sigma_q_discrete};\quad (b) a 3D ground structure interconnecting points of the grid $\hat{X}_{\mbf{z}}$ and an optimal \textit{grid-shell} being a 3D truss $\hat{\sigma}$ according to \eqref{eq:suboptimal_MV}.}
	\label{fig:ground_structure}
\end{figure}

Henceforward $\bar{n}$ will stand for the number of elements in the set $X$ (further referred to as \textit{nodes}), namely $\bar{n} = \# X$, while $n = \#(X\backslash \bO)$. We choose a bijection $\chi: \{1,\ldots,n\} \to X \backslash \bO$ that will identify the points of $X$ not lying on the boundary with indices $i$. Similarly, with $m= \bar{n}\,(\bar{n}-1)/2$ standing for number of all the segments (unordered pairs of distinct points, further referred to as \textit{members} or \textit{bars}) connecting the points in $X$, two mappings $\chi_-,\chi_+$ are picked so that we arrive at a bijection $[\chi_-(\argu),\chi_+(\argu)]:\{1,\ldots,m\} \to \bigl\{[x,y] \, \big\vert\, (x,y) \in X \times X, \ x \neq y \bigr\}$; the index $k$ will be used to identify members. The universe of all the segments/bars interconnecting the points in the grid $X$ shall be called the \textit{ground structure}.

Vectors and matrices, that will be used to formulate the finite dimensional program, will be displayed in bold font. We define the \textit{nodal load vector} and the \textit{member's length vector}:
\begin{equation*}
	\mbf{f} \in \R^n, \quad f_i := f_X\bigl(\{\chi(i)\}\bigr), \qquad\quad \mbf{l} \in \R^m, \quad l_k :=\abs{\chi_+(k)-\chi_-(k)}.
\end{equation*}
Upon fixing a Cartesian base $e_1, e_2$ in the plane $\R^2$, the \textit{virtual displacement vectors} $\mbf{u}_1, \mbf{u}_2, \mbf{w} \in \R^n$ will be henceforward identified with functions $u \in C(X;\Rd), w \in C(X;\R)$ which vanish on $\bO$ via the one-to-one relations $\bigl(u_{1;i}, u_{2;i}\bigr) = u\bigl( \chi(i)\bigr)$ and $w_{i} = w\bigl( \chi(i)\bigr)$. It should be stressed that the zero values of $u,w$ at points in $X \cap \bO$ are \textit{a priori} eliminated from vectors $\mbf{u}_1, \mbf{u}_2, \mbf{w}$.

The \textit{geometric matrices} $\mbf{B}_1,\mbf{B}_2, \mbf{D} \in \R^{m \times n}$ are defined through linear operations as follows:
\begin{equation}
	\label{eq:B_D_def}
	\bigl( \mbf{B}_1 \mbf{u}_1 + \mbf{B}_2 \mbf{u}_2 \bigr)_k := \Big(u\bigl(\chi_+(k) \bigr) - u\bigl(\chi_-(k) \bigr) \Big)\cdot \frac{\chi_+(k)-\chi_-(k)}{\abs{\chi_+(k)-\chi_-(k)}}, \quad 
	\bigl(\mbf{D} \mbf{w} \bigr)_k := w\bigl(\chi_+(k) \bigr) - w\bigl(\chi_-(k)\bigr),
\end{equation}
i.e. $\mbf{B}_1$ and $\mbf{B}_2$ are sparse matrices with non-zero elements being the directional cosines of the constructed ground structure being a dense truss; matrix $\mbf{D}$ consists of $0, 1$ and $-1$ only. It is clear that with non-empty set $X \cap \bO$ the matrix $\mbf{D}$ has a trivial kernel and the same holds for the operator $\R^{2n}\ni(\mbf{u}_1,\mbf{u}_2) \mapsto  \mbf{B}_1 \mbf{u}_1 + \mbf{B}_2 \mbf{u}_2 \in \R^m$ once $X \cap \bO$ contains at least three points that are not colinear -- both conditions are satisfied when \eqref{eq:assum_on_X} is assumed.

The two-point condition $\Lambda(u,w)(x,y) \leq \abs{y-x}\ $ in \eqref{eq:Z_X_def} may be readily rewritten in the algebraic format:
\begin{equation}
	\label{eq:quadratic_constraint}
	\frac{1}{4} \frac{\bigl( (\mbf{D} \mbf{w})_k \bigr)^2}{l_k} + (\mbf{B}_1 \mbf{u}_1 + \mbf{B}_2 \mbf{u}_2 )_k \leq l_k \qquad \text{for every  } k,
\end{equation}
while the integral $ \int_\Ob w\, f_X $ simply equals $\mbf{f}^\top\! \mbf{w} = \sum_{i=1}^n f_i \,w_i$. We have thus arrived at a finite dimensional convex program with a linear objective functional and a quadratic constraint \eqref{eq:quadratic_constraint}. It is well established, see e.g. \cite{andersen2003} or \cite{ben2001}, that a problem of this class may be rewritten as a \textit{conic quadratic programming problem}, i.e. a problem with linear objective, linear equality constraints and conic quadratic constraints. The advantage of the conic formulation is that it may be tackled by interior point methods of efficiency comparable to those written for linear programs, cf. \cite{andersen2003}.

In posing the conic quadratic program the so called \textit{rotated quadratic cone} in $\R^3$ will be of use:
\begin{equation*}
	\mathrm{K} := \Big\{(t_1,t_2,t_3) \in \R^3  \ \, \big\vert \ \,  t_1, t_2 \geq 0, \ \ 2\, t_1 t_2 \geq (t_3)^2\Big\}.
\end{equation*}
The cone $\mathrm{K}$ is self-dual, i.e. $\mathrm{K}^* = \mathrm{K}$ where for arbitrary convex cone $\mathcal{K}$ in a normed space $Y$ its dual cone is defined as $\mathcal{K}^* := \bigl\{ y^* \in Y^* \, \big\vert \, \pairing{y\,;y^*} \geq 0 \ \ \forall\, y\in \mathcal{K}  \bigr\}$. We put forward a pair of conic quadratic problems that will prove to be mutually dual:
\begin{tcolorbox}
\begin{equation*}\tag*{$(\mathcal{P}_X)$}
\inf\limits_{\mbf{s},\mbf{r}\in \R_+^m, \ \mbf{q} \in \R^m } 
\left\{ \, \mbf{l}^\top\! \mbf{s} + 2\,\mbf{l}^\top\! \mbf{r}   \, \left\vert \,
\begin{array}{c}
\mbf{B}_1^\top \mbf{s} = \mbf{0}\\
\mbf{B}_2^\top \mbf{s} = \mbf{0}\\
\mbf{D}^\top\! \mbf{q} = \mbf{f}
\end{array}, \,
(\mbf{r},\mbf{s},\mbf{q}) \in \mathrm{K}^m	
\right. \right\}
\end{equation*}
\vspace{0.5cm}
\begin{equation*}\tag*{$(\mathcal{P}_X^*)$}
\sup_{\substack{\mbf{u}_1,\mbf{u}_2, \mbf{w} \in \R^n \\
		\mbf{t}_1,\mbf{t}_2 \in \R^m_+, \ \mbf{t}_3\in \R^m}} 
\left\{ \, \mbf{f}^\top\!  \mbf{w} \, \left\vert \, 
\begin{array}{c}
\mbf{t}_2 +  \mbf{B}_1 \mbf{u}_1 + \mbf{B}_2 \mbf{u}_2  = \mbf{l}\\
\mbf{t}_3+\mbf{D}\mbf{w} = \mbf{0}\\
\mbf{t}_1 = 2\,\mbf{l}
\end{array}, \,
(\mbf{t}_1,\mbf{t}_2,\mbf{t}_3) \in \mathrm{K}^m	
\right. \right\}
\end{equation*}
\end{tcolorbox}
\noindent where $\mathrm{K}^m$ is the Cartesian product of $m$ cones $\mathrm{K}$, i.e. by $(\mbf{r},\mbf{s},\mbf{q}) \in \mathrm{K}^m$ we mean that $(r_k,s_k,q_k) \in \mathrm{K}$ for every $k$. 

We shall now recover the interpretation of $(\mathcal{P}_X)$ and $(\mathcal{P}^*_X)$ as discrete variants of convex problems $(\mathcal{P})$ and $(\mathcal{P}^*)$, respectively. First of all, the slack variables $\mbf{t}_1, \mbf{t}_2, \mbf{t}_3$ may be easily eliminated furnishing the quadratic constraint \eqref{eq:quadratic_constraint} therefore problem $(\mathcal{P}^*_X)$ becomes exactly the problem defined in \eqref{eq:Z_X_def} and $\Z_X = \sup \mathcal{P}^*_X$. The variable $\mbf{r}$ in the problem $(\mathcal{P}_X)$ may be eliminated as well: from the conic constraint follows inequality $2\, r_k\, s_k \geq (q_k)^2$ for every $k$ hence, considering that $2 \, l_k \, r_k$ contributes to the objective function and no other constraints on $r_k$ are present, we may always set $r_k = \bar{r}_k := \frac{1}{2}\,(q_k)^2/s_k$ whenever $s_k > 0 $ and $\bar{r}_k :=0$ otherwise when necessarily $q_k =0$ as well. After choosing $\mbf{r} = \bar{\mbf{r}}$ one arrives at the objective function
\begin{equation}
	\label{eq:elimnated_r}
	\mbf{l}^\top\! \mbf{s} + 2\,\mbf{l}^\top\! \mbf{\bar{r}} = \sum_{k=1}^m \left(l_k \, s_k + l_k \frac{(q_k)^2}{s_k} \right) = \sum_{k=1}^m \biggl(l_k \, s_k + l_k\, s_k \, (\psi_k)^2 \biggr),
\end{equation}
where $\psi_k$ is chosen so that $q_k = s_k \psi_k$. We may readily find the problem $(\mathcal{P}_X)$ as a modification of the infinite dimensional program $(\mathcal{P})$ by adding the constraint that the set of possible fields $\sigma, q$ is spanned by the ground structure $X \times X$, namely that $\sigma$ and $q$ are one dimensional measures as follows:
\begin{equation}
	\label{eq:sigma_q_discrete}
	\sigma = \sum_{k=1}^m s_k \, \sigma^{\,\chi_-(k),\,\chi_+(k)}, \qquad q = \sum_{k=1}^m q_k \, q^{\,\chi_-(k),\,\chi_+(k)},
\end{equation}
where $\sigma^{x,y}$ and $q^{x,y}$ are tensor and vector valued one dimensional measures defined in \eqref{eq:bar_measures}. It may be checked that, according to the comment below \eqref{eq:bar_measures}, distributions $-\DIV\,\sigma$ and $-\dive \, q$ are measures such that $\int_\Omega u \cdot(-\DIV\,\sigma) =  (\mbf{B}_1 \mbf{u}_1 + \mbf{B}_2 \mbf{u}_2)^\top \mbf{s} = \mbf{u}_1^\top(\mbf{B}^\top_1\!\mbf{s}) + \mbf{u}_2^\top(\mbf{B}^\top_2\!\mbf{s})$ and $\int_\Omega w\,(-\dive\,q) =  (\mbf{D} \mbf{w})^\top \mbf{q} = \mbf{w}^\top (\mbf{D}^\top\!\mbf{q})$ and the algebraic equilibrium equations in $(\mathcal{P}_X)$ are recast. Provided that the bars of non-zero $s_k, q_k$ do not overlap, one may similarly find that $\int \tr\, \sigma + (\sigma^{-1} q)\cdot q$ is equal exactly to \eqref{eq:elimnated_r}.

For a sparse grid $X$ of $6 \times 5$ nodes which generates a ground structure counting $m = 435$ members Fig. \ref{fig:ground_structure}(a) shows the one dimensional stress field $\sigma$ computed via \eqref{eq:sigma_q_discrete} for $\mbf{s} \in \R^m$ being optimal in $(\mathcal{P}_X)$.

\begin{remark}
	From the consideration above we find that variables $s_k$ satisfying the in-plane equilibrium conditions $\mbf{B}_1^\top \mbf{s} =\mbf{B}_2^\top \mbf{s} = \mbf{0}$ represents a plane truss that is pinned at points in $X \cap \bO$ and pre-stressed with tensile forces $s_k$. With $q_k$ treated as transverse forces in the bars $\mbf{D}^\top \mbf{q} = \mbf{f}$ becomes the out-of-plane equilibrium equation. Ultimately we recognize the structure described by $\mbf{s}, \mbf{q}$ as a discretized \textit{pre-stressed membrane} subject to an out of plane load $\mbf{f}$; segments $[\chi_-(k),\chi_+(k)]$ thus deserve to be called \textit{strings} instead bars. For more details on the framework of pre-stressed membrane the reader is referred to \cite{bouchitte2020}.
\end{remark}

The duality result for the pair $(\mathcal{P}_X)$,\,$(\mathcal{P}_X^*)$ along with the optimality conditions are as follows: 
\begin{theorem}
	\label{thm:duality_discrete}
	Let $X \subset \Ob \subset \Rd$ be a finite grid that satisfies condition \eqref{eq:assum_on_X}. Then  $(\mathcal{P}_X)$,\,$(\mathcal{P}^*_X)$ is a pair of mutually dual conic quadratic problems such that
	\begin{equation*}
		\Z_X = \min \mathcal{P}_X = \max \mathcal{P}^*_X<\infty,
	\end{equation*}
	in particular both problems admit their solutions. Moreover $(\mbf{s},\mbf{q})\in \R^m_+ \times \R^m$ and $(\mbf{u}_1,\mbf{u}_2,\mbf{w}) \in \R^{3 \times n}$ solve problems $(\mathcal{P}_X)$ and, respectively, $(\mathcal{P}^*_X)$ if and only if the optimality conditions below hold true:
	\begin{align}
	\label{eq:opt_cond_discrete}
	\begin{cases}
	(i)& \frac{1}{4} \bigl( (\mbf{D} \mbf{w})_k \bigr)^2/l_k + (\mbf{B}_1 \mbf{u}_1 + \mbf{B}_2 \mbf{u}_2 )_k \leq l_k \qquad \text{for every  } k;\\
	(ii) & \mbf{B}_1^\top \mbf{s} = \mbf{0}, \quad
	\mbf{B}_2^\top \mbf{s} = \mbf{0},\quad
	\mbf{D}^\top\! \mbf{q} = \mbf{f}; \\
	(iii) & \frac{1}{4} \bigl( (\mbf{D} \mbf{w})_k \bigr)^2/l_k + (\mbf{B}_1 \mbf{u}_1 + \mbf{B}_2 \mbf{u}_2 )_k = l_k \qquad \text{for each  } k \text{ such that } s_k \neq 0;\\
	(iv) &   q_k = \frac{1}{2}\, s_k\, (\mbf{D} \mbf{w})_k/l_k \qquad \text{for every  } k.
	\end{cases}
	\end{align}
\end{theorem}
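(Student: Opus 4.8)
The plan is to present $(\mathcal{P}_X)$ and $(\mathcal{P}^*_X)$ as a primal--dual pair of conic quadratic programs in the standard format of \cite{ben2001} and then invoke the conic duality theorem. First I would write $(\mathcal{P}_X)$ as $\min\{\mbf{c}^\top\mbf{x}:\mbf{A}\mbf{x}=\mbf{b},\ \mbf{x}\in\mathrm{K}^m\}$ with $\mbf{x}=(\mbf{r},\mbf{s},\mbf{q})$, $\mbf{c}=(2\mbf{l},\mbf{l},\mbf{0})$, $\mbf{b}=(\mbf{0},\mbf{0},\mbf{f})$ and $\mbf{A}(\mbf{r},\mbf{s},\mbf{q})=(\mbf{B}_1^\top\mbf{s},\mbf{B}_2^\top\mbf{s},\mbf{D}^\top\mbf{q})$, whose adjoint is $\mbf{A}^\top(\mbf{u}_1,\mbf{u}_2,\mbf{w})=(\mbf{0},\ \mbf{B}_1\mbf{u}_1+\mbf{B}_2\mbf{u}_2,\ \mbf{D}\mbf{w})$. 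Since the rotated quadratic cone $\mathrm{K}$ is self-dual, so is $\mathrm{K}^m$, and the conic dual $\max\{\mbf{b}^\top\mbf{y}:\mbf{c}-\mbf{A}^\top\mbf{y}\in\mathrm{K}^m\}$, after naming the slack $(\mbf{t}_1,\mbf{t}_2,\mbf{t}_3):=\mbf{c}-\mbf{A}^\top(\mbf{u}_1,\mbf{u}_2,\mbf{w})$, is exactly $(\mathcal{P}^*_X)$. A short computation using $\mbf{B}_1^\top\mbf{s}=\mbf{B}_2^\top\mbf{s}=\mbf{0}$ and $\mbf{D}^\top\mbf{q}=\mbf{f}$ shows that, for any primal-feasible $(\mbf{r},\mbf{s},\mbf{q})$ and dual-feasible $(\mbf{u}_1,\mbf{u}_2,\mbf{w},\mbf{t}_1,\mbf{t}_2,\mbf{t}_3)$, the objective difference equals $\sum_k\bigl(r_k t_{1;k}+s_k t_{2;k}+q_k t_{3;k}\bigr)$, a sum of nonnegative terms because each summand pairs two vectors of $\mathrm{K}=\mathrm{K}^*$; this is weak duality, $\inf\mathcal{P}_X\ge\sup\mathcal{P}^*_X$.

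Next I would obtain strong duality together with attainment on both sides. Strict feasibility of $(\mathcal{P}^*_X)$ is immediate: the choice $\mbf{u}_1=\mbf{u}_2=\mbf{w}=\mbf{0}$ gives slack $(\mbf{t}_1,\mbf{t}_2,\mbf{t}_3)=(2\mbf{l},\mbf{l},\mbf{0})$ with $2(2l_k)l_k=4l_k^2>0$, so every triple lies in $\mathrm{int}\,\mathrm{K}$. Moreover $(\mathcal{P}^*_X)$ is bounded above: after eliminating the slacks (by Proposition \ref{prop:two-point_condition} its value is exactly $\Z_X$ of \eqref{eq:Z_X_def}), assumption \eqref{eq:assum_on_X} lets the two-point constraints against the pinned boundary nodes bound $\mbf{w}$ a priori, in the spirit of the estimate $\norm{w}_\infty\le\sqrt{2}\,\mathrm{diam}(\Omega)$ of Proposition \ref{prop:regularitu_u_w}. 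The conic duality theorem of \cite{ben2001} then gives $\min\mathcal{P}_X=\sup\mathcal{P}^*_X<\infty$ with the primal minimum attained. To get attainment of the supremum in $(\mathcal{P}^*_X)$ as well I would argue by compactness: by \eqref{eq:assum_on_X} every free node is a strict convex combination of boundary nodes, hence $\mbf{B}_1\mbf{v}_1+\mbf{B}_2\mbf{v}_2\le\mbf{0}$ forces $(\mbf{v}_1,\mbf{v}_2)=\mbf{0}$, so the feasible set of $(\mathcal{P}^*_X)$, sliced over the bounded range of $\mbf{w}$, is bounded; a maximizing sequence then has a convergent subsequence whose limit is feasible and optimal. (An equivalent route is to exhibit a strictly positive self-equilibrated pre-stress on the complete ground structure, which makes $(\mathcal{P}_X)$ strictly feasible and yields attainment in $(\mathcal{P}^*_X)$ by a second application of the conic duality theorem.)

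Finally, the optimality system \eqref{eq:opt_cond_discrete} is read off from complementary slackness. With no duality gap, a feasible $(\mbf{s},\mbf{q})$ — completed by the optimal choice $r_k=\tfrac12 q_k^2/s_k$, and $r_k=0$ if $s_k=q_k=0$ — together with a feasible $(\mbf{u}_1,\mbf{u}_2,\mbf{w})$ are both optimal if and only if $\sum_k\bigl(r_k t_{1;k}+s_k t_{2;k}+q_k t_{3;k}\bigr)=0$; since every term is nonnegative, each pair $(r_k,s_k,q_k),(t_{1;k},t_{2;k},t_{3;k})\in\mathrm{K}$ must be mutually complementary. Analysing this for the rotated cone: whenever $s_k\neq0$ it forces the dual slack onto $\partial\mathrm{K}$, i.e. $2 t_{1;k} t_{2;k}=t_{3;k}^2$ with $t_{1;k}=2l_k$, $t_{3;k}=-(\mbf{D}\mbf{w})_k$, $t_{2;k}=l_k-(\mbf{B}_1\mbf{u}_1+\mbf{B}_2\mbf{u}_2)_k$, which rearranges to $\tfrac14(\mbf{D}\mbf{w})_k^2/l_k+(\mbf{B}_1\mbf{u}_1+\mbf{B}_2\mbf{u}_2)_k=l_k$, that is (iii); the remaining complementarity relations $r_k t_{1;k}=s_k t_{2;k}$ together with $q_k t_{3;k}\le0$ then give $q_k=\tfrac12 s_k(\mbf{D}\mbf{w})_k/l_k$, that is (iv). For $s_k=0$, cone membership forces $q_k=0$, so (iv) is trivial and (iii) vacuous; (i)--(ii) are just primal and dual feasibility. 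I expect the main obstacle to be the strong-duality-with-attainment step: one must use \eqref{eq:assum_on_X} carefully to secure the required rigidity of the ground structure (injectivity of the bar-elongation operator, triviality of its one-sided recession cone, equivalently existence of a strictly positive self-stress), after which the conic-programming machinery of \cite{ben2001,andersen2003} closes the argument; the remainder is bookkeeping that parallels the continuous optimality conditions of Theorem \ref{thm:opt_cond}.
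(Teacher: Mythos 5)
Your proposal is correct and shares the paper's overall architecture: cast $(\mathcal{P}_X)$,\,$(\mathcal{P}^*_X)$ as a standard primal--dual conic pair in the sense of \cite{ben2001}, invoke conic duality, and read the system \eqref{eq:opt_cond_discrete} off member-wise complementary slackness. Where you genuinely diverge is the strong-duality/attainment step. The paper proves \emph{strict feasibility of the primal}: using \eqref{eq:assum_on_X} it constructs, for every free node, a three-bar tension truss anchored to non-colinear boundary nodes, and superposes these into a self-equilibrated pre-stress with $s_k>0$ for \emph{every} member; this yields zero gap together with dual solvability, and primal solvability then follows from the (trivial) strict feasibility of the dual. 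You instead combine the trivial strict feasibility of the dual with \emph{boundedness of the dual}, and obtain dual attainment by a compactness/recession-cone argument, thereby avoiding the positive pre-stress construction altogether. The price is that the two facts you gloss over both require the same convex-combination device: writing a free node $x=\sum_j\mu_j y_j$ with $y_j\in X\cap\bO$ (possible by \eqref{eq:assum_on_X}) and summing the two-point constraints for the members $[x,y_j]$ with weights $\mu_j\abs{y_j-x}$ kills the $\mbf{u}$-terms and bounds $w(x)^2$ (a single boundary constraint does not, because of the in-plane term), and the same representation, together with non-colinearity of the boundary nodes, shows that $\mbf{B}_1\mbf{v}_1+\mbf{B}_2\mbf{v}_2\leq\mbf{0}$ forces $\mbf{v}=\mbf{0}$, so the recession cone of the dual feasible set is trivial and the set is compact. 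With those two details spelled out your route closes; the paper's route trades them for the explicit truss construction, which has the side benefit of exhibiting a strictly positive admissible pre-stress (useful for the member-adding initialization discussed in Section 7.2). Finally, in the complementarity bookkeeping for condition $(iv)$ the clean identity is that, for $s_k>0$ and both triples on $\partial\mathrm{K}$, the vanishing of $r_kt_{1;k}+s_kt_{2;k}+q_kt_{3;k}$ is equivalent to $\bigl(q_kt_{1;k}+s_kt_{3;k}\bigr)^2=0$, i.e. $2l_kq_k=s_k(\mbf{D}\mbf{w})_k$; your stated relations $r_kt_{1;k}=s_kt_{2;k}$ and $q_kt_{3;k}\leq 0$ are consequences rather than the driving equalities, but the conclusion and the treatment of the case $s_k=0$ are right, and this part is equivalent to the paper's sum-of-squares rearrangement.
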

\noindent The proof, relying on the well established algorithms in conic programming (cf. \cite{ben2001}), is moved to \ref{app:cone_duality}.

\subsection{The optimal grid-shell problem and the direct link to the 2D conic quadratic program}
\label{ssec:grid-shells}

\subsubsection{Discussion on constructing a suboptimal 3D structure based on solutions of $(\mathcal{P}_X)$,\,$(\mathcal{P}_X^*)$}

In the last subsection we have proposed a discrete approach to the pair of 2D problems $(\mathcal{P})$,\,$(\mathcal{P}^*)$. Our goal, however, is to develop a numerical method for optimal design of vaults being lower dimensional structures in three dimensional design space $\Ob \times \R$. After Theorem \ref{thm:optimal_3D_structure} it may seem that a way to recover the optimal 3D structure is to push-forward the one-dimensional measure $\sigma$ in \eqref{eq:sigma_q_discrete} to some surface $\ov{\mathcal{S}}_z$ furnishing $\hat{\sigma} \in \Mes(\Ob\times \R;\mathrm{T}_+^3)$. This approach would require specifying an elevation function: $\Omega$ could be triangularized using points in $X$ as nodes and function $z$ could be constructed as piece-wise affine interpolation of values $\frac{1}{2}\,\mbf{w}$ at the nodes. Such construction does not \textit{a priori} guarantee equilibrium in the sense that in general $- \hDIV \, \hat{\sigma} \neq \hat{F}$ for $\hat{F}$ being suitably elevated load $f_X$ (cf. Fig. \ref{fig:ground_structure}(b)): a simple counter-example could be given where a bar, that was straight on the base plane, becomes an unequilibrated funicular in 3D due to jumps of the slope of thus constructed $z$. 

In order to satisfy the equilibrium equations, being fundamental from the mechanical point of view, we put forward a more natural approach where a 3D truss is constructed by interconnecting points in the 3D grid that is obtained by elevating $X \subset \Ob$, see  Fig. \ref{fig:ground_structure}(b). Henceforward a function $z:X \to \R$ with zero boundary conditions $z=0$ on $\bO$ together with the function $\hat{z}: X \ni x \to \bigl(x,z(x) \bigr) \in X \times \R$ will be uniquely identified with a vector $\mbf{z} \in \R^n$ through the relation $z_i = z\bigl(\chi(i) \bigr)$ for each $i \in \{1,\ldots,n\}$, therefore also $\hat{z}\bigl(\chi(i) \bigr) = \bigl(\chi(i),z_i \bigr)$. For a given vector $\mbf{z}$ we define the elevated grid $\hat{X}_\mbf{z} = \hat{z}(X)$ while the bijection $\hat{\chi}_\mbf{z}:\{1,\ldots,n \} \to \hat{X}_\mbf{z} \backslash \bigl(\bO \times \{0\}\bigr)$ reads $\hat{\chi}_{\mbf{z}}(i) := \hat{z}\bigl(\chi(i) \bigr)$. Consequently, for each $k \in \{1, \ldots, m\}$ we put $\hat{\chi}_{\mbf{z},-}(k) := \hat{z}\bigl( \hat{\chi}_{-}(k)\bigr)$, $\hat{\chi}_{\mbf{z},+} := \hat{z}\bigl( \hat{\chi}_{+}(k)\bigr)$, namely $[\hat{\chi}_{\mbf{z},-}(k),\hat{\chi}_{\mbf{z},+}(k)]$ are the segments in the 3D ground structure interconnecting the 3D grid $\hat{X}_\mbf{z}$. Next, by $\hat{\mbf{l}}(\mbf{z}) \in \R^m$ we will denote the vector of lengths of such 3D segments which may be computed as follows:
\begin{equation*}
	\hat{\mbf{l}}(\mbf{z}) = \mbf{J}(\mbf{z})\, \mbf{l}, \quad\ \ \mbf{J}(\mbf{z}) \text{ is a diagonal matrix such that }  J_{kk}(\mbf{z}) := \sqrt{1+(\Delta_k(\mbf{z}))^2}, \quad\ \  \text{where } \Delta_k(\mbf{z}) := \frac{1}{l_k} \bigl( \mbf{D} \mbf{z} \bigr)_k.
\end{equation*}
The number $\Delta_k(\mbf{z})$ plays a role of a difference quotient for the function $z:X \to \R$ while $J_{kk}(\mbf{z})$ may be viewed as \textit{tangential Jacobian} with respect to the horizontal segment $[\chi_-(k),\chi_+(k)]$.  

Readily, for vectors $\mbf{s},\mbf{q},\mbf{r}$ and $\mbf{u}_1, \mbf{u}_2, \mbf{w}$ being solutions of conic programs $(\mathcal{P}_X)$ and, respectively, $(\mathcal{P}_X^*)$ we may construct a pair $(\hat{\sigma},\hat{F})$ that for $f= f_X$ is a competitor for the Prager problem $(\mathrm{MVPS}_H)$ for $H$ large enough:
\begin{equation}
	\label{eq:suboptimal_MV}
	\hat\sigma = \sum_{k=1}^m \hat{s}_k \, \hat\tau_k \otimes \hat\tau_k \, \Ha^1 \mres [\hat{\chi}_{\mbf{z},-}(k),\hat{\chi}_{\mbf{z},+}(k)], \quad \hat{F} = \sum_{i=1}^n e_3 f_i \,\delta_{\hat{\chi}_{\mbf{z}}(i)}, \quad \text{where} \quad \mbf{z} = \frac{1}{2}\,\mbf{w},\quad \hat{\mbf{s}} = \mbf{J}(\mbf{z}) \, \mbf{s},
\end{equation}
where by $\hat{\tau}_k$ we understand a unit vector that is tangent to the 3D segment $[\hat{\chi}_{\mbf{z},-}(k),\hat{\chi}_{\mbf{z},+}(k)]$. The proof that indeed $-\hDIV \, \hat{\sigma} = \hat{F}$ could be performed as a discrete variant of the proof of Lemma \ref{lem:feas_hat_sigma}. The one dimensional 3D stress field $\hat{\sigma}$ computed above is visualized in Fig. \ref{fig:ground_structure}(b). Similarly the pair $(\hat{\mu},\hat{F})$ as below is feasible in $(\mathrm{MCPS}_H)$: 
\begin{equation}
	\label{eq:suboptimal_MC}
	\hat\mu = \sum_{k=1}^m a_k \, \Ha^1 \mres [\hat{\chi}_{\mbf{z},-}(k),\hat{\chi}_{\mbf{z},+}(k)], \quad  \hat{F} = \sum_{i=1}^n e_3 f_i \,\delta_{\hat{\chi}_{\mbf{z}}(i)}, \quad \text{where} \quad \mbf{z} = \frac{1}{2}\,\mbf{w},\quad \mbf{a} = \frac{V_0}{\Z_X}\,\mbf{J}(\mbf{z}) \, \mbf{s}.
\end{equation}

Measures in \eqref{eq:suboptimal_MV} and \eqref{eq:suboptimal_MC} may be considered suboptimal solutions to problems $(\mathrm{MVPS}_H)$ and $(\mathrm{MCPS}_H)$, respectively. The difference with respect to the true optimal solutions given in Theorem \ref{thm:optimal_3D_structure} (aside from higher volume or compliance) is that \textit{a priori} $\hat{\sigma}, \hat{\mu}$ above may not lie on a single surface $\mathcal{S}$. It is simply because the 3D ground structure generated by the nodal grid $\hat{X}_\mbf{z}$ is truly spatial, despite the fact that $\hat{X}_\mbf{z} \subset \ov{\mathcal{S}}_{\bar{z}}$ for any Lipschitz continuous interpolation $\bar{z}:\Ob \to \R$ of $z:X \to \R$. In view of the present paper it may be considered a flaw of the proposed discrete approach -- our original goal was to optimally design a vault therefore, when looking for its discrete approximation, we should aim for 3D trusses that lie on a single Lipschitz continuous surface $\mathcal{S}$ spread over $\Omega$. Such trusses may be considered a 3D generalization of plane funiculars that will be henceforward referred to as \textit{grid-shells}.

In order to justify the proposed discrete method we shall now convey an intuition behind the fact that for optimal $\mbf{s}$ and $\mbf{w}$ the 3D trusses given via \eqref{eq:suboptimal_MV} and \eqref{eq:suboptimal_MC} are "almost grid-shells":
\begin{proposition}
	\label{prop:arg_for_GS}
	Let $x_1, x_2, x_3 \in X$ be colinear points in $\Ob$ such that $x_2 \in[x_1,x_3]$. Assume that vectors $\mbf{s},\mbf{q},\mbf{r}$ and $\mbf{u}_1, \mbf{u}_2,\mbf{w}$ are solutions of $(\mathcal{P}_X)$ and $(\mathcal{P}_X^*)$, respectively, and set $\mbf{z} = \frac{1}{2} \,\mbf{w}$. Then, for $k=3$ being the bar index such that $[\chi_-(3),\chi_+(3)] = [x_1,x_3]$, the following implication holds:
	\begin{equation*}
		s_{3} \neq 0 \qquad \Rightarrow \qquad \text{the points } \hat{z}(x_1) , \hat{z}(x_2), \hat{z}(x_3) \in \hat{X}_\mbf{z} \text{ are colinear in } \Ob \times \R,
	\end{equation*}
	where we recall the one-to-one relation $\hat{z}\bigl(\chi(i)\bigr) = \bigl(\chi(i),z_i\bigr)\ $ for each $i$.
\end{proposition}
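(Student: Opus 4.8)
The plan is to derive the claimed collinearity from the optimality conditions \eqref{eq:opt_cond_discrete} for the pair $(\mathcal{P}_X)$,\,$(\mathcal{P}_X^*)$, exploiting the fact that three collinear points in $X$ give rise to three members of the ground structure whose two-point constraints in $(\mathcal{P}_X^*)$ are coupled by a triangle-type inequality. First I would set up notation: write $a = \abs{x_2 - x_1}$, $b = \abs{x_3 - x_2}$, so $l_3 = a + b$; let $k_1, k_2$ be the bar indices of $[x_1,x_2]$ and $[x_2,x_3]$ respectively, so $l_{k_1} = a$, $l_{k_2} = b$. Denote by $w_1, w_2, w_3$ the values of $\mbf{w}$ (equivalently $\mbf{z} = \tfrac12 \mbf{w}$, so $z_j = \tfrac12 w_j$) at these three nodes, and by $p_j = u(x_j)\cdot \tau$ the projections of the in-plane displacement on the common direction $\tau = (x_3-x_1)/\abs{x_3-x_1}$; note that for the outer bar $(\mbf{B}_1\mbf{u}_1 + \mbf{B}_2\mbf{u}_2)_3 = p_3 - p_1$, and likewise for the two inner bars with the same $\tau$.

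Next I would invoke condition $(i)$ of \eqref{eq:opt_cond_discrete} for the two inner bars $k_1, k_2$ — these are admissibility inequalities, hence hold regardless of whether $s_{k_1}, s_{k_2}$ vanish — together with condition $(iii)$ for the outer bar $k=3$, which is active precisely because $s_3 \neq 0$. Writing $\delta_{21} = w_2 - w_1$, $\delta_{32} = w_3 - w_2$, $\delta_{31} = w_3 - w_1 = \delta_{21} + \delta_{32}$, the three relations read
\begin{align*}
\tfrac14\, \delta_{21}^2/a + (p_2 - p_1) &\leq a,\\
\tfrac14\, \delta_{32}^2/b + (p_3 - p_2) &\leq b,\\
\tfrac14\, \delta_{31}^2/(a+b) + (p_3 - p_1) &= a+b.
\end{align*}
Adding the first two inequalities and subtracting the equality, the $p$-terms telescope and cancel, leaving
\begin{equation*}
\tfrac14\left( \frac{\delta_{21}^2}{a} + \frac{\delta_{32}^2}{b} - \frac{(\delta_{21}+\delta_{32})^2}{a+b} \right) \leq 0.
\end{equation*}
The key elementary fact is that the quantity in parentheses equals $\dfrac{(b\,\delta_{21} - a\,\delta_{32})^2}{a b (a+b)} \geq 0$, so it must be zero; this is just the equality case of the Cauchy–Schwarz / convexity inequality $\tfrac{(r+s)^2}{p+q} \leq \tfrac{r^2}{p} + \tfrac{s^2}{q}$. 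Hence $b\,\delta_{21} = a\,\delta_{32}$, i.e. $\dfrac{w_2 - w_1}{a} = \dfrac{w_3 - w_2}{b}$, which says that $w$ (equivalently $z = \tfrac12 w$) is affine along the segment with respect to arc length. Since $x_2$ divides $[x_1,x_3]$ in the ratio $a:b$, affinity of $z$ along the segment is exactly the statement that $\hat z(x_1) = (x_1, z_1)$, $\hat z(x_2) = (x_2, z_2)$, $\hat z(x_3) = (x_3, z_3)$ are collinear in $\Ob \times \R$.

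I expect the only genuine subtlety — and the main thing to be careful about — is the bookkeeping with the sign conventions in the geometric matrices $\mbf{B}_1, \mbf{B}_2, \mbf{D}$: depending on the orientation chosen by $\chi_-, \chi_+$ on each of the three bars, the terms $(\mbf{B}_1\mbf{u}_1 + \mbf{B}_2\mbf{u}_2)_{k_j}$ and $(\mbf{D}\mbf{w})_{k_j}$ may pick up a global sign, and one must check that with a consistent orientation of all three segments along $\tau$ the $p$-terms indeed telescope cleanly. The $(\mbf{D}\mbf{w})_{k_j}$ terms enter only squared, so their signs are harmless; it is the linear $p$-terms that require attention, but since the constraint $(iii)$ for the outer bar is an equality one may freely reorient it to match the inner bars. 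A secondary point worth a sentence is that the degenerate cases $a = 0$ or $b = 0$ are excluded because the three points are distinct members of $X$. Everything else is the two-line algebraic identity above, so the proof is short.
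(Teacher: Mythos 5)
Your proof is correct and follows essentially the same route as the paper: conditions $(i)$ for the two inner bars and $(iii)$ for the outer bar (active since $s_3\neq 0$), cancellation of the linear $u$-terms thanks to the common directional cosines, and the equality case of the Cauchy--Schwarz-type inequality $\frac{(r+s)^2}{p+q}\leq \frac{r^2}{p}+\frac{s^2}{q}$ yielding $(w_2-w_1)/l_1=(w_3-w_2)/l_2$. The only cosmetic difference is that you package the two opposing inequalities of the paper into the single identity $\frac{r^2}{p}+\frac{s^2}{q}-\frac{(r+s)^2}{p+q}=\frac{(qr-ps)^2}{pq(p+q)}$, which is the same argument.
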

\begin{proof}
	We assume that $x_i = \chi(i)$ and $[\chi_-(1),\chi_+(1)] = [x_1,x_2]$, $[\chi_-(2),\chi_+(2)] = [x_2,x_3]$. For each of the three members there holds inequality $(i)$ in \eqref{eq:opt_cond_discrete} while, assuming that $s_3 \neq 0$, equality $(iii)$ is true for $k = 3$. Owing to the fact that $l_3 = l_1 + l_2$ and that $\mbf{u}_1, \mbf{u}_2$ enter those inequalities/equalities linearly (and with the same directional cosines) the following may be inferred:
	\begin{equation*}
		\frac{1}{4}\,\frac{(w_3-w_1)^2}{l_3} \geq \frac{1}{4} \, \frac{(w_2 - w_1)^2}{l_1} + \frac{1}{4} \, \frac{(w_3 - w_2)^2}{l_2}.
	\end{equation*}
	On the other hand, by virtue of Schwartz inequality:
	\begin{equation*}
		(w_3  - w_1)^2 = \left(\frac{w_2-w_1}{\sqrt{l_1}}\, \sqrt{l_1} + \frac{w_3-w_2}{\sqrt{l_2}}\, \sqrt{l_2} \right)^2 \leq \left(\frac{(w_2 - w_1)^2}{l_1} + \frac{(w_3 - w_2)^2}{l_2} \right) (l_1 + l_2)
	\end{equation*}
	being precisely the opposite inequality which renders each of the above an equality. In this particular case the Schwartz inequality is an equality if and only if $(w_2 - w_1)/l_1 = (w_3-w_2)/l_2$ which furnishes the assertion.
\end{proof}

The first implication of the proposition is that for colinear $x_1, x_2, x_3$ it is impossible that the 3D truss in accordance with \eqref{eq:suboptimal_MV} or \eqref{eq:suboptimal_MC} contains a vertical triangle formed by bars $[\hat{z}(x_1),\hat{z}(x_2)]$, $[\hat{z}(x_2),\hat{z}(x_3)]$, $[\hat{z}(x_1),\hat{z}(x_3)]$: either the bar $[\hat{z}(x_1),\hat{z}(x_3)]$ has a zero cross-section area or the triangle degenerates to a single bar. Next, we consider a quadrilateral of diagonals $[x_1,x_2]$, $[x_3,x_4]$ that intersect at point $x_5$ and we assume that $x_1, \ldots, x_5 \in X$. Then, by applying Proposition \ref{prop:arg_for_GS} to triples $x_1,x_5,x_2$ and $x_3,x_5,x_4$ we find that two non-zero 3D bars $[\hat{z}(x_1),\hat{z}(x_2)]$ and $[\hat{z}(x_3),\hat{z}(x_4)]$ can occur as a part of the truss \eqref{eq:suboptimal_MV} only under the condition that they intersect  at $\hat{z}(x_5)$. Of course, the two scenarios considered are idealized, since e.g. for arbitrary quadruple $x_1,\dots,x_4 \in X$ the intersection $x_5$ usually is not a point in $X$. If, however, $X$ is very dense and the four points are far away from each other, there is a point in $X$ that is very close to the intersection in question. Thus one may hope that for such grids $X$ the truss \eqref{eq:suboptimal_MV} is "approximately a grid-shell". Clearly this kind of argumentation is vague and in order to make it rigorous one would have to investigate convergence of the discrete method (e.g. for sequence of grids $X_\eps$ being $\eps$-nets for $\Ob$) which we skip in this contribution. Nevertheless, our hopes will be met by the precise simulations carried out in Section \ref{sec:numerics} where the suboptimal 3D trusses shall numerically lie on the surface $\mathcal{S}$ interpolating points in $\hat{X}_\mbf{z}$.

Formulas for suboptimal 3D trusses \eqref{eq:suboptimal_MV} and \eqref{eq:suboptimal_MC} were proposed as a sort of "discrete push-forward" of solution $\mbf{s}$ of problem $(\mathcal{P}_X)$ to the 3D ground structure spanned by the grid $\hat{X}_\mbf{z}$ where $\mbf{z} = \frac{1}{2}\, \mbf{w}$ with $\mbf{w}$ being solution of $(\mathcal{P}^*_X)$. At this point both the formulas are just an intuitive guess inspired by the general Theorem \ref{thm:optimal_3D_structure}, namely the 3D trusses \eqref{eq:suboptimal_MV} and \eqref{eq:suboptimal_MC} are not yet proved to be solutions of any optimal design problem. In the forthcoming subsections we will show that they are indeed optimal in the class of 3D trusses extracted from the ground structure generated by $\hat{X}_\mbf{z}$, with $\mbf{z} \in \R^n$ being a design variable. After the deliberation had above the present author dares to abuse the language and call this problem an \textit{optimal grid-shell design}.

\subsubsection{Grid-shells of minimum volume -- the plastic design setting}

For a fixed finite grid $X \subset \Ob$ that satisfies \eqref{eq:assum_on_X} we consider the problem of statics of a truss -- the 3D ground structure -- that for an elevation vector $\mbf{z}$ interconnects the nodes in the 3D grid $\hat{X}_\mbf{z}$, see Fig. \ref{fig:ground_structure}(b). To each index $k \in \{1,\ldots,m\}$ we associate a bar identified with 3D segment $[\hat{\chi}_{\mbf{z},-}(k),\hat{\chi}_{\mbf{z},+}(k)] \subset \Ob \times \R$. The bars can carry tensile axial forces $\hat{s}_k \geq 0$ only. For each index $i \in \{1,\ldots,n \}$ to a node $\hat{\chi}_\mbf{z}(i) \in \hat{X}_\mbf{z} \backslash \bigl(\bO \times \{0\} \bigr)$ we apply a vertical force of magnitude $f_i$ (upwards if $f_i >0$). The are no vertical members present in the ground structure and $k$-th bar has a slope $\Delta_k(\mbf{z}) = (\mbf{D} \mbf{z})_k / l_k$ with respect to the base plane and is of length $\hat{l}_k(\mbf{z}) = J_{kk}(\mbf{z})\, l_k =  \sqrt{1+(\Delta_k(\mbf{z}))^2} \, l_k$. Up to the sign, the horizontal and vertical components of forces exerted on nodes by $k$-th bar read $s_k = 1/J_{kk}(\mbf{z}) \, \hat{s}_k$ and  $q_k = \Delta_k(\mbf{z})/J_{kk}(\mbf{z}) \, \hat{s}_k$, respectively. Readily, equations $\mbf{B}_1^\top \mbf{s} = \mbf{0}, \ \mbf{B}_2^\top \mbf{s} = \mbf{0},\ \mbf{D}^\top\! \mbf{q} = \mbf{f}$ appearing in the 2D problem $(\mathcal{P}_X)$ may be recognized as 3D equilibrium equations of the nodes in the set $\hat{X}_\mbf{z} \backslash \bigl(\bO \times \{0\} \bigr)$. Up to dividing by the constant yield stress the problem of \textit{Minimum Volume Grid-Shell} can be posed as follows: 
\begin{equation*}\tag*{$(\mathrm{MVGS}_X)$}
\V_{X,\mathrm{min}} = \inf_{\substack{\mbf{z} \in \R^n \\ \hat{\mbf{s}} \in \R_+^m}} \left\{ \sum_{k=1}^{m} \hat{l}_k(\mbf{z}) \, \hat{s}_k \ \left\vert \  \begin{array}{c}
\mbf{B}_1^\top \mbf{s} = \mbf{0}\\
\mbf{B}_2^\top \mbf{s} = \mbf{0}\\
\mbf{D}^\top\! \mbf{q} = \mbf{f}
\end{array}, \ \def\arraystretch{1.2}\begin{array}{c}
s_k = \frac{1}{J_{kk}(\mbf{z})}\, \hat{s}_k\\
q_k = \frac{\Delta_k(\mbf{z})}{J_{kk}(\mbf{z})}\, \hat{s}_k\\
\end{array}
\right.\right\}.
\end{equation*}
\begin{remark}
	We stress that, in contrast to the classical ground structure methods, the shape of the 3D ground structure from which we extract the grid-shell $\hat{\mbf{s}}$ solving $(\mathrm{MVGS}_X)$ depends on the design variable $\mbf{z}$, cf. Fig. \ref{fig:ground_structure}(b).
\end{remark}
\noindent As in the standard minimum volume formulation for trusses (cf. \cite{gilbert2003,sokol2015,zegard2014}) in $(\mathrm{MVGS}_X)$ we consider the plastic design case where, apart from the elevation vector $\mbf{z}$, we seek the axial force vector $\hat{\mbf{s}}$ that guarantees equilibrium, without minding the deformation. Similarly as in the case of $(\mathrm{MVV}_\Omega)$, the problem $(\mathrm{MVGS}_X)$ is non-convex with respect to the pair $(\mbf{z},\hat{\mbf{s}})$. Analogously, a link to the pair of convex 2D problems $(\mathcal{P}_X)$,\,$(\mathcal{P}_X^*)$ will be now established.

The natural next step is to change variables from $\hat{\mbf{s}}$ to $\mbf{s}$ via the simple relation $\hat{\mbf{s}} = \mbf{J}(\mbf{z}) \, \mbf{s}$ while recalling that $\hat{l}_k(\mbf{z}) = J_{kk}(\mbf{z})\, l_k $. We arrive at an equivalent formulation:
\begin{equation}
	\label{eq:MVGS_changed_var}
	\V_{X,\mathrm{min}}=\inf_{\substack{\mbf{z} \in \R^n \\ \mbf{s} \in \R_+^m}} \left\{ \sum_{k=1}^{m} \biggl( l_k\, s_k+ l_k\, s_k\, \bigl(\Delta_k(\mbf{z})\bigr)^2  \biggr) \, \left\vert   \begin{array}{c}
	\mbf{B}_1^\top \mbf{s} = \mbf{0}\\
	\mbf{B}_2^\top \mbf{s} = \mbf{0}\\
	\mbf{D}^\top\! \mbf{q} = \mbf{f}
	\end{array}, \ q_k = \Delta_k(\mbf{z})\, s_k
	\right.\right\}.
\end{equation}
Then we follow the idea employed in Section \ref{ssec:2D_convex_program} for minimum weight vaults. We propose a relaxation of the problem \eqref{eq:MVGS_changed_var} where instead of minimizing with respect to $\mbf{z} \in \R^n$ we switch to variable $\bm{\psi} \in \R^m$, more precisely we put $\psi_k$ in place of $\Delta_k(\mbf{z})$. Since $\IM \,\mbf{D}$ is a subset of $\R^m$ (a strict subset in general) the objective value in the relaxed problem will be not greater than $\V_{X,\mathrm{min}}$. Upon a change of variables, from $(\mbf{s}, \bm{\psi})$ to $(\mbf{s}, \mbf{q})$ where $q_k = s_k\,\psi_k$, we find that the objective function of the relaxed problem is equal exactly to \eqref{eq:elimnated_r} and ultimately the objective value in the relaxed problem is precisely $\min \mathcal{P}_X = \Z_X$, therefore
\begin{equation}
	\label{eq:ZX_leq_VminX}
	\Z_X \leq \V_{X,\mathrm{min}},
\end{equation}
which is the discrete counterpart of inequality \eqref{eq:Z_leq_Vmin} for vaults. Similarly as in Section \ref{ssec:recovering_domes} we employ the optimality conditions, this time in its discrete variant \eqref{eq:opt_cond_discrete}, to reconstruct a grid-shell of minimum volume based on the solutions of the conic quadratic program $(\mathcal{P}_X)$,\,$(\mathcal{P}_X^*)$: 
\begin{theorem}[\textbf{Construction of the least-volume grid-shell}] 
	\label{thm:recovering_MV_grid-shell}
	Assume that the triples $(\mbf{s},\mbf{q},\mbf{r}) \in \R^{3 \times m}$ and $(\mbf{u}_1,\mbf{u}_2,\mbf{w}) \in \R^{3 \times n}$ are solutions of problems $(\mathcal{P}_X)$ and $(\mathcal{P}_X^*)$ respectively. Then the pair
	\begin{equation*}
	\mbf{z} = \frac{1}{2} \, \mbf{w}, \qquad \hat{\mbf{s}} = \mbf{J}(\mbf{z}) \,\mbf{s}
	\end{equation*} 
	solves the problem $(\mathrm{MVGS}_X)$ with $\V_{X,\mathrm{min}} = \Z_X$.
\end{theorem}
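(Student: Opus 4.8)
The proof follows the exact template of Theorem \ref{thm:recovring_MV_dome}: we already know the lower bound $\Z_X \leq \V_{X,\mathrm{min}}$ from \eqref{eq:ZX_leq_VminX}, so it suffices to exhibit an admissible pair $(\mbf{z},\hat{\mbf{s}})$ for $(\mathrm{MVGS}_X)$ whose objective equals $\Z_X$. The plan is to verify that the proposed pair $\mbf{z} = \tfrac{1}{2}\mbf{w}$, $\hat{\mbf{s}} = \mbf{J}(\mbf{z})\mbf{s}$ is feasible and then compute its volume directly, showing it collapses to $\Z_X = \min\mathcal{P}_X$.

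First I would check feasibility. Set $s_k$ as given (the first component of the $(\mathcal{P}_X)$ solution) and define $q_k := \Delta_k(\mbf{z})\, s_k$, where $\Delta_k(\mbf{z}) = (\mbf{D}\mbf{z})_k/l_k = \tfrac{1}{2}(\mbf{D}\mbf{w})_k/l_k$. By optimality condition $(iv)$ in \eqref{eq:opt_cond_discrete} we have $q_k = \tfrac{1}{2}\,s_k\,(\mbf{D}\mbf{w})_k/l_k$ for every $k$, which is exactly $\Delta_k(\mbf{z})\,s_k$; hence the vector $\mbf{q}$ thus defined coincides with the $\mbf{q}$ from the solution of $(\mathcal{P}_X)$. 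Therefore the three equilibrium equations $\mbf{B}_1^\top\mbf{s} = \mbf{0}$, $\mbf{B}_2^\top\mbf{s} = \mbf{0}$, $\mbf{D}^\top\mbf{q} = \mbf{f}$ hold by condition $(ii)$. Next, with $\hat{s}_k = J_{kk}(\mbf{z})\,s_k$, one has $s_k = \hat{s}_k/J_{kk}(\mbf{z})$ and $q_k = \Delta_k(\mbf{z})\,s_k = \Delta_k(\mbf{z})\,\hat{s}_k/J_{kk}(\mbf{z})$, which are precisely the two defining relations of $(\mathrm{MVGS}_X)$ linking $\hat{\mbf{s}}$ to $(\mbf{s},\mbf{q})$ through the slope $\Delta_k(\mbf{z})$. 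Thus $(\mbf{z},\hat{\mbf{s}})$ is admissible for $(\mathrm{MVGS}_X)$.

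Then I would evaluate the objective. Using $\hat{l}_k(\mbf{z}) = J_{kk}(\mbf{z})\,l_k$ and $\hat{s}_k = J_{kk}(\mbf{z})\,s_k$,
\begin{equation*}
\sum_{k=1}^m \hat{l}_k(\mbf{z})\,\hat{s}_k = \sum_{k=1}^m J_{kk}(\mbf{z})^2\, l_k\, s_k = \sum_{k=1}^m \bigl(1 + \Delta_k(\mbf{z})^2\bigr) l_k\, s_k = \sum_{k=1}^m \Bigl( l_k\, s_k + l_k\, s_k\,\Delta_k(\mbf{z})^2 \Bigr).
\end{equation*}
Since $\Delta_k(\mbf{z})\,s_k = q_k$, whenever $s_k > 0$ we have $l_k\,s_k\,\Delta_k(\mbf{z})^2 = l_k\,q_k^2/s_k = 2\,l_k\,\bar r_k$ with $\bar r_k = \tfrac12 q_k^2/s_k$, and whenever $s_k = 0$ condition $(iv)$ forces $q_k = 0$ so both sides vanish; comparing with \eqref{eq:elimnated_r} this sum equals $\mbf{l}^\top\mbf{s} + 2\,\mbf{l}^\top\bar{\mbf{r}}$, which is the objective of $(\mathcal{P}_X)$ evaluated at its optimal point, i.e. $\min\mathcal{P}_X = \Z_X$. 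Combining with \eqref{eq:ZX_leq_VminX} yields the chain $\Z_X \leq \V_{X,\mathrm{min}} \leq \sum_k \hat{l}_k(\mbf{z})\hat{s}_k = \Z_X$, so all inequalities are equalities; hence $(\mbf{z},\hat{\mbf{s}})$ solves $(\mathrm{MVGS}_X)$ and $\V_{X,\mathrm{min}} = \Z_X$.

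The argument is essentially bookkeeping — the only subtlety is the handling of members with $s_k = 0$ (where $r_k$ and $q_k$ both vanish by the conic constraint and condition $(iv)$), which must be addressed to make the identification with \eqref{eq:elimnated_r} rigorous rather than merely formal. There is no genuine obstacle: the entire content has already been front-loaded into the discrete optimality conditions of Theorem \ref{thm:duality_discrete} and the relaxation inequality \eqref{eq:ZX_leq_VminX}, exactly mirroring the continuous proof via \eqref{eq:Z_leq_V_leq_Z}.
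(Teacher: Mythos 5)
Your proof is correct and follows essentially the same route as the paper's: the lower bound \eqref{eq:ZX_leq_VminX}, feasibility via the discrete optimality condition $(iv)$ in \eqref{eq:opt_cond_discrete} (which gives $q_k = \Delta_k(\mbf{z})\,s_k$ for $\mbf{z}=\frac{1}{2}\mbf{w}$), and the identification of the volume with the $(\mathcal{P}_X)$ objective through \eqref{eq:elimnated_r}, yielding the same chain as \eqref{eq:Z_X_leq_V_X_leq_Z_X}. The only cosmetic difference is that you argue directly in the variables $(\mbf{z},\hat{\mbf{s}})$ of $(\mathrm{MVGS}_X)$ rather than in the changed variables of \eqref{eq:MVGS_changed_var}, and you spell out the $s_k=0$ case explicitly, which the paper leaves implicit.
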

\begin{proof} It suffices to show that the pair $(\mbf{z},\mbf{s})$ solves the modified problem \eqref{eq:MVGS_changed_var}. Upon putting $\psi_k = \Delta_k(\mbf{z})$ we infer from the optimality condition (iv), Eq. \eqref{eq:opt_cond_discrete} that for each $k$ there holds $q_k = \frac{1}{2}\,s_k\,\Delta_k(\mbf{w}) = s_k\,\Delta_k(\mbf{z}) = s_k \, \psi_k$. By combining the following facts: inequality \eqref{eq:ZX_leq_VminX}, feasibility of $(\mbf{z},\mbf{s})$ in \eqref{eq:MVGS_changed_var} and optimality of $\mbf{s}, \mbf{q}$ in $(\mathcal{P}_X)$ (together with $\mbf{r} = \bar{\mbf{r}}$, see \eqref{eq:elimnated_r}) we may write down the following chain:
\begin{equation}
	\label{eq:Z_X_leq_V_X_leq_Z_X}
	\Z_X \leq \V_{X,\mathrm{min}} \leq \sum_{k=1}^{m} \Big( l_k\, s_k+ l_k\, s_k\, \bigl(\Delta_k(\mbf{z})\bigr)^2 \Big) = \sum_{k=1}^m \Big(l_k \, s_k + l_k\, s_k \, (\psi_k)^2 \Big) = \Z_X
\end{equation}
which is in fact a chain of equalities rendering $(\mbf{z},\mbf{s})$ optimal for \eqref{eq:MVGS_changed_var}.
\end{proof}

\subsubsection{Grid-shells of minimum compliance -- the elastic design setting}

We jump to investigate optimal design of elastic grid-shells where, apart from elevation vector $\mbf{z} \in \R^n$, a vector of bar's cross-section areas $\mbf{a} \in \R^m$ will be chosen. The bars in the 3D ground structure spanned by the 3D grid $\hat{X}_\mbf{z}$ will be assumed to be made of homogeneous elastic material of prescribed Young's modulus $E_0$. Vectors $\mbf{u}_1, \mbf{u_2}, \mbf{w} \in \R^n$ shall describe the nodal displacements, namely for each $i$ at the node $\hat{\chi}_\mbf{z}(i) \in \hat{X}_\mbf{z} \backslash \bigl(\Ob \times \{0\} \bigr)$ we have horizontal displacements $u_{1;i}, u_{i;2}$ in two orthogonal directions and vertical displacement $w_i$.

With the use of finite difference operators defined with respect to the base plane:
\begin{equation*}
\mathrm{e}_k(\mbf{u}_1,\mbf{u}_2) := \frac{1}{l_k} \bigl( \mbf{B}_1 \mbf{u}_1 + \mbf{B}_2 \mbf{u}_2 \bigr)_k, \qquad \Delta_k(\mbf{w}) := \frac{1}{l_k} \bigl( \mbf{D} \mbf{w} \bigr)_k
\end{equation*}
through simple geometric relations we may compute the axial strain in the $k$-th bar of the 3D ground structure:
\begin{equation}
\label{eq:operator_hat_e_k}
\hat{\mathrm{e}}_k(\mbf{u}_1,\mbf{u}_2,\mbf{w};\,\mbf{z}) = \frac{\mathrm{e}_k\bigl(\mbf{u}_1,\mbf{u}_2 \bigr) + \Delta_k(\mbf{z}) \, \Delta_k(\mbf{w})}{1 + \bigl(\Delta_k(\mbf{z})\bigr)^2}. 
\end{equation}
The elastic energy stored in a single member with the strain $\epsilon_k$ may be written as $\frac{E_0}{2}\, \bigl(\check{\gamma}_+(\epsilon_k) \bigr)^2 a_k \, \hat{l}_k(\mbf{z})$ where $\check{\gamma}_+:\R \to \R_+$ is the 1D counterpart of the gauge $\hat{\gamma}_+:\mathrm{T}^3 \to \R_+$, i.e. $\check{\gamma}_+(\epsilon)$ is simply the positive part $\epsilon_+$ of $\epsilon$. The polar reads $\check{\gamma}_+^0(s) = \abs{s} + \mathbbm{I}_{\R_+}(s)$ which automatically rules out compressive axial forces $s<0$ in the dual formulation of elasticity. Compliance of the grid-shell being a 3D truss of cross sections $\mbf{a} \in \R^m$ and interconnecting nodes elevated by vector $\mbf{z} \in \R^n$ may be computed as minus infimum of the total potential energy or, equivalently, supremum of minus total potential energy:
\begin{equation}
	\label{eq:disc_comp_def}
	\Comp_X(\mbf{z},\mbf{a}) := \sup_{\mbf{u}_1,\mbf{u}_2, \mbf{w} \in \R^n} \left\{  \mbf{f}^\top\! \mbf{w}   -  \frac{E_0}{2} \sum_{k=1}^{m} \Big(\bigl(\hat{\mathrm{e}}_{k}(\mbf{u}_1,\mbf{u}_2,\mbf{w};\,\mbf{z})\bigr)_+\Big)^2 a_k\, \hat{l}_k(\mbf{z})   \right\}.
\end{equation}
By standard duality arguments we arrive at the dual, stress-based formula for the compliance:
\begin{equation}
	\label{eq:disc_dual_comp_def}
	\Comp_X(\mbf{z},\mbf{a}) = \inf_{\hat{\mbf{s}} \in \R_+^m} \left\{ \frac{1}{2E_0} \sum_{k=1}^{m} \frac{(\hat{s}_k)^2}{a_k}\, \hat{l}_k(\mbf{z}) \ \left\vert \  \begin{array}{c}
	\mbf{B}_1^\top \mbf{s} = \mbf{0}\\
	\mbf{B}_2^\top \mbf{s} = \mbf{0}\\
	\mbf{D}^\top\! \mbf{q} = \mbf{f}
	\end{array}, \ \def\arraystretch{1.2}\begin{array}{c}
	s_k = \frac{1}{J_{kk}(\mbf{z})}\, \hat{s}_k\\
	q_k = \frac{\Delta_k(\mbf{z})}{J_{kk}(\mbf{z})}\, \hat{s}_k\\
	\end{array}  \right.\right\}
\end{equation}
where we agree that for $a_k=0$ the quotient $(\hat{s}_k)^2/a_k$ equals zero if $\hat{s}_k =0$ and equals $\infty$ whenever $s_k \neq 0$.

The problem of designing the \textit{minimum compliance grid-shell} under the volume constraint reads:
\begin{equation*}\tag*{$(\mathrm{MCGS}_X)$}
	\Comp_{X,\mathrm{min}}=\inf_{\substack{\mbf{z} \in \R^n \\ \mbf{a} \in \R_+^m}} \left\{ \mathcal{C}_X(\mbf{z},\mbf{a})\ \, \left\vert \ \ \sum_{k=1}^{m}  a_k\, \hat{l}_k(\mbf{z}) \leq V_0 \right.\right\}.
\end{equation*}
The connection between problem $(\mathrm{MCGS}_X)$ and the conic program $(\mathcal{P}_X)$,\,$(\mathcal{P}_X^*)$  leads through an inequality that is the discrete counterpart of the one claimed in Lemma \ref{lem:Cmin_leq_Z}: 
\begin{lemma}
	\label{lem:C_X_geq_Z_X}
	There holds an inequality
	\begin{equation*}
	\Comp_{X,\mathrm{min}} \geq \frac{(\Z_X)^2}{2E_0 V_0}.
	\end{equation*}
\end{lemma}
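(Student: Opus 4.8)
The plan is to mimic the proof of Lemma \ref{lem:Cmin_leq_Z}, with the ground-structure operators in place of the continuous ones. First I would plug the displacement-based definition \eqref{eq:disc_comp_def} of $\Comp_X(\mbf{z},\mbf{a})$ into $(\mathrm{MCGS}_X)$ and exchange the minimization over $(\mbf{z},\mbf{a})$ with the maximization over $(\mbf{u}_1,\mbf{u}_2,\mbf{w})$, which always produces an inequality:
\begin{equation*}
\Comp_{X,\mathrm{min}}\geq\sup_{\mbf{u}_1,\mbf{u}_2,\mbf{w}\in\R^n}\ \inf_{\substack{\mbf{z}\in\R^n,\ \mbf{a}\in\R_+^m\\ \sum_k a_k\hat{l}_k(\mbf{z})\leq V_0}}\left\{\mbf{f}^\top\!\mbf{w}-\frac{E_0}{2}\sum_{k=1}^m\Big((\hat{\mathrm{e}}_k(\mbf{u}_1,\mbf{u}_2,\mbf{w};\mbf{z}))_+\Big)^2 a_k\hat{l}_k(\mbf{z})\right\}.
\end{equation*}
For fixed $\mbf{z}$ the substitution $\tilde{a}_k=a_k\hat{l}_k(\mbf{z})$ is a bijection of $\R_+$ (since $\hat{l}_k(\mbf{z})\geq l_k>0$) and turns the volume constraint into $\sum_k\tilde{a}_k\leq V_0$; the resulting linear infimum in $\tilde{\mbf{a}}$ is attained by putting all the available volume on a bar realizing $\max_k((\hat{\mathrm{e}}_k)_+)^2$, so the bracket is bounded below by $\mbf{f}^\top\!\mbf{w}-\frac{E_0V_0}{2}\max_k((\hat{\mathrm{e}}_k(\mbf{u}_1,\mbf{u}_2,\mbf{w};\mbf{z}))_+)^2$.

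The key step is to make this bound independent of $\mbf{z}$. Writing $\psi_0:=\Delta_k(\mbf{z})$ in \eqref{eq:operator_hat_e_k} gives $\hat{\mathrm{e}}_k=(\mathrm{e}_k(\mbf{u}_1,\mbf{u}_2)+\psi_0\Delta_k(\mbf{w}))/(1+\psi_0^2)\leq\hat{h}_k(\mbf{u}_1,\mbf{u}_2,\mbf{w})$, where $\hat{h}_k(\mbf{u}_1,\mbf{u}_2,\mbf{w}):=\sup_{\psi\in\R}(\mathrm{e}_k(\mbf{u}_1,\mbf{u}_2)+\psi\Delta_k(\mbf{w}))/(1+\psi^2)$ is a single-member analogue of the gauge $h$ of \eqref{eq:h_with_psi}; this supremum is finite and non-negative (the quotient vanishes as $\psi\to\pm\infty$), so one also has $(\hat{\mathrm{e}}_k)_+\leq\hat{h}_k$, uniformly in $\mbf{z}$. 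Hence the infimum over $\mbf{z}$ drops out, and
\begin{equation*}
\Comp_{X,\mathrm{min}}\geq\sup_{\mbf{u}_1,\mbf{u}_2,\mbf{w}}\left\{\mbf{f}^\top\!\mbf{w}-\frac{E_0V_0}{2}\Big(\max_k\hat{h}_k(\mbf{u}_1,\mbf{u}_2,\mbf{w})\Big)^2\right\}.
\end{equation*}
Since $(\mbf{u}_1,\mbf{u}_2,\mbf{w})\mapsto\max_k\hat{h}_k(\mbf{u}_1,\mbf{u}_2,\mbf{w})$ is a gauge (a supremum of affine maps vanishing at the origin, hence non-negative and positively $1$-homogeneous), I would next reuse the scaling argument from the proof of Proposition 2.1 in \cite{bouchitte2001}: representing a generic triple as $t$ times one with $\max_k\hat{h}_k\leq1$ and maximizing the univariate quadratic in $t\geq0$ yields $\Comp_{X,\mathrm{min}}\geq\frac{1}{2E_0V_0}\big(\sup\{\mbf{f}^\top\!\mbf{w}\mid\max_k\hat{h}_k(\mbf{u}_1,\mbf{u}_2,\mbf{w})\leq1\}\big)^2$; the triples with $\max_k\hat{h}_k=0$ satisfy $\mbf{D}\mbf{w}=0$, hence $\mbf{w}=0$ under \eqref{eq:assum_on_X}, and do not affect the supremum.

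It remains to identify that supremum with $\Z_X$. A discriminant computation identical to the one in the proof of Proposition \ref{prop:h_and_g_C} gives $\hat{h}_k(\mbf{u}_1,\mbf{u}_2,\mbf{w})\leq1\iff\frac14(\Delta_k(\mbf{w}))^2+\mathrm{e}_k(\mbf{u}_1,\mbf{u}_2)\leq1$, which after multiplying by $l_k$ is exactly the constraint $\frac14((\mbf{D}\mbf{w})_k)^2/l_k+(\mbf{B}_1\mbf{u}_1+\mbf{B}_2\mbf{u}_2)_k\leq l_k$ appearing in \eqref{eq:Z_X_def} (equivalently condition $(i)$ of \eqref{eq:opt_cond_discrete}); as $\mbf{f}^\top\!\mbf{w}=\int_\Ob w\,f_X$ for the associated nodal function, the supremum equals $\Z_X$ and the lemma follows. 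I expect the only delicate points to be bookkeeping — legitimacy of the $\inf$--$\sup$ exchange (which only costs an inequality), the change of variables in $\mbf{a}$, and the handling of the degenerate triples in the scaling step — none of which is a genuine obstacle; the argument is a faithful discretization of Lemma \ref{lem:Cmin_leq_Z}.
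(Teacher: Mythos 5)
Your proof is correct, but it follows a genuinely different route from the one the paper actually takes. You work from the displacement-based definition \eqref{eq:disc_comp_def}: swap $\inf$--$\sup$ for $\sup$--$\inf$, eliminate $\mbf{a}$ by the substitution $\tilde{a}_k=a_k\hat{l}_k(\mbf{z})$ and a linear-programming argument, eliminate $\mbf{z}$ via the one-dimensional gauge $\hat{h}_k$ (the function the paper calls $h_1(\epsilon,\theta)=\sup_{\psi}(\epsilon+\psi\theta)/(1+\psi^2)$), apply the scaling trick of Lemma \ref{lem:Cmin_leq_Z}, and identify the resulting constrained supremum with $\Z_X$ through the discriminant computation of Proposition \ref{prop:h_and_g_C}; your handling of the degenerate triples with $\max_k\hat{h}_k=0$ via the trivial kernel of $\mbf{D}$ closes the only delicate point in the scaling step. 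This is precisely the "full analogy to Section \ref{ssec:recovering_dome_elastic}" that the paper mentions as possible but deliberately declines, "to introduce some diversity": the paper's actual proof instead starts from the stress-based (dual) definition \eqref{eq:disc_dual_comp_def}, applies a single Cauchy--Schwarz inequality with respect to the weighted scalar product $\pairing{\mbf{x}\,;\mbf{y}}=\sum_k x_k y_k \hat{l}_k(\mbf{z})$ to obtain $\bigl(\V_{X,\mathrm{min}}\bigr)^2/(2E_0V_0)\leq \Comp_{X,\mathrm{min}}$, and then invokes the already established relation $\Z_X\leq\V_{X,\mathrm{min}}$ from \eqref{eq:ZX_leq_VminX} (in fact equality, by Theorem \ref{thm:recovering_MV_grid-shell}). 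The paper's route is shorter because it leans on the plastic-design result; yours is self-contained on the kinematic side (it never needs \eqref{eq:ZX_leq_VminX}), reaches $\Z_X$ directly through the constraint of $(\mathcal{P}_X^*)$, and is a faithful discretization of Lemma \ref{lem:Cmin_leq_Z}, at the price of carrying the gauge machinery explicitly.
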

\noindent The proof of this inequality, and in fact the rest of this subsection, could go in full analogy to Section \ref{ssec:recovering_dome_elastic}. It would lead through an auxiliary function $h_1: \R \times \R \to \R_+$ defined by $h_1(\epsilon,\theta) := \sup_{\psi\in \R}\, \bigl(\epsilon+ \psi \,\theta\bigr)/\bigl(1+\psi^2\bigr)$ being a one 1D counterpart of function $h$ in \eqref{eq:h_with_psi}.
However, in the discrete, grid-shell setting of the design problem the function $h_1$ would not play a role equally important to the one played by function $h$ in Sections \ref{sec:elastic_design} or \ref{sec:3D}. For this reason, and also to introduce some diversity in the text, we choose to prove Lemma \eqref{lem:C_X_geq_Z_X} by exploiting the dual definition of compliance \eqref{eq:disc_dual_comp_def} thus building upon methods used in works \cite{czarnecki2015,czarnecki2012,czarnecki2017,czubacki2015} or \cite{bendsoe1996}:

\begin{proof}[Proof of Lemma \ref{lem:C_X_geq_Z_X}]
	For any triple of vectors $\mbf{z}\in \R^n$, $\mbf{a}\in \R_+^m$, $\hat{\mbf{s}}\in \R_+^m$ such that the volume constraint $\sum_{k=1}^m a_k\, \hat{l}_k(\mbf{z}) \leq V_0$ is satisfied a chain of inequalities may be written down 
	\begin{equation*}
		 \left(\sum_{k=1}^{m} \hat{s}_k \, \hat{l}_k(\mbf{z}) \right)^2 = \left(\sum_{k=1}^{m} \frac{\hat{s}_k}{\sqrt{a_k}} \, \sqrt{a_k}\ \hat{l}_k(\mbf{z}) \right)^2 \leq \left(\sum_{k=1}^{m} \frac{(\hat{s}_k)^2}{a_k} \, \hat{l}_k(\mbf{z}) \right) \left(\sum_{k=1}^{m} a_k \, \hat{l}_k(\mbf{z}) \right) \leq \left(\sum_{k=1}^{m} \frac{(\hat{s}_k)^2}{a_k} \, \hat{l}_k(\mbf{z}) \right) V_0,
	\end{equation*}
	where the first inequality is the Schwarz inequality with respect to the following scalar product on $\R^m$: $\pairing{\mbf{x}\,;\mbf{y}} = \sum_{k=1}^m x_k \, y_k \, \hat{l}_k(\mbf{z})$ (cf. the derivation below Eq. (2.27) in \cite{czarnecki2017} where Schwarz ineq. was already utilized in a similar context). In the next step we multiply the chain above by $1/(2 E_0 V_0)$ and then we take the infimum of both its LHS and RHS with respect to the triple $(\mbf{z},\mbf{a},\hat{\mbf{s}})$ that satisfies the constraint given in $(\mathrm{MVGS}_X)$ and moreover $\sum_{k=1}^m a_k\, \hat{l}_k(\mbf{z}) \leq V_0$ in order to find that the following inequality holds:
	\begin{equation*}
		\frac{1}{2 E_0 V_0} \, \bigl( \V_{X,\mathrm{min}}\bigr)^2 \leq \Comp_{X,\mathrm{min}}.
	\end{equation*}
	Indeed, the LHS above emerged directly from definition of problem $(\mathrm{MVGS}_X)$ while $\Comp_{X,\mathrm{min}}$ may be found by plugging the dual definition \eqref{eq:disc_dual_comp_def} of compliance $\Comp_X(\mbf{z},\mbf{a})$ into $(\mathrm{MCGS}_X)$ to discover the triple infimum in $(\mbf{z},\mbf{a},\hat{\mbf{s}})$. The assertion follows by inequality \eqref{eq:ZX_leq_VminX} (or in fact equality $\V_{X,\mathrm{min}} = \Z_X$ by virtue of Theorem \ref{thm:recovering_MV_grid-shell}).
\end{proof}

The recipe for a grid-shell of minimum compliance reads as follows:

\begin{theorem}[\textbf{Constructing elastic grid-shell of the least compliance}]
\label{thm:recovering_MC_grid-shell}
Let $(\mbf{s},\mbf{q},\mbf{r}) \in \R^{3 \times m}$ and $(\mbf{u}_1,\mbf{u}_2,\mbf{w}) \in \R^{3 \times n}$ be solutions of problems $(\mathcal{P}_X)$ and $(\mathcal{P}_X^*)$ respectively. Then the pair
	\begin{equation*}
		{\mbf{z}} = \frac{1}{2} \, \mbf{w}, \qquad {\mbf{a}} = \frac{V_0}{\Z_X}\, \mbf{J}({\mbf{z}}) \,\mbf{s}
	\end{equation*} 
	solves the problem $(\mathrm{MCGS}_X)$ with $\Comp_{X,\mathrm{min}} = \frac{(\Z_X)^2}{2E_0 V_0}$. Moreover, the displacement and axial force vectors
	\begin{equation*}
	({\mbf{u}}_{1,\e},{\mbf{u}}_{2,\e},{\mbf{w}_\e}) = \frac{\Z_X}{E_0 V_0}\, ({\mbf{u}}_1,{\mbf{u}}_2,{\mbf{w}}) \qquad \text{and} \qquad \hat{\mbf{s}} = \mbf{J}({\mbf{z}}) \,\mbf{s}
	\end{equation*}
	solve, respectively, the displacement-based elasticity problem \eqref{eq:disc_comp_def} and the stress-based elasticity problem \eqref{eq:disc_dual_comp_def} for the optimal grid-shell $({\mbf{z}},{\mbf{a}})$. Each bar of non-zero cross-sectional area $a_k \neq0$ undergoes constant positive strain $\hat{\mathrm{e}}_{k}({\mbf{u}}_{1,\e},{\mbf{u}}_{2,\e},{\mbf{w}_\e};{\mbf{z}}) = \Z_X / (E_0 V_0)$. As a result the uni-axial Hooke's law holds for each bar $k$:
	\begin{equation*}
	\hat{s}_k =\bigl( E_0 \,a_k \bigr)\, \hat{\mathrm{e}}_{k}({\mbf{u}}_{1,\e},{\mbf{u}}_{2,\e},{\mbf{w}_\e};{\mbf{z}}).
	\end{equation*}
\end{theorem}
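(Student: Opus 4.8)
The plan is to mirror the proof of Theorem~\ref{thm:recovring_MC_dome}, replacing the continuous gauge computations by the elementary algebra of the finite-difference operators, and to squeeze the two bounds \eqref{eq:ZX_leq_VminX}--\eqref{eq:Z_X_leq_V_X_leq_Z_X} (via Theorem~\ref{thm:recovering_MV_grid-shell}) and Lemma~\ref{lem:C_X_geq_Z_X} into chains of equalities. First I would verify feasibility of $(\mbf{z},\mbf{a})$. Since $\hat{s}_k = J_{kk}(\mbf{z})\,s_k$, $a_k = \frac{V_0}{\Z_X}\,J_{kk}(\mbf{z})\,s_k$ and $\hat{l}_k(\mbf{z}) = J_{kk}(\mbf{z})\,l_k$, one gets $\sum_k a_k\,\hat{l}_k(\mbf{z}) = \frac{V_0}{\Z_X}\sum_k l_k\,s_k\bigl(1+(\Delta_k(\mbf{z}))^2\bigr)$, which equals $V_0$ by the chain \eqref{eq:Z_X_leq_V_X_leq_Z_X} obtained in the proof of Theorem~\ref{thm:recovering_MV_grid-shell}; hence the volume constraint in $(\mathrm{MCGS}_X)$ holds with equality. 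Moreover, using the optimality condition $(iv)$ in \eqref{eq:opt_cond_discrete}, i.e. $q_k = \tfrac{1}{2}\,s_k\,(\mbf{D}\mbf{w})_k/l_k = s_k\,\Delta_k(\mbf{z})$, together with the equilibrium equations $\mbf{B}_1^\top\mbf{s} = \mbf{B}_2^\top\mbf{s} = \mbf{0}$, $\mbf{D}^\top\mbf{q} = \mbf{f}$ from $(\mathcal{P}_X)$, the axial-force vector $\hat{\mbf{s}} = \mbf{J}(\mbf{z})\,\mbf{s}$ is admissible for the stress-based problem \eqref{eq:disc_dual_comp_def} associated with $(\mbf{z},\mbf{a})$.

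Next I would evaluate the complementary energy at $\hat{\mbf{s}}$: for a bar with $s_k > 0$ one has $\frac{(\hat{s}_k)^2}{a_k}\,\hat{l}_k(\mbf{z}) = \frac{\Z_X}{V_0}\,l_k\,s_k\bigl(1+(\Delta_k(\mbf{z}))^2\bigr)$, whereas bars with $s_k = 0$ contribute nothing by the stated convention (note $a_k = 0 \Leftrightarrow s_k = 0$). Summing and using \eqref{eq:Z_X_leq_V_X_leq_Z_X} once more yields $\frac{1}{2E_0}\sum_k \frac{(\hat{s}_k)^2}{a_k}\hat{l}_k(\mbf{z}) = \frac{(\Z_X)^2}{2E_0 V_0}$. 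Combining this with Lemma~\ref{lem:C_X_geq_Z_X} gives $\Comp_{X,\mathrm{min}} \le \Comp_X(\mbf{z},\mbf{a}) \le \frac{(\Z_X)^2}{2E_0 V_0} \le \Comp_{X,\mathrm{min}}$, forcing all inequalities to be equalities; thus $(\mbf{z},\mbf{a})$ solves $(\mathrm{MCGS}_X)$, $\Comp_{X,\mathrm{min}} = \frac{(\Z_X)^2}{2E_0 V_0}$, and $\hat{\mbf{s}}$ solves \eqref{eq:disc_dual_comp_def}.

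For the displacement-based statement and the claim on the strains, the central elementary fact is that, with $\mbf{z} = \tfrac{1}{2}\mbf{w}$, the bar-strain operator \eqref{eq:operator_hat_e_k} evaluated at $(\mbf{u}_1,\mbf{u}_2,\mbf{w})$ satisfies $\hat{\mathrm{e}}_k(\mbf{u}_1,\mbf{u}_2,\mbf{w};\mbf{z}) \le 1$ for every $k$, with equality whenever $s_k \ne 0$. Indeed, dividing conditions $(i)$ resp. $(iii)$ in \eqref{eq:opt_cond_discrete} by $l_k$ gives $(\Delta_k(\mbf{z}))^2 + \mathrm{e}_k(\mbf{u}_1,\mbf{u}_2) \le 1$ resp. $=1$; substituting $\Delta_k(\mbf{z})\,\Delta_k(\mbf{w}) = 2(\Delta_k(\mbf{z}))^2$ (since $\Delta_k(\mbf{w}) = 2\Delta_k(\mbf{z})$) into the numerator of \eqref{eq:operator_hat_e_k} turns it into $\mathrm{e}_k(\mbf{u}_1,\mbf{u}_2) + 2(\Delta_k(\mbf{z}))^2$, which is $\le$ resp. $=$ the (positive) denominator $1 + (\Delta_k(\mbf{z}))^2$. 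Since $\hat{\mathrm{e}}_k$ is linear in the displacement arguments for fixed $\mbf{z}$, rescaling by $\Z_X/(E_0 V_0)$ gives $\hat{\mathrm{e}}_k(\mbf{u}_{1,\e},\mbf{u}_{2,\e},\mbf{w}_\e;\mbf{z}) = \Z_X/(E_0 V_0) > 0$ on every bar with $a_k \ne 0$ and $\le \Z_X/(E_0 V_0)$ elsewhere. Plugging $(\mbf{u}_{1,\e},\mbf{u}_{2,\e},\mbf{w}_\e)$ into \eqref{eq:disc_comp_def}, discarding the vanishing $a_k = 0$ terms and using $\sum_{a_k\ne0} a_k\hat{l}_k(\mbf{z}) = V_0$ together with $\mbf{f}^\top\mbf{w}_\e = \frac{\Z_X}{E_0 V_0}\,\mbf{f}^\top\mbf{w} = \frac{(\Z_X)^2}{E_0 V_0}$ (because $\mbf{w}$ solves $(\mathcal{P}_X^*)$), the bracket evaluates to $\frac{(\Z_X)^2}{E_0 V_0} - \frac{(\Z_X)^2}{2E_0 V_0} = \frac{(\Z_X)^2}{2E_0 V_0} = \Comp_X(\mbf{z},\mbf{a})$, so this triple attains the supremum in \eqref{eq:disc_comp_def}.

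Finally the uni-axial Hooke's law is a direct check: for bars with $a_k \ne 0$, $E_0\,a_k\,\hat{\mathrm{e}}_k(\mbf{u}_{1,\e},\mbf{u}_{2,\e},\mbf{w}_\e;\mbf{z}) = E_0\cdot\frac{V_0}{\Z_X}\,J_{kk}(\mbf{z})\,s_k\cdot\frac{\Z_X}{E_0 V_0} = J_{kk}(\mbf{z})\,s_k = \hat{s}_k$, while both sides vanish when $a_k = 0$. There is no genuine obstacle here, as the argument is a faithful discrete transcription of Theorem~\ref{thm:recovring_MC_dome}; the only points requiring care are the bookkeeping for zero-area (equivalently zero-force) bars in the energy sums and invoking the chain \eqref{eq:Z_X_leq_V_X_leq_Z_X} at the right places to reduce the geometrically nonlinear objectives to the conic value $\Z_X$.
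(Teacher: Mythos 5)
Your proposal is correct and follows essentially the same route as the paper's proof: feasibility of $(\mbf{z},\mbf{a})$ and of $\hat{\mbf{s}}$ via the chain \eqref{eq:Z_X_leq_V_X_leq_Z_X} and optimality condition $(iv)$, the squeeze with Lemma \ref{lem:C_X_geq_Z_X}, the unit-strain computation from condition $(iii)$ with $\mbf{z}=\frac{1}{2}\mbf{w}$ (your grouping in $\Delta_k(\mbf{z})$ is just a rewriting of \eqref{eq:unit_strain}), and the evaluation of the displacement-based functional at the rescaled triple. The only cosmetic difference is your explicit bookkeeping of the $a_k=0$ bars, which the paper handles implicitly; no gap.
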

\begin{proof}
	First we compute the volume of the designed grid-shell:
	\begin{equation*}
	\sum_{k=1}^{m}  {a}_k\, \hat{l}_k(\mbf{{z}}) = \sum_{k=1}^{m} \biggl( \frac{V_0}{\Z_X} \sqrt{1+(\Delta_k({\mbf{z}}))^2}\,s_k\biggr) \biggl( \sqrt{1+(\Delta_k({\mbf{z}}))^2}\,l_k\biggr) =  \frac{V_0}{\Z_X} \sum_{k=1}^{m} \Big( l_k\, s_k+ l_k\, s_k\, \bigl(\Delta_k(\mbf{z})\bigr)^2 \Big) = V_0
	\end{equation*}
	where \eqref{eq:Z_X_leq_V_X_leq_Z_X} was acknowledged in the last equality. The pair $(\mbf{z},\mbf{a})$ is thus feasible in $(\mathrm{MCGS}_X)$ while, by virtue of optimality condition (iv) in  \eqref{eq:opt_cond_discrete} which gives $q_k = \frac{1}{2} \,s_k \Delta_k(\mbf{w}) = \Delta_k(\mbf{z})/J_{kk}(\mbf{z}) \, \hat{s}_k $, the axial force vector $\hat{\mbf{s}}$ satisfies the constraints in \eqref{eq:disc_dual_comp_def} and therefore the following chain of inequalities may be written down:
	\begin{equation*}
	\Comp_{X,\mathrm{min}} \leq \Comp_X({\mbf{z}},{\mbf{a}})\leq \frac{1}{2E_0} \sum_{k=1}^{m} \left(\frac{{\hat{s}}_k}{{a}_k} \right)^2 \! {a}_k\, \hat{l}_k({\mbf{z}}) = \frac{1}{2E_0} \sum_{k=1}^{m} \left(\frac{\Z_X}{V_0}\right)^2 \! {a}_k\, \hat{l}_k({\mbf{z}}) = \frac{(\Z_X)^2}{2E_0 V_0} \leq \Comp_{X,\mathrm{min}},
	\end{equation*}
	which ultimately is a chain of equalities and two implications follow: the grid-shell $(\mbf{z},\mbf{a})$ is optimal in $(\mathrm{MCGS}_X)$ and $\hat{\mbf{s}}$ solves the stress based elasticity problem \eqref{eq:disc_dual_comp_def}.
	
	Next, we shall compute the virtual axial strain in bars for which $a_k \neq 0$ or, equivalently, $s_k \neq 0$. By using the equality $\mbf{z} = \frac{1}{2}\, \mbf{w}$ and by a suitable grouping of the terms we obtain:
	\begin{equation}
	\label{eq:unit_strain}
	\hat{\mathrm{e}}_k(\mbf{u}_1,\mbf{u}_2,\mbf{w};{\mbf{z}}) = \frac{\mathrm{e}_k(\mbf{u}_1,\mbf{u}_2 ) + \Delta_k({\mbf{z}}) \, \Delta_k(\mbf{w})}{1 + \bigl(\Delta_k({\mbf{z}})\bigr)^2}
	= \frac{\left(\frac{1}{4}\bigl(\Delta_k(\mbf{w})\bigr)^2+\mathrm{e}_k(\mbf{u}_1,\mbf{u}_2 )\right)+\frac{1}{4}\bigl(\Delta_k(\mbf{w})\bigr)^2}{1 + \frac{1}{4}\bigl(\Delta_k(\mbf{w})\bigr)^2} = 1,
	\end{equation}
	where the last equality comes from optimality condition $(iii)$ in  \eqref{eq:opt_cond_discrete} which for $s_k\neq 0$ gives $\frac{1}{4}\bigl(\Delta_k(\mbf{w})\bigr)^2+\mathrm{e}_k(\mbf{u}_1,\mbf{u}_2 ) = \left(\frac{1}{4} \bigl( (\mbf{D} \mbf{w})_k \bigr)^2/l_k + (\mbf{B}_1 \mbf{u}_1 + \mbf{B}_2 \mbf{u}_2 )_k  \right)\!/l_k = 1$. Automatically, a constant elastic axial strain $\hat{\mathrm{e}}_{k}({\mbf{u}}_{1,\e},{\mbf{u}}_{2,\e},{\mbf{w}_\e};{\mbf{z}}) = \Z_X / (E_0 V_0)$ in bars with $a_k \neq 0$ follows. As a result the Hooke's law is established while, owing to definition of compliance \eqref{eq:disc_comp_def} and optimality of $(\mbf{z},\mbf{a})$, we may write down a chain
	\begin{align*}
	\Comp_{X,\mathrm{min}} =&\ \Comp_X(\mbf{z},\mbf{a}) \geq   \mbf{f}^\top\! {\mbf{w}_\e} - \frac{E_0}{2} \sum_{k=1}^{m} \Big( \bigl(\hat{\mathrm{e}}_{k}({\mbf{u}}_{1,\e},{\mbf{u}}_{2,\e},{\mbf{w}_\e};{\mbf{z}})\bigr)_+ \Big)^2\, {a}_k \, \hat{l}_k({\mbf{z}}) \\
	=&\  \frac{\Z_X}{E_0 V_0}\,\mbf{f}^\top\! \mbf{w} - \frac{E_0}{2} \sum_{k=1}^{m} \left(\frac{\Z_X}{E_0 V_0} \right)^2\! {a}_k \, \hat{l}_k({\mbf{z}})=  \frac{\Z_X}{E_0 V_0}\, \Z_X - \frac{(\Z_X)^2}{2E_0 (V_0)^2} \sum_{k=1}^{m} {a}_k \, \hat{l}_k({\mbf{z}}) =  \frac{(\Z_X)^2}{2E_0 V_0} = 	\Comp_{X,\mathrm{min}},
	\end{align*}
	where equality $\mbf{f}^\top\! \mbf{w} = \Z_X$ holds true by optimality of $(\mbf{u}_1,\mbf{u}_2,\mbf{w})$ for $\mathcal{P}_X^*$. The above is once again a chain of equalities rendering $({\mbf{u}}_{1,\e},{\mbf{u}}_{2,\e},{\mbf{w}_\e})$ a solution of the maximization problem \eqref{eq:disc_comp_def}.
\end{proof}

Similarly as in Section \ref{ssec:recovering_dome_elastic} we stress the astonishing relation:
\begin{corollary}
	In the optimal elastic grid-shell the elevation vector $\mbf{z} \in \R^n$ and the vertical displacement vector $\mbf{w}_\e \in \R^n$ satisfy the relation
	\begin{equation*}
		\mbf{z} = \frac{E_0 V_0}{2 \Z_X}\, \mbf{w}_\e.
	\end{equation*}
\end{corollary}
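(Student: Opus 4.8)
The plan is to obtain this identity as an immediate algebraic consequence of the two formulas already provided in Theorem \ref{thm:recovering_MC_grid-shell}. That theorem asserts that the optimal elastic grid-shell has elevation vector $\mbf{z} = \tfrac{1}{2}\,\mbf{w}$, where $\mbf{w}$ is the vertical component of the solution $(\mbf{u}_1,\mbf{u}_2,\mbf{w})$ of $(\mathcal{P}_X^*)$, and that the vertical displacement vector of this grid-shell is $\mbf{w}_\e = \tfrac{\mathcal{Z}_X}{E_0 V_0}\,\mbf{w}$. Hence the only step is to eliminate the intermediate vector $\mbf{w}$ between these two relations.

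First I would recall that, since $\mathcal{Z}_X>0$ by Theorem \ref{thm:duality_discrete} and $E_0, V_0>0$ by assumption, the scalar $\tfrac{\mathcal{Z}_X}{E_0 V_0}$ is nonzero, so the relation $\mbf{w}_\e = \tfrac{\mathcal{Z}_X}{E_0 V_0}\,\mbf{w}$ may be inverted to give $\mbf{w} = \tfrac{E_0 V_0}{\mathcal{Z}_X}\,\mbf{w}_\e$. Substituting this into $\mbf{z} = \tfrac{1}{2}\,\mbf{w}$ yields
\begin{equation*}
	\mbf{z} = \frac{1}{2}\,\mbf{w} = \frac{1}{2}\cdot\frac{E_0 V_0}{\mathcal{Z}_X}\,\mbf{w}_\e = \frac{E_0 V_0}{2\,\mathcal{Z}_X}\,\mbf{w}_\e,
\end{equation*}
which is exactly the claimed identity. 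No further computation or estimate is required, and there is no genuine obstacle; the content of the corollary lies entirely in Theorem \ref{thm:recovering_MC_grid-shell}, whose two displayed formulas make the elevation vector and the elastic vertical displacement vector proportional with the stated constant. The corollary is thus the discrete counterpart of Corollary \ref{cor:z_w}, and the only thing worth emphasising in the write-up is that the proportionality constant $\tfrac{E_0 V_0}{2\mathcal{Z}_X}$ is the same one appearing there with $\mathcal{Z}$ replaced by its discrete analogue $\mathcal{Z}_X = \min \mathcal{P}_X$.

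If a slightly more self-contained presentation is desired, I would additionally note that the proportionality can be read off componentwise: for each node index $i\in\{1,\dots,n\}$ one has $z_i = \tfrac12 w_i$ and $w_{\e;i} = \tfrac{\mathcal{Z}_X}{E_0 V_0} w_i$, so $z_i = \tfrac{E_0 V_0}{2\mathcal{Z}_X} w_{\e;i}$, and the vector identity follows. This is purely cosmetic and does not change the argument.
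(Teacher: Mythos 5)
Your derivation is correct and matches the paper's intent exactly: the corollary is stated without proof precisely because it follows, as you show, by eliminating $\mbf{w}$ between the formulas $\mbf{z} = \frac{1}{2}\,\mbf{w}$ and $\mbf{w}_\e = \frac{\Z_X}{E_0 V_0}\,\mbf{w}$ of Theorem \ref{thm:recovering_MC_grid-shell}. Nothing further is needed.
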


\subsection{From optimal vaults and grid-shells in pure tension to optimal designs in pure compression}
\label{ssec:compression}

In each design problem posed in this work we are looking for optimal structure that may carry tensile stresses only: either it was positive stress matrices $\eta \in \Sddp$, $\hat{\sigma} \in \mathrm{T}^3_+$ or positive axial member forces $\hat{s}_k \geq 0$. This setting is by no doubt the most natural in terms of mathematical formulation when comparing to the pure compression assumption. In practise, on the other hand, vaults are more often designed as structures that carry gravitational, downward load, i.e. $f \leq 0$, and which are spread over (not beneath) the plane region $\Omega$, namely $z \geq 0$. Under such circumstances the vault is in compression and formulations enforcing $\eta \in \mathrm{T}^2_-$, $\hat{\sigma} \in \mathrm{T}^3_-$ or negative axial member force $\hat{s}_k \leq 0$ are better suited.

Below we list the changed formulas for optimal vaults or grid-shells in case when only compression is allowed. We will not modify the plane convex problems $(\mathcal{P})$,\,$(\mathcal{P}^*)$ and their discrete counterparts $(\mathcal{P}_X)$, $(\mathcal{P}^*_X)$: for $(\sigma,q), (u,w)$ and $(\mbf{s},\mbf{q},\mbf{r})$, $(\mbf{u}_1,\mbf{u}_2,\mbf{w})$ being their solutions respectively there will still hold $\sigma \succeq 0$ and $s_k \geq 0$. Instead the formulas put forward in theorems in this work shall be modified by putting the minus sign in strategic places. Readily, Theorem \ref{thm:recovring_MV_dome} in the compression setting would furnish formulas for vault of minimum volume:
\begin{equation*}
	z = -\frac{1}{2} w,\quad \eta = - \frac{1}{J_{z}} \, \sig.
\end{equation*}
Meanwhile, with the energy potential $j_{z,-}\bigl(\xi \bigr) = \frac{E_0}{2} \bigl( \gamma_{z,-}(\xi ) \bigr)^2$ where $ 	\gamma_{z,-} = \bigl( \gamma^0_z  + \mathbbm{I}_{\mathrm{T}^2_-}\bigr)^0$, the vault in compression of minimum compliance could be constructed by the following alteration of formulas in Theorem \ref{thm:recovring_MC_dome}:
\begin{equation*}
	z = -\frac{1}{2}\, {w}, \qquad \rho = \frac{V_0}{\Z} \frac{1}{J_{z}}\, G_{z} : \sigma, \qquad (u_\e,w_\e) = \frac{\Z}{E_0 V_0}\, (-{u},{w}), \qquad \NN = -\frac{1}{J_{z}\, \rho}\, \sigma.
\end{equation*} 
Similar results apply to 3D problems posed in Section \ref{sec:3D}. Finally, Theorems \ref{thm:recovering_MV_grid-shell} and \ref{thm:recovering_MC_grid-shell} for optimal grid-shells in compression would change accordingly:
\begin{equation}
\label{eq:compression_mod}
{\mbf{z}} = - \frac{1}{2} \, \mbf{w}, \qquad \hat{\mbf{s}} = - \mbf{J}({\mbf{z}}) \,\mbf{s}, \qquad {\mbf{a}} = \frac{V_0}{\Z_X}\, \mbf{J}({\mbf{z}}) \,\mbf{s}, \qquad ({\mbf{u}}_{1,\e},{\mbf{u}}_{2,\e},{\mbf{w}_\e}) = \frac{\Z_X}{E_0 V_0}\, (-{\mbf{u}}_1,-{\mbf{u}}_2,{\mbf{w}}).
\end{equation}
The altered formulas for elastic deformations, e.g. $(u_\e,w_\e) = \Z/(E_0 V_0)\, (-{u},{w})$ are not obvious and are related to the operators $\mathcal{A}_z(u,w) := e(u) + \nabla z\, \symtens\, \nabla w$ or $\hat{\mathrm{e}}_k(\mbf{u}_1,\mbf{u}_2,\mbf{w};\mbf{z})$, see \eqref{eq:operator_hat_e_k}, that describe the elastic strains: by virtue of \eqref{eq:unit_strain}  one can check that $\hat{\mathrm{e}}_{k}({\mbf{u}}_{1,\e},{\mbf{u}}_{2,\e},{\mbf{w}_\e};{\mbf{z}}) = -\Z/(E_0 V_0)$ provided that  $a_k>0$.

\section{Numerical simulations}
\label{sec:numerics}

\subsection{The input data and software}

The discrete method put forward revolves around the pair of mutually dual conic quadratic programming problems $(\mathcal{P}_X)$,\,$(\mathcal{P}_X^*)$. On one side, the pair is the discrete counterpart of the pair $(\mathcal{P})$,\,$(\mathcal{P}^*)$ that furnishes solutions of optimal vault design problems in settings presented in Sections \ref{sec:plastic_design}, \ref{sec:elastic_design}, \ref{sec:3D}. On the other, Section \ref{ssec:grid-shells} shows that the pair $(\mathcal{P}_X)$,\,$(\mathcal{P}_X^*)$ is directly linked to to the problem of optimal grid-shell being a substructure of the 3D ground structure generated by the elevated grid $\hat{X}_{\mbf{z}}$. Therefore, given a design domain $\Omega$ and a load $f \in \Mes(\Ob;\R)$ our goal is to numerically solve the conic quadratic program $(\mathcal{P}_X)$,\,$(\mathcal{P}_X^*)$ for a fine grid $X \subset \Ob$. Then, based on solutions $(\mbf{s},\mbf{q},\mbf{r})$ and $(\mbf{u}_1,\mbf{u}_2, \mbf{w})$, the optimal grid-shell is recovered via Theorem \ref{thm:recovering_MC_grid-shell} which may be considered an approximation of the general solution given by Theorem \ref{thm:optimal_3D_structure}.

In the numerical simulations we shall choose polygonal domains $\Omega$ that may be non-convex or multi-connected. For a chosen resolution parameter $h>0$ we will use regular grids of the form
\begin{equation}
	\label{eq:X_regular}
	X = \Big(\bigl\{ (h\, j_1, h\,j_2) \, \big\vert\, j_1, j_2 \in \mathbbm{Z} \bigr\} \cap \Omega \Big) \cup X_{\bO}
\end{equation}
where $X_\bO$ is a union of sets of the form $\bigl(\{h \,j_1\}\times \R \bigr) \cap \bO $ and $\bigl(\R \times \{h \,j_2\}\bigr) \cap \bO $, $\ j_1, j_2 \in \mathbbm{Z}$, without their relative interiors (horizontal and vertical lines may intersect with whole edges of polygon $\Omega$), see Fig. \ref{fig:ground_structure}(a). Condition \eqref{eq:assum_on_X} may be easily verified. For non-convex polygons $\Omega$ the $m$-element ground structure will be constructed by choosing those segments $[x,y]$ that are contained within the closure $\Ob$. Based on the original load $f \in \Mes(\Ob;\R)$, that may be a combination of discrete point loads, loads distributed along lines or across a 2D area, the discrete load $f_X \in \Mes(X;\R)$ represented by vector $\mbf{f} \in \R^n$, will be proposed "manually", see e.g. Examples \ref{ex:diagonal_load},\,\ref{ex:pressure}. The proposed algorithm is not dedicated for handling complicated geometries of the design domains -- for that purpose the GRAND method developed in \cite{zegard2014} could be employed.

The numerical method was implemented in $\text{MATLAB}^{\tiny{\textregistered}}$ R2018b. After building the vectors $\mbf{l} \in \R^m, \mbf{f} \in \R^n$ and matrices $\mbf{B}_1, \mbf{B}_2, \mbf{D}$ (in a sparse form) the conic quadratic programs $(\mathcal{P}_X)$,\,$(\mathcal{P}_X^*)$, precisely in the form written in Section \ref{ssec:discretication}, is being solved via dedicated solver from the $\text{MOSEK}^{\tiny{\textregistered}}$ 8 toolbox, cf. \cite{mosek2019}. The solver exploits a variant of the primal-dual interior point method developed in \cite{andersen2003} which is suited for large-scale and sparse problems. By following Section 6.3 in \cite{mosek2019} the conic quadratic problem $(\mathcal{P}_X^*)$ is implemented directly (we note that the \textit{rotated} quadratic cone $\mathrm{K}$ is used for the conic constraints). As a result one obtains solutions $(\mbf{s},\mbf{q},\mbf{r})$ and $(\mbf{u}_1,\mbf{u}_2,\mbf{w},\mbf{t}_1,\mbf{t}_2,\mbf{t}_3)$ of the both programs. The post-processing part, that includes recovering the 3D grid-shell $(\mbf{z},\mbf{a})$ through Theorem \ref{thm:recovering_MC_grid-shell} and visualizations, is performed in $\text{Mathematica}^{\tiny{\textregistered}}$ 11.1.

\subsection{The adaptive approach via the member-adding technique}
\label{sssec:member_adding}

During the tests run on a personal computer, the MOSEK solver was capable of solving the conic program $(\mathcal{P}_X), (\mathcal{P}_X^*)$ for ground structures containing up to several million members, which for a square domain $\Omega$ roughly corresponds to a $50 \times 50$ grid $X$ (for a convex domain the number of members is equal to $m = \bar{n} \,(\bar{n}-1)/2$ where $\bar{n}$ is the total number of points in $X$). For comparison, a $200 \times 200$ grid generates $m \approx 800\cdot 10^6$ bars, being far beyond the reach of any interior-point method solver being run on a PC. Very similar computational challenges are faced when solving large scale linear programming problem \eqref{eq:truss_max}, \eqref{eq:truss_min} in order to produce accurate truss approximations of Michell structures. In \cite{gilbert2003} the rectangular structure of the $m \times n$ matrices $\mbf{B}_1,\mbf{B}_2$ ($m >> n$ for large $n$) was taken advantage of. The authors proposed an adaptive approach where as the point of departure one takes a small ground structure consisting of $\widetilde{m} \approx 4 \,\bar{n}$ members connecting only the neighbouring nodes in $X$; after solving the linear program for matrices  $\widetilde{\mbf{B}}_1,\widetilde{\mbf{B}}_2$ the ground structure is then updated in subsequent iterations by adding bars that violate the feasibility condition in the dual problem. The so called \textit{member-adding} adaptive technique was further developed in \cite{sokol2015} and also successfully applied to other linear programming problems: slip-line analysis in \cite{gilbert2014}, optimal design of grillages in \cite{bolbotowski2018}, optimal design of long-span bridges with gravity loading in \cite{fairclough2018}. In this work we bring the member-adding adaptive method to the pair of conic quadratic programs $(\mathcal{P}_X), (\mathcal{P}_X^*)$. The algorithm of the method is outlined in Box 7.1.
\begin{tcolorbox}[float=p!,title=\textbf{Box 7.1 The adaptive "member-adding" algorithm}]
\begin{enumerate}[label=\{\arabic*\}]
	\item Preparing the first iteration:
	\begin{enumerate}[label=\{1.\arabic*\},rightmargin=1cm]
		\item Throughout the algorithm the nodal grid $X$ according to \eqref{eq:X_regular} is fixed for a chosen resolution parameter $h$. For the number $n = \#(X \backslash \bO)$ the load vector $\mbf{f} \in \R^n$ is built and does not change in subsequent iterations. The \textit{full ground structure} will be the family of all $m$ members/segments interconnecting the nodes in $X$ that are contained in the set $\Ob$ $($each index $k \in \{1,\ldots,m\}$ identifies the segment $[\chi_-(k),\chi_+(k)] )$. 
		\item For the first iteration $iter = 1$ the initial \textit{active ground structure} is prepared that consists only of members connecting the neighbouring nodes in $X$, namely the set of member indices $k$ is restricted to
		\begin{equation*}
		I_1 = \Big\{ k\in \{1,\ldots,m\} \ \, \big\vert \ \, \abs{\chi_+(k)-\chi_-(k)}_\infty \leq h \Big\},
		\end{equation*}
		where  $\abs{y}_\infty  = \max\{\abs{y_1},\abs{y_2}\}$ for $y =(y_1,y_2) \in \Rd$.
		For $m_1 = \#(I_1)$ vector $\mbf{l}^{(1)} \in \R^{m_1}$ and matrices $\mbf{B}_1^{(1)}, \mbf{B}_2^{(1)}, \mbf{D}^{(1)} \in \R^{m_1 \times n}$ are built (in their sparse form) according to formulas in Section \ref{ssec:discretication} yet for rows referring only to indices $k \in I_1$.
	\end{enumerate}
	\item  Adaptive loop  that starts at the counter $iter = 1$. Each iteration $iter$ involves the following steps:
	\begin{enumerate}[label=\{2.\arabic*\},rightmargin=1cm]
		\item \label{item:first_step} The MOSEK solver is run for the conic quadratic program $(\mathcal{P}_X^{(iter)}), (\mathcal{P}_X^{*,(iter)})$ that is constructed for vectors and matrices $\mbf{f} \in \R^n,\ \mbf{l}^{(iter)} \in \R^{m_{iter}}$, $ \mbf{B}_1^{(iter)},\mbf{B}_2^{(iter)},\mbf{D}^{(iter)}\in \R^{m_{iter} \times n}$. A solution is returned:	
		\begin{equation*}
		\mbf{s}^{(iter)}, \mbf{q}^{(iter)}, \mbf{r}^{(iter)} \in \R^{m_{iter}}, \qquad \mbf{u}_1^{(iter)}, \mbf{u}_2^{(iter)}, \mbf{w}^{(iter)} \in \R^n.
		\end{equation*}
		\item \label{item:violation} The set of members in the full ground structure, for which the two-point condition is violated by $\mbf{u}_1^{(iter)}, \mbf{u}_2^{(iter)}, \mbf{w}^{(iter)}$ being competitors in the original problem $(\mathcal{P}^*_X)$, is identified:
		\begin{equation*}
		\qquad \delta I_{iter} = \left\{ k\in \{1,\ldots,m\} \ \left\vert \ \frac{1}{4}\,\left( \Delta_k\bigl(\mbf{w}^{(iter)}\bigr)\right)^2 + \mathrm{e}_k\bigl(\mbf{u}_1^{(iter)}, \mbf{u}_2^{(iter)} \bigr) > 1 \right.\right\};
		\end{equation*}
		it must be stressed that all the members $k$ in the \textit{full} ground structure are checked.
		\item If $\delta I_{iter} = \varnothing$ then the loop is terminated. Otherwise the active ground structure is updated:
		\begin{equation*}
		I_{iter+1} = I_{iter}\, \cup\, \delta I_{iter}
		\end{equation*}
		and the vectors/matrices are appended to $\mbf{l}^{(iter+1)} \in \R^{m_{iter+1}}$, $ \mbf{B}_1^{(iter+1)}$, $\mbf{B}_2^{(iter+1)}$, $\mbf{D}^{(iter+1)}\in \R^{m_{iter+1} \times n}$ by adding rows corresponding to every $k \in \delta I_{iter}$, where $m_{iter +1} = \#(I_{iter+1})$. Next the step \ref{item:first_step} is repeated for $iter := iter+1$.
	\end{enumerate}
	\item Constructing solutions of the original conic quadratic program $(\mathcal{P}_X), (\mathcal{P}_X^*)$:
	\begin{enumerate}[label={},,rightmargin=1cm]
		\item For $iter$ being the last counter in the adaptive loop we set $\bigl(\mbf{u}_1,\mbf{u}_2,\mbf{w} \bigr) \linebreak = \bigl(\mbf{u}_1^{(iter)}, \mbf{u}_2^{(iter)}, \mbf{w}^{(iter)} \bigr)$ while $\mbf{s}, \mbf{q}, \mbf{r} \in \R^m$ are constructed by allocating components of $\mbf{s}^{(iter)}, \mbf{q}^{(iter)}, \mbf{r}^{(iter)} \in \R^{m_{iter}}$ into zero vectors in $\R^m$. Optimality conditions \eqref{eq:opt_cond_discrete} are met for the triples $(\mbf{s}, \mbf{q}, \mbf{r})$ and $\bigl(\mbf{u}_1,\mbf{u}_2,\mbf{w} \bigr)$: conditions $(ii), (iii), (iv)$ are numerically established by MOSEK solver at each iteration, while condition $(i)$, holding for every $k \in \{1,\ldots,m\}$ (i.e. for each member in the full ground structure), is guaranteed by $\delta I_{iter} = \varnothing$.
	\end{enumerate}
\end{enumerate}
\end{tcolorbox}

A few comments on the algorithm are in order. First, we must ensure that in the first iteration $iter = 1$ the problems $(\mathcal{P}_X^{(1)}), (\mathcal{P}_X^{*,(1)})$ have solutions. It is important to note that verification of  condition \eqref{eq:assum_on_X} is \textit{a priori} insufficient to answer positively, since the set $I_1$ could potentially miss crucial members that guaranteed existence of feasible solution in the problem $(\mathcal{P}_X)$ posed for the full ground structure. However, from the prove of Theorem \ref{thm:duality_discrete} (cf. \ref{app:cone_duality}) we see that it is enough that at each point $\bar{x} =(\bar{x}_1,\bar{x}_2) \in X \cap \Omega$ we have a four bar truss formed by four chains of horizontal/vertical bars of length $h$ going from $\bar{x}$ to $(\bar{x}_1-a_1,\bar{x}_2), (\bar{x}_1+b_1,\bar{x}_2), (\bar{x}_1,\bar{x}_2-a_2), (\bar{x}_1,\bar{x}_2+b_2) \in  X_\bO$ for $a_1,a_2, b_1, b_2 > 0 $. As a result assertions of Theorem \ref{thm:duality_discrete} holds true for problems $(\mathcal{P}_X^{(1)}), (\mathcal{P}_X^{*,(1)})$ and thus also for the problems in the subsequent iterations.

The adaptive loop is by construction guaranteed to finish in a finite number of steps that does not exceed $m - m_1$ which, of course, is very pessimistic. Although no rate of convergence is examined in this work, the member-adding algorithm usually converges in no more than 10 to 15 iterations while (for large $m$ of magnitudes: $10^6$ to $10^9$) the number $m_{iter}$ is usually kept on the level between $m/1000$ and $m/100$. Exceptions may occur if in the solution of the continuous problem $(\mathcal{P}^*)$ we have subregions of $\Omega$ where $\frac{1}{4}\, \nabla w \otimes \nabla w + e(u)$ equals to identity matrix $\mathrm{I}$ exactly -- then we may expect that every single bar lying in this subregion will be added to the set of active bars $I_{iter}$ which makes $m_{iter}$ to blow up with each iteration. This phenomena is known from truss and grillage optimization and can be handled by a simple trick: from each component of the vector $\mbf{l}^{(iter)}$ we subtract a very small constant, e.g. $ 10^{-7}$.

\subsection{Examples}
\label{ssec:examples}

For several examples of domian $\Omega$ and load $f$ we will present numerical solutions of the optimal grid-shell problem, we shall focus on the case of elastic design, i.e. the problem $(\mathrm{MCGS}_X)$. Using the adaptive algorithm implemented in MATLAB the pair of conic quadratic problems $(\mathcal{P}_X), (\mathcal{P}_X^*)$ will be solved directly and the grid-shell will be recovered via Theorem \ref{thm:recovering_MC_grid-shell}. We will consider the case when only compression in the grid-shell is admissible therefore the altered formulas \eqref{eq:compression_mod} shall be employed. In each example we will use grids $X$ of three different resolutions while the figures will refer to the finest $X$. It is thus justified to treat the displayed grid-shell solutions as discrete approximations of optimal vaults or, more generally, of the 3D lower dimensional measure $\hat{\mu}\in \Mes_+(\Ob\times \R)$ furnished by Theorem \ref{thm:optimal_3D_structure}. Computations were performed on a mobile workstation with Intel Core i7-6700HQ processor, 16 GB RAM and running 64-bit Windows 7. The computational details are summarized in Table \ref{tab:miscellaneous} whose entries are described in the first Example \ref{ex:4_point_loads}. It should be stressed that symmetry of the investigated problems has not been exploited numerically.

\begin{table}[h]
	\scriptsize
	\centering
	\caption{Summary on numerical computations for the pair of problems $(\mathcal{P}_X), (\mathcal{P}_X^*)$ specified in Examples \ref{ex:4_point_loads}-\ref{ex:cross}}
	\begin{tabular}{lccccccc}
		\toprule
		Example & Grid $X$  & Full GS  & Iterations   & Active GS    & CPU time & Objective value $\Z_X$ &  Max. elevation $\norm{z}_\infty$\\
		\midrule[0.36mm]
		\ref{ex:4_point_loads}
		& $65\!\times\!65$ & $8\,923\,200$ & 8 & $20\,052$ & $42$ sec. & $2.6458 \, P a$ & $0.331 \,a $\\
		& $105\!\times\!105$ & $60\,769\,800$ & 10 & $56\,963$ & $3$ min. $35$ sec. & $2.6458 \, P a$ & $0.331 \,a $\\
		& $201\!\times\!201$ & $816\,100\,200$ & 10 & $233\,886$ & $21$ min. $08$ sec. & $2.6458 \, P a$ & $0.331 \,a $\\
		\midrule[0.1mm]
		\ref{ex:multiple_point_loads}
		& $41\!\times\!41$ & $1\,412\,040$ & 8  &$7\,484$ & $18$ sec. & $27.286 \, P a$ & $0.405 \,a $\\
		& $81\!\times\!81$ & $21\,520\,080$ & 9  &$30\,386$ & $1$ min. $40$ sec. & $27.286 \, P a$ & $0.405 \,a $\\
		& $161\!\times\!161$ & $335\,936\,160$ & 10  &$139\,823$ & $13$ min. $05$ sec. & $27.283 \, P a$ & $0.405 \,a $\\
		\midrule[0.1mm]
		\ref{ex:diagonal_load}
		& $51\!\times\!51$ & $3\,381\,300$ & 8 & $19\,618$ & $30$ sec. & $1.6565 \, t a^2$ & $0.484 \,a $\\
		& $101\!\times\!101$ & $52\,025\,100$ & 9 & $104\,148$ & $3$ min. $44$ sec. & $1.6566 \, t a^2$ & $0.484 \,a $\\
		& $201\!\times\!201$ & $816\,100\,200$ & 10 & $612\,082$ &  $35$ min. $46$ sec. & $1.6567 \, t a^2$ & $0.484 \,a $\\
		\midrule[0.1mm]
		\ref{ex:pressure}
		& $51\!\times\!51$ & $3\,381\,300$ & 8 & $13\,665$ & $24$ sec. & $0.43696 \, p a^3$ & $0.405 \,a $\\
		& $101\!\times\!101$ & $52\,025\,100$ & 9 & $71\,609$ & $2$ min. $26$ sec. & $0.43647 \, p a^3$ & $0.405 \,a $\\
		& $201\!\times\!201$ & $816\,100\,200$ & 10 & $392\,812$ &  $24$ min. $44$ sec. & $0.43615 \, p a^3$ & $0.405 \,a $\\
		\midrule[0.1mm]
		\ref{ex:cross}
		& $61\!\times\!61$ & $\approx 60 \cdot 10^6$ & 8 & $14\,905$ &  $28$ sec. & $0.22338 \, p a^3$ & $0.303 \,a $\\
		& $121\!\times\!121$ & $\approx 125 \cdot 10^6$ & 9 & $71\,313$ &  $4$ min. $10$ sec. & $0.22302 \, p a^3$ & $0.303 \,a $\\
		& $241\!\times\!241$ & $\approx 600 \cdot 10^6$ & 10 & $352\,624$ &  $29$ min. $36$ sec. & $0.22283 \, p a^3$ & $0.304 \,a $\\
		\bottomrule
	\end{tabular}
	\label{tab:miscellaneous}
\end{table}

\begin{example} \textbf{(Square domain and  four point loads)}
\label{ex:4_point_loads}
For a square domain $\Omega$ of side's length equal to $a$ we consider a load being four downward forces of magnitude $P$ symmetrically spaced as in Fig. \ref{fig:4_point_loads}(a); as a measure the load reads $f = \sum_{j=1}^4 (- P \delta_{x_j})$. The pair of conic quadratic programs $(\mathcal{P}_X)$,\,$(\mathcal{P}_X^*)$ will be solved for a regular grid $X$ in accordance with \eqref{eq:X_regular} for three resolutions: $h \in\{a/64, a/104, a/200\}$. The finest resolution gives $201 \times 201$ nodes, while $n = \#(X \backslash \bO) =199^2 = 39\,601$. The fully connected ground structure (abbrev. "Full GS" in Table \ref{tab:miscellaneous}) counts $m = \bar{n}\,(\bar{n}-1)/2 =816\,100\,200$ potential members, where $\bar{n} = 201^2$. There is no need to discretize the load, i.e. $f_X = f$ and the load vector $\mbf{f} \in \R^n$ has four non-zero components each equal to  \nolinebreak $-P$. 

\begin{figure}[h]
	\centering
	\subfloat[]{\includegraphics*[trim={0cm -1cm -0cm -0cm},clip,height=0.23\textwidth]{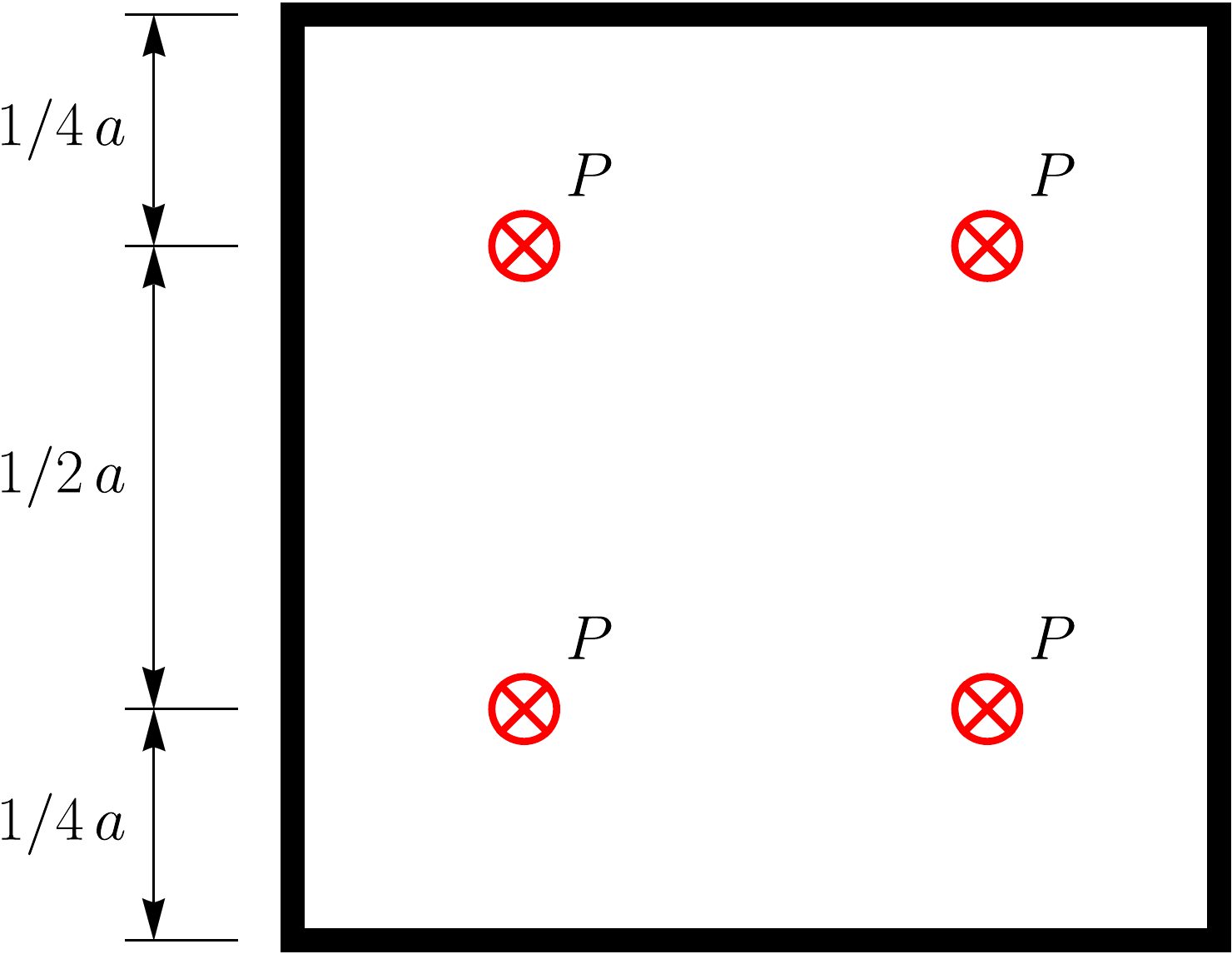}}\hspace{0.8cm}
	\subfloat[]{\includegraphics*[trim={0cm -1cm -0cm -0cm},clip,height=0.23\textwidth]{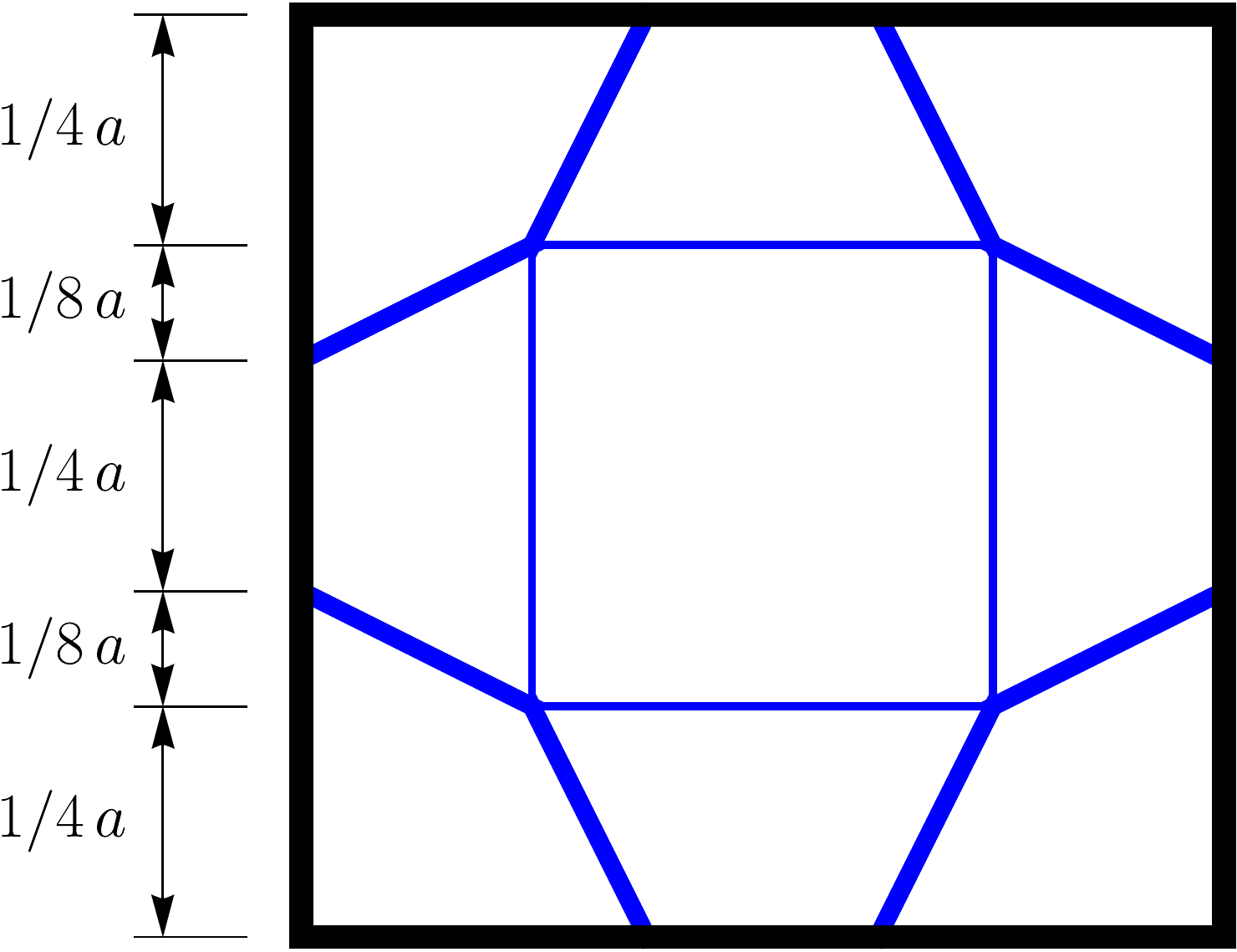}}\hspace{0.8cm}
	\subfloat[]{\includegraphics*[trim={0.5cm 0cm 0.5cm 1cm},clip,width=0.33\textwidth]{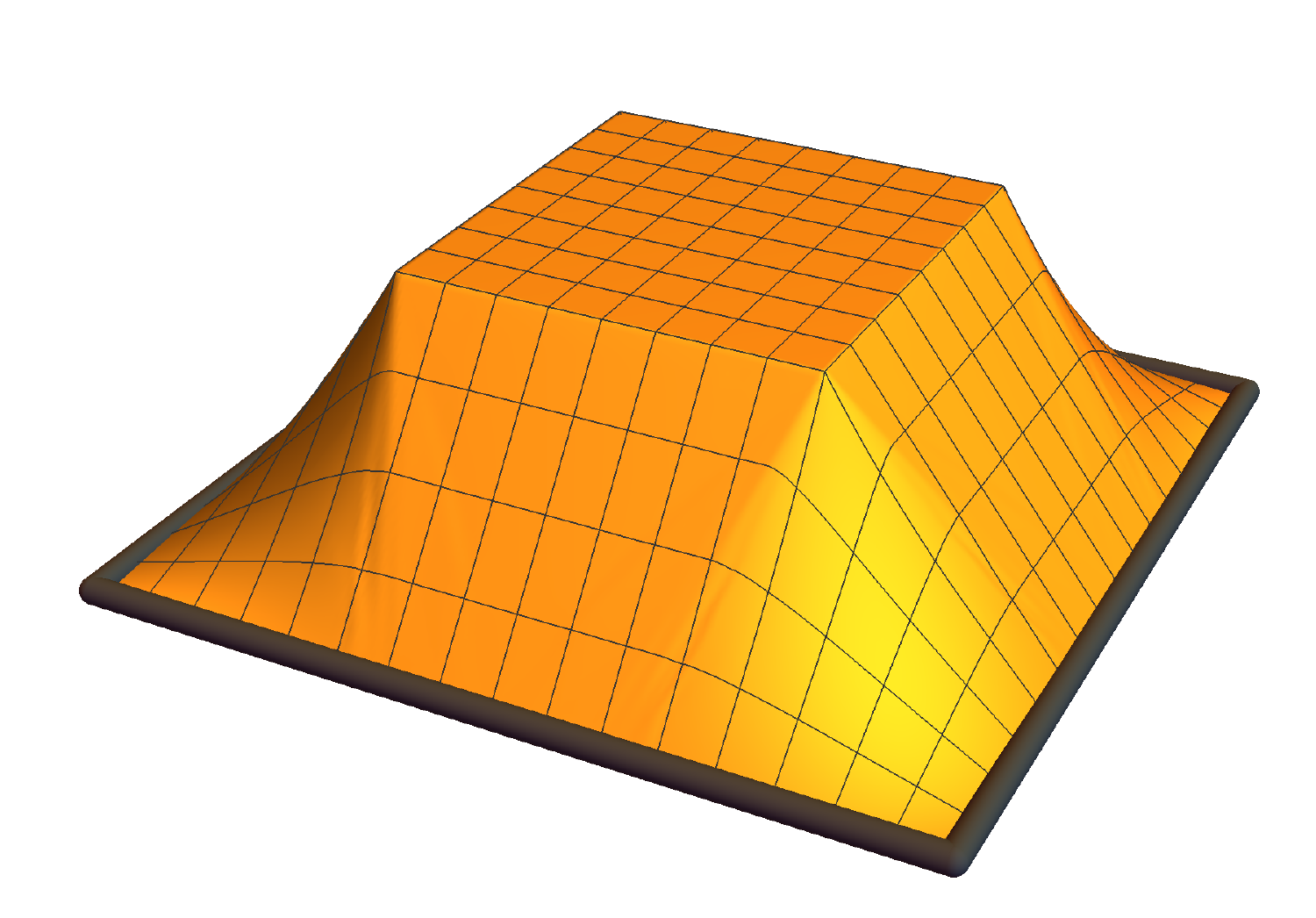}}\\
	\subfloat[]{\includegraphics*[trim={0.5cm -0cm 0.5cm 2.5cm},clip,width=0.33\textwidth]{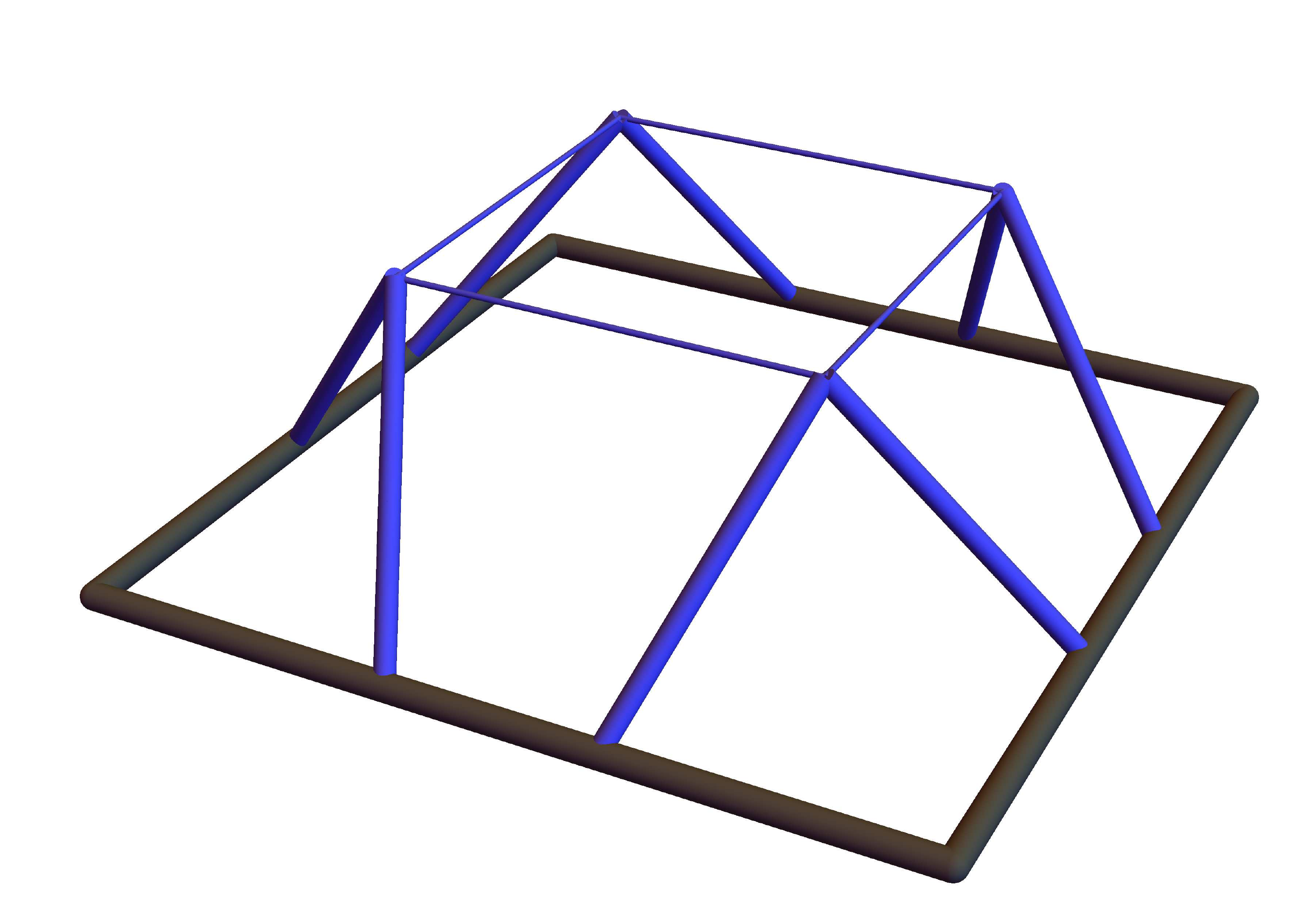}}\hspace{1cm}
	\subfloat[]{\includegraphics*[trim={0.5cm -0cm 0.5cm 2.5cm},clip,width=0.33\textwidth]{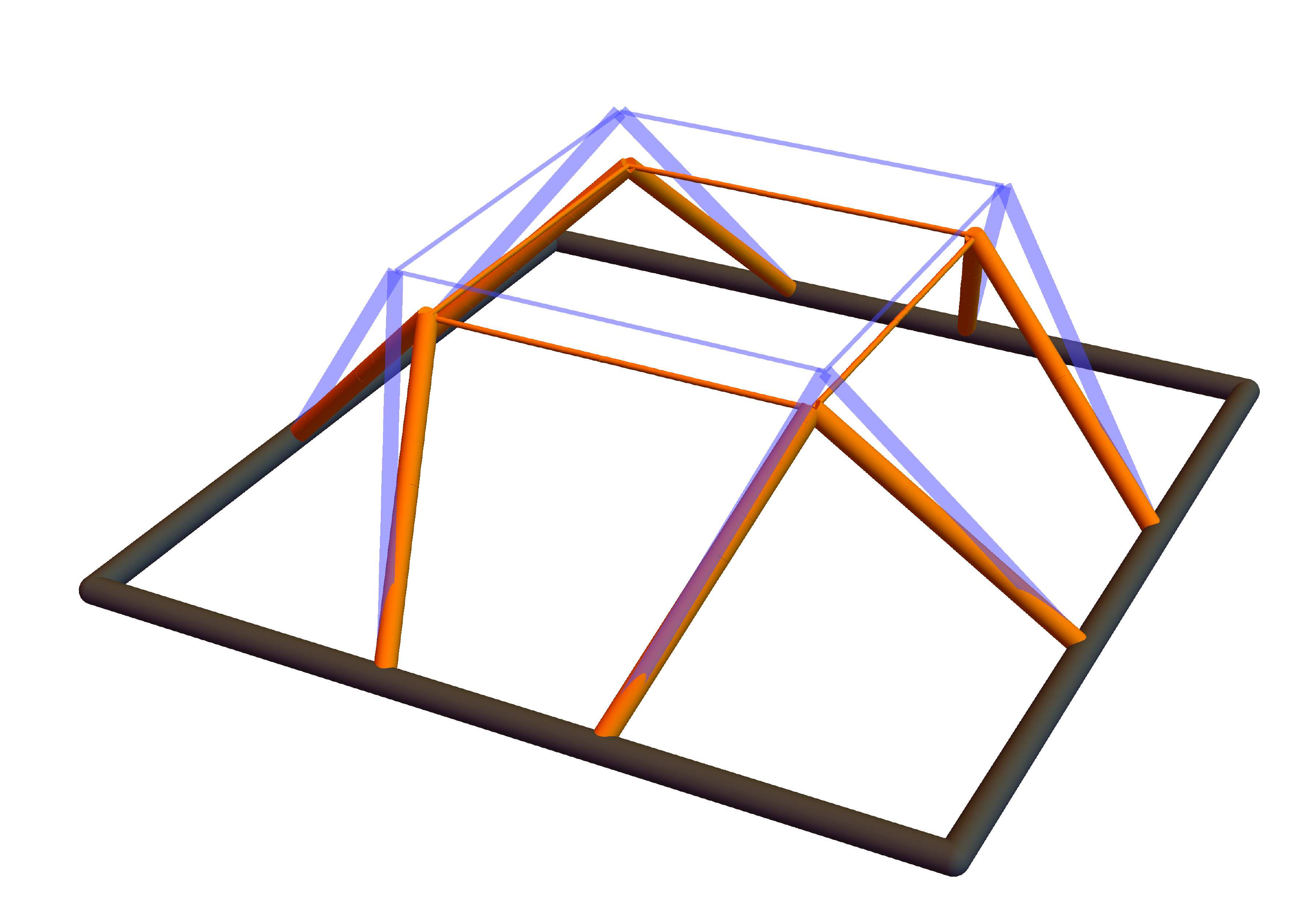}}
	\caption{The problem of optimal grid-shell in compression over a square domain and under a four-point load: (a) the loading conditions; (b) the plane truss given by $\mbf{s}$ solving $(\mathcal{P}_X)$; (c) optimal elevation function $z = -\frac{1}{2} \,w$  where $w$ solves $(\mathcal{P}^*_X)$; (d) optimal grid-shell in compression; (e) elastic deformation.}
	\label{fig:4_point_loads}
\end{figure}

For $h =a/200$ the adaptive algorithm solving the conic program  $(\mathcal{P}_X)$,\,$(\mathcal{P}_X^*)$  converged in 10 iterations and the total computational time was around 21 minutes; the number of active members (abbrev. "Active GS" in Table \ref{tab:miscellaneous}) in the last iteration was $m_{10} = 233\,886$. The objective value thus numerically obtained was $\Z_{X} = 2.6458 \, P a$. The computational details, also for other resolutions of $X$, are listed in Table \ref{tab:miscellaneous}.

The vector $\mbf{s}$ that numerically solves problem $(\mathcal{P}_X)$ represents a pre-stressed plane truss that is showed in Fig. \ref{fig:4_point_loads}(b); it turns out to be identical for all three resolutions $h$. We find that from almost one billion members available the optimal truss consists of only twelve bars: four bars parallel to the square's sides and eight bars inclined at 1:2 slope. The author suspects that Fig. \ref{fig:4_point_loads}(b) represents an exact solution of the infinite dim. problem $(\mathcal{P})$, i.e. a solution $\sigma$ of the form \eqref{eq:sigma_q_discrete}. Attempts to prove this optimality has been made, however, in order to employ optimality conditions in Theorem \ref{thm:duality} one has to propose analytical functions $u,w$ solving $(\mathcal{P}^*)$ which seems not straightforward despite numerical solutions $\mbf{u}_1, \mbf{u}_2, \mbf{w}$ of $(\mathcal{P}^*_X)$ being available. We pay attention to an important feature of solution Fig. \ref{fig:4_point_loads}(b) that will reappear in the rest of examples in this work: the optimal truss interconnects the points in the set $\mathrm{spt}\,{f} \cup \bO$ or, in the discrete setting, $\mathrm{spt}\,f_X \cup (X \cap \bO)$. This is a big difference when comparing to the Michell problem, where it is typical that even for three forces on a plane an optimal truss (optimal with respect to a given ground structure) engages intermediate points as joints where bars connect.

Based on solutions of the conic program we construct an optimal elastic grid-shell via Theorem \ref{thm:recovring_MC_dome}; for given volume upper bound $V_0$ and Young modulus $E_0$ we shall assume that the grid-shell is in pure compression instead of pure tension and thus formulas in \eqref{eq:compression_mod} will apply. Hence, for $\mbf{w}$ being numerical solution of $(\mathcal{P}^*_X)$ the optimal elevation vector reads $\mbf{z} = - \frac{1}{2}\,\mbf{w}$; in Fig. \ref{fig:4_point_loads}(c) we may see the graph being an interpolation of the induced function $z:X \to \R$. The shape of optimal grid-shell, showed in Fig. \ref{fig:4_point_loads}(d), follows by unprojecting the plane truss in Fig. \ref{fig:4_point_loads}(b) whereas formula $\mbf{a} = V_0/\Z_X \, \mbf{J}(\mbf{z})\, \mbf{s}$ yields the bar's cross section areas. It can be easily verified that the grid-shell, being a 3D truss, is geometrically unstable. Nevertheless, according to Theorem \ref{thm:recovering_MC_grid-shell} (and modification \eqref{eq:compression_mod} for the structure in compression) under the original load $\mbf{f}$ (elevated through $\mbf{z}$) the elastic deformation exists and the nodal displacements are $({\mbf{u}}_{1,\e},{\mbf{u}}_{2,\e},{\mbf{w}_\e}) = \Z_X/(E_0 V_0)\, (-{\mbf{u}}_1,-{\mbf{u}}_2,{\mbf{w}})$; the deformation is visualized in Fig. \ref{fig:4_point_loads}(e).

\end{example}

\begin{example}\textbf{(Multiple point forces evenly spaced in a square domain)}
\label{ex:multiple_point_loads}

Through the next example we investigate the phenomena noted for the case of four-force load for which the optimal truss solving the primal problem interconnects only the points in the set $\mathrm{spt}\,{f} \cup \bO$. To this aim we simply increase the number of point loads to 49, equally spaced every $a/8$ in two directions, each of magnitude $P$, cf. Fig. \ref{fig:multiple_point_loads}(a). A $161 \times 161$ nodal grid $X$ is chosen so that between every four point loads there are $20 \times 20$ nodes and thus the load $f = f_X = \sum_{j=1}^{49} (- P \delta_{x_j})$ does not simulate a continuous uniform load (cf. Example \ref{ex:pressure}). The computational details, also for other resolutions of $X$, are listed in Table \ref{tab:miscellaneous}.

\begin{figure}[h!]
		\centering
		\subfloat[]{\includegraphics*[trim={0cm -1cm -0.5cm -0cm},clip,height=0.23\textwidth]{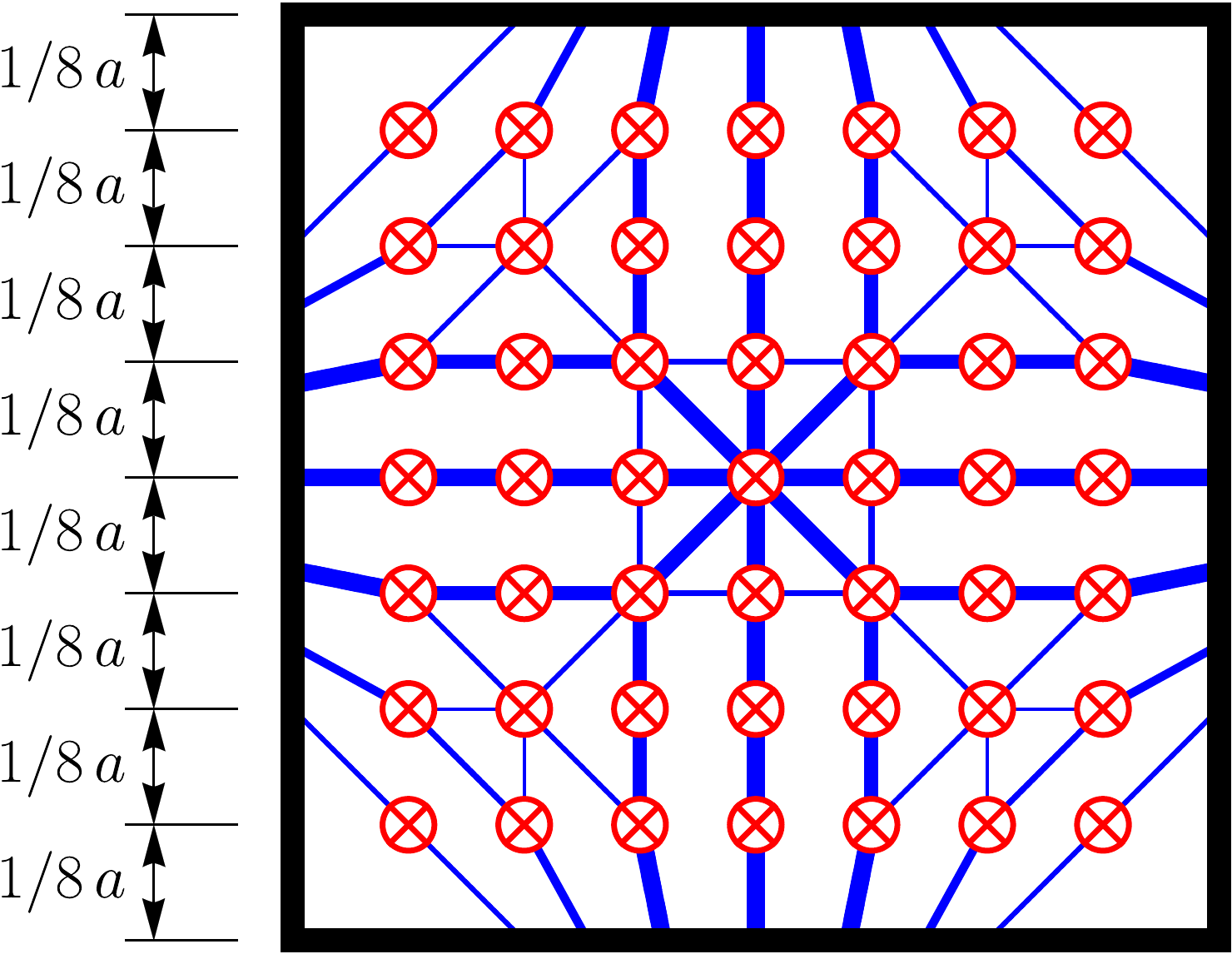}}\hspace{0.3cm}
		\subfloat[]{\includegraphics*[trim={1cm 0cm 0.5cm 2cm},clip,width=0.33\textwidth]{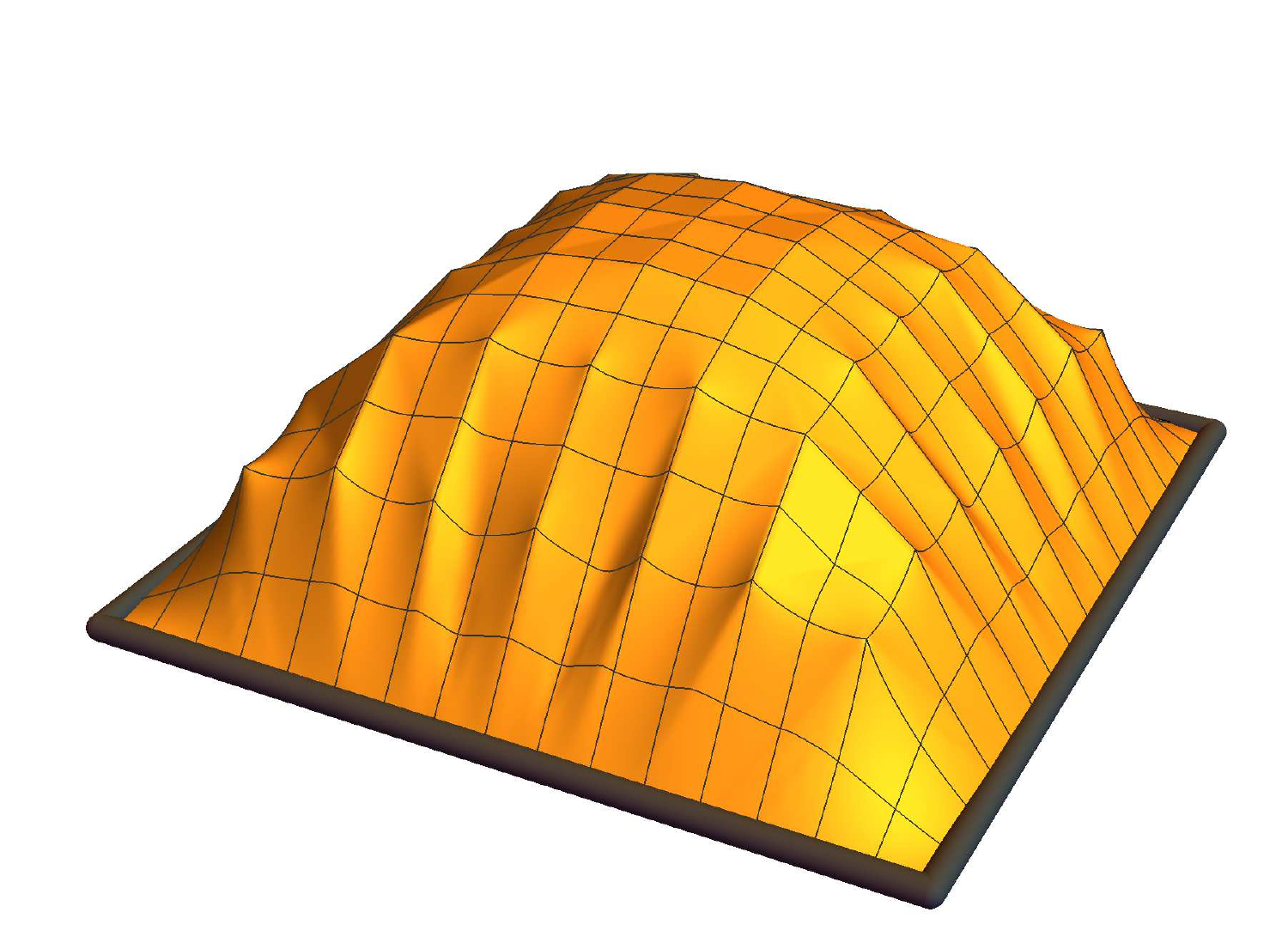}}\hspace{0.cm}
		\subfloat[]{\includegraphics*[trim={1.5cm -0cm 1.cm 4cm},clip,width=0.33\textwidth]{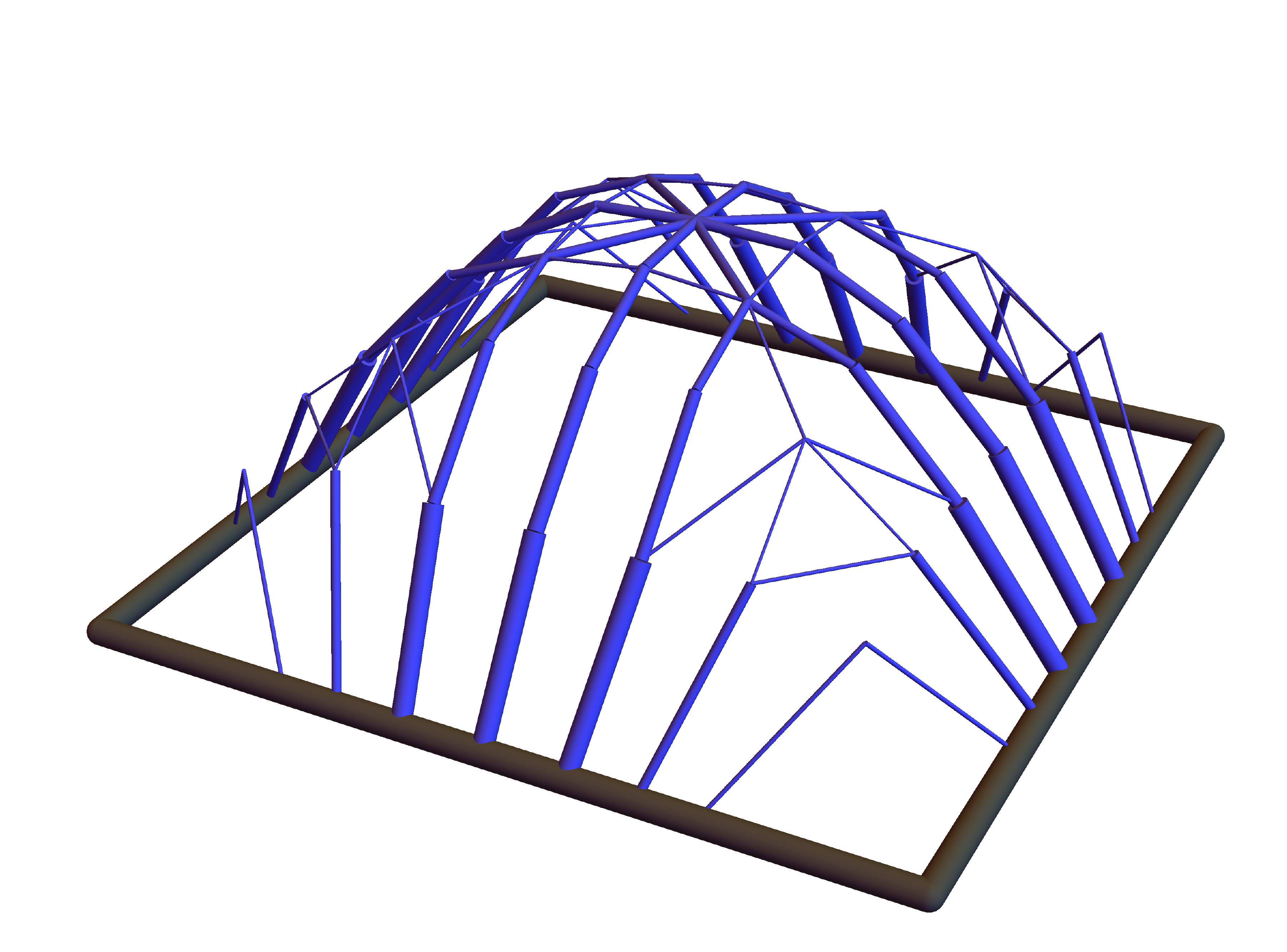}}
		\caption{The problem of optimal grid-shell in compression over a square domain and under multiple point loads: (a) plane truss given by $\mbf{s}$ solving $(\mathcal{P}_X)$; (b) optimal elevation function $z = -\frac{1}{2} \,w$  where $w$ solves $(\mathcal{P}^*_X)$; (c) optimal grid-shell in compression.}
		\label{fig:multiple_point_loads}
\end{figure}
Based on the solution $\mbf{s}$ of problem $(\mathcal{P}_X)$, visualized as a planar truss in Fig. \ref{fig:multiple_point_loads}(a) for $h = a/160$, we can readily confirm that the bars connect only the points from the set $\mathrm{spt}\,f \cup (X \cap \bO)$. Not a single node in $X$, that either does not lie on the boundary or is not carrying a load $P$, was exploited and, although a whole universe of potential members is available, the optimal structures consists of only few bars. At the same time the employed boundary points are not obviously positioned which proves that a fine discretization of the boundary is essential after all. The optimal elastic grid-shell in compression is visualized in Fig. \ref{fig:multiple_point_loads}(c).
\end{example}

\begin{example}\textbf{(Knife load distributed along diagonals of a square domain)}
\label{ex:diagonal_load}
While keeping the square domain $\Omega$ we vary the load: this time the measure $f =- t\,\Ha^1\mres[A_1,A_3] - t\,\Ha^1\mres[A_2,A_4]$ represents two knife loads of intensity $t>0$ (of units $\mathrm{N}/\mathrm{m}$) applied downwards and along diagonals of the square $\Omega$, see Fig. \ref{fig:diagonal_load}(a). For a $201 \times 201$ nodal grid the load $f$ must be discretized to $f_X \in \Mes(X;\R)$: for $h=a/200$ to every node lying on the diagonals we apply a downward point force of magnitude $t \, \sqrt{2} h$ and the vector $\mbf{f}$ is defined accordingly (except the very centre of the square where a twice bigger force is applied). The computational details, also for other resolutions of $X$, are listed in Table \ref{tab:miscellaneous}.

\begin{figure}[h]
		\centering
		\subfloat[]{\includegraphics*[trim={0cm -1cm -0cm -0cm},clip,height=0.24\textwidth]{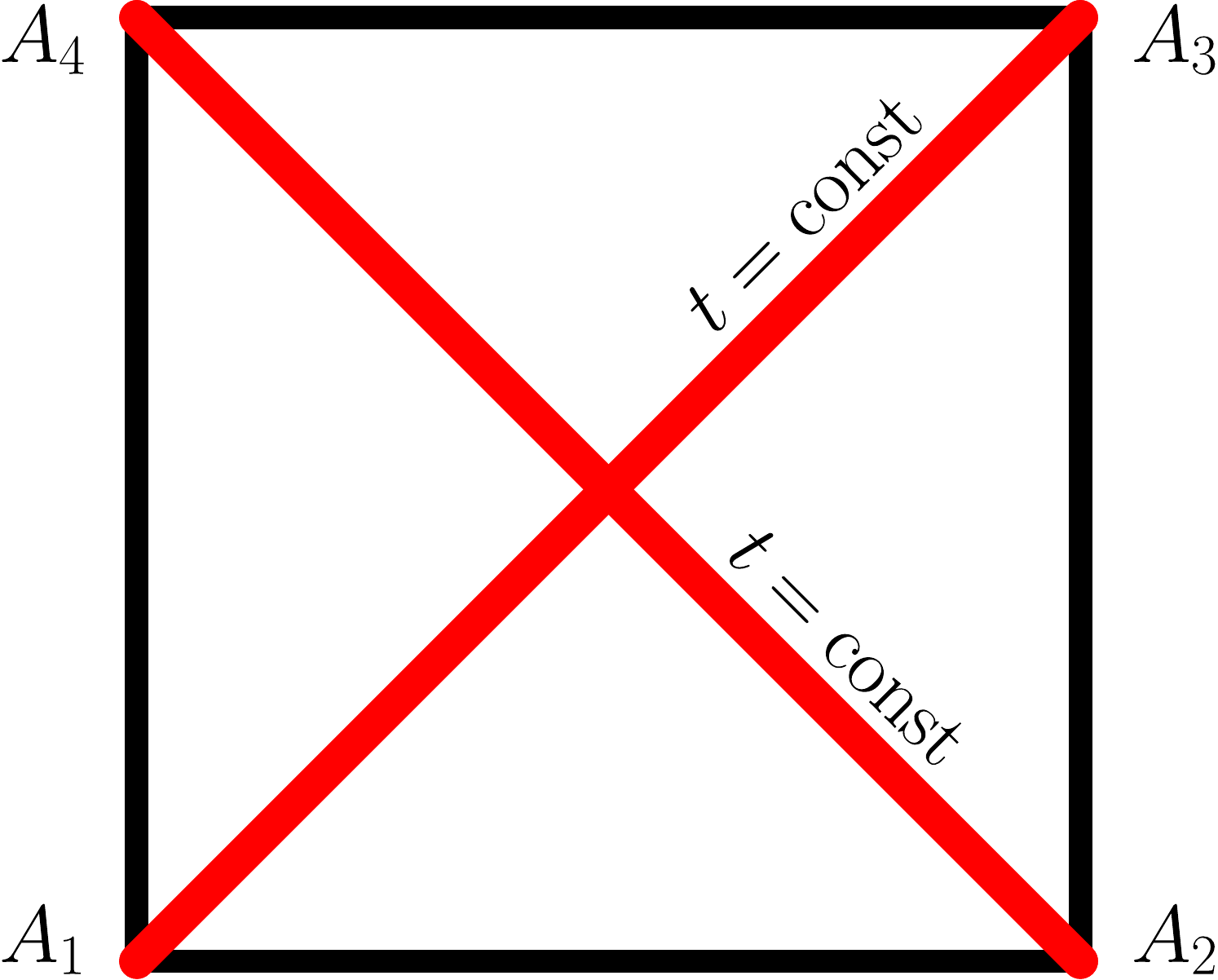}}\hspace{0.6cm}
		\subfloat[]{\includegraphics*[trim={0cm -1cm -0cm -0cm},clip,height=0.24\textwidth]{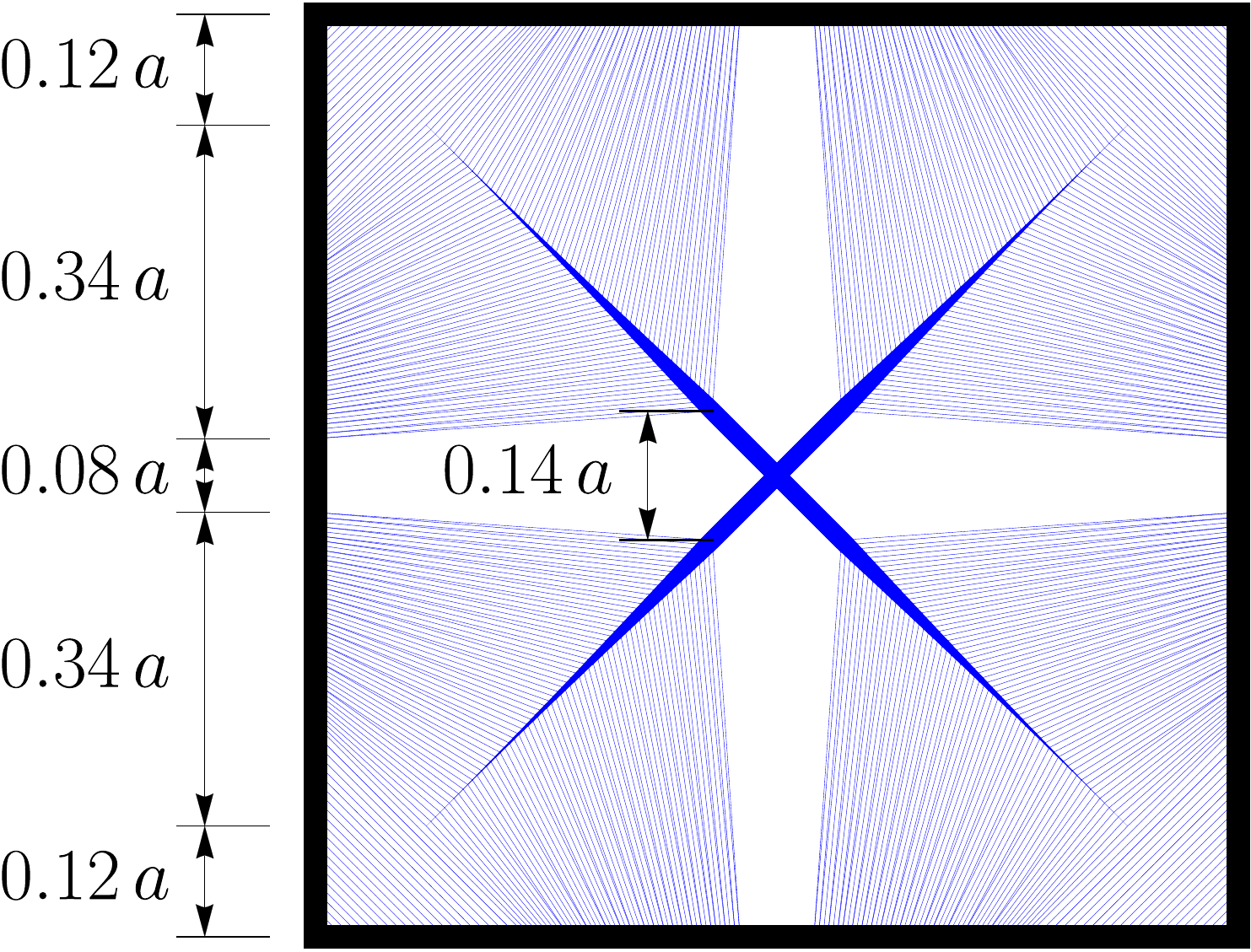}}\hspace{0.4cm}
		\subfloat[]{\includegraphics*[trim={0.5cm 0cm 0.5cm 0cm},clip,width=0.35\textwidth]{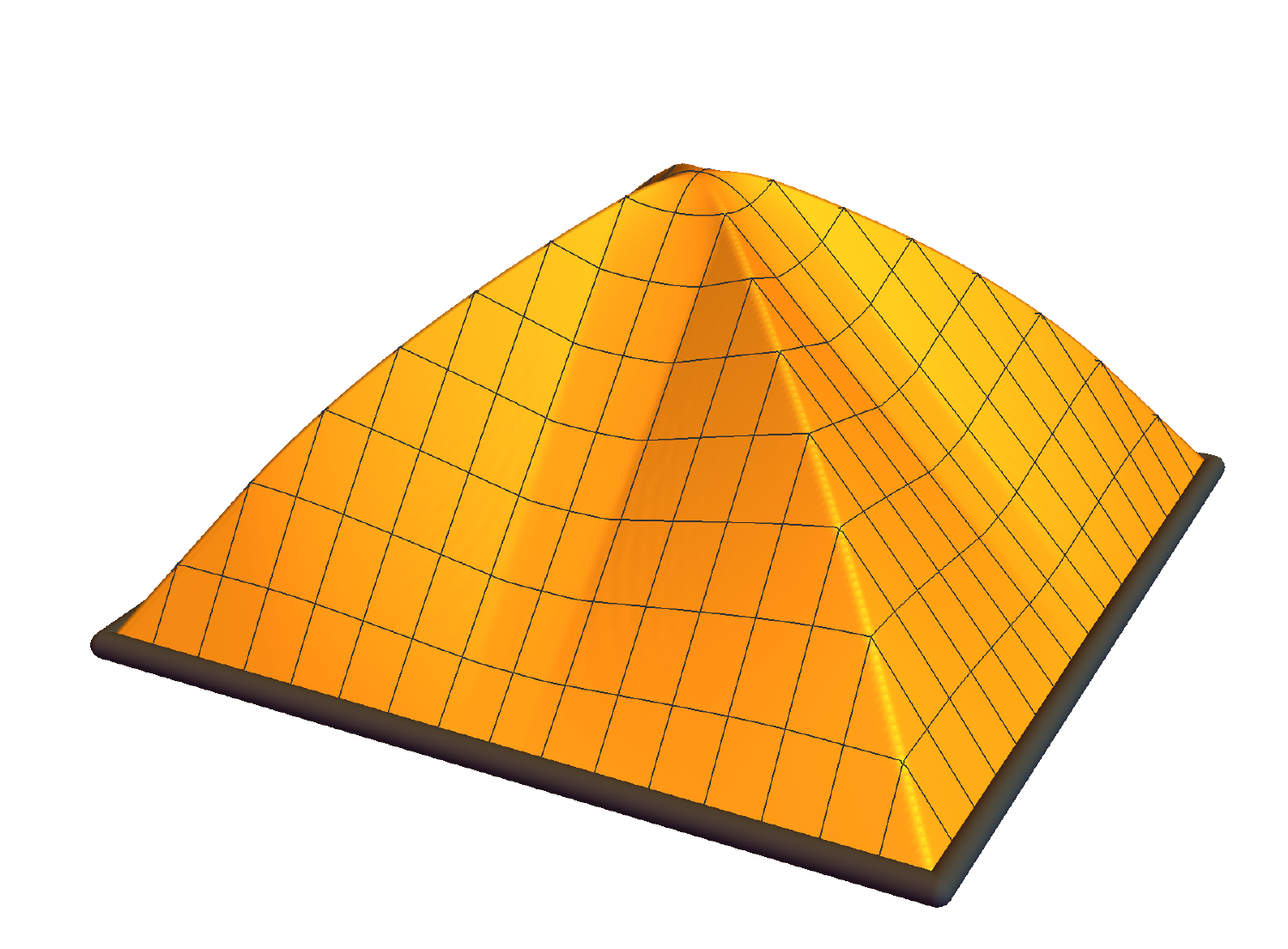}}\\
		\subfloat[]{\includegraphics*[trim={2cm -0cm 0.5cm 3cm},clip,width=0.7\textwidth]{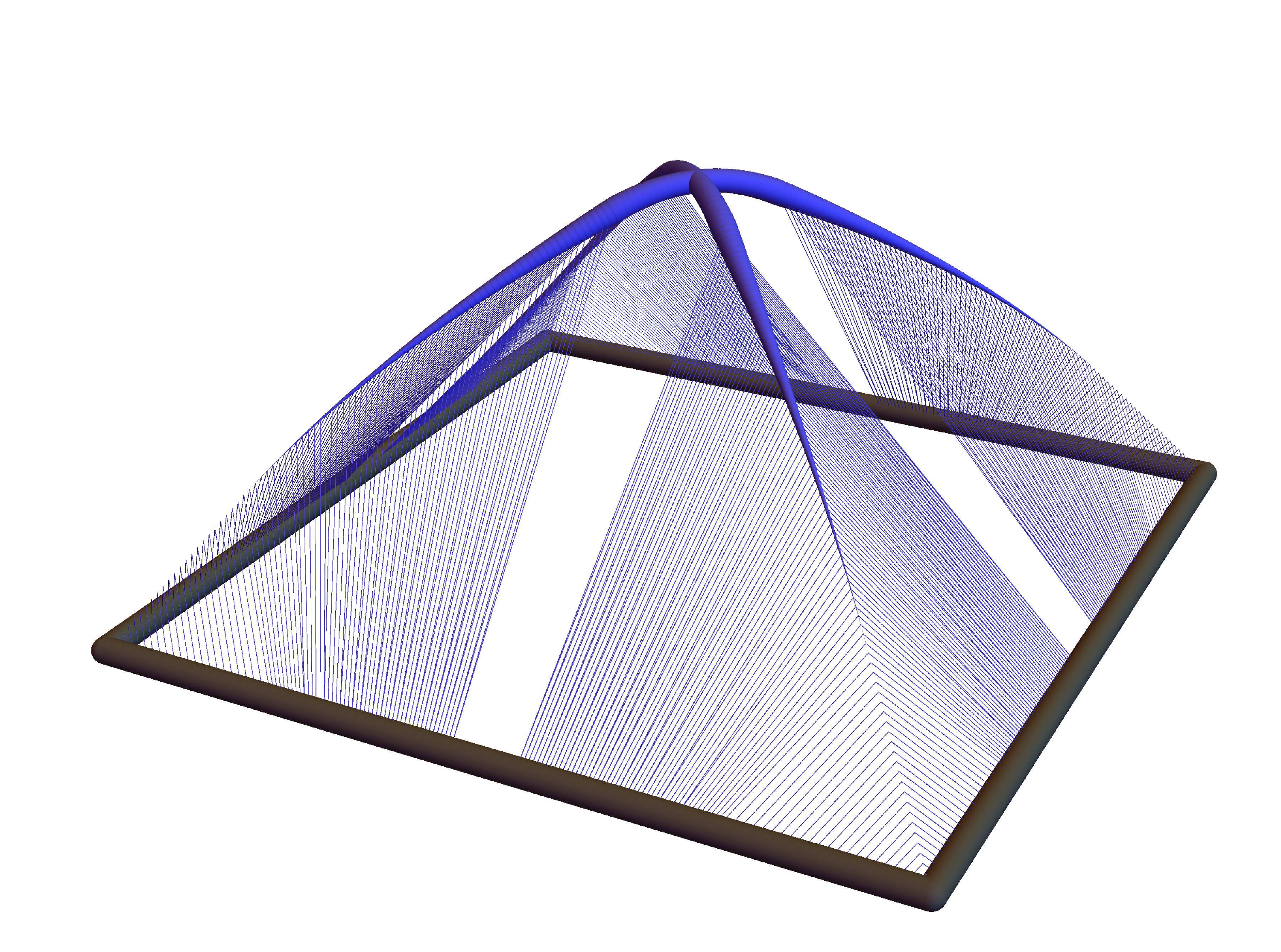}}
		\caption{The problem of optimal grid-shell in compression over a square domain and under a knife load distributed along the diagonals: (a) the loading conditions; (b) plane truss given by $\mbf{s}$ solving $(\mathcal{P}_X)$; (c) optimal elevation function $z = -\frac{1}{2} \,w$  where $w$ solves $(\mathcal{P}^*_X)$; (d) optimal grid-shell in compression.}
		\label{fig:diagonal_load}
\end{figure}

Solution $\mbf{s}$ of problem $(\mathcal{P}_X)$ is visualized as a pre-stressed truss in Fig. \ref{fig:diagonal_load}(b). It is clear that $\mbf{s}$ approximates a solution $\sigma$ of problem $(\mathcal{P})$ that has a feature which is very typical for Michell structures: $\sigma$ consists of one-dimensional part, being two bars of varying cross section, and of the continuous part, being eight fans of fibres connecting the diagonals to the boundary. The two bars do not reach the square's corners, instead they start at $0.12\, a$ distance from the boundary (numerical estimation) with zero cross sectional area. In vicinity of the corners the continuous part is a strip of parallel bars inclined at $\pm 45\degree$ angle that are not connected to the two diagonal bars. The continuous part does not fill the whole design domain -- a void subregion emerges in a shape of a cross with the arms of width linearly varying from $0.08\, a$ to $0.14\,a$ (numerical estimation); the diagonal bars are of constant cross section when crossing the void area. Unlike in many Michell structures, however, in the solution $\sigma$ predicted in Fig. \ref{fig:diagonal_load}(b) all the bars are straight, there are no curved bars or the so called \textit{Hencky nets}. Both of those structural elements are closely related to cooperation between tensile and compressive stresses which cannot occur for positive $\sigma$. Additionally, we observe that as in the previous two examples the bars constituting the field $\sigma$ connect only the points in the set $\mathrm{spt}\,f \cup \bO$.

The optimal tent-resembling elevation function, being an interpolation of data $\mbf{z} = - \frac{1}{2} \, \mbf{w}$, is showed in Fig. \ref{fig:diagonal_load}(c). The optimal elastic grid-shell in compression may be readily constructed by unprojecting onto the elevated surface (elevated ground structure to be more accurate) and by employing formula $\mbf{a} = V_0/\Z_X \, \mbf{J}(\mbf{z})\, \mbf{s}$ to compute bar's cross sectional areas, see Fig. \ref{fig:diagonal_load}(d). As there are no loads present over the subregion of $\Omega$ where continuous part of solution occurs, we deduce that the thin fibres are straight and therefore there are eight pieces of the graph $z$ that are ruled surfaces. 

One could say that the obtained grid-shell approximates a vault discussed in Sections \ref{sec:plastic_design} and \ref{sec:elastic_design}. However, there are one-dimensional structural elements present, i.e. the two arches running over parts of diagonals of the square $\Omega$, hence the truly optimal structure is predicted to be given by elastic material distribution being a 3D measure $\hat{\mu} \in \Mes_+(\Ob\times\R)$ that solves problem $(\mathrm{MCPS})$ posed in Section \ref{sec:3D}. A natural competitor for optimal solution in this loading scenario would be the pair of arches of finite cross sectional area connecting two opposite corners of the square. Such solution may be provoked by restricting the set where the structure is fixed from $\bO$ to only four vertices  $A_1,A_2,A_3,A_4$  (cf. Section \ref{ssec:support_other_than_bO}) in which case we would obtain $\widetilde{\Z}_X = 2.3097 \,t a^2$. Thus, in the context of the plastic design, a two-arch structure would require almost $39\%$ more material than the grid-shell in Fig. \ref{fig:diagonal_load}(d).
\end{example}

\begin{figure}[p!]
	\centering
	\hbox{{\vbox{\offinterlineskip\halign{#\hskip3pt&#\cr
					\centering
					\hspace{1.85cm}\subfloat[]{\includegraphics*[trim={-0cm -1cm -0cm -0cm},height=3.4cm]{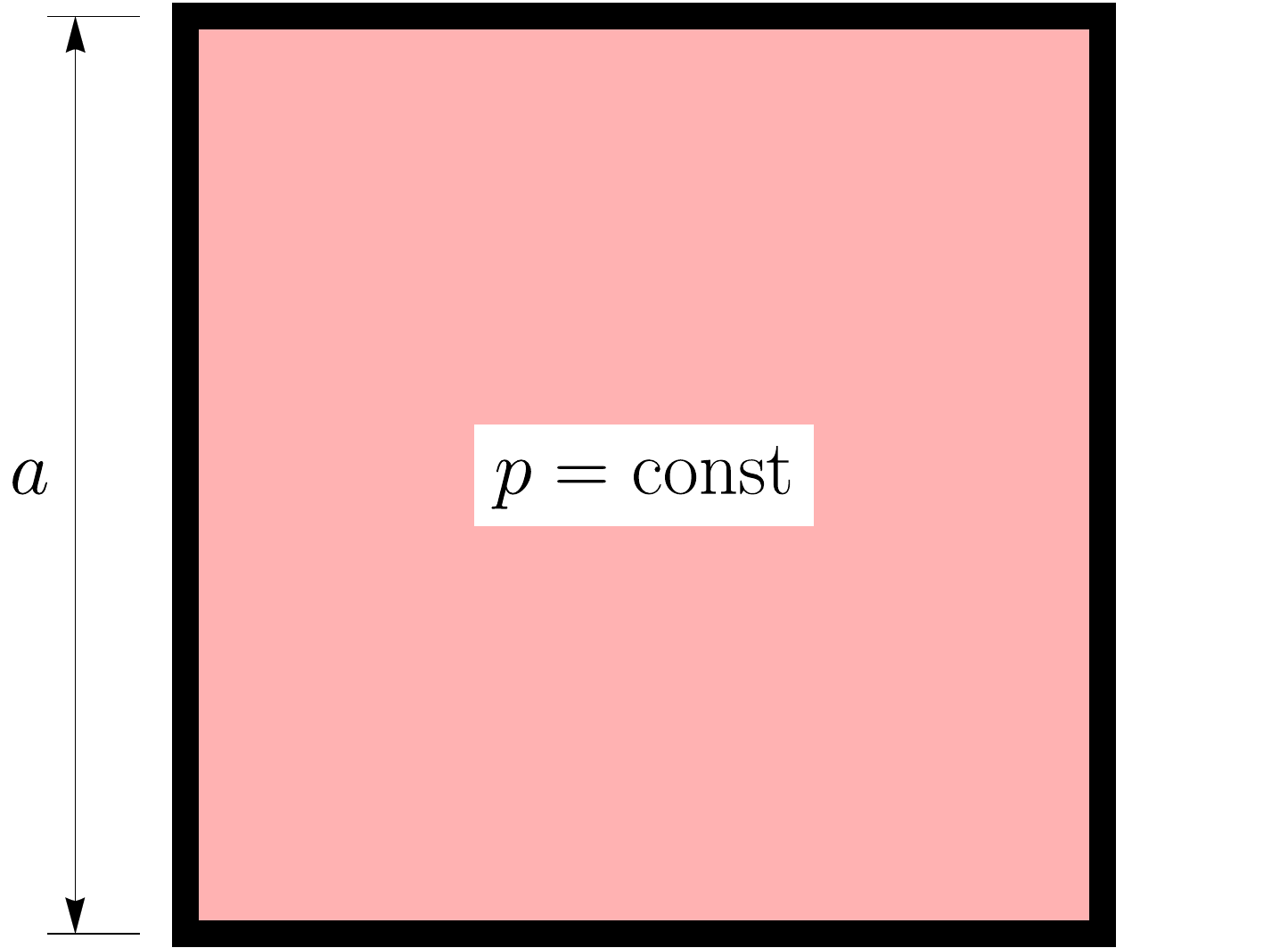}}\cr
					\noalign{\vskip3pt}
					\subfloat[]{\includegraphics*[trim={0cm -0.5cm -0cm 1.7cm},width=7cm]{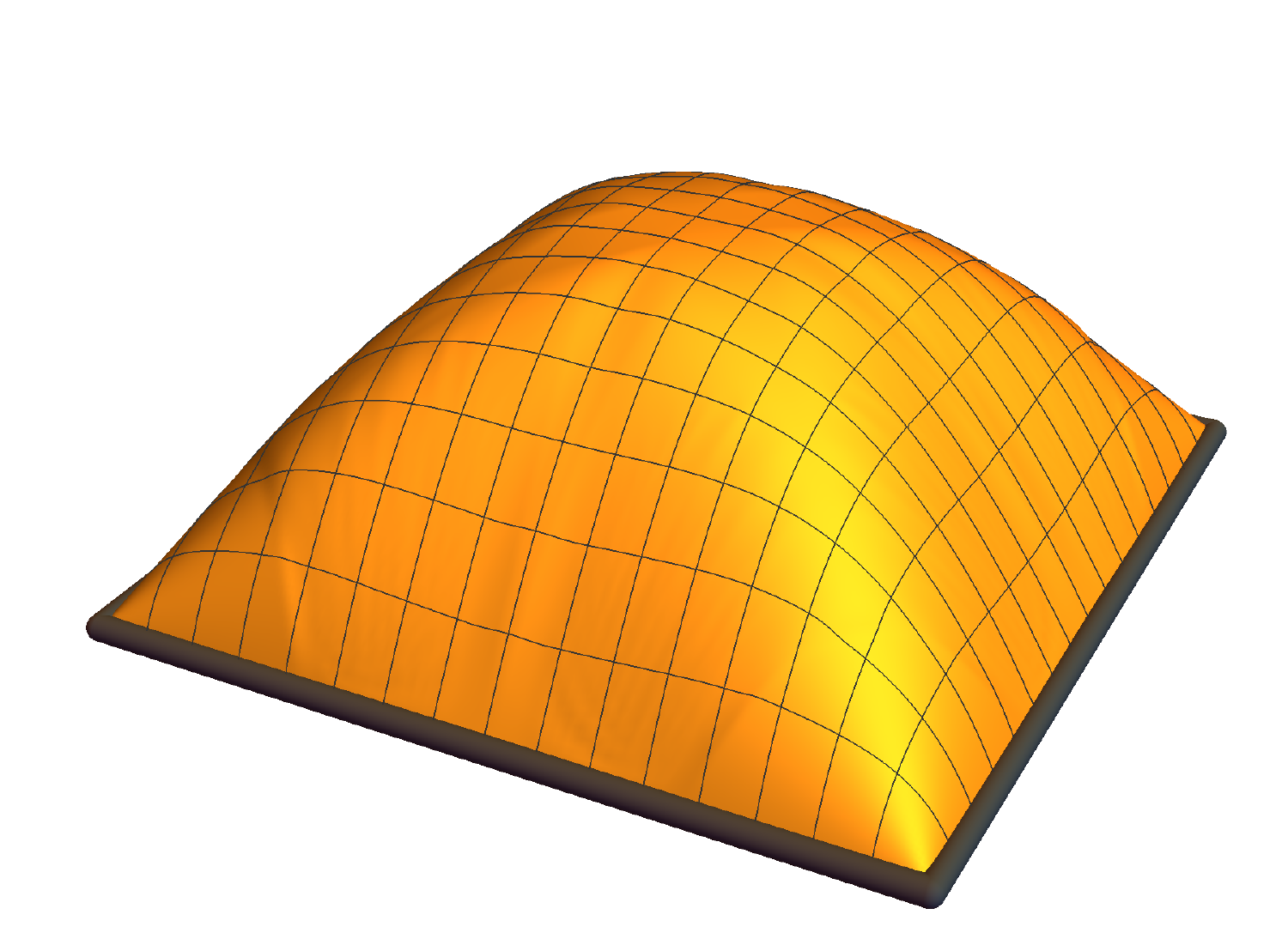}}\cr
		}}}
		\hspace{0.5cm}
		\subfloat[]{\includegraphics*[trim={0cm -0.7cm -0cm -0cm},clip,width=0.52\textwidth]{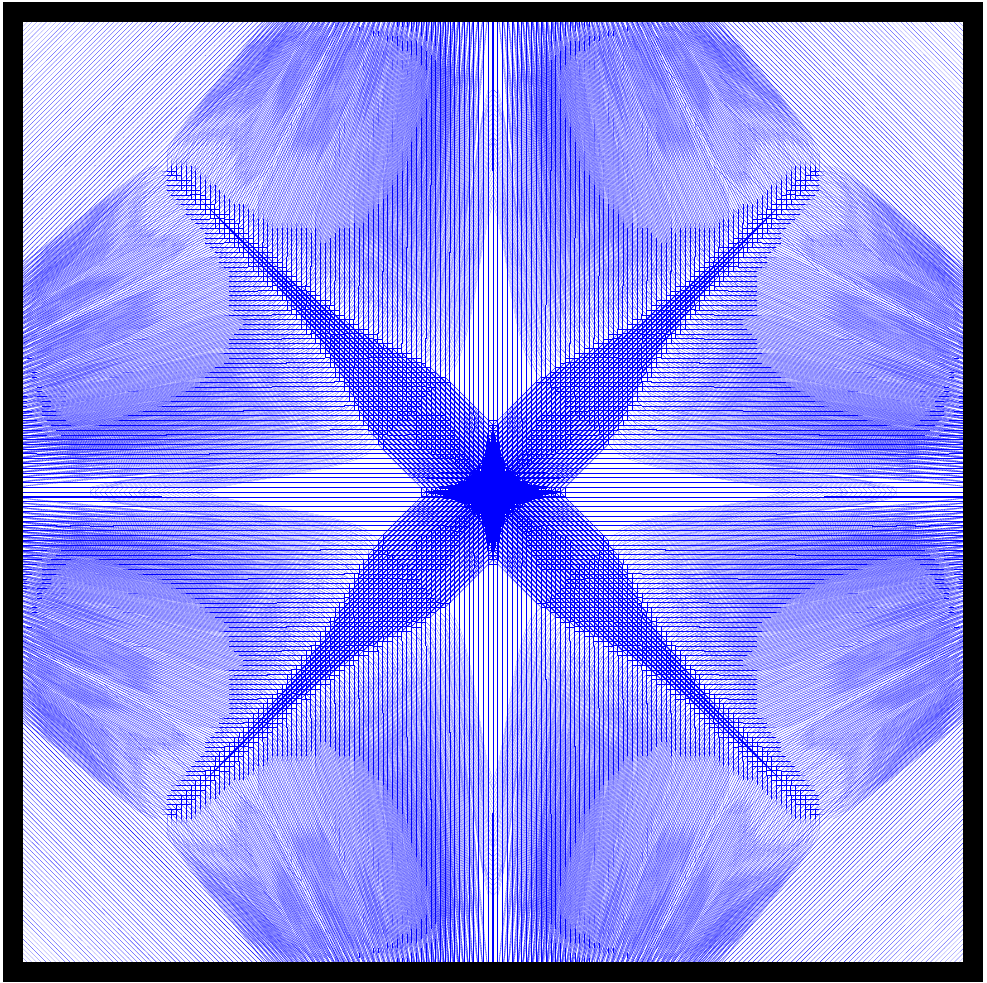}}}
	\subfloat[]{\includegraphics*[trim={2cm -0cm 0.5cm 3cm},clip,width=0.85\textwidth]{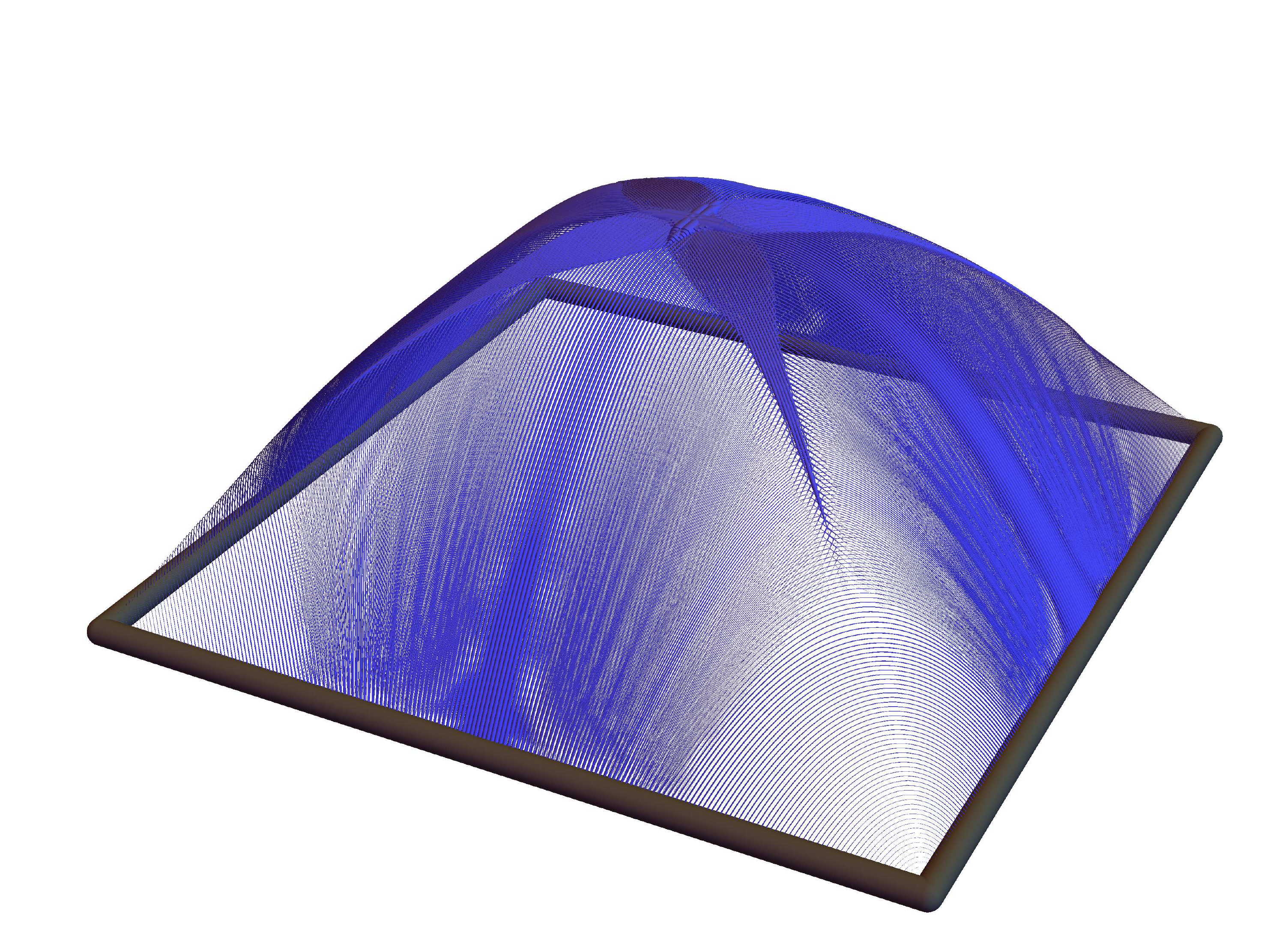}}
	\caption{The problem of optimal grid-shell in compression over a square domain and under uniformly distributed load: (a) design domain and load; (b) optimal elevation function $z = -\frac{1}{2} \,w$  where $w$ solves $(\mathcal{P}^*_X)$; (c) plane truss given by $\mbf{s}$ solving $(\mathcal{P}_X)$; (d) optimal grid-shell in compression.}
	\label{fig:pressure}
\end{figure}

\begin{example}\textbf{(Uniformly distributed load in a square domain)}
\label{ex:pressure}
We carry on with investigating the optimal grid-shell problem for square domain; the downward load is now uniformly distributed in $\Omega$, i.e. $f = -p\, \mathcal{L}^2 \mres \Omega$ where $p$ is a positive constant of units being $\mathrm{N}/\mathrm{m}^2$, see Fig. \ref{fig:pressure}(a). For a $201 \times 201$ grid we propose the discretized load $f_X \in \Mes(X;\R)$ by applying at each node a downward load of magnitude $p \, h^2$ where $h = a/200$. The computational details, also for other resolutions of $X$, are listed in Table \ref{tab:miscellaneous}.

The pre-stressed truss, that is furnished by vector $\mbf{s}$ solving problem $(\mathcal{P}_X)$, is showed in Fig. \ref{fig:pressure}(c); unprojecting $\mbf{s}$ onto the elavation function $z$ from Fig. \ref{fig:pressure}(b) yields an optimal grid-shell visualized in Fig. \ref{fig:pressure}(d). Unlike in the previous example, the structure of the exact field $\sigma\in \Mes(\Ob;\Sddp)$ solving the infinite dimensional problem $(\mathcal{P})$ is difficult to guess based on the numerical solution $\mbf{s}$. There are certain similarities between the two solutions, for instance the strip of parallel fibres near the corners. It seems that in the large part of the design domain $\sigma$ is rank-one, i.e. of the form $s \, \tau\otimes\tau$. The area adjacent to the two diagonals is an exception: a reinforcement of the structure may be observed, yet not in the form of  bars but two-dimensional caps made of significantly thicker bars running in different directions (see Fig. \ref{fig:pressure}(d) for better grasp of bar's thickness) -- therein $\sigma$ appears to be of full rank. One dimensional ribs seem to be missing; very thick bars may be found near the centre of the square but the guess of the present author is that they simulate an $L^1$ function $\sigma$ that blows up to infinity, cf. the analytical Example \ref{ex:axisymmetric}.

\end{example}

\begin{example}\textbf{(Uniformly distributed load in a cross-shaped domain)}
\label{ex:cross}
We choose a non-convex domain $\Omega$ in a shape of symmetric cross; the load is again uniformly distributed, namely $f = - p \, \mathcal{L}^2 \mres \Omega$, cf. Fig. \ref{fig:cross}(a). The nodal grid is chosen in accordance with \eqref{eq:X_regular}, for $h = a/240$ while the discretized load $f_X$ is constructed identically as in Example \ref{ex:pressure}. The computational data, also for other resolutions of $X$, is listed in Table \ref{tab:miscellaneous} (since $\Omega$ is non-convex the precise number of members in the full ground structure would require heavy computing, instead we give a rough estimate).

\begin{figure}[h!]
	\centering
	\hbox{{\vbox{\offinterlineskip\halign{#\hskip3pt&#\cr
					\centering
					\hspace{2.cm}\subfloat[]{\includegraphics*[trim={-0cm -1cm -0cm -2cm},height=4cm]{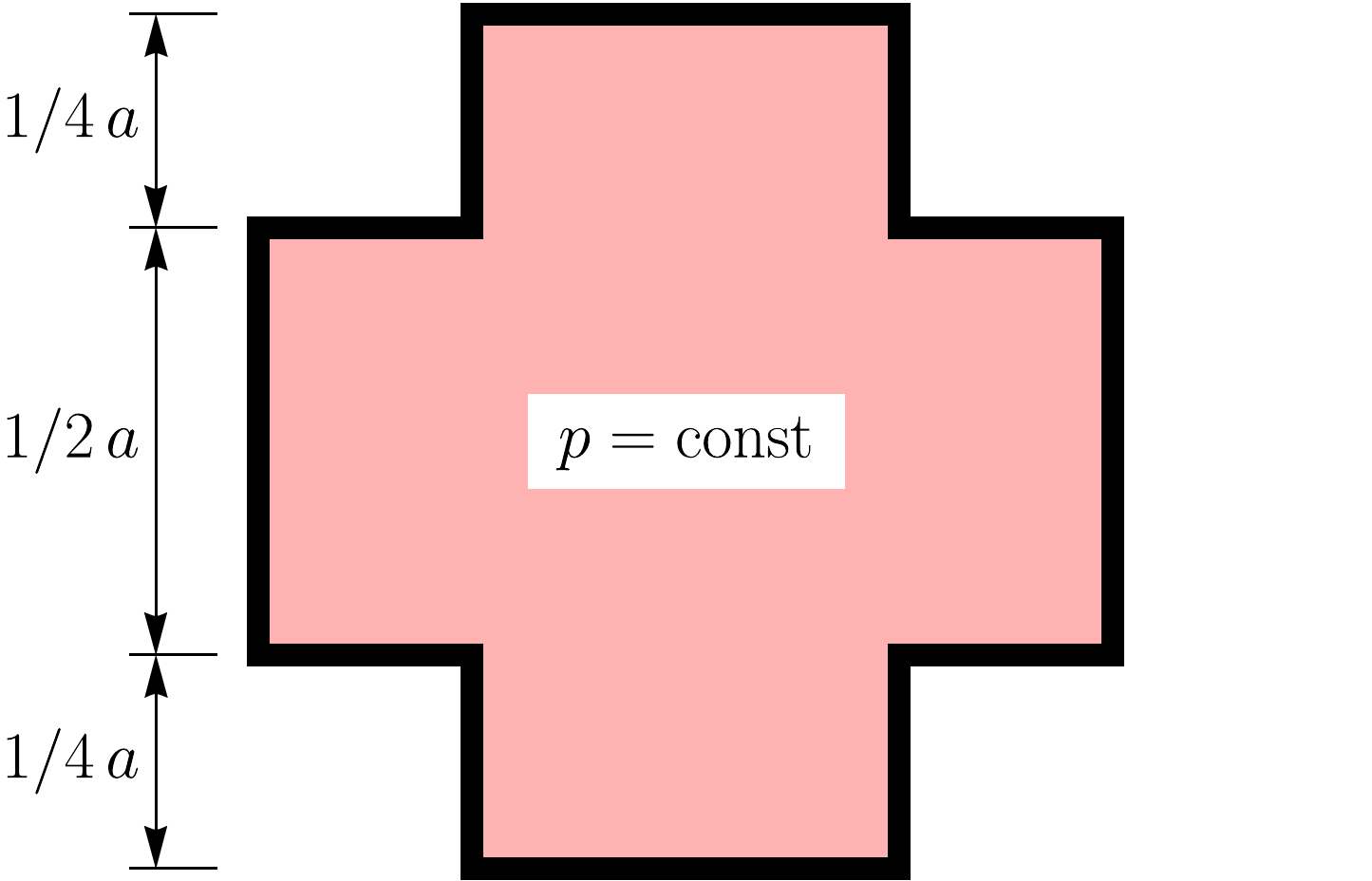}}\cr
					\noalign{\vskip3pt}
					\subfloat[]{\includegraphics*[trim={1.2cm 0.5cm 1.2cm 2cm},width=0.45\textwidth]{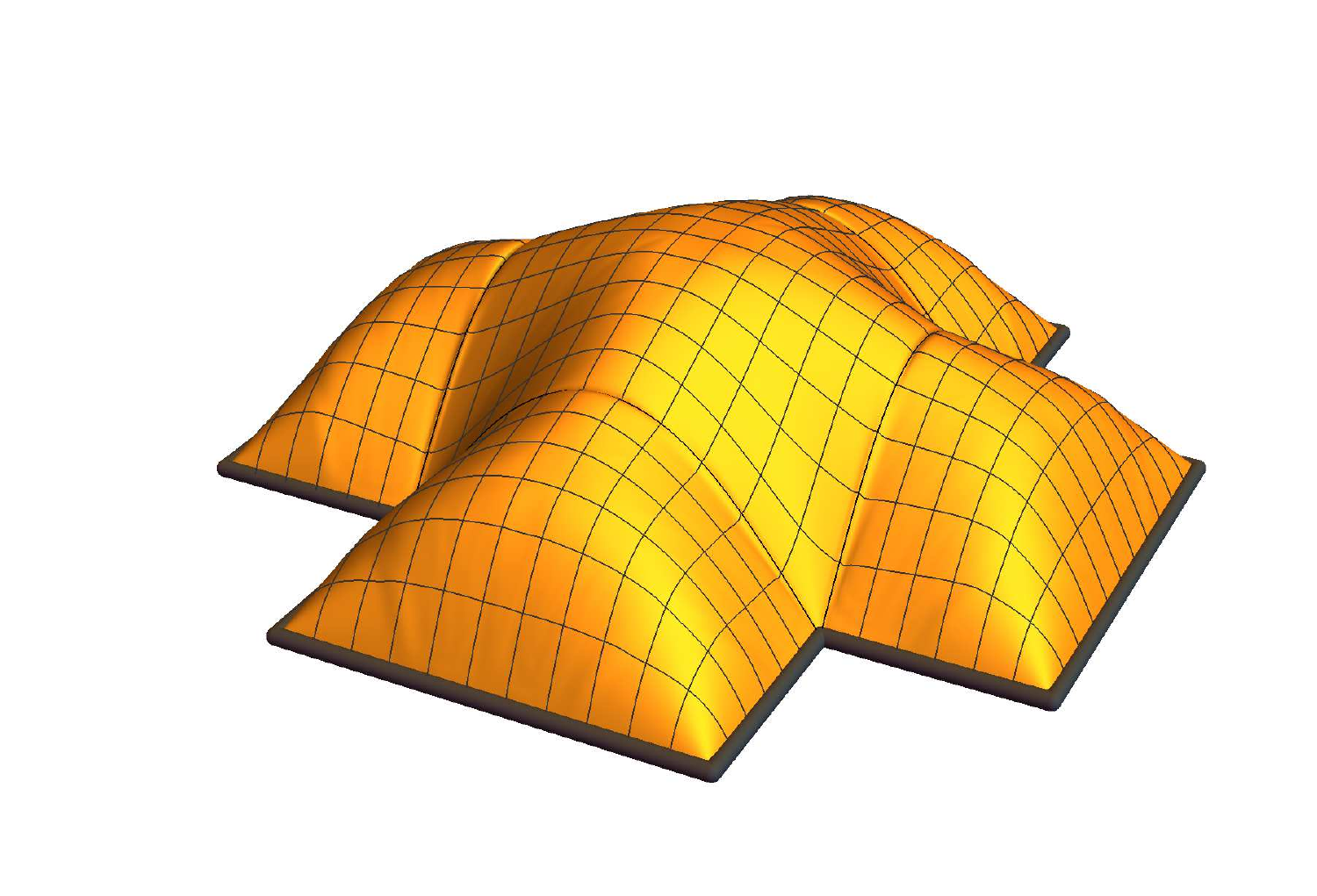}}\cr
		}}}
		\hspace{0.4cm}
		\subfloat[]{\includegraphics*[trim={0cm -1cm -0cm -0cm},clip,width=0.52\textwidth]{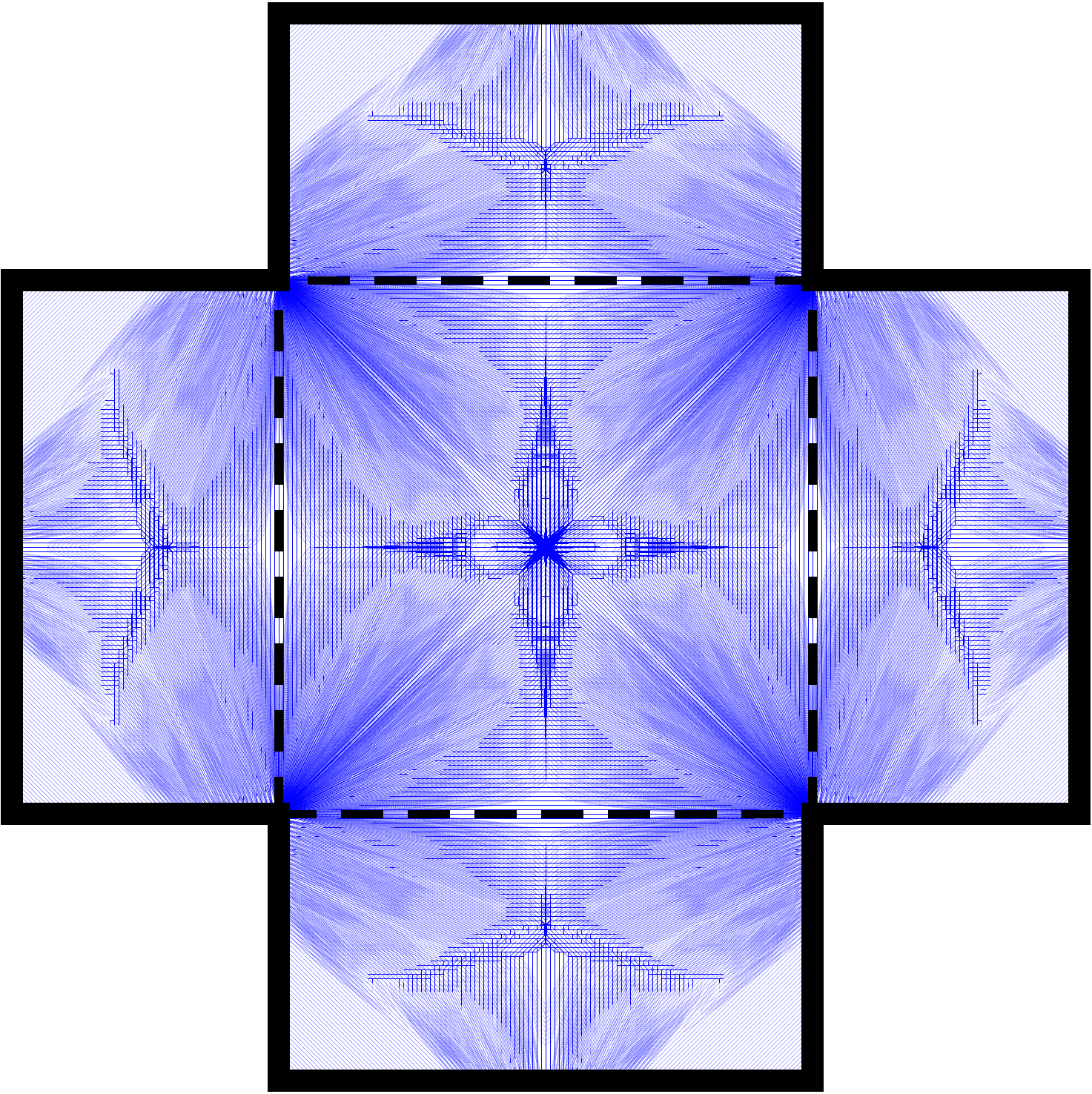}}}
	\subfloat[]{\includegraphics*[trim={5cm -0cm 2cm 4cm},clip,width=0.45\textwidth]{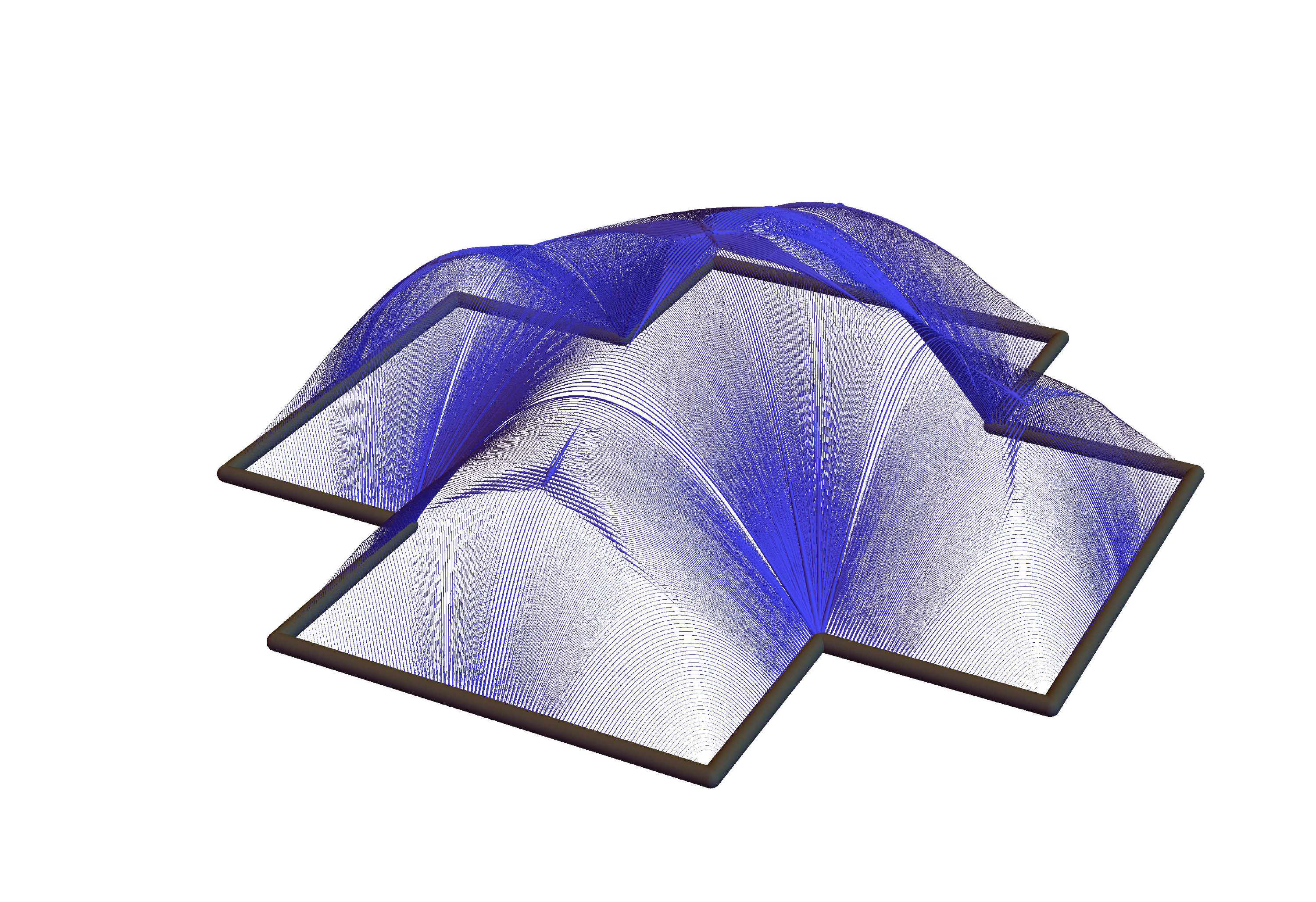}}\hspace{0.4cm}
	\subfloat[]{\includegraphics*[trim={5cm -0cm 2cm 4cm},clip,width=0.45\textwidth]{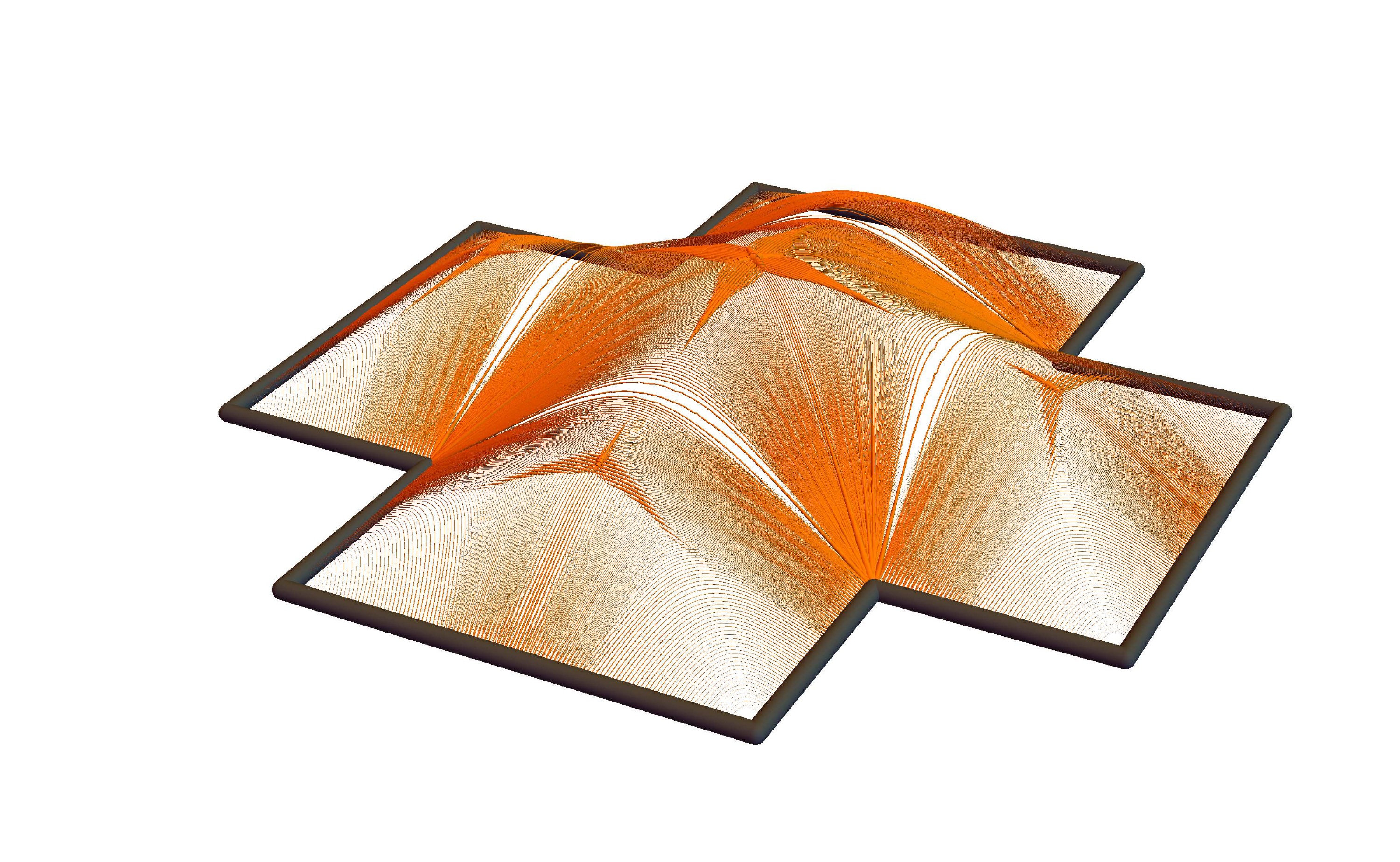}}
	\caption{The problem of optimal grid-shell in compression over a cross-shaped domain and under uniformly distributed load: (a) design domain and load; (b) optimal elevation function $z = -\frac{1}{2} \,w$  where $w$ solves $(\mathcal{P}^*_X)$; (c) plane truss given by $\mbf{s}$ solving $(\mathcal{P}_X)$; (d) optimal grid-shell in compression; (e) elastic deformation.}
	\label{fig:cross}
\end{figure}
\end{example}

The numerical solution $\mbf{s}$ of problem $(\mathcal{P}_X)$ is presented in Fig. \ref{fig:cross}(c). We observe that the truss in the central square connecting the four re-entrant corners is disconnected from the rest of the remaining four parts, namely there are no bars that interconnect the five regions, cf. the dashed lines partitioning the domain. Nevertheless, the elevation function $z$, cf. the interpolation in Fig. \ref{fig:cross}(b), is continuous as was guaranteed by Proposition \ref{prop:regularitu_u_w}. The optimal elastic grid-shell approximating a vault in compression is showed in Fig. \ref{fig:cross}(d), while in Fig. \ref{fig:cross}(e) we see its elastic deformation that admits a singularity: the displacement is clearly discontinuous along the four lines partitioning $\Omega$. More precisely it is only the component $u_{\e,\nu}$ of horizontal vector displacement $u_{\e}$ which is normal to those lines that is discontinuous. This scenario is again in agreement with Proposition \ref{prop:regularitu_u_w} where function $u$ is in general in $BV$ space while the singular part $(e(u))_s$ is negative -- since $u_\e = - \Z_X/(E_0 V_0)\, u$ (cf. \eqref{eq:compression_mod}) the singular part $(e(u_\e))_s$ is positive and therefore the grid-shell may experience a sort of cracking visible in Fig. \ref{fig:cross}(e).

\section{Variations of the form finding problem}
\label{sec:variations}

\subsection{Optimal vaults and grid-shells fixed on a plane set $\Gamma$ other than $\bO$}
\label{ssec:support_other_than_bO}

From the very beginning of this contribution we have kept the assumption that the designed vault / Prager structure / grid-shell is pinned on the boundary $\bO \times \{0\}$. It is natural to generalize the investigated problem so that the potential kinematical support of the structure is $\bO_0 \times \{0\}$ where $\Omega_0$ is a closed subset of $\bO$. Taking a step further would be to account for line and point supports in the interior of $\Omega$. Thus, in general, one could consider a problem where the structure being designed may rest on $\Gamma \times \{0\}$ where $\Gamma$ is any closed subset contained in $\Ob$. In this broader setting the infinite dimensional problem $(\mathcal{P})$ would be affected as follows: the two equilibrium equations $\DIV \, \sigma = 0 $ and $-\dive \,q = f$ would be imposed in the sense of distributions on the open set $\Rd \backslash \Gamma$ instead of $\Omega$. As a result the set $\Gamma$ has to satisfy the condition:
\begin{equation}
\label{eq:condition_on_Gamma}
\Ob \subset \mathrm{conv}(\Gamma).
\end{equation}
Indeed, according to Theorem 2.2 in \cite{bouchitte2019} a positive matrix valued measure $\sigma \in \Mes(\Rd;\Sddp)$ whose divergence $\DIV \,\sigma$ is supported on a closed set $\Gamma$ must itself be supported in the convex hull of $\Gamma$, i.e. $\mathrm{spt}\,\sigma \subset \mathrm{conv}(\Gamma)$. Once $\Omega$ is convex condition \eqref{eq:condition_on_Gamma} readily implies existence of solution $(\sigma,q) \in \Mes(\Ob;\Sddp) \times \Mes(\Ob;\Rd)$ of the altered problem $(\mathcal{P})$. If, on the other hand, $\Omega$ is non-convex the conditions \eqref{eq:condition_on_Gamma} is in general not sufficient and in fact the criteria for existence in $(\mathcal{P})$ are difficult to establish in this broader scenario.  Analysis of the dual problem $(\mathcal{P}^*)$ becomes problematic even when $\Omega$ is convex, in particular the existence and  regularity results in Proposition \ref{prop:regularitu_u_w} fail to hold. In general, rigorous mathematical results on the pair $(\mathcal{P}), (\mathcal{P}^*)$, studied thoroughly in \cite{bouchitte2020} in the case when $\Gamma = \bO$, at this point cannot be easily generalized to the case of any $\Gamma$ satisfying \eqref{eq:condition_on_Gamma} -- before the results in Sections \ref{sec:plastic_design},\,\ref{sec:elastic_design},\,\ref{sec:3D} are extended to general $\Gamma$, Theorems \ref{thm:duality} and \ref{thm:opt_cond} must be carefully revised first.

In case when $\bO \not\subset \Gamma$ an extra analysis is essential also in case of the discrete formulation, i.e. assumptions and assertions of Theorem \ref{thm:duality_discrete} need to be revised. We will skip this matter entirely although, in the example below we shall give the numerical solution of the optimal grid-shell problem for $\Gamma$ being an eight-point set  -- the solution will exist yet it will suffer from certain pathologies in the horizontal deformation vectors $\mbf{u}_1, \mbf{u}_2$.

\begin{example}\textbf{(A vault supported on eight columns)}
	\label{ex:8_columns}
	Once again we consider a square design domain together with a uniformly distributed load $f = - p \, \mathcal{L}^2 \mres \Omega$, only this time the structure is pinned only at eight points, i.e. $\Gamma = \bigl\{ A_1, \ldots, A_8 \bigr\}$, see Fig. \ref{fig:8_columns}(a). Details on computations are summed up in Table \ref{tab:8_columns}.
	
	\begin{table}[H]
		\scriptsize
		\centering
		\caption{Summary on numerical computations for the pair of problems $(\mathcal{P}_X), (\mathcal{P}_X^*)$ specified in Example \ref{ex:8_columns}.}
		\begin{tabular}{lccccccc}
			\toprule
			Example no. & Grid $X$  & Full GS  & Iterations   & Active GS    & CPU time & Objective value $\Z_X$ &  Max. elevation $\norm{z}_\infty$\\
			\midrule
			\ref{ex:8_columns}
			& $51\!\times\!51$ & $3\,381\,300$ & 8 & $16\,341$ & $25$ sec. & $0.55200 \, p a^3$ & $0.429 \,a $\\
			& $101\!\times\!101$ & $52\,025\,100$ & 9 & $78\,791$ & $3$ min. $20$ sec. & $0.54551 \, p a^3$ & $0.429 \,a $\\
			& $201\!\times\!201$ & $816\,100\,200$ & 10 & $383\,102$ &  $22$ min. $24$ sec. & $0.54228 \, p a^3$ & $0.429 \,a $\\
			\bottomrule
		\end{tabular}
		\label{tab:8_columns}
	\end{table}

	\begin{figure}[!h]
		\centering
		\hbox{{\vbox{\offinterlineskip\halign{#\hskip3pt&#\cr
						\centering
						\hspace{1.4cm}\subfloat[]{\includegraphics*[trim={-0cm -1cm -0cm -2cm},width=4.2cm]{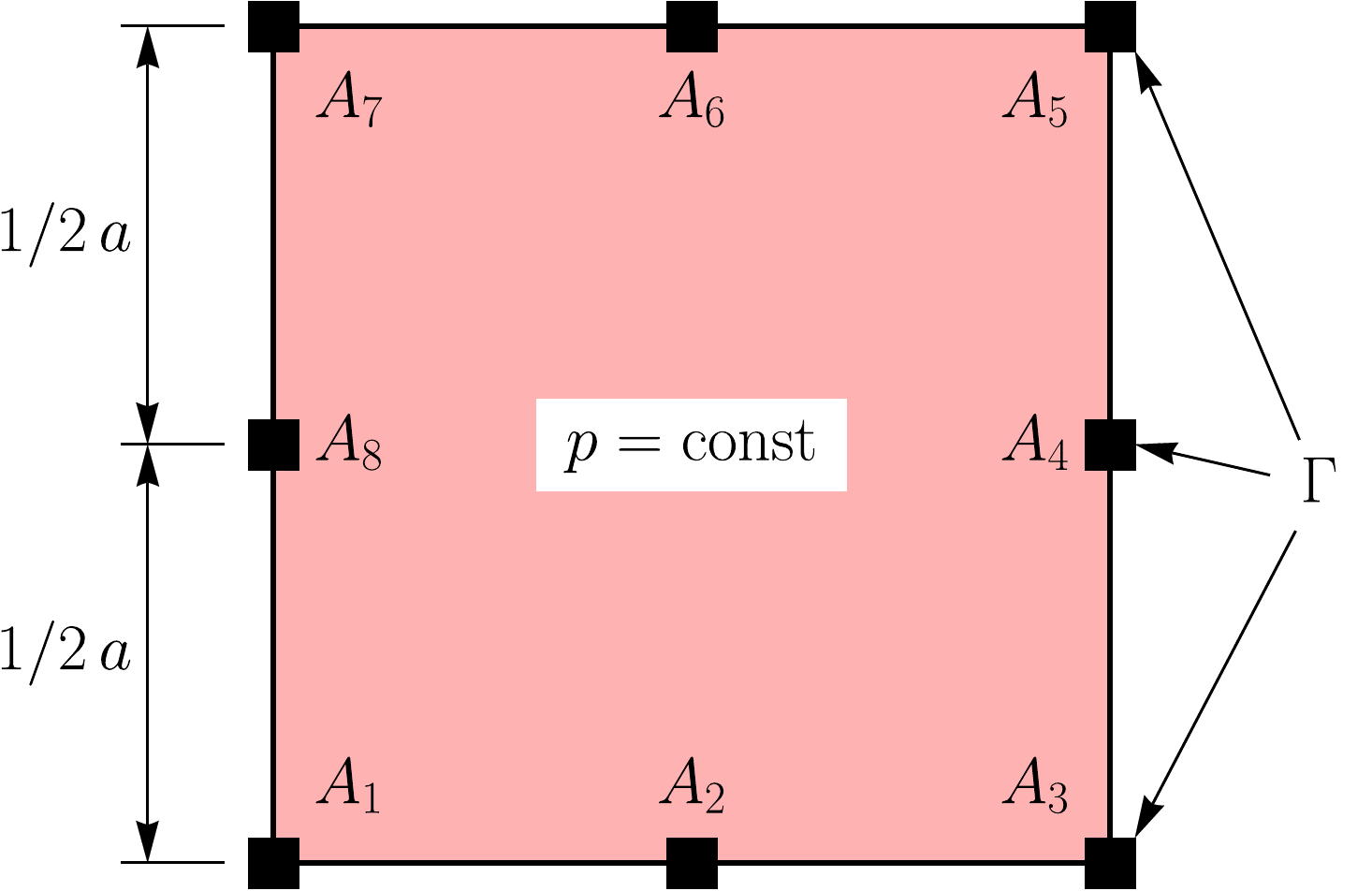}}\cr
						\noalign{\vskip3pt}
						\subfloat[]{\includegraphics*[trim={0cm -0.5cm 0cm 1.5cm},width=7cm]{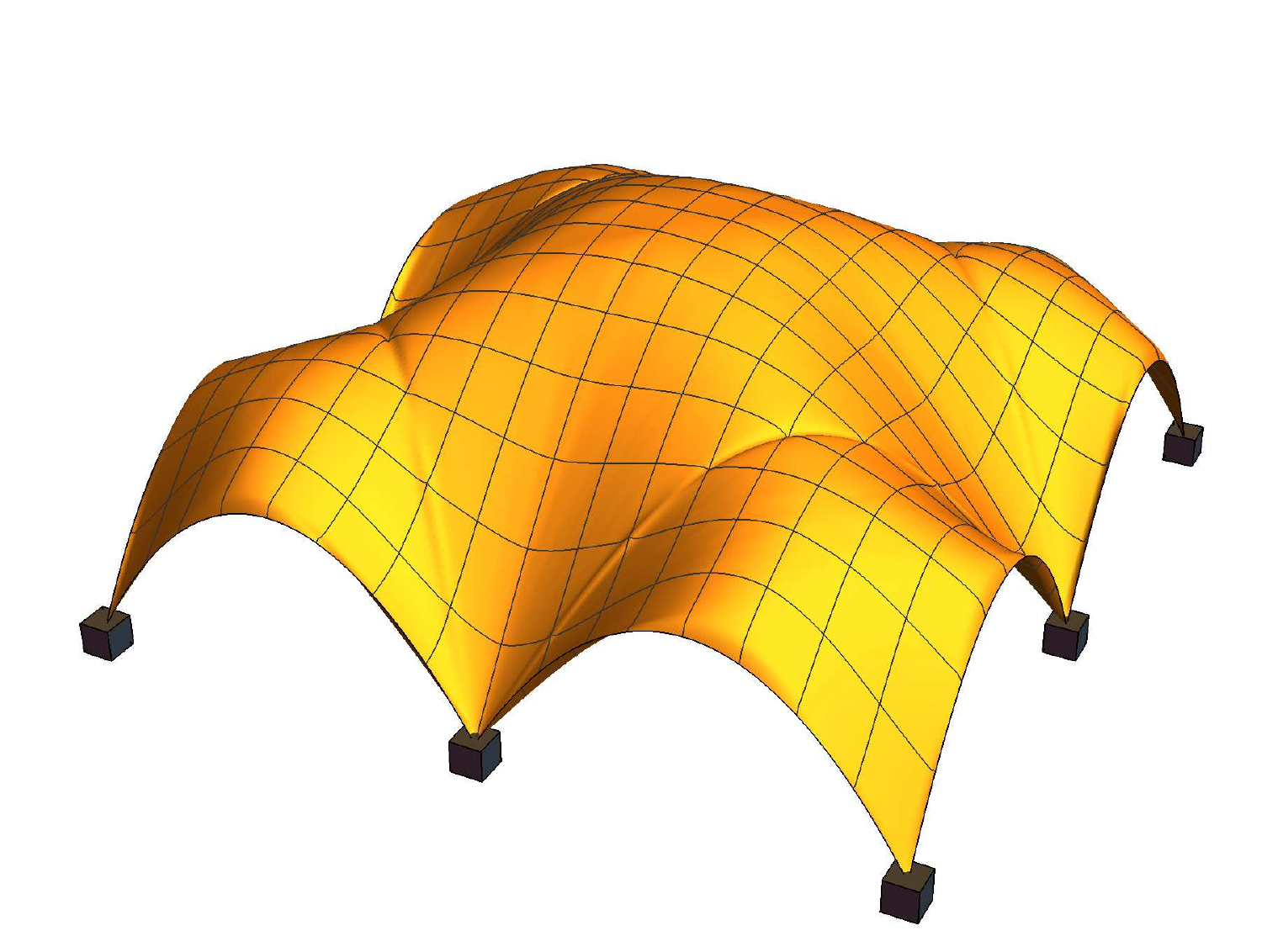}}\cr
			}}}
			\hspace{0.5cm}
			\subfloat[]{\includegraphics*[trim={0cm -1cm -0cm -0cm},clip,width=0.51\textwidth]{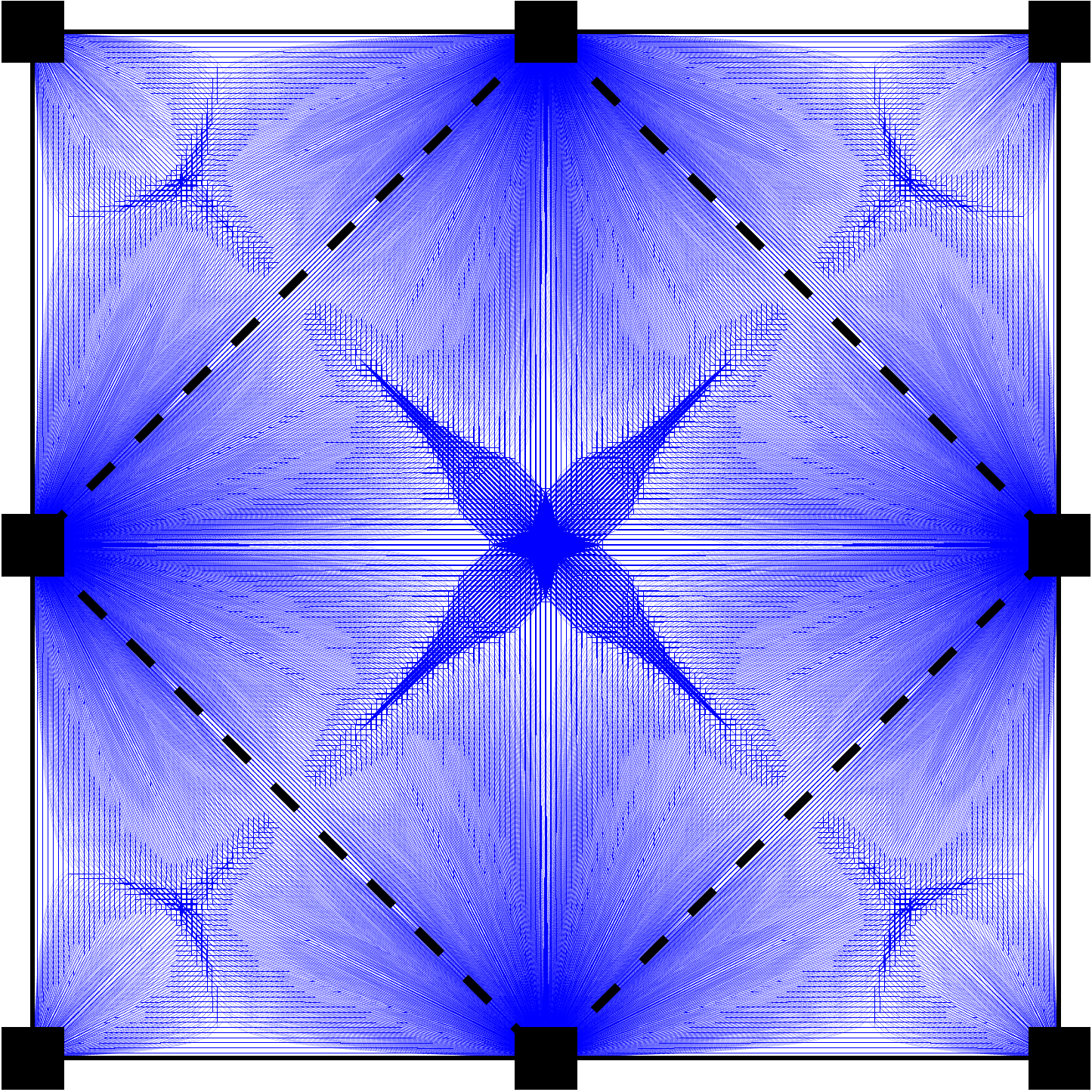}}}
		\subfloat[]{\includegraphics*[trim={2cm -0cm 1cm 4cm},clip,width=0.45\textwidth]{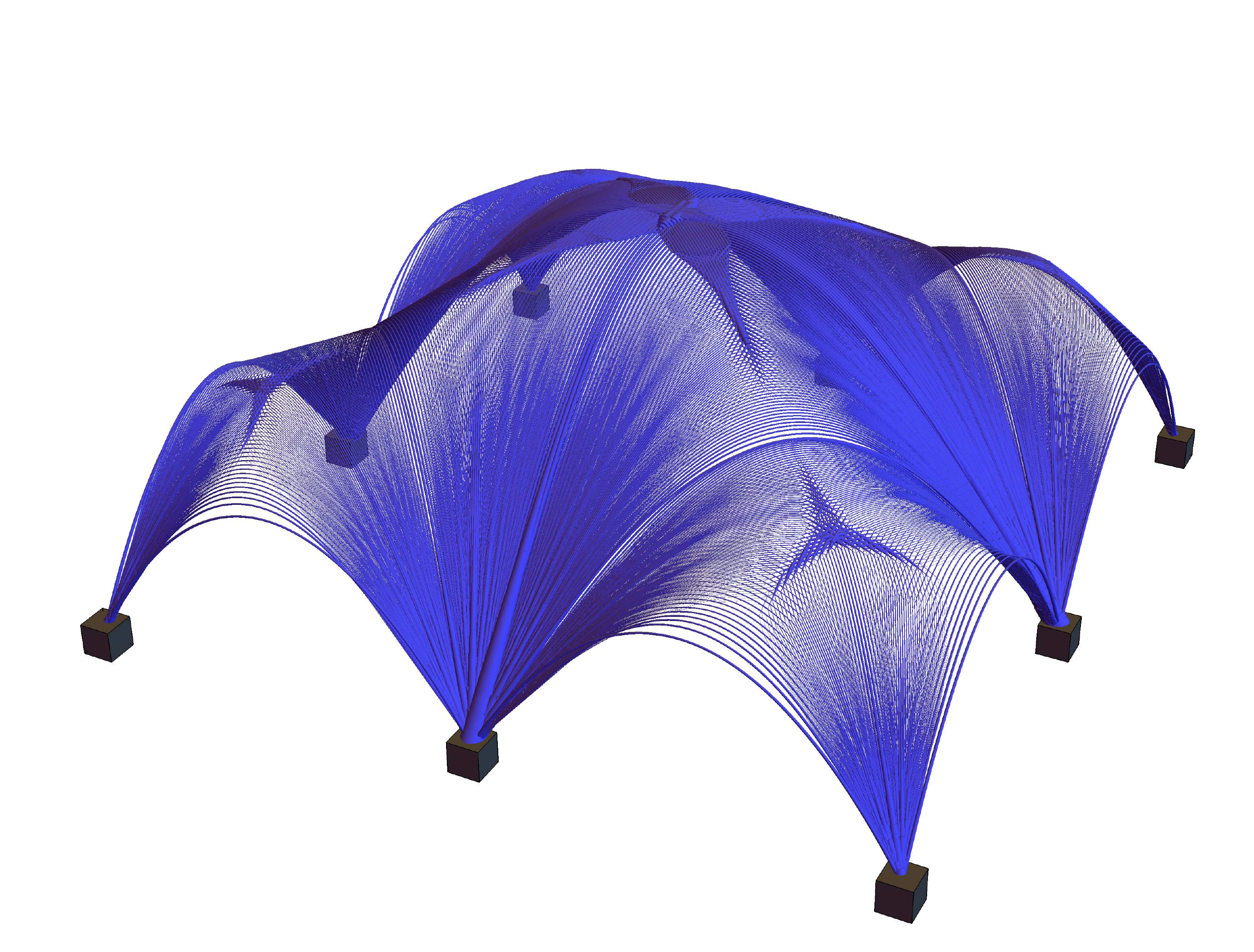}}\hspace{0.4cm}
		\subfloat[]{\includegraphics*[trim={2.5cm 1cm 1.5cm 4cm},clip,width=0.45\textwidth]{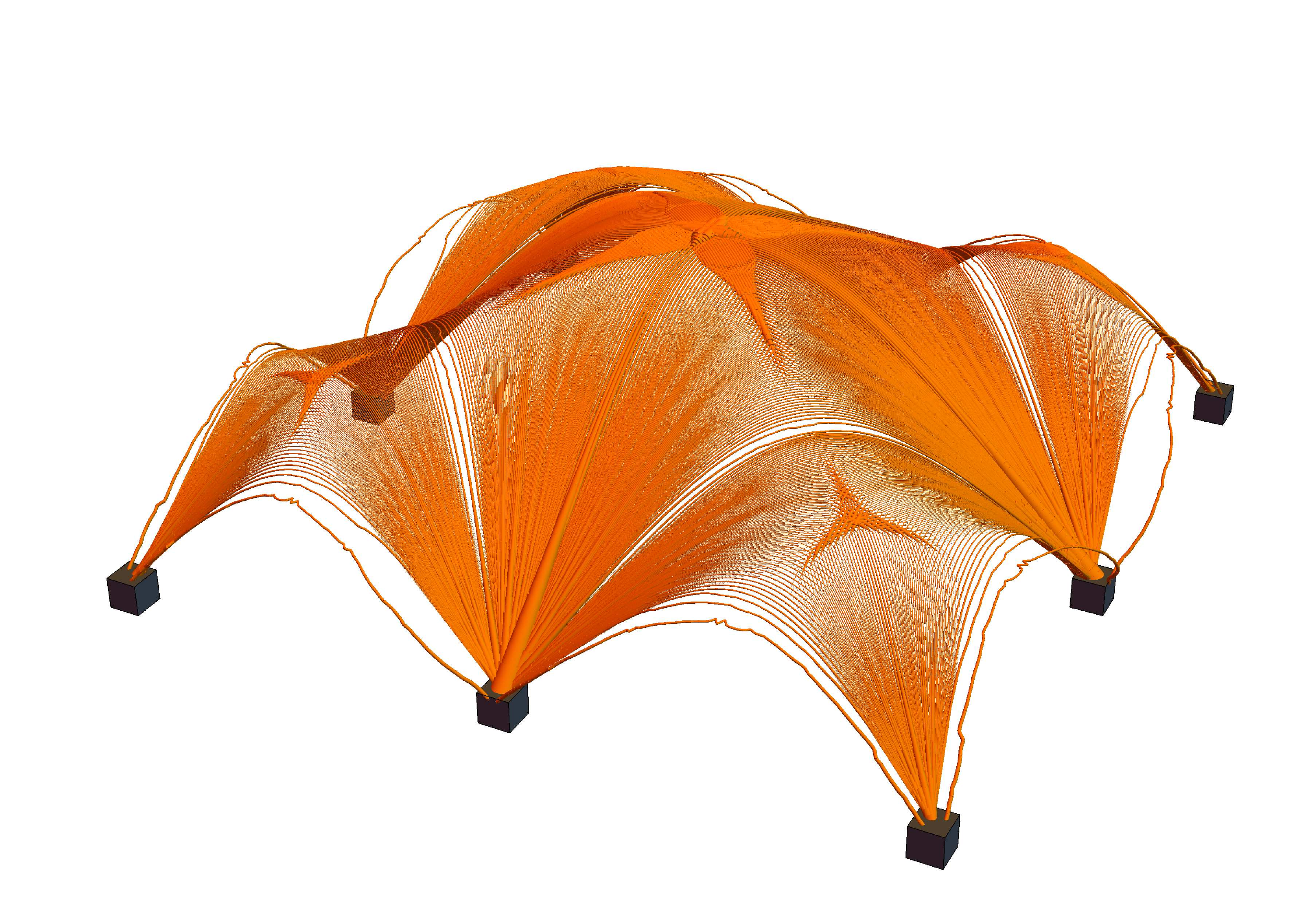}}
		\caption{The problem of optimal grid-shell in compression over a square domain with eight point supports and under uniformly distributed load: (a) design domain, load and boundary conditions; (b) optimal elevation function $z = -\frac{1}{2} \,w$  where $w$ solves $(\mathcal{P}^*_X)$; (c) plane truss given by $\mbf{s}$ solving $(\mathcal{P}_X)$; (d) optimal grid-shell in compression; (e) elastic deformation.}
		\label{fig:8_columns}
	\end{figure}
	Similarly as in Example \ref{ex:cross}, from solution $\mbf{s}$ presented in Fig. \ref{fig:8_columns}(c) we find that the optimal plane pre-stressed truss breaks along the dashed lines into five independent parts; the same concerns the optimal grid-shell in Fig. \ref{fig:8_columns}(d). Once again along those lines the vault in compression cracks as visualized in Fig. \ref{fig:8_columns}(e). In the elastic deformation we observe another singularity: the arches along the boundary lines undergo big and irregular horizontal displacements $u_\e$ that are normal and outward with respect to the boundary $\bO$. In formulation \eqref{eq:Z_X_def} of the infinite dimensional problem $\mathcal{P}^*$ we see that, unless $u = 0$ due to presence of the kinematical support, the component $u_\nu$ of $u$ that is normal respect to $\bO$ is not bounded from below by the two point condition, i.e. arbitrarily big inward normal component is admissible (it is not in contradiction with Proposition \ref{prop:regularitu_u_w} which guarantees that $\norm{u}_\infty \leq \mathrm{diam}(\Omega)$ since this result is valid only under the assumption that $\Gamma = \bO$). The analogous phenomenon may be discovered for the discrete formulation and thus the outward normal displacement on the boundary is arbitrarily big (we recall that $u_\e = - \Z_X/(E_0 V_0) \, u$ for a grid-shell in compression). Nevertheless, a solution was found by the MOSEK solver in each iteration of the adaptive loop.
\end{example}

\subsection{Optimal vaulted structures pinned on an elevated boundary}
\label{ssec:boundary_conditions}

\subsubsection{The obstacles for setting the optimum design problem in case of the boundary being a non-plane curve }
\label{sssec:any_curve}

In Sections \ref{sec:plastic_design},\,\ref{sec:elastic_design} the designed vaults are assumed to be pinned on the plane horizontal curve $\bO \times \{0\}$; similarly grid-shells in Section \ref{sec:discrete} are pinned on $(X \cap \bO) \times \{0\}$. Another natural generalization of those design problems would be to set the kinematical support on a 3D curve $\hat{\Gamma}$ such that $\hat{\Gamma} = \hat{z}_0(\bO)$ where $\hat{z}_0(x) = \bigl(x,z_0(x)\bigr)$ for some Lipschitz continuous function $z_0: \bO \to \R$.

As was pointed out in the text the design problems $(\mathrm{MVV}_\Omega)$, $(\mathrm{MCV}_\Omega)$, $(\mathrm{MVGS}_X)$, $(\mathrm{MCGS}_X)$ lack some vital mathematical properties such as convexity, e.g. with respect to the pair $(z,\eta)$ in $(\mathrm{MVV}_\Omega)$. None of those problems, however, are ever tackled directly: their solutions are reconstructed based on solutions of convex problems $(\mathcal{P})$,\,$(\mathcal{P}^*)$ or their discrete counterparts. In particular, from $w$ solving $(\mathcal{P}^*)$ follows the formula for optimal elevation:
\begin{equation*}
z = \frac{1}{2} \, w.
\end{equation*}
It is clear that such formula cannot work if the boundary is elevated -- the condition $w = 2 z_0$ on $\bO$ would have to be imposed. Although problem $(\mathcal{P}^*)$ could be modified by introducing this non-homogeneous boundary condition on $w$, the ideas from Section \ref{sec:plastic_design} that give rise to Theorem \ref{thm:recovring_MV_dome} fall apart. The obstacles are more crisp in the elastic setting where the elevation $z$ turned out to be proportional to elastic vertical displacement $w_\e$, cf. Corollary \ref{cor:z_w}. The last paragraph in Section \ref{ssec:recovering_dome_elastic} explained that this relation between $z$ and $w_\e$ is crucial for generating a constant strain in the sense that $\gamma_{z,+}\bigl(\mathcal{A}_{z}(u_\e,w_\e)\bigr) = \mathrm{const}$ and this property is intrinsic for structures of minimum compliance. If the optimal vault ought to be fixed along a non-plane curve $\hat{\Gamma}$ the deformation function $w_\e$ must still be zero on $\bO$ where necessarily $z = z_0$, then the proportionality relation between $z$ and $w_\e$ is impossible and the miraculous equalities $\gamma_{z,+}\bigl(\mathcal{A}_{z}(u_\e,w_\e)\bigr) = h(u_\e,w_\e) = \Z/(E_0 V_0)$ fails to hold. Summing up, the methods developed in this paper are of no use for problems $(\mathrm{MVV}_\Omega)$, $(\mathrm{MCV}_\Omega)$, $(\mathrm{MVGS}_X)$, $(\mathrm{MCGS}_X)$ when an arbitrary boundary curve $\hat{\Gamma}$ is considered. Nevertheless, the experiment that will be carried out in Example \ref{ex:Gamma} will indicate that in this broader setting a 2D form is not optimal.

Another story may be told about the Prager problems $(\mathrm{MVPS}_H)$, $(\mathrm{MCPS}_H)$ posed in Section \ref{sec:3D}: both are well-posed convex problems on their own, cf. Proposition \ref{prop:existence}. Although any 3D structure is feasible in those problems, from Theorem \ref{thm:optimal_3D_structure} and Corollary \ref{cor:3D_on_cylinder} we learned that a lower dimensional vault known from Sections \ref{sec:plastic_design},\ref{sec:elastic_design} solves $(\mathrm{MVPS}_H)$, $(\mathrm{MCPS}_H)$ provided that either the fixed boundary is a plane curve $\bO \times \{0\}$ or it is the whole cylinder $\bO \times \R$. In contrast with the vault problems $(\mathrm{MVV}_\Omega)$, $(\mathrm{MCV}_\Omega)$ there are no mathematical contra-indications for posing problems $(\mathrm{MVPS}_{H,\hat{\Gamma}})$, $(\mathrm{MCPS}_{H,\hat{\Gamma}})$ with the boundary being any Lipschitz curve $\hat{\Gamma}$, in particular the proof of Proposition \ref{prop:existence} remains unchanged hence there always exists  optimal 3D structure $\hat\sigma$ or $\hat\mu$ and load distribution $\hat{F}$ furnishing minimal volume $\hat{\V}_{\mathrm{min}}^{H,\hat{\Gamma}}$ or minimal compliance $\hat{\Comp}_\mathrm{min}^{H,\hat{\Gamma}}$. The big question reads:
\begin{problem}
	\label{prob:3D_is_2D?}
	For an arbitrary boundary curve $\hat{\Gamma}$ does there exist optimal solutions of $(\mathrm{MVPS}_{H,\hat{\Gamma}})$, $(\mathrm{MCPS}_{H,\hat{\Gamma}})$ that concentrate on a single surface $\mathcal{S}_z$?
\end{problem}

\noindent We will not give a definitive answer to this question although the example studied below should provide an intuition. First we note (cf. Corollary \ref{cor:3D_on_cylinder} and the comment below) that there still hold inequalities similar to those in Lemma \nolinebreak \ref{lem:inequalities_3D}:
\begin{equation}
\label{eq:ineq_3D_Gamma}
\hat{\mathcal{V}}^{H,\hat{\Gamma}}_\mathrm{min} \geq \mathcal{Z}, \qquad \hat{\Comp}^{H,\hat{\Gamma}}_{\mathrm{min}} \geq \frac{\Z^2}{2E_0 V_0}
\end{equation}
where again $\Z = \inf \mathcal{P} = \sup \mathcal{P}^*$, i.e. the value of $\Z$ is independent of $\hat{\Gamma}$. 

\begin{example} \textbf{(Load distributed along diagonals of a square with non-plane boundary curve $\hat{\Gamma}$)}
	\label{ex:Gamma}
	We consider the setting of the problem from Example \ref{ex:diagonal_load}, i.e. $\Omega$ is a square and the downward load of intensity $t>0$ is distributed along the diagonals: $f =- t\,\Ha^1\mres[A_1,A_3] - t\,\Ha^1\mres[A_2,A_4]$ where $A_i$ are the square's corners. We tackle the modified Prager problem $(\mathrm{MVPS}_{H,\hat{\Gamma}})$ (in the setting of compression) for the boundary curve $\hat{\Gamma}$ being a polygonal chain showed in Fig. \ref{fig:Gamma}(a), the jump parameter $\Delta H$ may be chosen arbitrarily. 	
	\begin{figure}[h]
		\centering
		\subfloat[]{\includegraphics*[trim={0.5cm -0.6cm 0.5cm 0cm},clip,width=0.32\textwidth]{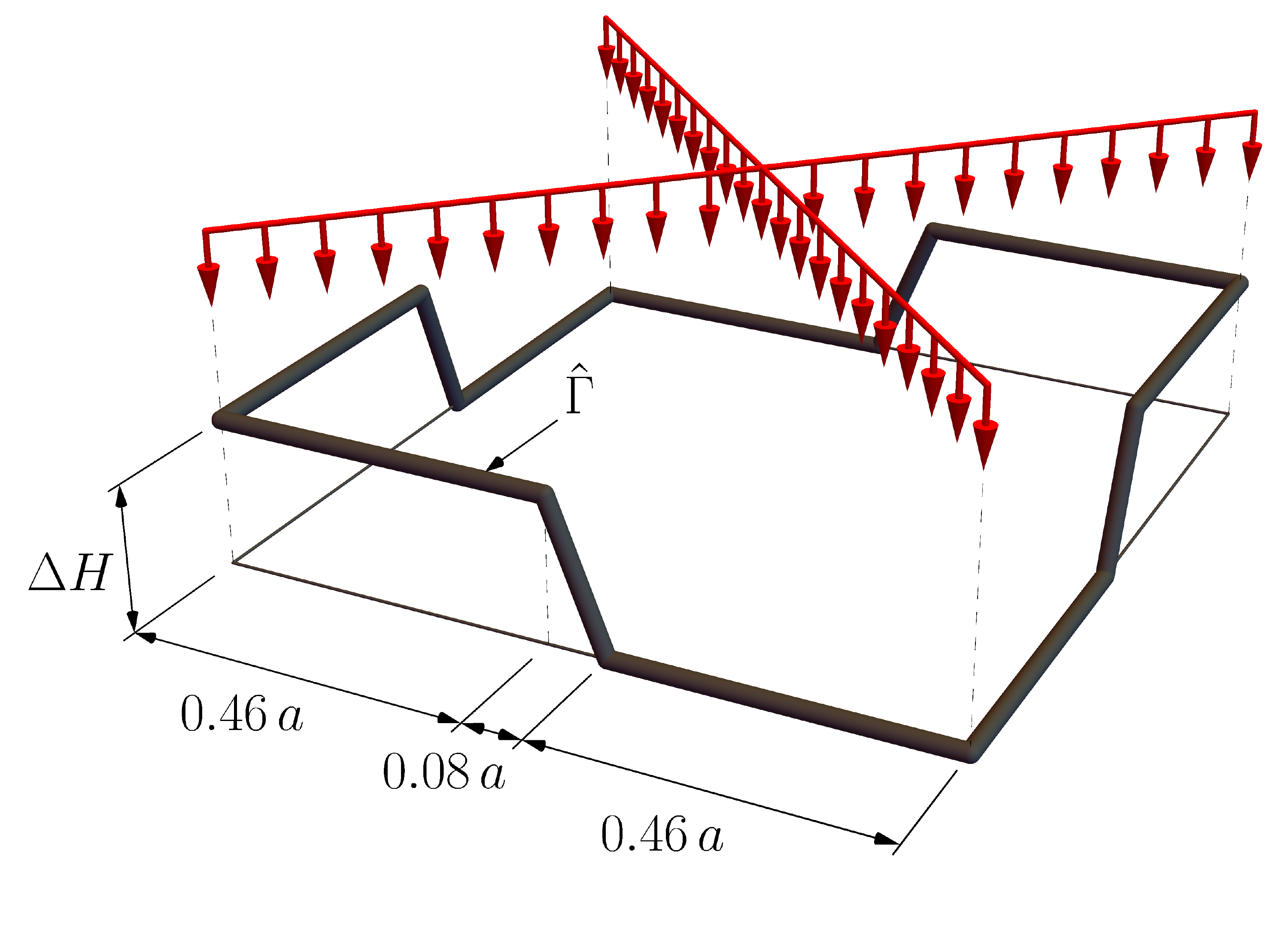}}\hspace{0.5cm}
		\subfloat[]{\includegraphics*[trim={1.5cm -2cm 0.5cm 3cm},clip,width=0.3\textwidth]{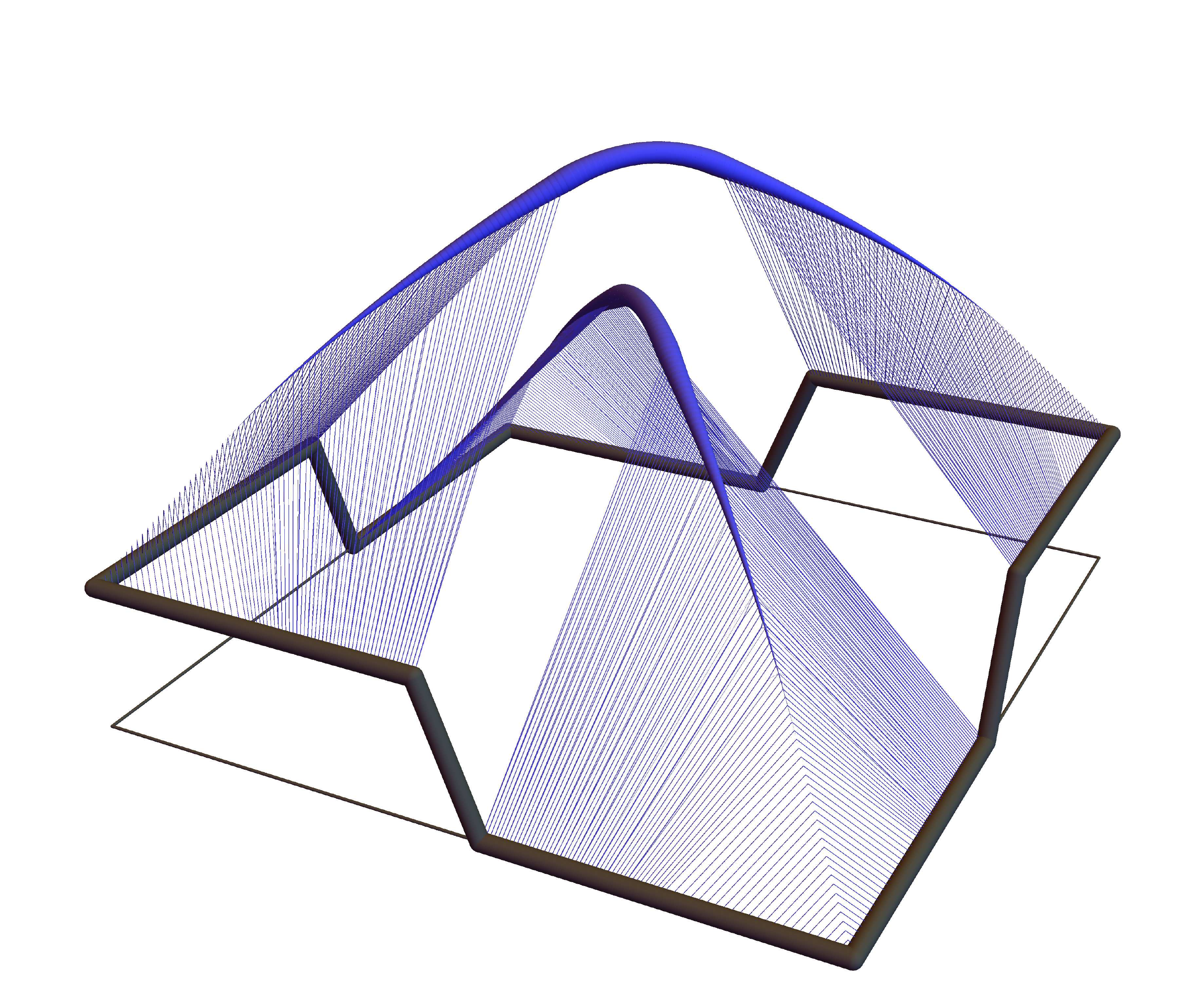}}\hspace{0.5cm}
		\subfloat[]{\includegraphics*[trim={0.cm 0cm 0cm 0cm},clip,width=0.28\textwidth]{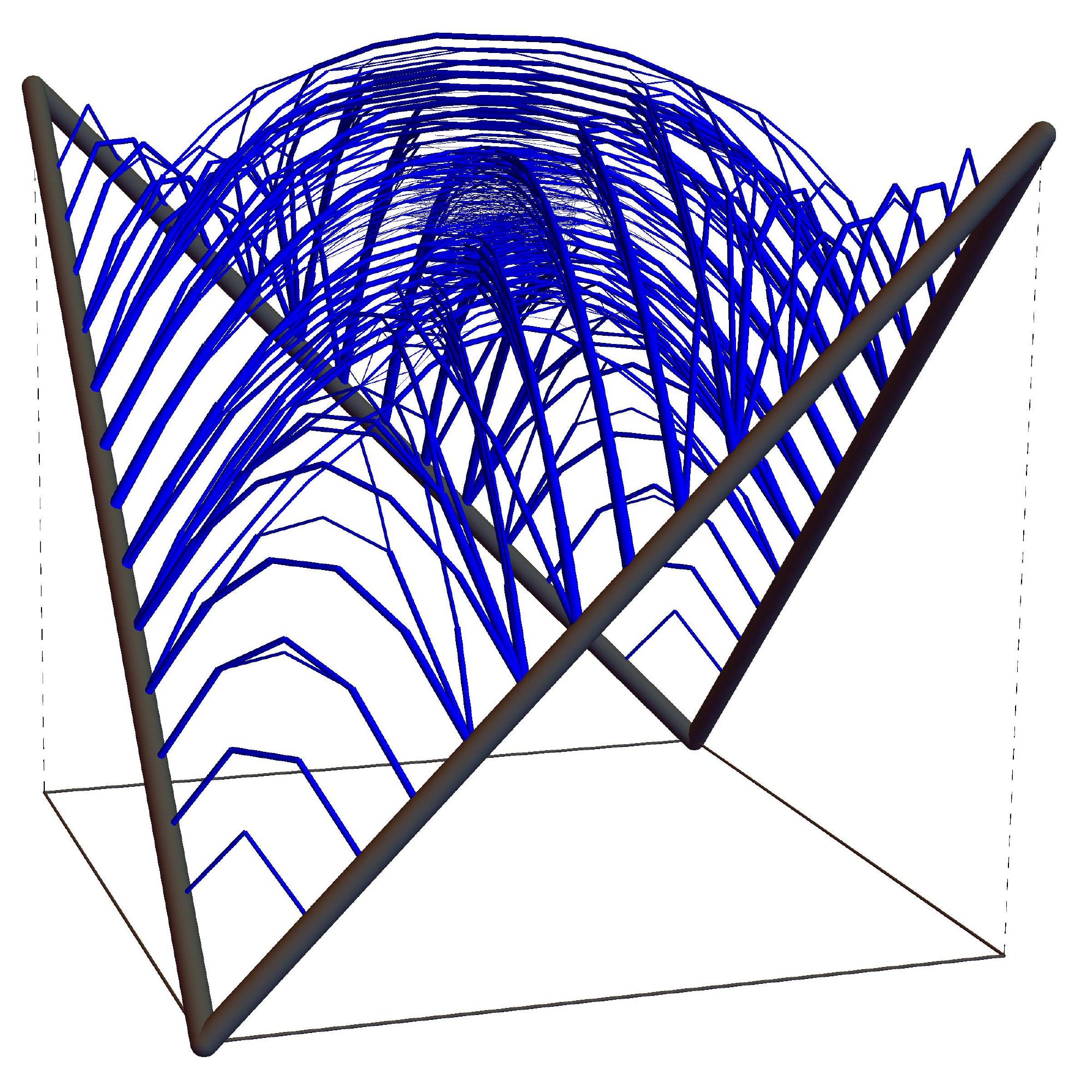}}\\
		\caption{(a) Prager problem under transmissible knife-load -- the load and boundary conditions; (b) a prediction of a Prager structure that does not constitute a 2D vault. (c) Prager problem for transmissible load that is uniformly distributed horizontally and for different boundary curve -- numerical simulation with the use of the 3D ground structure method, by courtesy of Tomasz Sok{\'o}{\l}.}
		\label{fig:Gamma}
	\end{figure}
	
	According to Theorem \ref{thm:optimal_3D_structure} the 3D structure approximated by the grid-shell from Fig. \ref{fig:diagonal_load}(d) solves the same problem yet for a plane boundary $\hat{\Gamma} = \bO \times \{0\}$, in particular the volume of the structure equals $\Z$. At the same time, in Fig. \ref{fig:diagonal_load}(d) we observe  that the loads on the two diagonals are independently carried by the two arch-like structures, i.e. if only one of the loads was applied the optimal structure would be none other but one of the arches. Since the geometry of the curve $\hat\Gamma$ is suited to solution from Example \ref{ex:diagonal_load}, see Fig. \ref{fig:diagonal_load}(b), we deduce that the 3D structure showed in Fig. \ref{fig:Gamma}(b) solves the problem $(\mathrm{MVPS}_{H,\hat{\Gamma}})$: the equilibrium is satisfied while the volume is still $\Z$ therefore optimality follows owing to the first inequality in \eqref{eq:ineq_3D_Gamma}.
\end{example}
\medskip
The optimal 3D structure in Fig. \ref{fig:Gamma}(b) is certainly not the design chased after in Problem \ref{prob:3D_is_2D?}: there is no surface $\mathcal{S}_z = \bigl\{(x,z(x))\bigr\}$ on which the two arches lie together. It does not yet prove that the answer to Problem \ref{prob:3D_is_2D?} is negative since there may be other optimal solutions that meet the required condition. However, since $\Delta H$ can be chosen arbitrarily big it is very difficult that we could point to such a 3D structure without compromising the minimum volume $\Z$. For additional consideration, in Fig. \ref{fig:Gamma}(c) we present a numerical prediction of a Prager structure for the case of a square domain $\Omega$, uniformly distributed transmissible load and a non-planar boundary curve $\hat{\Gamma}$ being a different polygonal chain. The solution, found by Tomasz Sok{\'o}{\l} through a 3D ground structure method, appears to be multi-layered instead of lying on a single surface, cf. the solution in Fig. \ref{fig:Prager_structure} for the horizontal boundary curve.

Based on the performed simulations it is the author's fear that the optimal vault problem tackled in Sections \ref{sec:plastic_design},\,\ref{sec:elastic_design} is badly posed for arbitrary curve $\hat{\Gamma}$, i.e. that its solution may not exist in general.

\subsubsection{Alteration of the convex problems $(\mathcal{P})$,\,$(\mathcal{P}^*)$ for a slanted plane boundary curve} 
\label{sssec:slanted}

In \cite{czubacki2020} the optimal arch-grid problem (cf. Section \ref{ssec:archgrids} below) was successfully tackled for a boundary curve that is not horizontal: with arches directions fixed by a Cartesian basis $e_1,e_2$ the boundary could be chosen as $\hat{\Gamma} = \hat{\Gamma}_{z_0} = \bigl\{ \bigl(x,z_0(x)\bigr) \, \big\vert \, x\in \bO \bigr\}$ where $z_0(x) = z_0(x_1,x_2) = b_0 + b_1\,x_1 +b_2\, x_2 +b_3\, x_1 x_2$. Inspired by this idea we find that the vault problem may be posed in a slightly simpler scenario where $z_0$ is affine -- henceforward we set
\begin{equation*}
\hat{\Gamma}_{z_0} = \Big\{ \bigl(x,z_0(x)\bigr) \, \big\vert \, x\in \bO \Big\}, \quad \text{where } z_0:\Rd \to \R \text{ is any affine function}, \quad G_{z_0} := \mathrm{I} + \nabla z_0 \otimes \nabla z_0 = \mathrm{const}.
\end{equation*}
We shall quickly repeat the main steps from Section \ref{sec:plastic_design} where the plastic design problem is formulated for vaults: from the problem $(\mathrm{MVV}_{\Omega})$ we jump to $(\mathrm{MVV}_{\Omega,\Gamma_{z_0}})$ where the elevation function $z \in C^1(\Ob;\R)$ must satisfy condition $z = z_0$ on $\bO$. The variables may be changed as follows: $z = \tilde{z} + z_0$ where $\tilde{z} = 0$ on $\bO$. For a function  $\sigma \in L^1(\Omega;\Sddp)$ that satisfies the equilibrium equations $\DIV\, \sigma = 0$, $-\dive (\sigma \nabla z) = f$ in $\Omega$ the normalized volume of the vaults reads
\begin{align*}
&\int_{\Omega} \Big(\tr \, \sigma + \bigl(\nabla z \otimes \nabla z \bigr) :\sigma \Big) = \int_{\Omega} \Big( \mathrm{I} : \sigma +  \bigl((\nabla \tilde{z} + \nabla z_0) \otimes  (\nabla \tilde{z} + \nabla z_0) \bigr) : \sigma \Big)\\
=& \int_{\Omega} \Big( \bigl(\mathrm{I}+\nabla z_0 \otimes \nabla z_0 \bigr):\sigma  + \bigl(\nabla \tilde{z} \otimes \nabla \tilde{z} \bigr):\sigma  \Big) + \int_{\Omega} \nabla \varphi  : \sigma = \int_{\Omega} \Big( G_{z_0}:\sigma  + \bigl(\nabla \tilde{z} \otimes \nabla \tilde{z} \bigr):\sigma  \Big)
\end{align*}
where the vector function $\varphi:=2\,\tilde{z}\,\nabla z_0  \in C^1(\Ob;\Rd)$ was introduced; since $\nabla z_0 \in \Rd$ is a constant vector we see that $\nabla \varphi = 2\, \nabla z_0 \otimes \nabla \tilde{z}$ which explains the presence of the term $\int_{\Omega} \nabla \varphi  : \sigma$ above. Since $\tilde{z}$ is zero on $\bO$ the same goes for $\varphi$ therefore  $\int_{\Omega} \nabla \varphi  : \sigma = 0$ due to the equilibrium equation $\DIV \,\sigma = 0$ in $\Omega$. Similarly one may prove that $\dive(\sigma \nabla z_0) =0$ and consequently $- \dive(\sigma \nabla \tilde{z}) = -\dive(\sigma \nabla z) = f$. 

By acknowledging the above the next step is to propose a modified pair of mutually dual convex problems $(\mathcal{P}_{z_0})$, $(\mathcal{P}^*_{z_0})$ that differs from the original one as follows:
\begin{enumerate}[label=(\Roman*)]
	\item in the problem $(\mathcal{P})$ the term $\tr\,\sigma$ is replaced by $G_{z_0}:\sigma$;
	\item in the problem $(\mathcal{P}^*)$ the constraint is replaced by $\frac{1}{4}\, \nabla w \otimes \nabla w + e(u) \preceq  G_{z_0}$ everyhwere in $\Ob$.
\end{enumerate}
The rest of the argument is almost a 1-to-1 copy of Section \ref{sec:plastic_design}, ultimately we arrive at:

\begin{proposition}
	\label{proposition:recovring_MV_dome_Gamma}
	For an affine function $z_0 \in \Rd \to \R$ assume that the pairs $(\sigma,q) \in L^1(\Omega;\Sddp) \times L^1(\Omega;\Rd)$ and $(u,w) \in C^1(\Ob;\R^2) \times C^1(\Ob;\R)$ are solutions of problems $(\mathcal{P}_{z_0})$ and $(\mathcal{P}^*_{z_0})$ respectively. Then the pair
	\begin{equation*}
	z = \frac{1}{2} w + z_0,\quad \eta = \frac{1}{J_{z}} \, \sig
	\end{equation*} 
	solves the problem $(\mathrm{MVV}_{\Omega,\Gamma_{z_0}})$ with $\mathcal{V}_{\mathrm{min},z_0} = \Z_{z_0}:= \inf \mathcal{P}_{z_0} = \sup \mathcal{P}^*_{z_0}$.
\end{proposition}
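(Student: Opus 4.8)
The plan is to mirror the proof of Theorem \ref{thm:recovring_MV_dome} almost verbatim, using the change of variables $z = \frac{1}{2} w + z_0$ and the algebraic identity established in the paragraph preceding the statement. First I would recall the reduction already carried out in the text: the problem $(\mathrm{MVV}_{\Omega,\Gamma_{z_0}})$ admits the equivalent ``$\sigma$-only'' form analogous to \eqref{eq:MVFD_sigma}, where one minimizes $\int_\Omega \bigl( \tr\,\sigma + \sigma:(\nabla z \otimes \nabla z) \bigr)$ over $z \in C^1(\Ob;\R)$ with $z = z_0$ on $\bO$ and $\sigma \in L^1(\Omega;\Sddp)$ satisfying $\DIV\,\sigma = 0$, $-\dive(\sigma\nabla z) = f$ in $\Omega$. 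Then, writing $z = \tilde z + z_0$ with $\tilde z = 0$ on $\bO$, the computation displayed just above the proposition shows
\begin{equation*}
\int_\Omega \bigl( \tr\,\sigma + \sigma:(\nabla z \otimes \nabla z) \bigr) = \int_\Omega \bigl( G_{z_0}:\sigma + \sigma:(\nabla \tilde z \otimes \nabla \tilde z) \bigr),
\end{equation*}
together with $\DIV\,\sigma = 0$ and $-\dive(\sigma\nabla\tilde z) = f$. Hence $\mathcal{V}_{\mathrm{min},z_0}$ equals the infimum of $\int_\Omega \bigl(G_{z_0}:\sigma + \sigma:(\nabla\tilde z\otimes\nabla\tilde z)\bigr)$ over $\tilde z \in C^1(\Ob;\R)$ with $\tilde z = 0$ on $\bO$ and $\sigma$ as above — this is precisely the counterpart of \eqref{eq:MVFD_sigma} with $\tr\,\sigma$ replaced by $G_{z_0}:\sigma$.

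Next I would record the relaxation/duality facts for the altered pair. Exactly as in Section \ref{ssec:2D_convex_program}, relaxing $\nabla\tilde z$ to an arbitrary Borel field $\zeta$, setting $q = \sigma\zeta$, and allowing $\sigma,q$ to be measures produces the problem $(\mathcal{P}_{z_0})$ with objective $\int_\Ob\bigl(G_{z_0}:\sigma + (\sigma^{-1}q)\cdot q\bigr)$; consequently
\begin{equation*}
\Z_{z_0} := \inf \mathcal{P}_{z_0} \leq \mathcal{V}_{\mathrm{min},z_0}.
\end{equation*}
The gauge computation of Section \ref{sssec:duality} goes through with $\mathrm{I}$ replaced by the fixed positive-definite matrix $G_{z_0}$: the set $\mathscr{C}_{z_0} = \{(\eps,\vartheta) : \tfrac14\vartheta\otimes\vartheta + \eps \preceq G_{z_0}\}$ is still convex and closed, its polar gauge satisfies $g^0_{\mathscr{C}_{z_0}}(\sigma,q) = G_{z_0}:\sigma + (\sigma^{-1}q)\cdot q$ for $\sigma \in \Sddp$, $q\in\IM(\sigma)$ (and $+\infty$ otherwise, since $G_{z_0}$ is still finite so the unbounded directions of $\mathscr{C}_{z_0}$ remain the negative-semidefinite half-lines), and the weak-duality argument gives $\inf\mathcal{P}_{z_0} \geq \sup\mathcal{P}^*_{z_0}$. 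The strong duality $\Z_{z_0} = \min\mathcal{P}_{z_0} = \sup\mathcal{P}^*_{z_0}$ and the optimality conditions — the verbatim analogue of Theorem \ref{thm:opt_cond} with $\mathrm{I}\rightsquigarrow G_{z_0}$ and $\tr\,\sigma \rightsquigarrow G_{z_0}:\sigma$ — follow from the same general duality tools of \cite{bouchitte2020}; in particular one obtains the key conditions $\bigl(\tfrac14\nabla w\otimes\nabla w + e(u)\bigr):\sigma = G_{z_0}:\sigma$ $\sigma$-a.e. and $q = \tfrac12\sigma\nabla w$.

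Finally I would close the loop as in Theorem \ref{thm:recovring_MV_dome}. Given solutions $(\sigma,q)$ of $(\mathcal{P}_{z_0})$ and $(u,w)$ of $(\mathcal{P}^*_{z_0})$, set $\tilde z = \frac12 w$ (so $\tilde z = 0$ on $\bO$ and $z = \tilde z + z_0 = \frac12 w + z_0$ satisfies $z = z_0$ on $\bO$). From $q = \tfrac12\sigma\nabla w = \sigma\nabla\tilde z$ one gets $\sigma:(\nabla\tilde z\otimes\nabla\tilde z) = (\sigma^{-1}q)\cdot q$, and the equilibrium equations $\DIV\,\sigma = 0$, $-\dive(\sigma\nabla\tilde z) = -\dive\,q = f$ make $(\tilde z,\sigma)$ feasible. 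Therefore
\begin{equation*}
\Z_{z_0} \leq \mathcal{V}_{\mathrm{min},z_0} \leq \int_\Ob\bigl(G_{z_0}:\sigma + \sigma:(\nabla\tilde z\otimes\nabla\tilde z)\bigr) = \int_\Ob\bigl(G_{z_0}:\sigma + (\sigma^{-1}q)\cdot q\bigr) = \int_\Ob g^0_{\mathscr{C}_{z_0}}(\sigma,q) = \Z_{z_0},
\end{equation*}
a chain of equalities; hence $(\tilde z,\sigma)$ solves the $\sigma$-only form, so $(z,\eta)$ with $\eta = \tfrac{1}{J_z}\sigma$ solves $(\mathrm{MVV}_{\Omega,\Gamma_{z_0}})$ and $\mathcal{V}_{\mathrm{min},z_0} = \Z_{z_0}$. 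The only genuinely non-routine point is checking that the gauge/duality machinery of \cite{bouchitte2020} is insensitive to replacing the identity $\mathrm{I}$ by the constant positive-definite matrix $G_{z_0}$ — this is where I would be most careful — but since $G_{z_0}$ is a fixed, uniformly elliptic, bounded matrix, every estimate (coercivity, lower semicontinuity of the integrand $g^0_{\mathscr{C}_{z_0}}$, the convex-duality step) carries over with only cosmetic changes, and the argument that $\int_\Omega\nabla\varphi:\sigma = 0$ and $\dive(\sigma\nabla z_0) = 0$ crucially uses that $z_0$ is affine so that $\nabla z_0$ is constant. I expect the proof in the paper to simply say ``the argument is a one-to-one copy of Section \ref{sec:plastic_design}'', and indeed that is the entire content.
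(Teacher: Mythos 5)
Your proposal is correct and follows exactly the route the paper intends: the paper gives no separate proof of this proposition beyond the displayed volume identity and the remark that ``the rest of the argument is almost a 1-to-1 copy of Section \ref{sec:plastic_design}'', and your writeup is precisely that copy (the $G_{z_0}$-modified gauge computation, the duality/optimality conditions with $\mathrm{I}\rightsquigarrow G_{z_0}$, and the chain of inequalities mirroring \eqref{eq:Z_leq_V_leq_Z}). Your closing caveat about re-running the machinery of \cite{bouchitte2020} with the constant positive-definite matrix $G_{z_0}$, and the role of $z_0$ being affine in killing the cross term and in $\dive(\sigma\nabla z_0)=0$, is exactly the right place to be careful and matches the paper's own level of justification.
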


Alteration of the discrete problems $(\mathcal{P}_X)$,\,$(\mathcal{P}_X^*)$ is just as straightforward: the length vector $\mbf{l}\in \R^m$ defined by $l_k :=\abs{\chi_+(k)-\chi_-(k)}$ must be changed to:
\begin{equation*}
\mbf{l}(z_0)\in \R^m, \quad l_k(z_0) := \Big( G_{z_0}:\bigl(\chi_+(k)-\chi_-(k)\bigr)\otimes \bigl(\chi_+(k)-\chi_-(k)\bigr) \Big)^{\frac{1}{2}}
\end{equation*}
thus arriving at problems $(\mathcal{P}_{z_0,X})$, $(\mathcal{P}_{z_0,X}^*)$ with equal objective value $\Z_{z_0,X}$. To give a flavour we once more revisit the example of optimal grid-shell/vault over a square for a load distributed along the diagonals, yet for a "slanted" boundary:

\begin{example}\textbf{(Knife load along diagonals for a slanted boundary)}
	\label{ex:slanted_Gamma}
	For a square domain $\Omega$ we consider the problem of optimal grid-shell in compression that is supported on a plane slanted (non-horizontal) curve $\hat{\Gamma}_{z_0} =\bigl\{ \bigl(x,z_0(x)\bigr) \, \big\vert \, x\in \bO \bigr\}$ where $z_0$ is affine with $\nabla z_0 = -0.2 e_1 + 0.5 e_2$, see Fig. \ref{fig:slanted}(a); as in Examples \ref{ex:diagonal_load}, \ref{ex:Gamma} the downward load of intensity $t$ is distributed along the square's diagonals. In order to obtain a detailed solution we choose a very fine grid $X$ of $321 \times 321$ nodes which generated over $5.3 \cdot 10^9$ members in the full ground structure. Computational details, also for more coarse grids $X$, may be found in Table \ref{tab:slanted}.
	\begin{table}[h]
		\scriptsize
		\centering
		\caption{Summary on numerical computations for the pair of modified problems $(\mathcal{P}_{z_0,X}), (\mathcal{P}_{z_0,X}^*)$ specified in Example \ref{ex:slanted_Gamma}.}
		\begin{tabular}{lccccccc}
			\toprule
			Example no. & Slope of $\hat{\Gamma}_{z_0}$ & Grid $X$  & Full GS  & Iterations   & Active GS    & CPU time & Objective value $\Z_{z_0,X}$ \\
			\midrule
			\ref{ex:slanted_Gamma} (Fig. \ref{fig:slanted}) &  $\nabla z_0 = -0.2 e_1 + 0.5 e_2 $
			& $81\!\times\!81$  & $21\,520\,080$ & 9 & $60\,156$ & $2$ min. $20$ sec.& $1.7381 \, t a^2$\\
			&& $161\!\times\!161$  & $335\,936\,160$ & 13 & $351\,877$ & $33$ min. $15$ sec.& $1.7380 \, t a^2$\\
			&& $321\!\times\!321$  & $5\,308\,672\,320$ & 12 & $2\,187\,452$ & $5$ h $39$ min. $02$ sec.& $1.7379 \, t a^2$\\
			\bottomrule
		\end{tabular}
		\label{tab:slanted}
	\end{table}

	\begin{figure}[p!]
		\centering
		\subfloat[]{\includegraphics*[trim={0.3cm 2cm 7cm -0cm},clip,width=0.45\textwidth]{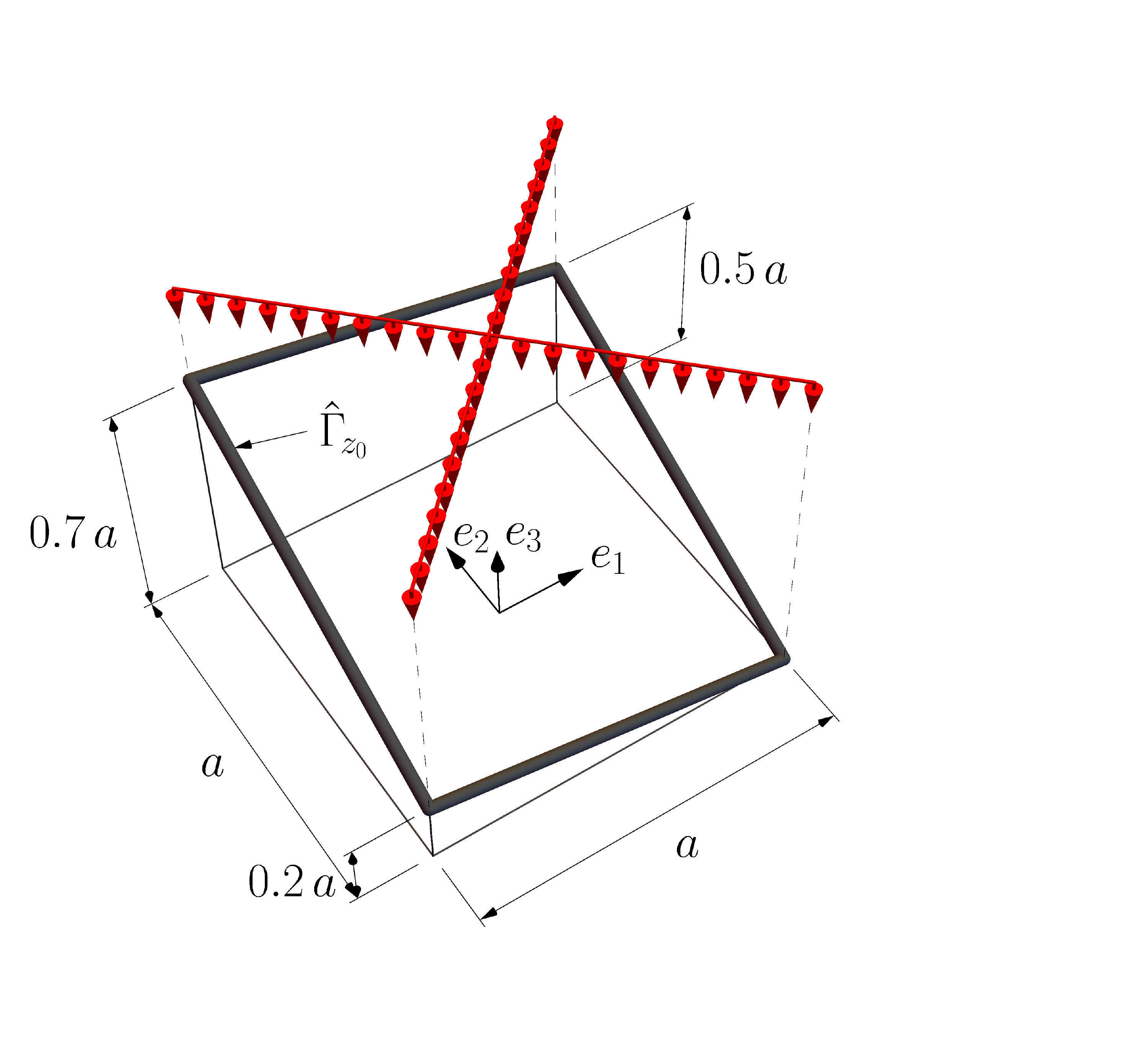}} \hspace{1.cm}
		\subfloat[]{\includegraphics*[trim={0cm -0.7cm -0cm -0cm},clip,width=0.45\textwidth]{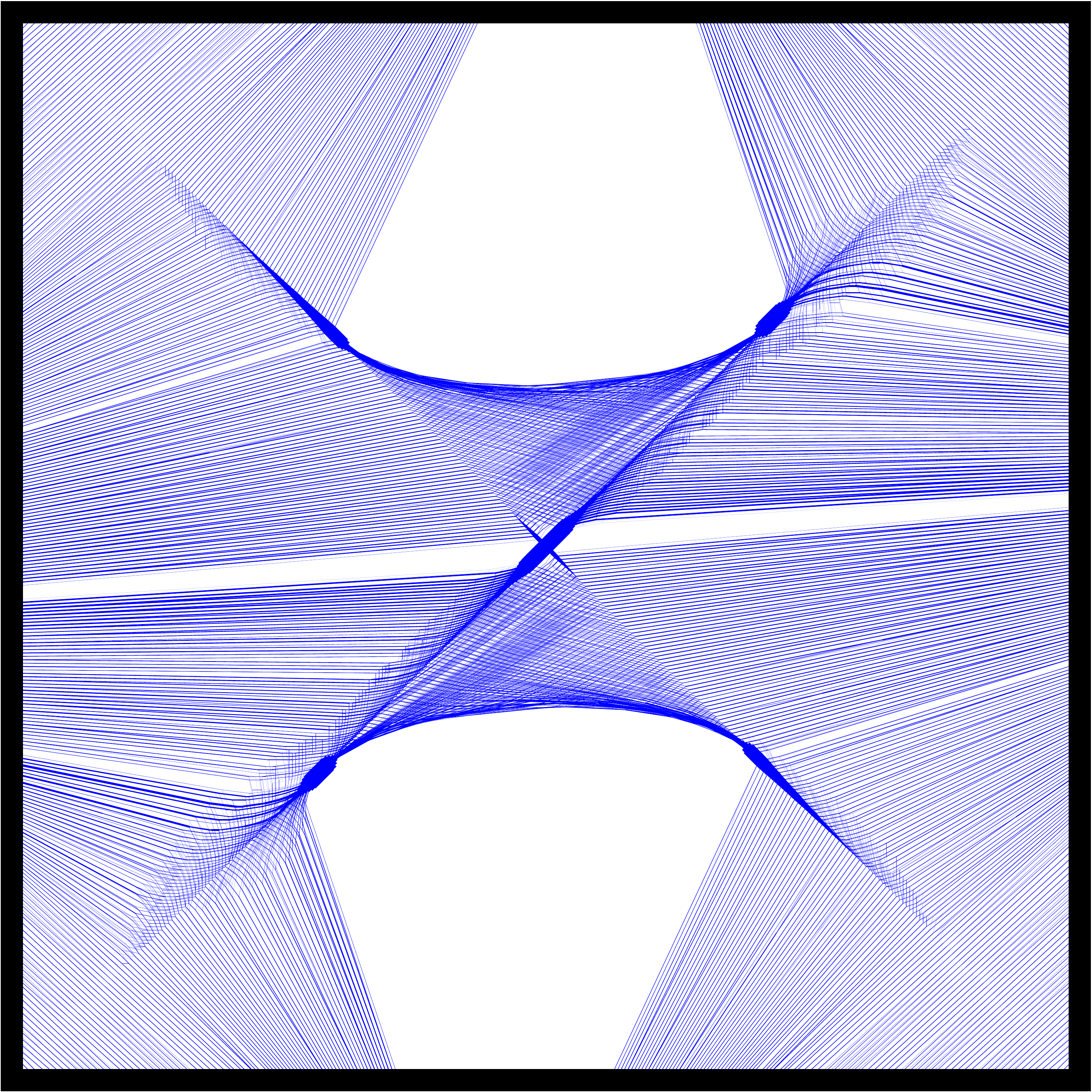}}\\
		\subfloat[]{\includegraphics*[trim={0.5cm 0.cm 1.8cm 2cm},clip,width=0.48\textwidth]{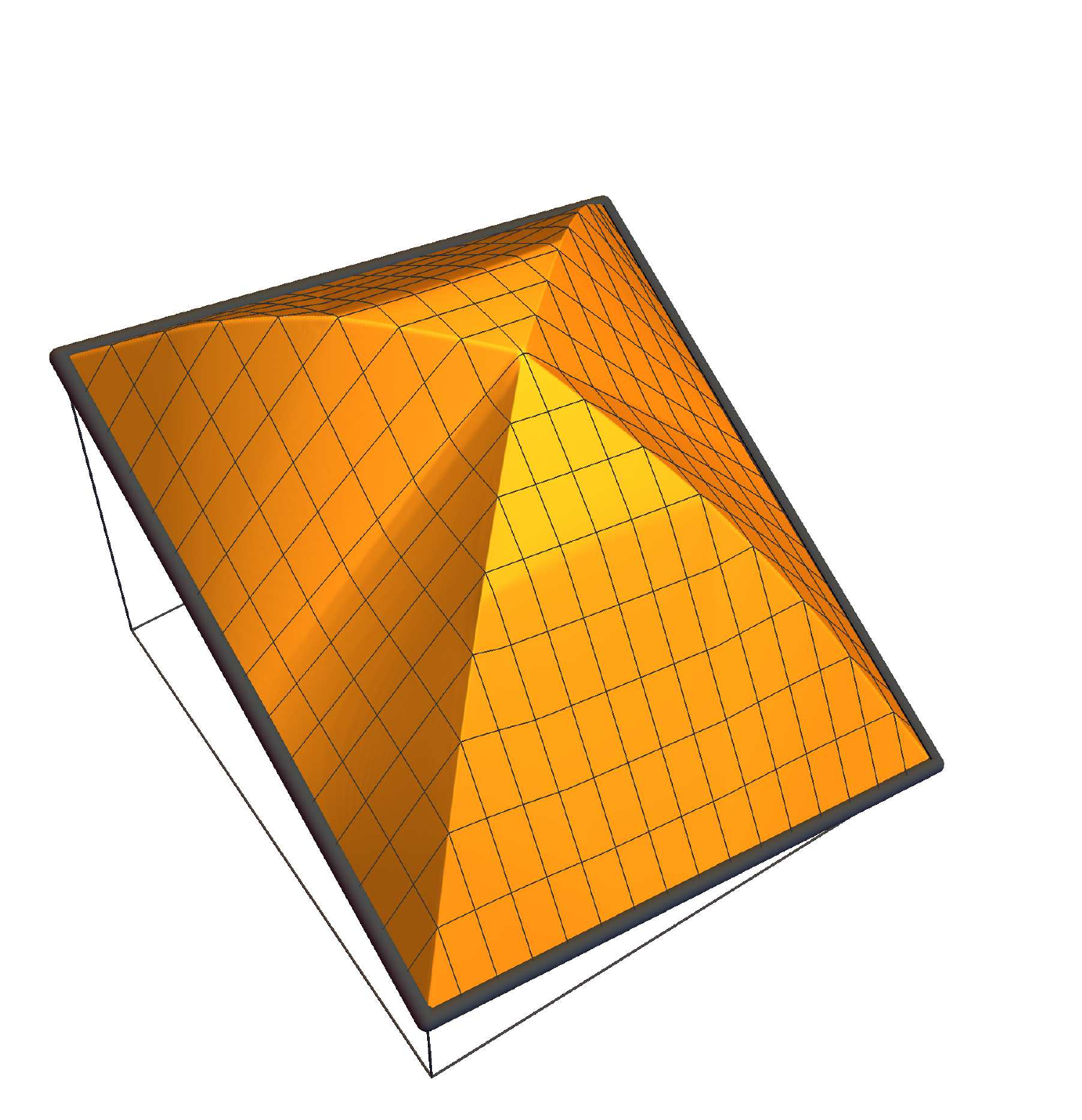}}\hspace{0.2cm}
		\subfloat[]{\includegraphics*[trim={1.3cm 0.cm 3.8cm 4cm},clip,width=0.48\textwidth]{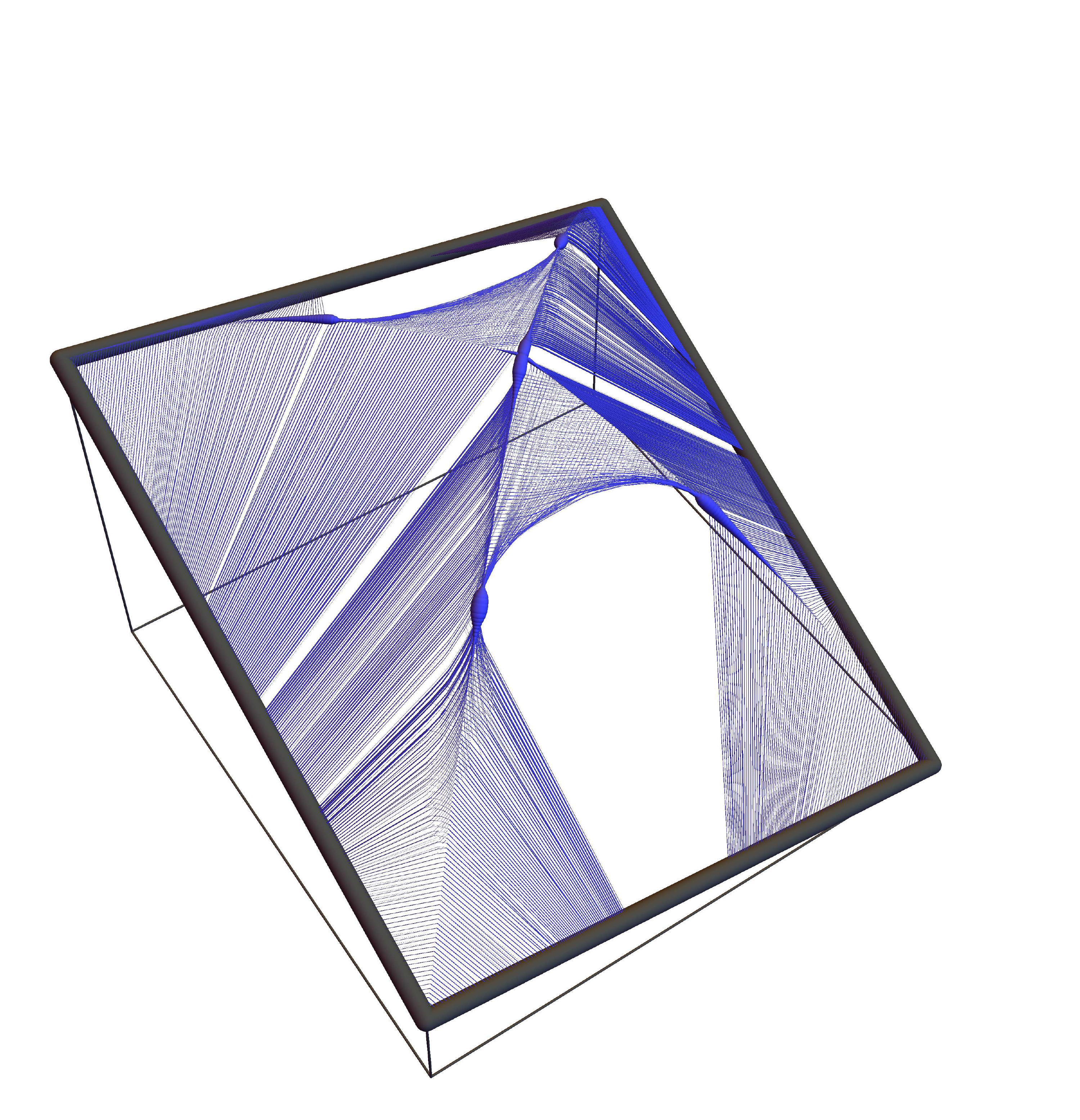}}
		\caption{The problem of optimal grid-shell in compression over a square domain for a slanted boundary and two knife loads: (a) geometry of the boundary and the load; (b) plane truss given by $\mbf{s}$ solving $(\mathcal{P}_{z_0,X})$; (c) optimal elevation function $z = -\frac{1}{2} \,w + z_0$  where $w$ solves $(\mathcal{P}^*_{z_0,X})$; (d) optimal grid-shell in compression.}
		\label{fig:slanted}
	\end{figure}
	The optimal pre-stressed truss, generated by $\mbf{s}$ solving the modified problem $(\mathcal{P}_{z_0,X})$, is showed in Fig. \ref{fig:slanted}(b). The optimal elevation of the slanted grid-shell is obtained by interpolating data $\mbf{z} = - \frac{1}{2}\,\mbf{w}+ \mbf{z}_0$  (the minus sign is due to the compression setting) where $\mbf{w}$ solves $(\mathcal{P}^*_{z_0,X})$ and  $z_{0;i} = z_0\bigl(\chi(i)\bigr)$, see Fig. \ref{fig:slanted}(c). The optimal grid-shell is presented in Fig. \ref{fig:slanted}(d). We observe that some unexpected forms emerge in comparison to the original solution from Fig. \ref{fig:diagonal_load}(d): two regions in the centre, where the field $\sigma$ is rank-two, seem to be bounded by curved bars of finite cross-sectional area. We thus learn that imposing a non-zero slope of the boundary greatly affects the optimal structural topology.
\end{example}

\subsection{A link to the optimal arch-grid problem}
\label{ssec:archgrids}

As mentioned in the introduction, this work was inspired by the problem of optimal design of arch-grids originated at the end of '70s in \cite{rozvany1979} and recently revisited in \cite{czubacki2019}, \cite{czubacki2020}. For a convex bounded domain $\Omega \subset \Rd$ and a fixed Cartesian basis $e_1,e_2$ an arch-grid is originally defined as a structure composed of arches in compression running from boundary to boundary in the two orthogonal directions $e_1,e_2$; there may be a continuum of infinitely thin arches or/and a finite number of arches of finite cross-sectional area. Similarly as in case of vaults in Sections \ref{sec:plastic_design} and \ref{sec:elastic_design} the arch-grid ought to carry the load $f$ that vertically tracks the structure.

In \cite{czubacki2020} the problem of optimal arch-grids was explicitly rewritten as a pair of mutually dual convex programs: in the primal problem  we seek a field $q \in \Mes(\Ob;\Rd)$ satisfying $-\dive\,q = f$ that minimizes certain norm; in the dual we maximize the integral $\int_\Ob w\, f $ with respect to the virtual displacement field $w$ which satisfy "line-wise" constraints $\rho_1(w;x_2) \leq \sqrt{L_1(x_2)}$ and $\rho_2(w;x_1) \leq \sqrt{L_2(x_1)}$ for each $x_2 \in \pi_2(\Ob)$ and $x_1 \in \pi_1(\Ob)$ ($\pi_i$ is the projection onto the $i$-th coordinate) where 
\begin{equation*}
	\rho_1(w;x_2) = \frac{1}{2} \left(\int_{x_1^-(x_2)}^{x_1^+(x_2)} \left(\frac{\partial w}{\partial x_1}(\xi,x_2) \right)^2\! d\xi\right)^{1/2}, \qquad \rho_2(w;x_1) = \frac{1}{2} \left(\int_{x_2^-(x_1)}^{x_2^+(x_1)} \left(\frac{\partial w}{\partial x_2}(x_1,\xi) \right)^2\! d\xi\right)^{1/2}
\end{equation*}
(in \cite{czubacki2020} definitions of $\rho_i$ differ: the factors $\frac{1}{2}$ are absent) and
\begin{equation*}
	L_1(x_2) = x_1^+(x_2) - x_1^-(x_2), \qquad L_2(x_1) = x_2^+(x_1) - x_2^-(x_1),
\end{equation*}
where for given coordinate $x_2 \in \pi_2(\Ob)$ we have $\bigl[\bigl(x_1^-(x_2)\bigr)\,,\,  \bigl(x_1^+(x_2)\bigr) \bigr] = \Ob\, \cap\, \bigl(\R \times \{x_2\}\bigr)$ and $ x_2^-(x_1),\ x_2^+(x_1)$ are similarly defined for every $x_1 \in \pi_1(\Ob)$. 

In \cite{rozvany1979} the authors proved that in the optimal arch-grid the arches lie on a single surface $\mathcal{S}_z$ for a function $z:X \to \R$ such that $z =0$ on $\bO$. One may thus say that an optimal arch-grid is a vault in which the material fibres run in two orthogonal directions $e_1,e_2$ only. Originally, the present work was aimed at relaxing this constraint. Now we shall show that the pair of problems $(\mathcal{P})$ and $(\mathcal{P}^*)$ may be altered, by adding constraints and relaxing constraints respectively, so that the pair of problems emerging in the optimal arch-grid problem is recast.

For a fixed Cartesian basis $e_1, e_2$ we consider a convex problem:
\begin{equation*}\tag*{$(\mathcal{P}_\perp)$}
\Z_\perp=\inf_{\substack{\sigma \in \Mes(\Ob;\Sddp) \\ q \in \Mes(\Ob;\Rd)}} \left\{ \int_{\Ob} \tr \, \sigma + \int_\Ob (\sigma^{-1} q)\cdot q  \, \left\vert \,  \begin{aligned}
\DIV \, \sigma &= 0,\\
-\dive\, q &= f 
\end{aligned}
\ \text{ in } \Omega, \ \ \sigma = \sum_{i=1}^{2}\sigma_i \,e_i\otimes e_i \right.\right\}
\end{equation*}
where $\sigma_i \in \Mes(\Ob;\R_+)$, and the problem which can be proved to be dual to $(\mathcal{P}_\perp)$:
\begin{equation*}\tag*{$(\mathcal{P}^*_\perp)$}
\sup_{\substack{u \in C^1\left(\Ob;\Rd\right) \\ w \in C^1(\Ob;\R)}} \left\{\, \int_\Ob w\, f  \, \left\vert \, (u,w)=0 \text{ on } \bO, \ \  \left(\frac{1}{4}\, \nabla w \otimes \nabla w +e(u)\right)\!:\!(e_i \otimes e_i) \leq 1 \ \ \text{ in } \Ob, \ i=1,2 \right.\right\}
\end{equation*}
The equality $\Z_\perp = \min \mathcal{P}_\perp = \sup \mathcal{P}_\perp^*$ could be proved similarly as in Theorem \ref{thm:duality}, cf. \cite{bouchitte2020}. Enforcing the eigenvectors of $\sigma$ to be $e_1, e_2$ everywhere in $\Ob$ together with equation $\DIV\, \sigma =0$ substantially limits the choice of admissible fields $\sigma$: each such field represents two mutually orthogonal families of parallel prismatic fibres going through $\Omega$ from boundary point to a boundary point. To make a rigorous link to the optimal arch-grid problem, however, we shall investigate the dual problem instead:
\begin{proposition}
	\label{prop:archgrid}
	Assuming that the domain $\Omega$ is convex and bounded, for any function $w \in \D(\Omega;\R)$ (smooth and with compact support in $\Omega$) the following conditions are equivalent independently for $i=1,2$: 
	\begin{enumerate}[label=(\Roman*)]
		\item There exists $u_i\, \in C^1(\Ob;\R)$ with $u_i=0$ on $\bO$ such that point-wise in $\Ob$ there holds:
		\begin{equation*}
			\frac{1}{4}\, \left(\frac{\partial w}{\partial x_i} \right)^2 + \frac{\partial u_i}{\partial x_i} \leq 1;
		\end{equation*}
		\item Denoting $j := i-1$, for each $x_j \in \pi_j(\Ob)$ there holds:
		\begin{equation}
			\label{eq:constraint_rho}
			\rho_i(w;x_j) \leq \sqrt{L_i(x_j)}.
		\end{equation}
	\end{enumerate}
\end{proposition}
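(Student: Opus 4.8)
The statement consists of two equivalences, one for each $i\in\{1,2\}$, which are proved identically since the two coordinate directions play symmetric roles; I would fix $i=1$ and write $j$ for the complementary index as in the statement. The common starting remark, needed in both directions, is geometric: because $\Omega$ is bounded and convex, for every $x_j=x_2\in\pi_j(\Omega)$ the slice $\Ob\cap(\Rd\cap(\R\times\{x_2\}))$ is a closed segment $[x_1^-(x_2),x_1^+(x_2)]\times\{x_2\}$, and its two endpoints necessarily lie on $\bO$ — an interior point of the segment belonging to $\bO$ would, by openness of $\Omega$, actually lie in $\Omega$, contradicting maximality of the slice. Consequently any $u_1\in C^1(\Ob)$ with $u_1=0$ on $\bO$ satisfies $\int_{x_1^-(x_2)}^{x_1^+(x_2)}\frac{\partial u_1}{\partial x_1}(\xi,x_2)\,d\xi=u_1(x_1^+(x_2),x_2)-u_1(x_1^-(x_2),x_2)=0$ for every $x_2$.

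For the implication (I)$\Rightarrow$(II) I would take such a $u_1$ realizing the pointwise bound and integrate the inequality $\frac14\bigl(\frac{\partial w}{\partial x_1}\bigr)^2+\frac{\partial u_1}{\partial x_1}\le1$ over the slice at height $x_2$. By the remark above the $u_1$-term integrates to zero, leaving $\frac14\int_{x_1^-(x_2)}^{x_1^+(x_2)}\bigl(\frac{\partial w}{\partial x_1}(\xi,x_2)\bigr)^2\,d\xi\le L_1(x_2)$; taking square roots and inserting the factor $\frac12$ from the definition of $\rho_1$ gives exactly \eqref{eq:constraint_rho}. For the reverse implication (II)$\Rightarrow$(I) I would exhibit the primitive explicitly. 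Put $g(\xi,x_2):=1-\frac14\bigl(\frac{\partial w}{\partial x_1}(\xi,x_2)\bigr)^2$ and let $m(x_2):=\frac1{L_1(x_2)}\int_{x_1^-(x_2)}^{x_1^+(x_2)}g(\xi,x_2)\,d\xi$ be its slice-average; the squared form of \eqref{eq:constraint_rho} says precisely $m(x_2)\ge0$. Define
\[
u_1(x_1,x_2):=\int_{x_1^-(x_2)}^{x_1}\bigl(g(\xi,x_2)-m(x_2)\bigr)\,d\xi .
\]
Then $\frac{\partial u_1}{\partial x_1}=g-m\le g$, i.e. $\frac14\bigl(\frac{\partial w}{\partial x_1}\bigr)^2+\frac{\partial u_1}{\partial x_1}\le1$ pointwise; moreover $u_1$ vanishes at $x_1=x_1^-(x_2)$ trivially and at $x_1=x_1^+(x_2)$ since $\int_{x_1^-(x_2)}^{x_1^+(x_2)}(g-m)\,d\xi=\int g-m\,L_1=0$. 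Because $w\in\D(\Omega)$ has compact support, for every height $x_2$ outside $\pi_2(\mathrm{spt}\,w)$ one has $g\equiv1$ and $m\equiv1$ on that slice, hence $u_1\equiv0$ there; together with the two endpoint computations this yields $u_1=0$ on all of $\bO$ and a continuous extension of $u_1$ to $\Ob$.

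The delicate point — and the one I expect to require the real work — is the regularity claim $u_1\in C^1(\Ob)$. The primitive above is $C^1$ wherever the slice data $x_1^-(\cdot)$, $x_1^+(\cdot)$, $m(\cdot)$ are $C^1$, which for a bounded convex $\Omega$ holds off the at most countably many corners of $\bO$, so in general $u_1$ is only Lipschitz up to the boundary. To upgrade to $C^1(\Ob)$ I would smooth the construction: using that $u_1$ may be rewritten as $u_1(x_1,x_2)=\frac14\,\frac{x_1-x_1^-(x_2)}{L_1(x_2)}\int_{\R}\bigl(\tfrac{\partial w}{\partial x_1}(\cdot,x_2)\bigr)^2-\frac14\int_{-\infty}^{x_1}\bigl(\tfrac{\partial w}{\partial x_1}(\xi,x_2)\bigr)^2\,d\xi$ (the second term being globally $C^\infty$ and supported, together with the factor $\int_{\R}(\partial_1w)^2$, in $\pi_2(\mathrm{spt}\,w)\Subset\pi_2(\Omega)$), one replaces the piecewise-linear weight $\frac{x_1-x_1^-(x_2)}{L_1(x_2)}$ by a smooth weight $\beta(x_1,x_2)$ interpolating between the two lateral boundary arcs and satisfying $\partial_1\beta\le 4\bigl(\int_{\R}(\partial_1w)^2(\cdot,x_2)\bigr)^{-1}$ — such a profile exists exactly because \eqref{eq:constraint_rho} forces $\frac14\int_{\R}(\partial_1w)^2(\cdot,x_2)\le L_1(x_2)$ — and checks that the resulting $u_1$ still has zero boundary trace and still obeys the pointwise bound. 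Alternatively, if one is content to assume $\bO\in C^1$ (in line with the smoothness hypotheses imposed elsewhere in the paper), the original primitive already furnishes a function in $C^1(\Ob)$ and no smoothing is needed; the forward implication (I)$\Rightarrow$(II) requires no such care at all.
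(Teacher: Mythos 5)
Your proof is essentially the paper's: the forward direction integrates the pointwise bound over each horizontal slice using that $u_1$ vanishes at both endpoints, and your primitive $\int_{x_1^-(x_2)}^{x_1}\bigl(g-m\bigr)\,d\xi$ coincides exactly with the paper's $\int_{x_1^-(x_2)}^{x_1}\bigl(\tfrac{(\rho_1(w;x_2))^2}{L_1(x_2)}-\tfrac14(\tfrac{\partial w}{\partial x_1})^2\bigr)d\xi$, since $m(x_2)=1-(\rho_1(w;x_2))^2/L_1(x_2)$. The only divergence is your treatment of regularity: the paper simply asserts that this $u_1$ is of class $C^1(\Ob;\R)$ "by the regularity of $w\in\D(\Omega;\R)$", whereas you correctly observe that for a merely convex boundary the slice data $x_1^\pm(\cdot)$, $L_1(\cdot)$ may have corners inside $\pi_2(\mathrm{spt}\,w)$, so that without a smoothing of the weight (or a smoothness assumption on $\bO$) the explicit primitive is in general only Lipschitz in $x_2$ -- a legitimate refinement of a point the paper's proof glosses over, and your proposed fixes are reasonable.
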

\noindent Proof of the proposition is moved to \ref{app:archgird}. Up to regularity imposed on $w$ (the difference is immaterial thanks to a density result) this equivalence shows how to eliminate the vector function $u$ from the problem $(\mathcal{P}_\perp^*)$ above: function $u$ is present only in the constraint that may be replaced by the "line-wise" condition \eqref{eq:constraint_rho}. The new form of $(\mathcal{P}_\perp^*)$ becomes (up to multiplicative constant $\frac{1}{2}$ in definitions of $\rho_i$) the dual problem formulated in \cite{czubacki2020} for arch-grids, which ultimately shows that the optimal vault problem put forward in this work is in fact a generalization of the optimal arch-grid problem.

The numerical method proposed herein for the optimal vault problem may be modified in a straightforward manner to suit the optimal arch-grid problem $(\mathcal{P}_\perp), (\mathcal{P}_\perp^*)$: the ground structure must be limited only to bars going in directions $e_1, e_2$. For the regular grid $X$ as in \eqref{eq:X_regular} this can be done trivially. This way the size of thus constructed conic quadratic problems $(\mathcal{P}_{\perp,X}), (\mathcal{P}_{\perp,X}^*)$ decreases drastically as $m \approx 2 n$. The adaptive algorithm is redundant since there are no bars to add in subsequent iterations -- the problem is tackled by the MOSEK solver directly. We end this section by a demonstration:

\begin{example}\textbf{(Optimal arch-grid for uniformly distributed load on a square)}
\label{ex:archgrid}
For $\Omega$ being a square of the side's length $a$ and for the load being uniformly distributed, i.e. $f = - p\, \mathcal{L}^2\mres \Omega$ we consider the problem of optimal arch-grid for the orthogonal basis $e_1, e_2$ being parallel to the sides of the square. For a $201 \times 201$ nodal grid $X$ the ground structure consists of only $m = 80\,400$ bars parallel to $e_1, e_2$. The computational details, also for other resolutions of $X$, can be found in Table \ref{tab:archgrid}.

\begin{table}[h]
	\scriptsize
	\centering
	\caption{Summary on numerical computations for the arch-grid problem $(\mathcal{P}_{\perp,X}), (\mathcal{P}_{\perp,X}^*)$ specified in Example \ref{ex:archgrid}.}
	\begin{tabular}{lccccccc}
		\toprule
		Example no. & Grid $X$  & Full GS $=$ Active GS  & Iterations    & CPU time & Objective value $\Z_{\perp,X}$ &  Max. elevation $\norm{z}_\infty$\\
		\midrule
		\ref{ex:archgrid} (Fig. \ref{fig:archgrid})
		& $101\!\times\!101$ & $20\,200$ & 1 & $6$ sec. & $0.46005 \, p a^3$ & $0.414 \,a $\\
		& $201\!\times\!201$ & $80\,400$ & 1 &  $38$ sec. & $0.46014 \, p a^3$ & $0.414 \,a $\\
		& $401\!\times\!401$ & $320\,800$ & 1  &  $7$ min. $59$ sec. & $0.46016 \, p a^3$ & $0.414 \,a $\\
		\bottomrule
	\end{tabular}
	\label{tab:archgrid}
\end{table}

	\begin{figure}[h]
		\centering
		\subfloat[]{\includegraphics*[trim={2cm -0cm 1.5cm 3.5cm},clip,width=0.35\textwidth]{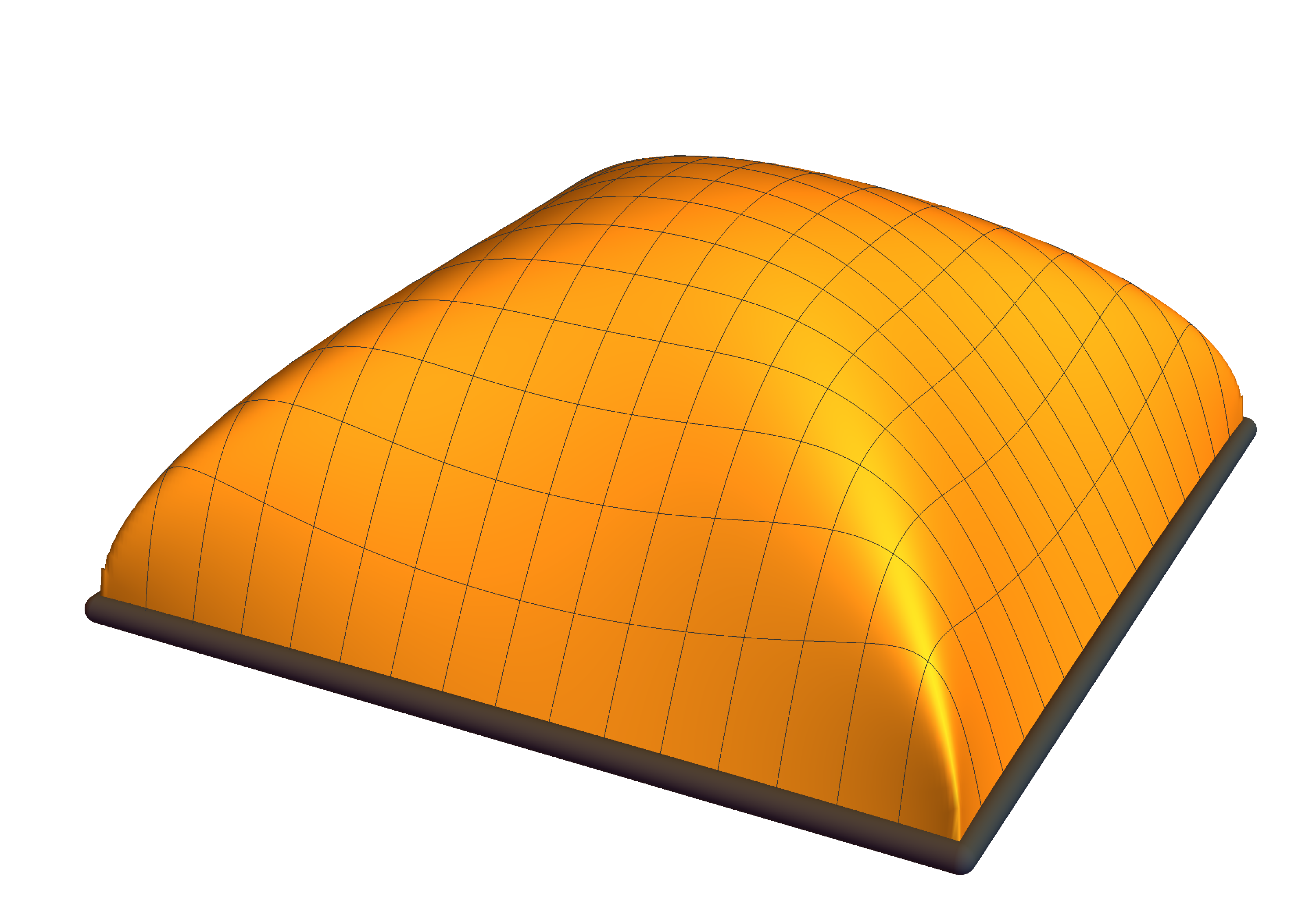}}\hspace{0.4cm}
		\subfloat[]{\includegraphics*[trim={2cm 0cm 1.5cm 3.5cm},clip,width=0.35\textwidth]{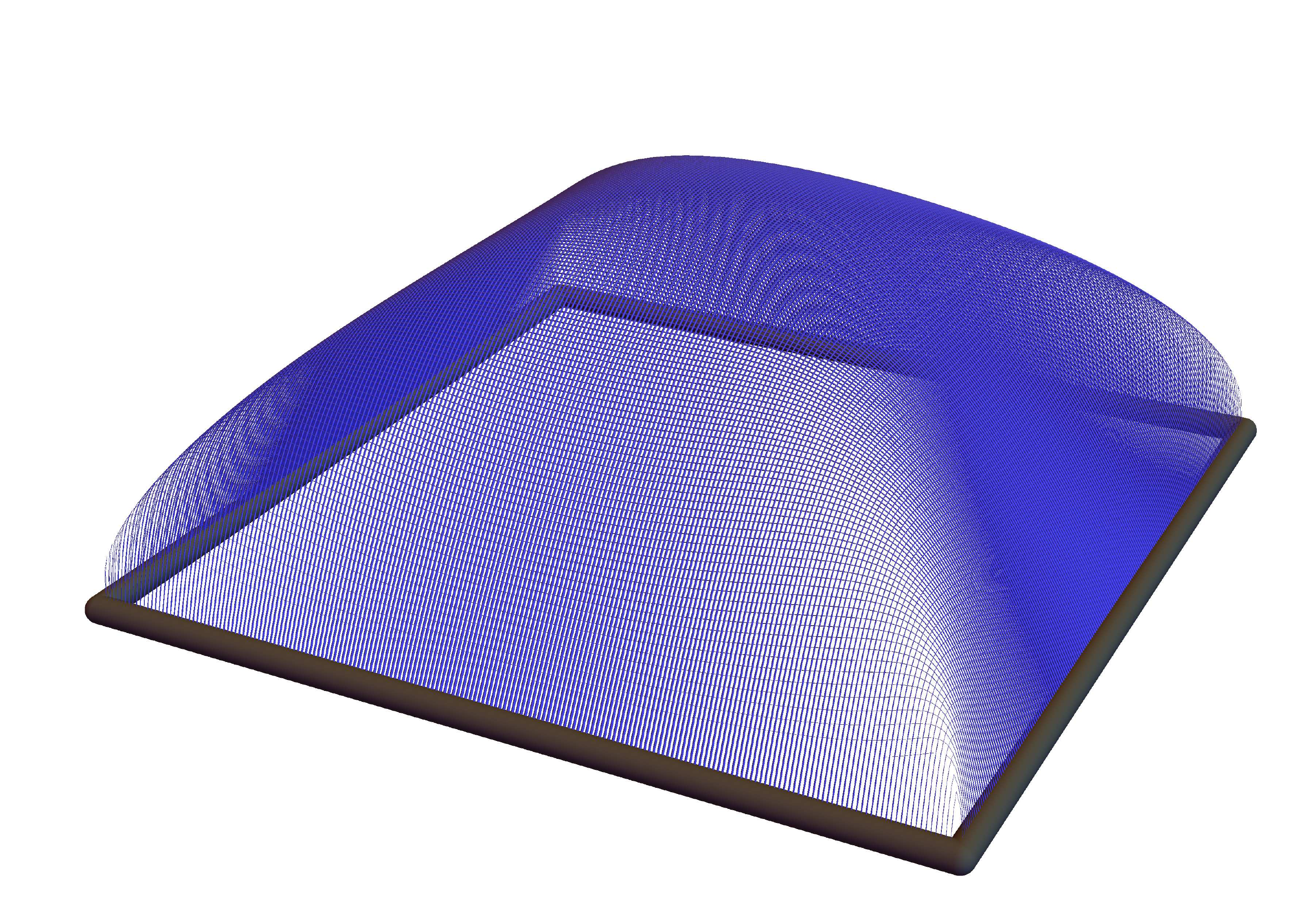}}\hspace{0.4cm}
		\subfloat[]{\includegraphics*[trim={1.5cm -1cm 1cm 2.5cm},clip,width=0.2\textwidth]{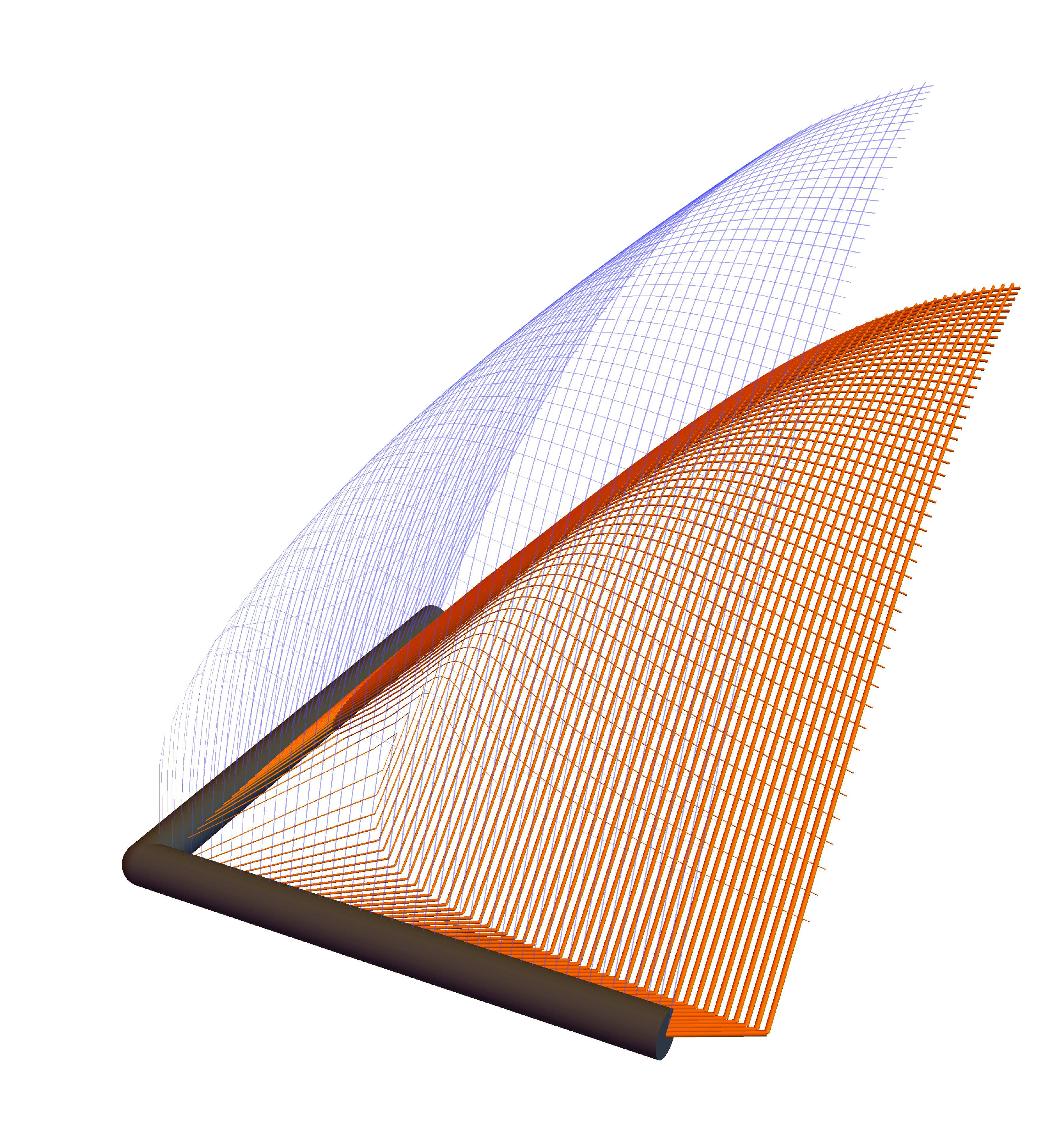}}\\
		\caption{The problem of optimal arch-grid over a square domain under uniformly distributed load: (a) optimal elevation function $z = -\frac{1}{2} \,w$  where $w$ solves $(\mathcal{P}^*_{\perp,X})$; (b) optimal arch-grid; (c) elastic deformation of the arch-grid in vicinity of a corner.}
		\label{fig:archgrid}
	\end{figure}
After an almost 1-to-1 adjustment of Theorem \ref{thm:recovering_MC_grid-shell} (and of formulas \eqref{eq:compression_mod} tailored for the compression setting) towards the problem of optimal arch-grid we may readily present the optimal elevation function $z$ in Fig. \ref{fig:archgrid}(a) and then the optimal arch-grid in Fig. \ref{fig:archgrid}(b). The results, both the visualizations and the objective value $\Z_{\perp,X}$, coincide very well with the results obtained in \cite{czubacki2019} found by alternative numerical methods. E.g. for the $201 \times 201$ nodal grid, it is natural to compare the value $\Z_{\perp,X} = 0.46014 \, p a^3$ being the minimal volume of an arch-grid with $\Z_{X} = 0.43615 \, p a^3$ in Example \ref{ex:pressure} that is the minimal volume of a grid-shell for the same boundary and load conditions: the arch-grid is $5.5\%$ heavier.

We conclude the subsection by an analysis of the elastic deformation of the generated optimal arch-grid: in Fig. \ref{fig:archgrid}(c) we focus on the structure's displacements in vicinity of one of the corners. We find that nodes in the interior of $\Omega$ that are the closest to $\bO$ undergo big horizontal displacements $\mbf{u}_{1,\e}, \mbf{u}_{2,\e}$ yet very small deflections $\mbf{w}_\e$. This picture allows to predict certain features of the exact solutions $u_\e \in BV(\Omega;\Rd)$  and $w_\e \in C(\Ob;\R)$ of problem $(\mathcal{P}^*_{\perp})$: the function $w_e$ is zero on the boundary while $u_\e$ is not (in the sense of trace in the $BV$ space). Proposition \ref{prop:regularitu_u_w} does not rule out such singularities of $u_\e, w_\e$ in the case of vaults/grid-shell either, yet in the course of numerical experiments they has not been observed so far. 
	
\end{example}

\section{Final remarks and open problems}
\label{sec:final_remarks}

\subsection{Short summary of the results and challenges of extension to full mathematical generality}

Assuming a plane supporting boundary $\bO$ in this work several design problems were put forward: 'continuous' plastic and elastic vault design problems $(\mathrm{MVV}_\Omega)$ and $(\mathrm{MCV}_\Omega)$ together with their discrete counterparts, plastic and elastic grid-shell design problems $(\mathrm{MVGS}_X)$ and $(\mathrm{MCGS}_X)$. Each of those problems is \textit{a priori} non-convex and seems difficult for tackling directly, both in therms of numerics and theoretical study, like verifying existence of solution. Instead two mutually dual convex problems $(\mathcal{P})$ and $(\mathcal{P}^*)$, naturally emerging for an optimal design plane pre-stressed membrane, were recalled after \cite{bouchitte2020}. Based on solutions $(\sigma,q)$ and $(u,w)$ of the respective problems construction of vaults that are optimal for $(\mathrm{MVV}_\Omega)$, $(\mathrm{MCV}_\Omega)$ is established. Similarly from solutions $(\mbf{s},\mbf{q})$ and $(\mbf{u},\mbf{w})$ of the discrete, ground-structure based conic quadratic programs $(\mathcal{P}_X)$ and $(\mathcal{P}_X^*)$ we were able to recast optimal grid-shells for $(\mathrm{MVGS}_X)$, $(\mathrm{MCGS}_X)$.

The 'continuous' setting of problems $(\mathrm{MVV}_\Omega)$, $(\mathrm{MCV}_\Omega)$ requires that the designed vault does not involve lower dimensional structural elements: struts or arches, thereby limiting the method to cases when $(\sigma,q)$ solving $(\mathcal{P})$ are $L^1$ functions. To cover the more general case usually occurring for practical loading data the measure theoretic formulations of the Prager problem was proposed, cf. $(\mathrm{MVPS}_H)$ and $(\mathrm{MCPS}_H)$.
From solutions of $(\mathcal{P})$,\,$(\mathcal{P}^*)$ we constructed a vaulted structure that mathematically is modelled by a 3D measure $\hat{\sigma}$  charging the single surface $\mathcal{S}_z$; mechanically $\hat{\sigma}$ may be viewed as a junction of 'continuous' membrane shell and a grid-shell. The structure is proved to solve the Prager problem, both in the plastic and elastic setting. This construction still relies on some extra regularity assumptions, yet a lot weaker ones: functions $(u,w)$ solving the dual problem $(\mathcal{P}^*)$ have to be Lipschitz continuous. For the time being this seems always to be the case when $\Omega$ is convex. However, based on numerical simulation in Example \ref{ex:cross} we predict that continuity of solution $u$ fails (note that the discrete optimal grid-shell problems $(\mathrm{MVGS}_X)$, $(\mathrm{MCGS}_X)$ are free of such regularity issues).

The first open problem is to develop the construction of optimal vault in the general regularity setting that is guaranteed by Proposition \ref{prop:regularitu_u_w} where in particular $u$ is a function of bounded variation only. The challenging part is generalization of the optimality conditions from Theorem \ref{thm:opt_cond}, in particular the product $\sigma:e(u)$ is difficult to define when both $\sigma$ and $e(u)$ are measures, see e.g. \cite{anzellotti1983}.

\subsection{Optimal elastic vaults for other constitutive laws}

The compliance minimization problems: the optimal vault problem $(\mathrm{MCV}_\Omega)$ and the Prager problem $(\mathrm{MCPS})$ are formulated under assumption of the Michell-like elastic potential. This way they are equivalent to the minimum volume problems reminiscent of the one of Michell. Moreover, the optimal structures found are a natural limits of grid-shell approximations, similarly as Michell structures are limits of families of trusses.

The Michell problem in its elastic setting, however, has a mathematical structure that is similar to other optimal design problems, including finding optimal distribution $\rho$ of a linearly elastic isotropic material, see the recent work \cite{bolbotowski2020a}. By analogy, isotropy could be enforced in the problem $(\mathrm{MCV}_\Omega)$ by a suitable adjustment of the gauge $\gamma_{z,+}$, again furnishing a non-convex optimization problem. The next step would be to modify problems $(\mathcal{P})$,\,$(\mathcal{P}^*)$: the term $\tr\, \sigma $ must be replaced by $\gamma^0_{+}(\sigma) = ((\mathscr{C}^{-1}\sigma):\sigma )^{1/2}$ with $\mathscr{C}$ being a fixed isotropic 4th order stiffness tensor, whereas the constraint in the dual problem would read $\gamma_+\bigl(1/4\, \nabla{w} \otimes \nabla{w} +e(u)\bigr)\leq 1$. Sadly, initial research shows that for such isotropic setting the passage from $(\mathcal{P})$,\,$(\mathcal{P}^*)$ to $(\mathrm{MCV}_\Omega)$ falls apart. To put it differently: the theory and numerical method developed in this work seems to be very specific to fibrous-like vaults that are mathematically characterized by the spectral norm and its modifications.

The Prager problem could be posed for the isotropic case as well; by a straightforward adaptation of Proposition \ref{prop:existence} existence of a 3D solution could be also established. Based on Example 5.1 found in \cite{bouchitte2001} one can infer that in the plane case a funicular arch would be unlikely to solve this altered Prager problem, similarly one may guess that in three dimensions a Prager structure ends up being a true 3D 'continuous' body rather than a vault concentrated on a single surface. This reasoning may raise doubts about well-posedness of the optimal isotropic elastic vault problem $(\mathrm{MCV}_\Omega)$ in the first place (recall the non-convexity of the formulation).

After restricting to boundary curves that are planar in Section \ref{ssec:boundary_conditions}, imposing the Michell-like elastic potential appears to be the second limitation of the form finding method presented in this paper. It therefore seems that the herein proposed link to a 2D convex pair of problems $(\mathcal{P})$,\,$(\mathcal{P}^*)$ cannot be treated as a general approach to form finding, instead it should rather be viewed as a mathematical passage in a very specific design problem. 

\subsection{Conjecture on optimality of a grid-shell in the case of a finite system of point loads}

In the numerical simulations in Examples \ref{ex:4_point_loads}, \ref{ex:multiple_point_loads}, \ref{ex:diagonal_load} we observed that the bars building an optimal truss $\sigma$ were connecting: either the points of load application with each other, or the loaded points  with the boundary points. So far no counter-example was found to this feature of optimal vaults and grid-shells. A conjecture can be made:

\begin{conjecture}[\textbf{A vault optimally designed for a discrete load is a grid-shell}]
	\label{conj:opt_vault=grid-shell}
	Let us assume that the load consists of a finite number of point forces, i.e. $f = \sum_{j=1}^{N} P_j \, \delta_{x_j}$. Then there exists a truss-like solution $\sigma$ of the problem $(\mathcal{P})$, namely
	\begin{equation*}
	\sigma= \sum_{k=1}^M s_k \, \sigma^{\,x^-_k,\, x^+_k} = \sum_{k=1}^M s_k \, \tau_k \otimes \tau_k \, \Ha^1 \mres [\,x^-_k,\, x^+_k], \qquad \tau_k = \frac{x^+_k - x^-_k}{\abs{x^+_k - x^-_k}},
	\end{equation*}
	where for each $k$ points $x^-_k, x^+_k$ belong to the set $\bO \cup \mathrm{spt}\,f= \bO \cup \{x_1,\ldots x_N\}$.

	As a result the optimal vault is a grid-shell composed of a finite number of bars connecting points on the boundary and the points of forces application only. 
\end{conjecture}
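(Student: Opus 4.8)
The plan is to reduce the statement to a structural property of the convex problem $(\mathcal{P})$ for a finitely supported load, namely that $(\mathcal{P})$ admits an optimal pair $(\sigma,q)$ whose first component $\sigma$ is a sum of finitely many bar-like measures $\sigma^{x^-_k,x^+_k}$ with endpoints in $\bO\cup\mathrm{spt}\,f$. Once such $\sigma$ (together with a matching $q$) is available, the recipe of Theorem~\ref{thm:optimal_3D_structure} (and of Theorems~\ref{thm:recovring_MV_dome}, \ref{thm:recovring_MC_dome} whenever $z$ is smooth) constructs the corresponding 3D structure $\hat\sigma$ by pushing forward along $\hat z=(\,\cdot\,,\tfrac12 w)$; push-forward is linear in $\sigma$ and, by the slicing computation already carried out in the example following Theorem~\ref{thm:optimal_3D_structure}, it sends a straight planar bar $[x^-_k,x^+_k]$ to a straight 3D bar $[\hat z(x^-_k),\hat z(x^+_k)]$. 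Hence the resulting optimal vault is automatically a grid-shell of $M$ straight bars whose joints are the elevated points $\hat z(x_j)$ and the elevated boundary, which is exactly the assertion.

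\textbf{Key steps.}
First I would invoke Theorem~\ref{thm:duality}: $(\mathcal{P})$ attains a minimizer $(\sigma_0,q_0)\in\Mes(\Ob;\Sddp)\times\Mes(\Ob;\Rd)$ with $-\DIV\,\sigma_0=0$, $-\dive\,q_0=f=\sum_j P_j\delta_{x_j}$. Second, I would analyze $q_0$: since $-\dive\,q_0$ is a finite sum of Dirac masses and $-\dive\,\sigma_0=0$, standard facts about divergence-constrained measures (the same ones underlying the bar measures in \eqref{eq:bar_measures} and the set $\widetilde{\mathscr T}$ and the reference \cite{bouchitte2019} cited in Section~\ref{ssec:support_other_than_bO}) allow one to replace $q_0$ by a measure supported on finitely many segments with endpoints in $\bO\cup\mathrm{spt}\,f$, without changing the divergence and without increasing the cost, because the optimal cost density $g^0_{\mathscr C}(\sigma,q)=\tr\,\sigma+(\sigma^{-1}q)\cdot q$ is a closed gauge and hence its integral is superadditive/convex along such decompositions. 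Third — and this is the heart — one must show that an optimal $\sigma$ may be taken rank-one along each of these segments and supported on their union; here I would use the extremality condition~(iii) of Theorem~\ref{thm:opt_cond}/\eqref{eq:opt_cond_mu}, which forces $\sigma$ to be carried where $\lambda_1(\tfrac14\nabla w\otimes\nabla w+e(u))=1$ and to be rank-one in the eigendirection, together with condition~(iv) $q=\tfrac12\sigma\nabla w$, which ties the direction of $q$ to the eigendirection of $\sigma$; combined with the two-point reformulation of Proposition~\ref{prop:two-point_condition}, this should confine $\sigma$ to straight segments along which the two-point constraint is saturated. Fourth, an extreme-point argument for the (convex, linearly constrained) set of optimal $\sigma$'s — essentially the discrete principle behind $(\mathcal{P}_X)$ and the chain \eqref{eq:Z_X_leq_V_X_leq_Z_X}, or a Krein–Milman / Choquet-type reduction — yields an optimal $\sigma$ that is a finite combination of such bar measures. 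Finally, apply Theorem~\ref{thm:optimal_3D_structure} (or Theorem~\ref{thm:recovering_MV_grid-shell} in the discrete setting) to conclude.

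\textbf{Main obstacle.}
The hard part will be Step three: controlling the geometry of the support of an optimal $\sigma$. While the optimality conditions pin down $\sigma$ pointwise in terms of $e(u)$ and $\nabla w$, proving that the set where $\lambda_1(\tfrac14\nabla w\otimes\nabla w+e(u))=1$ is a union of straight segments joining points of $\bO\cup\mathrm{spt}\,f$ is exactly the phenomenon observed numerically in Examples~\ref{ex:4_point_loads}--\ref{ex:diagonal_load} but not proved; it amounts to a regularity/free-boundary statement for the dual solution $(u,w)$ that is genuinely delicate, the more so because $u$ need only lie in $BV$ (Proposition~\ref{prop:regularitu_u_w}) and the product $\sigma:e(u)$ is not classically defined when both are singular measures. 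A secondary difficulty is that $(\mathcal{P}^*)$ need not have a Lipschitz solution on non-convex domains, so the argument would first be carried out under the regularity hypothesis that $(u,w)\in\mathrm{Lip}$, i.e. in the range of validity of Theorem~\ref{thm:optimal_3D_structure}, and the fully general case would remain — as the authors note — open.
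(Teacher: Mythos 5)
There is nothing in the paper to compare your argument against: the statement you were given is Conjecture~\ref{conj:opt_vault=grid-shell}, which the paper explicitly leaves open. The authors only report numerical evidence (Examples~\ref{ex:4_point_loads}--\ref{ex:diagonal_load}), note that no counter-example is known, and record that in the companion work (Section~5.5 and Proposition~5.25 of the cited reference) the conjecture is shown to be \emph{equivalent} to a purely geometric problem — whether every monotone map $v:\mathrm{spt}\,f\cup\bO\to\R^2$ admits a maximal monotone extension $\bar v:\Ob\to\R^2$ whose intrinsic metric $d_{\bar v}$ restricts to $d_v$ on $(\mathrm{spt}\,f\cup\bO)^2$ — and that this is ongoing research. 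So any complete "proof" should have raised a red flag; your proposal, to its credit, does not claim one.

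That said, your outline is not close to a proof, and the gap is not merely the one you flag. Step three — showing that an optimal $\sigma$ concentrates on finitely many straight segments with endpoints in $\bO\cup\mathrm{spt}\,f$ — is not a delicate regularity lemma to be supplied later; it \emph{is} the conjecture, so the scheme is circular at its core. Step two is also unsound as stated: because $g^0_{\mathscr{C}}(\sigma,q)$ in \eqref{eq:g_C^0} is finite only when $q\in\IM(\sigma)$ $\sigma$-a.e., you cannot reroute $q_0$ onto segments "without increasing the cost" independently of $\sigma_0$; any such rerouting presupposes exactly the control of $\mathrm{spt}\,\sigma$ that is missing, and gauge convexity gives subadditivity under superposition, not a cost decrease under support surgery. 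Step four is likewise unsupported: a Krein--Milman/Choquet reduction could at best produce extremal optimal measures, but nothing identifies the extreme points of the optimal set as finite sums of bar measures with joints restricted to $\bO\cup\mathrm{spt}\,f$ — recall that in the Michell problem (three point forces) optimal measures involve curved members and Hencky nets, and it is precisely the sign constraint $\sigma\succeq 0$ that is \emph{conjectured}, not proved, to exclude such behaviour here. Finally, note that the route the literature actually pursues (maximal monotone extensions and the metric $d_v$) is structurally different from your optimality-conditions/extreme-point plan; if you want to contribute, the productive target is that extension problem rather than the pointwise analysis of $\lambda_1\bigl(\tfrac14\nabla w\otimes\nabla w+e(u)\bigr)=1$, which in general only lives in the weak regularity framework of Proposition~\ref{prop:regularitu_u_w}.
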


It is well established that this property is untrue for Michell problem: even in the case of the three force problem the Michell structure is in general a complicated framework composed of 1D curved cables and \textit{Hencky nets} being 'continuous' fibrous-like regions, cf. \cite{sokol2010}.

Apart from clear theoretical merit such a result has a tremendous consequence for the further development of the numerical method: it allows to erase from the grid $X$ all the nodes that are either not loaded or do not lie on the boundary. This idea is clearly ineffective in case of simulating a uniformly distributed load like in Example \ref{ex:pressure} but reduces the problem greatly when a discrete load or a load distributed on lines is applied. From Example \ref{ex:slanted_Gamma} it is clear that the conjecture is false if a problem of slanted boundary curve $\hat{\Gamma}$ is considered -- compare the nets bounded by curved elements in Fig. \ref{fig:slanted}(b).

In \cite[Section 5.5]{bouchitte2020} it is proved that Conjecture \ref{conj:opt_vault=grid-shell} is equivalent to a purely geometrical problem. In \cite{bouchitte2020} to a vector function $v$, being a monotone map defined over some domain $D \subset \Rd$ (see \cite{alberti1999}), we associate an intrinsic metric $d_v:D \times D \to \R_+$. By \cite[Proposition 5.25]{bouchitte2020} the foregoing conjecture will be confirmed if we manage to show that any monotone function $v :\mathrm{spt}f \cup \bO \to\R^2$ admits its maximal monotone extension $\bar{v}: \Ob \to \R^2$ so that the restriction of the metric $d_{\bar{v}}:\Ob \times \Ob \to \R_+$ to $(\mathrm{spt}f \cup \bO)^2$ coincides with $d_v$. This is a subject of an ongoing research.

\appendix
\section{Proofs of the measure-theoretic results in Section \ref{sec:3D}}
\label{app:3D}

\begin{proof}[Proof of Proposition \ref{prop:existence}]
	Problem $(\mathrm{MVPS}_H)$ is an infinite dimensional linear program on a  product of closed  sets $\Mes\bigl(\Oh ; \mathrm{T}^3_+\bigr) \times \mathscr{T}(\Omega,f,H)$. For $H < \infty$ the set $\mathscr{T}(\Omega,f,H)$ is compact with respect to the weak-* topology, see the comment below the definition \eqref{eq:transmissible_set}. The linear functional minimized is weak-* continuous and coercive with respect to the first variable $\hat{\sigma}$ and ultimately we have compactness with respect to both variables. Since the equilibrium equation is written in the distributional sense it is continuous with respect to weak-* topology. The existence of solution of $(\mathrm{MVPS}_H)$ will be thus established once we show that there exists at least one feasible pair $(\hat{\sigma},\hat{F})$. Assume first that $f = \delta_{x_0}$ for some $x_0 \in \Omega$: we will build a two bar truss that carries this load. We choose $\hat{F}_0 = e_3 \, \delta_{(x_0,h)}$ for arbitrary $h\in(0,H]$ and, for any pair of points $x_1,x_2 \in \bO$ such that $x_0 \in [x_1,x_2]$, we take $\hat{\sigma}_0 =\sum_{i=1}^2 S_i \, \hat{\tau}_i \otimes \hat{\tau}_i \, \Ha^1\mres [(x_0,h),(x_i,0)]$ where $\hat{\tau}_i\in \R^3$ is a unit vector tangent to the 3D segment $[(x_0,h),(x_i,0)]$. It is easy to check that $S_1, S_2 \geq 0$ may be chosen so that $-\DIV\,\hat{\sigma}_0 = \hat{F}_0$. For $f = -\delta_{x_0}$ we repeat the construction with $h\in[-H,0)$. For a general measure $f$ the pair $(\hat{\sigma},\hat{F})$ may be obtained by measure-theoretic superposition (with positive coefficients) of pairs $(\hat{\sigma}_0,\hat{F}_0)$. Existence of solution in $(\mathrm{MVPS}_H)$ follows.
	
	Next, for each $\hat{v} \in C^1(\Ob\times\R;\R^3)$, the functional being maximized in \eqref{eq:3D_comp_def} is linear and continuous with respect to the pair $(\hat{\mu},\hat{F})$ hence $\hat{\Comp}(\hat{\mu},\hat{F})$ is convex and weakly-* lower semi-continuous as a point-wise supremum of such functionals. The product $\bigl\{\hat{\mu}\in \Mes_+\bigl(\Oh\bigr) \, \vert\, \int d\hat{\mu} \leq V_0 \bigr\} \times \mathscr{T}(\Omega,f,H)$ is weakly-* compact therefore solution of $(\mathrm{MCPS}_H)$ exists provided that $\hat{\Comp}^H_\mathrm{min} < \infty$. This follows from the dual formula \eqref{eq:3D_dual_comp_def} where from the construction of $(\hat{\sigma},\hat{F})$ above one can take $\hat{\mu} = \tr\,\hat{\sigma}$ and $\hat{S} = \frac{d\hat{\sigma}}{d\hat{\mu}}$ in the sense of Radon-Nikodym derivative.
\end{proof}

\vspace{0.1cm}
\begin{proof}[Proof of Theorem \ref{thm:optimal_3D_structure}]
	Owing to condition $H \geq \mathrm{diam}(\Omega)/\sqrt{2}$ we have $\mathrm{spt}\,\hat{F},\,\mathrm{spt}\,\hat{\sigma},\,\mathrm{spt}\,\hat{\mu} \subset \Oh = \Ob \times [-H,H]$ owing to estimates \eqref{eq:estimates_u_w}. We compute the trace of the field $\hat{S}$; from equalities $\tr(B^\top A  B) = \tr(A B B^\top) = A^\top : (B B^\top)$ we find based on \eqref{eq:opt_mu_S_2D} that
	\begin{equation*}
	\htr \, \hat{S} =  \Big( \bigl(\tr S + S:(\nabla_\mu z \otimes \nabla_\mu z)  \bigr) \Big)\circ \hat{z}^{-1} =  \frac{\Z}{V_0}  \qquad \hat{\mu}\text{-a.e.}
	\end{equation*}
	Then we check that
	\begin{align}
	\label{eq:opt_hat_sigma}
	\int_{\Ob \times  \R} \htr\,\hat{\sigma} = \int_{\Ob \times  \R} \htr\,\hat{S}\,d\hat{\mu} = \frac{\Z}{V_0} \int_{\Ob \times  \R} d\hat{\mu} &= \frac{\Z}{V_0} \int_{\Ob} d\mu = \frac{\Z}{V_0} \int_\Ob\frac{V_0}{\Z}\, \bigl(\mathrm{I}+\nabla_\mu z \otimes \nabla_\mu z \bigr):\sigma \\
	& = \int_{\Ob} \bigl( \tr\,\sigma + (\nabla_\mu z \otimes \nabla_\mu z):\sigma \bigr) = \int_{\Ob} g_\mathscr{C}^0 (\sigma,q) = \Z, \nonumber
	\end{align}
	where the integral $ \int_{\Ob} g_\mathscr{C}^0 (\sigma,q)$ was identified based on the formula  \eqref{eq:g_C^0} and optimality condition $(iv)'$, i.e. $q = \frac{1}{2} \, \sigma \, \nabla_\mu w = \sigma \, \nabla_\mu z$. The last equality follows directly from optimality of the pair $(\sigma,q)$. Verification of $\hat{F} \in \mathscr{T}(\Omega,f,H)$ is straightforward hence, by virtue of Lemmas \ref{lem:inequalities_3D} and \ref{lem:feas_hat_sigma} optimality of the pair $(\hat{\sigma},\hat{F})$ in problem $(\mathrm{MVPS}_H)$ is established.
	
	From the chain of equalities \eqref{eq:opt_hat_sigma} information $\int_{\Ob\times\R} d \hat{\mu} = V_0$ can be extracted, i.e. the pair $(\hat{\mu},\hat{F})$ is feasible for $(\mathrm{MCPS}_H)$. Then, based on Lemma \ref{lem:feas_hat_sigma} we find that $\hat{S}$ is admissible for the stress based elasticity problem \eqref{eq:3D_dual_comp_def}. The chain of inequalities follows:
	\begin{equation*}
	\hat{\Comp}^H_\mathrm{min} \leq \hat{\Comp}\bigl(\hat{\mu},\hat{F} \bigr) \leq \frac{1}{2 E_0} \int_{\Ob\times \R}  \bigl(  \htr\, \hat{S}\bigr)^2 d\hat\mu =  \frac{1}{2 E_0} \left(\frac{\Z}{V_0} \right)^2 \int_{\Ob\times \R} d\hat\mu = \frac{\Z^2}{2E_0 V_0} \leq \hat{\Comp}^H_\mathrm{min},
	\end{equation*}
	being, in fact, a chain of equalities, which proves optimality of $(\hat{\mu},\hat{F})$ and that the stress field $\hat{S} \in L^\infty_{\hat{\mu}}(\Ob;\Sddp)$ solves the stress-based elasticity problem \eqref{eq:3D_dual_comp_def} posed for the optimal structure.
	
	Functions $({u},{w})$ are Lipschitz continuous and they solve problem $(\mathcal{P}^*)$: by combining Lemma 1 in \cite{bouchitte2007} and the fact that $P\, \xi\, P \preceq \xi$ for any $\xi \in \Sdd$ and any orthogonal projection operator $P: \Rd \to \Rd$ we may infer
	\begin{equation}
	\label{eq:mu-tangential_constraint}
	\frac{1}{4}\, \nabla_\mu {w} \otimes \nabla_\mu {w} +e_\mu({u}) \preceq  \mathrm{I} \qquad  \text{or equivalenly } \qquad  h\bigl(e_\mu({u}),\nabla_\mu {w} \bigr) \leq 1.
	\end{equation}
	With $\hat{v}(\hat{x}) = u(x) + w(x) e_3$ it is straightforward that $\hat{v} \in \mathrm{Lip}(\Ob\times \R;\R^3)$ and therefore the $\hat{\mu}$-tangential operator $\hat{e}_{\hat{\mu}} (\hat{v})$ is meaningful. From basic facts on the measure-tangential calculus (cf. \cite{bouchitte2007}) we may find that $\hat{e}_{\hat{\mu}} (\hat{v}) = P_{\hat{\mu}} \, \hat{\xi}\, P_{\hat{\mu}} $ where $\hat{\xi} = \bigl(e_\mu(u) + \nabla_\mu w \,\symtens\,e_3 \bigr) \circ \hat{z}^{-1}$. Since $P_{\hat{\mu}} \, \hat{\xi} \, P_{\hat{\mu}} \preceq \hat{\xi}$ we obtain $\hat{\mu}$-a.e. that
	\begin{equation}
	\label{eq:hat_gamma_leq_1}
	\hat{\gamma}_+\bigl( \hat{e}_{\hat{\mu}} (\hat{v}) \bigr) \leq 	\hat{\gamma}_+\bigl( \hat{\xi} \bigr) =  h\bigl(e_\mu(u),\nabla_\mu w \bigr) \leq 1,
	\end{equation}
	where the above equality was already established in \eqref{eq:hat_gamma_h}. If $u,w $ were differentiable (hence $\hat{v}$ as well) the equality \eqref{eq:nabla_v:S} would have held therefore, by the fact that $P_{\hat{\mu}}\,\hat{S}\, P_{\hat{\mu}} = \hat{S}$ (see Lemma 2 in \cite{bouchitte2007}) and through a density argument, we obtain	 
	\begin{align}
	\label{eq:extremality_mu}
	&\hat{e}_{\hat{\mu}}(\hat{v})\bigl(\hat{z}(\argu)\bigr) : \hat{S}\bigl(\hat{z}(\argu)\bigr) =  \Big( S:e_\mu(u) + S :(\nabla_\mu w \otimes \nabla_\mu z) \Big) \\
	= &\biggl( S: \biggl(\frac{1}{4} \nabla_\mu w \otimes \nabla_\mu w + e_\mu(u)  \biggr) + S:(\nabla_\mu z \otimes \nabla_\mu z)\biggr)
	= \Big( \tr \,S + S:(\nabla_\mu z \otimes \nabla_\mu z)\Big) = \htr\, \hat{S}\bigl(\hat{z}(\argu)\bigr), \nonumber
	\end{align}
	where to pass to the second line we have manipulated with the equality $z= \frac{1}{2}\,w$ and then we have employed optimality condition $(iii)'$ from \eqref{eq:opt_cond_mu}. Since $\tr\,\hat{S} = \hat{\gamma}_+^0(\hat{S})$, by combining \eqref{eq:hat_gamma_leq_1} and \eqref{eq:extremality_mu} extremality of the pair $ \hat{e}_{\hat{\mu}} (\hat{v})$, $\hat{S}$ may be inferred therefore
	\begin{equation*}
	\hat{\gamma}_+\bigl( \hat{e}_{\hat{\mu}} (\hat{v}) \bigr) = 1 \qquad \hat{\mu}\text{-a.e.}
	\end{equation*}
	and as a result $\hat{\gamma}_+\bigl( \hat{e}_{\hat{\mu}} (\hat{v}_\e) \bigr) =\Z/(E_0 V_0)$. With the supremum problem \eqref{eq:3D_comp_def} reformulated in its relaxed, measure-tangential setting (cf. \cite{bouchitte1997}) we can write down a chain
	\begin{align*}
	\hat{\Comp}^H_\mathrm{min} = \hat{\Comp}\bigl(\hat{\mu},\hat{F} \bigr) &\geq \int_{\Ob\times\R} \hat{v}_\e\cdot \hat{F} - \frac{E_0}{2} \int_{\Ob\times \R}  \Big(  \hat{\gamma}_{+}\bigl(\hat{e}_{\hat{\mu}}(\hat{v}_\e)\bigr)\Big)^2 d\hat\mu \\
	&= \frac{\Z}{E_0 V_0} \int_{\Ob} w \, f - \frac{E_0}{2} \int_{\Ob\times \R}  \biggl(\frac{\Z}{E_0 V_0}\biggr)^2 d\hat\mu 
	= \frac{\Z^2}{2 E_0 V_0} = \hat{\Comp}^H_\mathrm{min} ,
	\end{align*}
	where by optimality of $(u,w)$ in $(\mathcal{P}^*)$ we acknowledged that $\int_{\Ob} w \, f = \Z$. Again, the above chain consists of equalities only rendering $\hat{v}_\e$ a solution of the relaxed displacement-based problem \eqref{eq:3D_comp_def} for the optimal structure.
	In order to verify the constitutive relation it is enough to show that $\hat{\mu}$-a.e.
	\begin{equation}
	\hat{e}_{\hat{\mu}} (\hat{v}_\e) : \hat{S} = \hat{j}_+\bigl(\hat{e}_{\hat{\mu}} (\hat{v}_\e)\bigr) + \hat{j}_+^*\bigl( \hat{S}\bigr)
	\end{equation}
	being straightforward when using formulas \eqref{eq:hat_j}. The proof is complete.
\end{proof}

\section{Duality between the conic programs}
\label{app:cone_duality}

\begin{proof}[Proof of Theorem \ref{thm:duality_discrete}]
	In the proof we will draw upon \cite[Section 2.5]{ben2001} where a summary on conic duality may be found. Upon setting the variable vector $\mbf{x} = \bigl[\mbf{s}^\top \ \mbf{q}^\top \ \mbf{r}^\top \bigr]^\top$ problem $(\mathcal{P}_X)$ becomes the primal conic programming problem $(\mathrm{Pr})$ $($therein identified by vectors $\mbf{c}\in \R^{3m}, \mbf{p} \in \R^{3n},\mbf{b}\in \R^{3m}$ and matrices $\mbf{P}\in \R^{3m \times 3n}, \mbf{A}_k \in \R^{3m \times 3})$ provided that we put:
	\begin{equation*}
	\mbf{c} = \begin{bmatrix}
	\mbf{l} \\ \mbf{0} \\ 2\,\mbf{ \mbf{l}}
	\end{bmatrix}\!, \qquad
	\mbf{p} = \begin{bmatrix}
	\mbf{0} \\ \mbf{0} \\ \mbf{ \mbf{f}}
	\end{bmatrix}\!, \qquad
	\mbf{b} = \mbf{0}, \qquad
	\mbf{P} = \begin{bmatrix}
	\mbf{B}_1 & \mbf{B}_2 & \mbf{0} \\
	\mbf{0} & \mbf{0} & \mbf{D} \\
	\mbf{0} & \mbf{0} & \mbf{0}
	\end{bmatrix}^\top,
	\end{equation*} 
	while $\mbf{A}_k$ are matrices consisting of ones an zeros that for each $k \in \{1,\ldots,m \}$ allocate variables $s_k,q_k,r_k$ into the $k$-th conic constraint; above symbols $\mbf{0}$ stand for zero column vectors or matrices of dimensions depending on the context. According to \cite{ben2001} the conic problem $(\mathrm{Pr})$ admits its dual problem $(\mathrm{Dl})$ of variables $\bm{\nu}\in \R^{3n}, \bm{\eta}_1\in \R^3, \ldots , \bm{\eta}_m \in \R^3$. With the interpretations $\bm{\nu} = \bigl[\mbf{u}_1^\top \ \mbf{u}_2^\top \ \mbf{w}^\top \bigr]^\top $ and $\bigl[\bm{\eta}_1 \ \ldots \ \bm{\eta}_m \bigr] = \bigl[ \mbf{t}_1 \ \mbf{t}_2 \ \mbf{t}_3 \bigr]^\top$ we find that the problem $(\mathrm{Dl})$ therein is exactly $(\mathcal{P}_X^*)$.
	
	According to \cite{ben2001} in order to prove that the duality gap vanishes and moreover that problem $(\mathcal{P}_X^*)$ admits a solution it is enough to show that $(\mathcal{P}_X)$ is \textit{strictly feasible}. In the setting of Section 2.5 in \cite{ben2001} this matter boils down to showing that: (i) the rows of $\mbf{P}$ are linearly independent; (ii) there exists a feasible solution $(\mbf{s},\mbf{q},\mbf{r})$ such that for each $k$ there holds $(r_k,s_k,q_k) \in \mathrm{int}(\mathrm{K})$ or, equivalently, $2\, r_k \, s_k > q_k$. Condition (i) follows directly from the comments below \eqref{eq:B_D_def}. Since for $r_k$ we may choose an arbitrarily large number, condition (ii) will be assured if we manage to show that for each $k$ there exists a vector $\mbf{s}$ satisfying $\mbf{B}_1^\top \mbf{s} =\mbf{B}_2^\top \mbf{s} = \mbf{0}$ (a pre-stressed truss in tension pinned on $X \cap \bO$) such that $s_k >0$ for each $k$. We will sketch the structural-mechanics idea behind this fact: every bar $k$ of positive force $s_k$ either is pinned on both ends by the boundary $\bO$ or it produces point forces $F \in \Rd$ at one or two its ends. We must show that such force $F$ can always be transferred to points $X \cap \bO$ via bars in tension. To that aim it is enough that for each point $x \in X \backslash \bO$ from the ground structure $X \times X$ we can extract a three-bar truss such that: it is jointed at $x$ while the other three nodes are non-colinear points $y_1,y_2,y_3 \in \bO$ such that $x \in \mathrm{conv}\bigl(\{y_1,y_2,y_3\} \bigr)$. It may be checked that such truss always exists thanks to assumption \eqref{eq:assum_on_X}. The strict feasibility of $(\mathcal{P}_X)$ readily follows. To show that solution of $(\mathcal{P})$ exists as well we point to an explicit strictly feasible vector for $(\mathcal{P}_X^*):\ $ $\mbf{u}_1 =\mbf{u}_2 = \mbf{w} = \mbf{0}$ and $\mbf{t}_1 = 2 \,\mbf{l}$, $\mbf{t}_2 = \mbf{l}$, $\mbf{t}_3 = \mbf{0}$.
	
	According to \cite{ben2001} the optimality conditions for feasible variables in a pair of conic problems may be written in a form of complementary slackness conditions that for $(\mathcal{P}_X)$,\,$(\mathcal{P}_X^*)$ read: $t_{1;k} r_k + t_{2;k} s_k + t_{3;k} q_k = 0$ for each $k$. Feasibility in $(\mathcal{P}_X^*)$ is equivalent to satisfying condition $(i)$ in \eqref{eq:opt_cond_discrete} and moreover the equalities $t_{1;k} = 2 \,l_k$, $t_{2;k} = l_k - (\mbf{B}_1 \mbf{u}_1 + \mbf{B}_2 \mbf{u}_2)_k$, $t_{3;k} = (\mbf{D}\mbf{w})_k$; these conditions, along with $(ii)$ in \eqref{eq:opt_cond_discrete} will thus be assumed to be true in the remainder of the proof. Once $s_k=0$ the conic constraint in $(\mathcal{P}_X)$ implies that $q_k =0$ while the comp. slack. cond. gives $r_k = 0$. If $s_k > 0$ then the complementary slackness condition may be rewritten as:
	\begin{equation*}
	\left( \frac{1}{4} \frac{\bigl((\mbf{D} \mbf{w})_k \bigr)^2}{l_k} + (\mbf{B}_1 \mbf{u}_1 + \mbf{B}_2 \mbf{u}_2 )_k \right) s_k = l_k s_k + l_k \, s_k \left(\frac{1}{2} \frac{\bigl(\mbf{D} \mbf{w})_k}{l_k} - \frac{q_k}{s_k}  \right)^2 + 2 \, l_k \left( r_k - \frac{1}{2}\frac{(q_k)^2}{s_k}  \right).
	\end{equation*}
	Due to $(i)$ in \eqref{eq:opt_cond_discrete} the LHS is not greater than $l_k s_k$ whereas the RHS is not smaller than $l_k s_k$ owing to the conic constraint $(r_k,s_k,q_k) \in \mathrm{K}$. Ultimately the equality holds if and only if conditions $(iii)$, $(iv)$ in \eqref{eq:opt_cond_discrete} and $r_k = \frac{1}{2}(q_k)^2 s_k$ are fulfilled. The proof is complete.
\end{proof}

\section{Proof of the link between the optimal vault problem and optimal arch-grid problem}
\label{app:archgird}

\begin{proof}[Proof of Proposition \ref{prop:archgrid}]
	We fix $i =1$ and assume that (I) holds. Since $u_1=0$ on $\bO$ then for any $x_2 \in \pi_2(\Ob)$, the univariate function $u_1(\argu,x_2)$ is zero at ends of the section $\bigl[\bigl(x_1^-(x_2)\bigr)\,,\,  \bigl(x_1^+(x_2)\bigr) \bigr]$ and therefore
	\begin{equation*}
	\int_{x_1^-(x_2)}^{x_1^+(x_2)} \frac{\partial u_1}{\partial x_1} (\xi,x_2)\,  d\xi = 0 \qquad \forall\,x_2 \in \R.
	\end{equation*}
	As a result, inequality in (II) for every $x_2\in \pi_2(\Ob)$ follows by integrating the inequality in (I) over the segment $\bigl[\bigl(x_1^-(x_2)\bigr)\,,\,  \bigl(x_1^+(x_2)\bigr) \bigr] \times \{x_2\}$.
	
	Contrarily, again for $i=1$, assume that (II) holds for given $w \in \D(\Omega;\R)$. For any $x =(x_1,x_2) \in \Omega$ we define 
	\begin{equation*}
	u_1(x_1,x_2)= \int_{x_1^-(x_2)}^{x_1} \left(\frac{\bigl(\rho_1(w;x_2)\bigr)^2}{L_1(x_2)} - \frac{1}{4} \left(\frac{\partial w}{\partial x_1}(\xi,x_2) \right)^2 \right) d\xi,
	\end{equation*}
	which furnishes a function $u_1$ of class $C^1(\Ob;\R)$ which may be easily verified by employing the regularity $w \in \D(\Omega;\R)$. For any $x_2 \in \pi_2(\Ob)$ we readily check that $u_1\bigl(x_1^-(x_2),x_2\bigr) = u_1\bigl(x_1^+(x_2),x_2\bigr) = 0$ and we immediately infer that $u_1 = 0$ on $\bO \backslash \bO_1$ where $\bO_1$ is the part of $\bO$ that is parallel to $e_1$. Owing to the compact support of $w$ the condition $u_1=0$ on $\bO_1$ follows as well. Ultimately we compute for each $x=(x_1,x_2) \in \Omega$ that
	\begin{equation*}
	\frac{1}{4}\, \left(\frac{\partial w}{\partial x_1}(x) \right)^2 + \frac{\partial u_1}{\partial x_1}(x) =\frac{1}{4}\, \left(\frac{\partial w}{\partial x_1}(x) \right)^2 + \left(\frac{\bigl(\rho_1(w;x_2)\bigr)^2}{L_1(x_2)} - \frac{1}{4} \left(\frac{\partial w}{\partial x_1}(x) \right)^2 \right) = \frac{\bigl(\rho_1(w;x_2)\bigr)^2}{L_1(x_2)} \leq 1
	\end{equation*}
	thus furnishing (I). For $i=2$ the proof is analogous.
\end{proof}

	\footnotesize
	\noindent\textbf{Acknowledgments.}
	The paper was prepared within the Research Grant no 2015/19/N/ST8/00474 financed by the National Science Centre (Poland), entitled: Topology optimization of thin elastic shells - a method synthesizing shape and free material design.
	
	The author would like to express his appreciation to Professor Guy Bouchitt\'{e} for hosting an 8 week visit  at Universit\'{e} de Toulon in the fall of 2019. Without his mathematical assistance this work would never be possible. To Professor Tomasz Sok{\'o}{\l} the author gives thanks for providing two numerical results inserted in this paper. The author is also thankful for the invaluable guidance of his PhD supervisors Professor Tomasz Lewi\'{n}ski and Professor Piotr Rybka.

	\setlength{\bibsep}{2pt}
	\bibliographystyle{spbasic}      

\end{document}